\documentclass[12pt]{amsart}

\usepackage{amssymb}
\usepackage{hyperref} 
\usepackage{amsmath}
\usepackage{latexsym}
\usepackage{extarrows}
\usepackage{enumerate}
\usepackage{mathtools}
\usepackage{bm}
\usepackage[all]{xy}
\usepackage{tikz-cd}
\usepackage{xcolor}
\usepackage{mathrsfs}
\usepackage{euscript}
\usepackage{csquotes}
\usepackage{comment}
\usepackage{bbm}
\usepackage{cleveref}

\makeatletter
\@namedef{subjclassname@2020}{%
  \textup{2020} Mathematics Subject Classification}
\makeatother

\usepackage[american]{babel}

\usepackage[T1]{fontenc}
\calclayout

\theoremstyle{plain}

\newtheorem{theorem}{Theorem}[section]
\newtheorem{proposition}[theorem]{Proposition}

\newtheorem{lemma}[theorem]{Lemma}
\newtheorem{corollary}[theorem]{Corollary}

\newtheorem*{theorem*}{Theorem}

\theoremstyle{definition}

\newtheorem{definition}[theorem]{Definition}

\newtheorem{remark}[theorem]{Remark}
\newtheorem{remarks}[theorem]{Remarks}
\newtheorem{example}[theorem]{Example}
\newtheorem{examples}[theorem]{Examples}

\newcommand\Z{\mathbb{Z}}

\newcommand\T{\mathbb{T}}
\newcommand\C{\mathbb{C}}

\renewcommand\subset{\subseteq}
\renewcommand\phi{\varphi}

\newcommand\fix{\operatorname{fix}}
\newcommand\lin{\operatorname{lin}}

\newcommand\supp{\operatorname{supp}}

\renewcommand{\1}{\text{\usefont{U}{bbold}{m}{n}1}}
\MakeRobust{\1}

\begin{document}

%%%%% To ease editing, for IMPAN journals add:

\baselineskip=17pt
\setlength\parindent{0pt}

%%%%%%%%%%%%%%%%

\title[Compactifications of Abelian Group Bundles]{Compactifications of Abelian Group Bundles and a topological Halmos--von Neumann-type Theorem}

\author[D. Anglewitz]{Dustin Anglewitz}
\address{Dustin Anglewitz, Fachgruppe Mathematik und Informatik, Bergische Universität Wuppertal, Gaußstraße 20, 
42119 Wuppertal}
\email{dustin.anglewitz@uni-wuppertal.de}

\author[P. Hermle]{Patrick Hermle}
\address{Patrick Hermle, Fachgruppe Mathematik und Informatik, Bergische Universität Wuppertal, Gaußstraße 20,
42119 Wuppertal}
\email{patrick.hermle@uni-wuppertal.de}

\author[H. Kreidler]{Henrik Kreidler}
\address{Henrik Kreidler, Fakultät für Mathematik und Informatik, Universität Leipzig, Augustusplatz 10, 04109 Leipzig}
\email{henrik.kreidler@uni-leipzig.de}

\subjclass[2020]{Primary: 22A22, 37B05; Secondary: 43A40; 54B40.} 

\begin{abstract}
    The topological Halmos--von Neumann theorem is a fundamental result of topological dynamics that completely classifies continuous actions of abelian topological groups which are \emph{ergodic} and have \emph{discrete spectrum}. It rests on a correspondence between such actions (up to the choice of a distinguished point), compactifications of the acting group and subgroups of its dual group.\medskip
    
    \noindent We generalize both the classical statement and the underlying categorical equivalences to the framework of open group \emph{bundles} over locally compact base spaces. As a byproduct, we establish a Pontryagin-type duality between \'{e}tale Hausdorff abelian group bundles and open proper Hausdorff abelian group bundles, extending the classical duality between discrete and compact abelian groups.
\end{abstract}

\maketitle

\section*{Introduction}

Compactifying topological spaces is a classical and useful concept with many applications, e.g., to analysis, additive combinatorics, and ergodic theory. If we start from a topological space with additional algebraic structure, it is often desirable to construct compactifications which still admit the same structure. Concretely, given a topological group $G$, it is natural to ask for its \emph{group compactifications}, i.e., pairs $(H,c)$, where $H$ is a compact\footnote{All compact spaces in this article are assumed to be Hausdorff.} topological group and $c \colon G \rightarrow H$ is a continuous group homomorphism with dense range\footnote{We do not require $c$ to be injective here.}.\medskip

Such group compactifications are also interesting from a dynamical point of view. Let us call a pair $(K,\varphi)$ a \emph{topological dynamical system} if $\varphi \colon G \times K \rightarrow K, \, (t,x) \mapsto \varphi_t(x)$ is a continuous $G$-action on a compact space $K$. Then every group compactification $(H,c)$ of $G$ gives rise to a \emph{rotation system} $(H,\varphi_c)$ via $(\varphi_c)_t(x) = c(t)x$ for $t \in G$ and $x \in H$.\medskip

In fact, if $G$ is abelian, these rotation systems turn out to be the prototypical examples of topological dynamical systems which are \enquote{irreducible} and \enquote{structured}. This is part of the topological version of the famous Halmos--von Neumann theorem from ergodic theory (see \cite{halmosvonneumann}).\medskip

To make this precise, recall that to each topological dynamical system $(K,\varphi)$ we can associate its \emph{Koopman representation} $T^\varphi \colon G \rightarrow \mathscr{L}(\mathrm{C}(K)),\, t \mapsto T_t^{\varphi}$ on the Banach space of continuous functions $\mathrm{C}(K)$, where $T_t^{\varphi}f = f \circ \varphi_t^{-1}$ for $t \in G$, see \cite{EFHN2015} for this approach. The system $(K,\varphi)$ then
    \begin{enumerate}[(i)]
        \item is \emph{(topologically) ergodic} (\enquote{is irreducible}) if the fixed space 
            \begin{align*}
                \fix(T^{\varphi}) \coloneqq \{f \in \mathrm{C}(K) \mid T^{\varphi}_tf = f \textrm{ for every } t \in G\}
            \end{align*}
            only contains the constant functions.
        \item has \emph{discrete spectrum} (\enquote{is structured}) if the Banach space $\mathrm{C}(K)$ is generated by the eigenspaces
            \begin{align*}
                \ker(\chi - T^{\varphi}) \coloneqq \{f \in \mathrm{C}(K) \mid T^{\varphi}_tf = \chi(t)f \textrm{ for every } t \in G\},
            \end{align*}
        where $\chi \colon G \rightarrow \T$ is a continuous group homomorphism, i.e., an element of the dual group $G^*$. 
    \end{enumerate}
Further denoting by $\sigma_{\mathrm{p}}(T^{\varphi}) \coloneqq \{\chi \in G^*\mid  \ker(\chi -T^{\varphi}) \neq \{0\}\}$ the \emph{point spectrum} of the Koopman representation $T^{\varphi}$, the topological Halmos--von Neumann theorem can be stated as follows (see, e.g., \cite[Section 5.5]{Walters1975}, \cite[Chapter VIII]{DNP}, \cite[Theorem 3.1 and Section 4]{edeko} for $G= \Z$, and  \cite[Theorem 4.4]{kreidler-hermle} in the general case):
    \begin{theorem*}[Topological Halmos--von Neumann Theorem]
        Let $G$ be an abelian topological group. For ergodic topological dynamical systems with discrete spectrum with respect to $G$ the following assertions hold.
            \begin{enumerate}[(i)]
                \item Two such systems $(K_1,\varphi_1)$ and $(K_2,\varphi_2)$ are isomorphic precisely when the equality $\sigma_{\mathrm{p}}(T^{\varphi_1}) = \sigma_{\mathrm{p}}(T^{\varphi_2})$ holds.
                \item The point spectra $\sigma_{\mathrm{p}}(T^{\varphi})$ arising from such systems $(K,\varphi)$ are precisely the subgroups of the dual group $G^*$.
                \item Every such system $(K,\varphi)$ is isomorphic to a rotation system $(H,\varphi_c)$ defined by some group compactification $(H,c)$ of $G$.
            \end{enumerate}
    \end{theorem*}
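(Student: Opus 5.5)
The plan is to go through the Koopman representation and Gelfand duality, arranging everything around the unitary eigenfunctions. Fix an ergodic system $(K,\varphi)$ with discrete spectrum.

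\textbf{Structure of the eigenfunctions.} Ergodicity gives the first facts.
\begin{enumerate}[(a)]
\item If $f \in \ker(\chi - T^{\varphi})$, then $|f|$ is fixed, hence constant.
\item Each eigenspace is one-dimensional: for $f,g$ in it with $g \neq 0$, the quotient $f/g$ is continuous and fixed, hence constant.
\item Products and conjugates of eigenfunctions are again eigenfunctions.
\end{enumerate}
So $\Gamma \coloneqq \sigma_{\mathrm{p}}(T^{\varphi})$ is a subgroup of $G^*$. For each $\chi \in \Gamma$ I choose a unimodular eigenfunction $f_\chi$. Fixing a point $x_0 \in K$, I normalize $f_\chi(x_0)=1$, which forces $f_\chi f_\eta = f_{\chi\eta}$. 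Hence $\chi \mapsto f_\chi$ is a group homomorphism into the unitary group of $\mathrm{C}(K)$.

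\textbf{Part (iii).} Give $\Gamma$ the discrete topology and let $H \coloneqq \Gamma^*$, a compact abelian group. Define $c \colon G \to H$ by $c(t)(\chi) = \chi(t)$. Then $c$ is continuous, since $H$ carries the topology of pointwise convergence and each $\chi$ is continuous. It has dense range, because its annihilator in $\Gamma = H^*$ is trivial; this is Pontryagin duality for the pair discrete/compact. Define $\Phi \colon K \to H$ by $\Phi(x)(\chi) = f_\chi(x)$, which is continuous.

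$\Phi$ is injective: the $f_\chi$ span a dense subspace by discrete spectrum, so they separate points. It is equivariant up to the sign convention: $f_\chi(\varphi_t x) = \overline{\chi(t)} f_\chi(x)$, so $\Phi(\varphi_t x) = c(t)^{-1}\Phi(x)$. Using $c(-t)$ in place of $c(t)$ removes the sign, or one can replace $\chi$ by $\bar\chi$.

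$\Phi$ is surjective. Its image is compact and invariant, and it contains $\Phi(x_0) = 1$, so it contains the dense orbit $c(G)$ and therefore all of $H$. A continuous bijection from a compact space to a Hausdorff space is a homeomorphism, so $(K,\varphi) \cong (H,\varphi_c)$. This is also the step where Stone--Weierstrass in the form of Gelfand duality would otherwise enter; with the direct map it is avoided.

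\textbf{Parts (ii) and (i).} For (ii), take any subgroup $\Gamma \le G^*$ and form the rotation $(\Gamma_d^*, \varphi_c)$ as above. Its characters $\chi \in \Gamma$ are eigenfunctions that span a dense subalgebra by Stone--Weierstrass, so the system has discrete spectrum.

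It is ergodic because a fixed function is constant on the dense orbit of $1$. Its point spectrum is exactly $\Gamma$: any eigenfunction for $\eta \notin \Gamma$ would, by orthogonality with respect to Haar measure, be orthogonal to all characters and hence zero. For the converse I would instead use that $c(t)$ acts on the character $\chi$ by multiplication with $\chi(t)$, and distinct characters of $G$ give independent eigenvalues.

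For (i), isomorphic systems clearly have equal point spectra. Conversely, by (iii) each system is isomorphic to the rotation on $\sigma_{\mathrm{p}}^*$, and that rotation depends only on $\sigma_{\mathrm{p}}$.

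\textbf{Main obstacle.} I expect the main obstacle to be the careful verification that the point spectrum of the rotation is exactly $\Gamma$ and not larger. This needs the one-dimensionality of eigenspaces together with the density of the span of the $f_\chi$: an eigenfunction for $\eta$ is orthogonal in $L^2$ (Haar measure) to every $f_\chi$ with $\chi \neq \eta$. If $\eta \notin \Gamma$, it is therefore orthogonal to a dense set and vanishes.
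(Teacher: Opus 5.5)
Your argument is correct, but it is not the paper's route. The paper obtains this statement as the case $M=\{\mathrm{pt}\}$ of Theorem~\ref{hvnmain}, and there the compactification is extracted intrinsically from the system rather than from chosen eigenfunctions:
\begin{enumerate}[(1)]
\item The uniform enveloping semigroup $\mathrm{E}_{\mathrm{u}}(K,\varphi)=\overline{\{\varphi_t\mid t\in G\}}$ is shown to be a compact abelian group. This uses Arzel\'a--Ascoli together with Lemma~\ref{lemmaunifsemigrp}, which says that every $\vartheta\in\mathrm{E}_{\mathrm{u}}$ multiplies each eigenfunction by a scalar $z_\vartheta$.
\item Ergodicity makes $\mathrm{E}_{\mathrm{u}}$ act transitively (Proposition~\ref{charergtransit}). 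Being abelian and acting effectively, it then acts freely.
\item Hence the orbit map $\vartheta\mapsto\vartheta(x_0)$ is an isomorphism from the rotation system of the compactification $(\mathrm{E}_{\mathrm{u}},i)$, $i(t)=\varphi_t$, onto $(K,\varphi)$ (Proposition~\ref{natisocomprot1}).
\item The point spectrum is identified with $\{\chi\circ i\mid\chi\in\mathrm{E}_{\mathrm{u}}^*\}$ through the character $\vartheta\mapsto z_\vartheta$ (Proposition~\ref{functorcomposition}).
\item Parts (i) and (ii) then follow from the Pontryagin-duality correspondence between compactifications and subgroups of $G^*$ (Propositions~\ref{constructioncomp} and~\ref{natiso2}). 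In particular, this gives $\sigma_{\mathrm{p}}=\Gamma$ for the rotation on $\Gamma_{\mathrm{d}}^*$ without any measure theory.
\end{enumerate}

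You replace all of this with the classical \enquote{eigenfunction coordinates}. Ergodicity gives one-dimensional eigenspaces of unimodular functions, and normalizing at $x_0$ gives a homomorphism $\chi\mapsto f_\chi$. The map $x\mapsto(\chi\mapsto f_\chi(x))$ is then directly an isomorphism onto the rotation on $(\sigma_{\mathrm{p}})_{\mathrm{d}}^*$. This is shorter and more elementary: it needs no enveloping semigroup, no Arzel\'a--Ascoli and no fixed-factor argument. Since the target depends only on $\sigma_{\mathrm{p}}$, it also makes (i) immediate. The paper's choice-free construction buys functoriality, i.e.\ the full categorical equivalences. Above all, it gives a method that survives the passage to group bundles, where eigencharacters exist only locally and the point spectrum is a sheaf rather than a group.

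Two minor points. On the rotation system, $\mathrm{ev}_\chi$ is an eigenfunction for $\overline{\chi}$ rather than $\chi$. This is harmless since $\Gamma$ is a group, and it is all you need for $\Gamma\subseteq\sigma_{\mathrm{p}}$, so your sentence about \enquote{the converse} can be dropped. Your orthogonality argument for $\sigma_{\mathrm{p}}\subseteq\Gamma$ should say explicitly that the Koopman operators are unitary on $L^2$ of Haar measure and that Haar measure has full support. One-dimensionality of eigenspaces plays no role in that step.
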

Parts (i), (ii) and (iii) can be seen as the \emph{uniqueness}, \emph{realization} and \emph{representation} aspects of the result, respectively (cf. \cite[Introduction]{edeko}). In this form, the Halmos--von Neumann theorem is a classification result for irreducible and structured topological dynamical systems, and, with the help of topological models, can be used to deduce the ergodic theoretic counterpart for measure-preserving transformations (see, e.g.,   \cite[Theorems 3.8 and 4.5]{kreidler-hermle}). However, as outlined in \cite{kreidler-hermle}, the key coherences behind the topological Halmos--von Neumann theorem can also be expressed as an equivalence between three categories for a fixed abelian topological group $G$:
    \begin{enumerate}[(i)]
        \item the category of group compactifications $(H,c)$ of $G$,
        \item the category of pointed ergodic systems with discrete spectrum $(K,\varphi,x)$ over $G$, and
        \item the opposite category of subgroups $\sigma$ of the dual group $G^*$.
    \end{enumerate}
The complete result can be found in \cite[Theorem 4.25]{kreidler-hermle}. Thus, loosely speaking, both the compactifications of $G$ and the (pointed) ergodic systems with discrete spectrum with respect to $G$ are classified by the subgroups of $G^*$.\medskip

The goal of this article is to extend the above results from abelian topological groups to abelian topological group bundles. Here, an \emph{abelian topological group bundle} is a continuous surjection $p \colon G \rightarrow M$ from a topological space $G$ to a topological space $M$ such that the fibers $G_m \coloneqq p^{-1}(\{m\})$ for $m \in M$ carry the structure of an abelian group, multiplication and inversion are continuous, and the neutral elements depend continuously on the base point (see Subsection \ref{sectopgrpbundles}  below for the precise definition). We further restrict to the case that the base space $M$ is locally compact and that $p$ is also an open map.\medskip

%, given any open topological abelian group bundle $p \colon G \rightarrow M$ over a locally compact space $M$,
In this framework, we derive that the following categories are equivalent (see Theorem \ref{hvncat}):
    \begin{enumerate}[(i)]
        \item the category of group bundle compactifications $(q,c)$ of $p$,
        \item the category of relatively ergodic systems with relative discrete spectrum and distinguished section $(q,\varphi,\tau)$ over $p$, and
        \item the opposite category of subsheaves $F$ of the dual sheaf $\mathcal{D}_p$.
    \end{enumerate}
The involved concepts and the functors establishing these equivalences will be introduced and discussed in detail throughout this article. As a consequence of these correspondences we obtain a generalization of the topological Halmos--von Neumann theorem to actions of open topological abelian group bundles over locally compact spaces (see Theorem \ref{hvnmain}).\medskip

We also remark that, as a tool for our main results, Theorem \ref{equivproper} establishes a duality between \'{e}tale Hausdorff abelian group bundles and open proper Hausdorff abelian group bundles over a locally compact base space. This extends the classical and well-known duality between discrete and compact abelian groups to group bundles, and complements the Pontryagin duality theorem from \cite{Goeh2008} for second countable locally compact abelian group bundles with a continuous Haar system. 

\subsection*{Related Results and Context} Topological group bundles feature in the theory of operator algebras in the setting of groupoid C*-algebras (see \cite{IKRSW2021}, \cite{IKRSW2021b}, and \cite{VaWi2022} for some recent examples). Smooth group bundles also appear in the framework of differential geometry (see \cite{BMS2022} and \cite{CaRo2023}). It therefore is natural and useful to study and classify compactifications of group bundles.\medskip

A more concrete motivation comes from classical topological dynamics and ergodic theory of group actions. Many structural results in these disciplines rely on understanding \enquote{relative behavior} of a system with respect to one of its factors, see, e.g., Furstenberg's structure theorem for minimal distal systems (see  \cite{Furs1963}, \cite[Section V.3]{deVr1993}) as well as Furstenberg--Zimmer structure theory (see, e.g., \cite{Furs1977}, \cite{zimmer1976ergodic}, \cite{zimmer1976extension}, and more recently \cite{jamneshan2019fz}, \cite{EHK2021}, \cite{HaKr2026})  and Host--Kra structure theory (see \cite{HoKr2018}) for measure-preserving systems.\medskip

In ergodic theory the notion of discrete spectrum has a natural generalization to morphisms (known as \enquote{extensions}) of measure-preserving systems (see, e.g., \cite[Section 4]{jamneshan2019fz},  \cite[Section 8.2]{EHK2021}). Moreover, under certain ergodicity assumptions one can generalize the Halmos--von Neumann theorem to such extensions (see \cite[Sections 5 and 6]{zimmer1976extension} for extensions of ergodic systems, and \cite{austin-fund} as well as \cite{EJK2023} for the representation aspect in the larger class of relatively ergodic extensions).\medskip

However, in topological dynamics the \enquote{right} relative concept of discrete spectrum for factor maps $q \colon (K,\varphi) \rightarrow (M,\psi)$ is still missing, at least if one does not impose any irreducibility conditions. For so-called minimal and distal systems, a notion of relative discrete spectrum based on the structure of $\mathrm{C}(K)$ as a $\mathrm{C}(M)$-module is implicit in \cite{Knapp1967}. This has been adapted and extended in \cite{EdKr2021} to cover open factor maps with a topologically ergodic base system $(M,\psi)$.\medskip

As is also observed in \cite[Example 1.12]{EdKr2021}, factor maps $q \colon (K,\varphi) \rightarrow (M,\psi)$ over a fixed system $(M,\psi)$ can equivalently be described as actions of the \emph{action groupoid} $M \rtimes G$. Here, \emph{groupoids} generalize groups in the sense that multiplication is only partially defined (see, e.g., \cite{2013MitreaMonniaux} or \cite{IbRo2019} for an introduction). Concretely, $M \rtimes G$ is defined by the topological product space $M \times G$ with multiplication and inversion given by
    \begin{align*}
        &(\psi_t(m),s) \cdot (m,t) \coloneqq (m,st) \quad \textrm{ for }\, s,t \in G \, \textrm{ and }\, m \in M,\\
        &(m,t)^{-1} \coloneqq (\psi_t(m),t^{-1}) \quad \textrm{ for } t \in G \, \textrm{ and } m \in M.
    \end{align*}

In particular, in the special case that the action on $M$ is trivial, i.e., $\psi_t(x) = x$ for all $x \in M$ and $t \in G$, the action groupoid $M \rtimes G$ is simply given by the \emph{trivial group bundle} $\mathrm{triv}_M^G$ over $M$ with fiber $G$ (see Example \ref{examplesgrpbundles} (ii) below). Thus in this article -- \linebreak by introducing and studying group bundle actions with relative discrete spectrum -- we also cover this special case of factor maps of continuous group actions. See also \cite[Subsection 5.6]{ADKPL2026} for a recent Halmos--von Neumann-type theorem in the ergodic theoretic analogue of this setting. \medskip

Let us mention two more works closely related to our article. First, Edeko in \cite{edeko} proves a version of the Halmos--von Neumann theorem for not necessarily ergodic continuous $\Z$-actions with discrete spectrum. His results also involve the consideration of bundles over the \emph{fixed factor} (see Subsection \ref{ergodicitysec} below). The key difference to his setting is that in this article the notion of discrete spectrum is seen relative to this factor (i.e., with respect to a group bundle action), whereas it is the classical one (i.e., for group actions) in \cite{edeko}.\medskip
%The relation is discussed in more detail below. Nonetheless, both articles are related, as every topological dynamical system $(K,\varphi)$ over a topological abelian group $G$ having discrete spectrum in the classical case, canonically also gives rise to a continuous group bundle action  with relative discrete spectrum: If we denote by $M$ the fixed factor of $(K,\varphi)$, then $M$ with the trivial dynamics is a factor of $(K,\varphi)$, and hence defines an action of the trivial group bundle $\mathrm{triv}_M^G$. We note however, that the notion of relative discrete spectrum is  \medskip

Second, our article can also be seen as the counterpart\footnote{With the significant caveat that we only deal with \emph{abelian} group bundles here.} to the upcoming second author's PhD thesis \cite{Herm2026}, where a similar theory is developed for actions of topologically ergodic groupoids.

\subsection*{Organization of the Article} We now briefly outline the structure and contents of this article.\medskip

In Section \ref{prelimsec} we first set up some notation and terminology and recall basic notions from category theory. We then discuss topological bundles and some elementary, but useful results for proper and \'{e}tale bundles. Spaces of continuous fiber maps on topological bundles and the compact-open topology thereon are also recalled in this preliminary part of the article (see Definitions \ref{defcontfiber} and \ref{defcompactopen}).\medskip

Section \ref{secharmonic} is then devoted to topological group bundles. After the discussion of elementary concepts and examples in Subsection \ref{sectopgrpbundles}, we introduce the dual bundle of an open locally compact group bundle in Definition \ref{defdualbundle} and show the compatibility of our definition with the one used in the literature on groupoid C*-algebras (see Proposition \ref{agreetop}). Our considerations culminate in a duality between \'{e}tale Hausdorff abelian group bundles and open proper Hausdorff abelian group bundles over a locally compact base space (see Theorem \ref{equivproper}). In the last part of this section, we recall the well-known correspondence between \'{e}tale abelian group bundles and  sheaves (of abelian groups), see Theorem \ref{bunseciso}. We also introduce the notion of well-supported sheaves (see Definition \ref{definitionsupport}) as the counterpart to \emph{Hausdorff} \'{e}tale bundles (see Theorem \ref{wsuphausdorff}).\medskip

The two remaining sections are the main parts of this article. In Section \ref{sec3} we introduce group bundle compactifications (see Definition \ref{defgrpcomp}). To classify these, we define the dual sheaf of an open topological abelian group bundle in Definition \ref{defdualsheaf}. Every group bundle compactification then gives rise to a well-supported subsheaf of the dual sheaf in a natural and functorial way (see Definition \ref{defdiscertespectrum}). In Subsection \ref{subsectclasscomp} we then treat the converse direction and build group bundle compactifications from well-supported subsheaves of the dual sheaf (see Definition \ref{defcompactdualfunctor}). These functors then establish a categorical equivalence (see Theorem \ref{maintheoremcomp}).\medskip

Finally, we add a \enquote{dynamical perspective} in Section \ref{discspsec} by introducing the notion of relative discrete spectrum. The concept is first defined in a purely operator theoretic way in the framework of bounded strongly continuous representations on Banach bundles (see Definition \ref{defdiscretespectrum}), and then applied to topological dynamics of group bundles (see Definition \ref{defdiscretespectop}). We show that representations of open and proper Hausdorff abelian group bundles always have relative discrete spectrum (see Proposition \ref{comprepdiscsp}). As a consequence, rotation systems defined by group bundle compactifications have relative discrete spectrum (see Proposition \ref{compactifdiscr}). In Subsection \ref{ergodicitysec} we then introduce the notion of relative ergodicity for actions of group bundles, and discuss several equivalent descriptions in Proposition \ref{chartoperg}. As a tool, we also define the uniform enveloping semigroup bundle (see Definition \ref{defunifenvsem}) as a relative version of the classical concept of enveloping semigroups in Subsection \ref{univenvsec}. In particular, this leads to a characterization of relative discrete spectrum for relatively ergodic systems in terms of the uniform enveloping semigroup bundle (see Theorem \ref{unifenvcomp2}). In Subsection \ref{hvnsec} we introduce a suitable generalization of the category of pointed relatively ergodic systems with relative discrete spectrum to the group bundle case (see Definition \ref{defpointed}). We then finally extend the Halmos--von Neumann theorem to the group bundle case in the categorical and the classical version (see Theorems \ref{hvncat} and \ref{hvnmain}).

\subsection*{Acknowledgements} The authors thank Noa Bihlmaier, Bálint Farkas, Rainer Nagel, Jean Renault and Nick Ruoff for feedback and fruitful discussions. They are also grateful towards Alexander Furlong for bringing the preprint \cite{ADKPL2026} to their attention.
%Nick/Noa?

\subsection*{AI Disclosure Statement} AI was neither used for carrying out the research leading to the article nor for writing the article itself.

\section{Preliminaries}\label{prelimsec}

\subsection{Notation and Terminology}
In the following, all vector spaces are complex. The continuous linear maps on a normed space $E$ are denoted by $\mathscr{L}(E)$. Moreover, the notations $\1$ and $\1_X$ are used for the constant one-function on a set $X$. The neutral element of a group $G$ is written as $1_G$. We write $\T \coloneqq \{z \in \C \mid |z| = 1\}$ for the complex unit circle, and equip it with the natural topology and group structure.\medskip

We call a topological space quasi-compact if every open cover has a finite subcover, and compact if it is, in addition, a Hausdorff space. Moreover, all locally compact spaces are assumed to be Hausdorff.\medskip

For any topological space $\Omega$ we write $\mathrm{C}(\Omega)$ for the vector space of all continuous complex-valued functions on $\Omega$, and $\mathrm{C}_{\mathrm{b}}(\Omega)$ for the linear subspace of bounded continuous functions. If $\Omega$ is locally compact, then $\mathrm{C}_{\mathrm{c}}(\Omega)$ and $\mathrm{C}_0(\Omega)$ denote the linear subspaces of compactly supported functions and functions vanishing at infinity, respectively.\medskip

\subsection{Some Category Theoretical Language}

As seen in \cite{kreidler-hermle}, the Halmos--von Neumann theorem for groups can be stated elegantly using category theoretical language. We use the same philosophy in the group bundle case considered here. This short section reviews the relevant category theoretical concepts. Our exposition is based on \cite[Section 2]{kreidler-hermle}; more detailed introductions can be found, e.g., in \cite{lane1998categories} and \cite{Agor2023}.

\begin{definition}
    A \textbf{(locally small) category} $\mathscr{C}$ is given by
        \begin{enumerate}[(1)]
            \item a class $\mathrm{Obj}_{\mathscr{C}}$ of \textbf{objects},
            \item a set $\mathrm{Mor}_{\mathscr{C}}(X,Y)$ of \textbf{morphisms} for each pair $(X,Y)$ of objects,
            \item a \textbf{composition map}
            \begin{align*}
                \circ = \circ_{X,Y,Z} \colon \mathrm{Mor}_{\mathscr{C}}(Y,Z) \times \mathrm{Mor}_{\mathscr{C}}(X,Y) \rightarrow \mathrm{Mor}_{\mathscr{C}}(X,Z), \quad (\alpha, \beta) \mapsto \alpha \circ \beta
            \end{align*}
                for each triple $(X,Y,Z)$ of objects, and
            \item an \textbf{identity morphism} $\mathrm{id}_X \in \mathrm{Mor}_{\mathscr{C}}(X,X)$ for each object $X$,
        \end{enumerate}
    subject to the following conditions.
        \begin{enumerate}[(i)]
            \item For any objects $W,X,Y,Z$ the sets $\mathrm{Mor}_{\mathscr{C}}(W,X)$ and $\mathrm{Mor}_{\mathscr{C}}(Y,Z)$ are disjoint if $W \neq Y$ or $X \neq Z$.
            \item If $\alpha \in \mathrm{Mor}_{\mathscr{C}}(W,X)$, $\beta \in \mathrm{Mor}_{\mathscr{C}}(X,Y)$ and $\gamma \in \mathrm{Mor}_{\mathscr{C}}(Y,Z)$ for objects $W,X,Y,Z$, then
                \begin{align*}
                    \gamma \circ (\beta \circ \alpha) = (\gamma \circ \beta) \circ \alpha.
                \end{align*}
            \item If $\alpha \in \mathrm{Mor}_{\mathscr{C}}(X,Y)$ for objects $X$ and $Y$, then $\mathrm{id}_{Y} \circ \alpha = \alpha \circ \mathrm{id}_X$. 
            \end{enumerate}
\end{definition}

We discuss some important examples.

    \begin{examples}\label{examplescat}
        \begin{enumerate}[(i)]
            \item Sets with maps as morphisms form a category $\mathbf{Set}$.
            \item Abelian groups with group homomorphisms as morphisms define a category $\mathbf{Ab}$. By equipping these groups with the discrete topology, we obtain the category of discrete abelian groups $\mathbf{DA}$.
            \item Compact abelian groups with continuous group homomorphisms as morphisms give rise to a category $\mathbf{CA}$.
            \item Locally compact abelian groups with continuous group homomorphisms as morphisms give rise to a category $\mathbf{LCA}$.
            \item For a partially ordered set $A$, we define a category $\mathscr{C}_A$ by letting $\mathrm{Obj}_{\mathscr{C}_A} \coloneqq A$ and 
                \begin{align*}
                    \mathrm{Mor}_{\mathscr{C}_A}(a,b) \coloneqq \begin{cases} \{(a,b)\} &\textrm{ if } a \leq b,\\
                \emptyset & \textrm{ else},
                    \end{cases}
                \end{align*}
            for $a,b \in A$. With the obvious composition maps and identity morphisms, this yields a category $\mathscr{C}_A$.
            \item For a given topological space $M$, a special case of (ii) is given by taking the set of open subsets of $M$ ordered by \emph{reverse} set inclusion. We denote the resulting category by $\mathscr{O}_M$.
            \item If $\mathscr{C}$ is any category, then the \textbf{opposite category} $\mathscr{C}^{\mathrm{op}}$ has the same class of objects as $\mathscr{C}$, but $\mathrm{Mor}_{\mathscr{C}^{\mathrm{op}}}(X,Y) \coloneqq \mathrm{Mor}_{\mathscr{C}}(Y,X)$ for all objects $X,Y$. The order in the composition maps is also reversed, while the identity morphisms are the same as in $\mathscr{C}$. Clearly, $(\mathscr{C}^{\mathrm{op}})^{\mathrm{op}} = \mathscr{C}$.
        \end{enumerate}
    \end{examples}

It is common and convenient to write a morphism $\alpha \in \mathrm{Mor}_{\mathscr{C}}(X,Y)$ between two objects $X$ and $Y$ of a category $\mathscr{C}$ as an arrow $\alpha \colon X \rightarrow Y$. Moreover, one says that the morphism $\alpha$ is an \textbf{isomorphism} if there is a morphism $\beta \colon Y \rightarrow X$ with 
    \begin{align*}
        \beta \circ \alpha = \mathrm{id}_X \quad \textrm{ and } \quad \alpha \circ \beta = \mathrm{id}_Y.
    \end{align*}
In this case, the morphism $\beta$ is uniquely determined, denoted by $\alpha^{-1}$ and called the \textbf{inverse} of $\alpha$. Two objects $X$ and $Y$ in a category $\mathscr{C}$ are \textbf{isomorphic} if there exists an isomorphism $\alpha \colon X \rightarrow Y$.\medskip

The concept of a functor allows to formulate relations between different categories:

\begin{definition}
    For categories $\mathscr{C}$ and $\mathscr{D}$, a \textbf{functor} $F \colon \mathscr{C} \rightarrow \mathscr{D}$ is given by
        \begin{enumerate}[(1)]
            \item an assignment $F \colon \mathrm{Ob}_{\mathscr{C}} \rightarrow \mathrm{Ob}_{\mathscr{D}}, \, X \mapsto F(X)$, and
            \item a map $F = F_{X,Y} \colon \mathrm{Mor}_{\mathscr{C}}(X,Y) \rightarrow \mathrm{Mor}_{\mathscr{D}}(F(X),F(Y)), \, \alpha \mapsto F(\alpha)$ for any two objects $X,Y$ in $\mathscr{C}$, 
        \end{enumerate}
    such that the following conditions are satisfied:
        \begin{enumerate}[(i)]
            \item $F(\beta \circ \alpha) = F(\beta) \circ F(\alpha)$ for all morphisms $\alpha \colon X \rightarrow Y$ and $\beta \colon Y \rightarrow Z$  in $\mathscr{C}$.
            \item $F(\mathrm{id}_X) = \mathrm{id}_{F(X)}$ for each object $X$ in $\mathscr{C}$.
        \end{enumerate}
\end{definition}

For any category $\mathscr{C}$, the \textbf{identity functor} $\mathrm{Id}_{\mathscr{C}}$ assigns to each object and each morphism of $\mathscr{C}$ itself. Moreover, if $F_1 \colon \mathscr{B} \rightarrow \mathscr{C}$ and $F_2 \colon \mathscr{C} \rightarrow \mathscr{D}$ are two functors (between categories $\mathscr{B}, \mathscr{C}$ and $\mathscr{D})$, their \textbf{composition} $F_2 \circ F_1 \colon \mathscr{B} \rightarrow \mathscr{D}$, defined in the obvious way on objects and morphisms, is again a functor. Moreover, each functor $F \colon \mathscr{C} \rightarrow \mathscr{D}$ induces a functor $F^{\mathrm{op}} \colon \mathscr{C}^{\mathrm{op}} \rightarrow \mathscr{D}^{\mathrm{op}}$ between the induced opposite categories.\medskip

Here is a concrete example:

\begin{example}\label{basicdual}
    We obtain a functor $\mathrm{D} \colon \mathbf{LCA} \rightarrow \mathbf{LCA}^{\mathrm{op}}$ from the category of locally compact abelian groups to its opposite category by 
        \begin{enumerate}[(i)]
            \item assigning to each locally compact abelian group $H$ its Pontryagin dual 
                \begin{align*}
                    H^* \coloneqq \mathrm{Mor}(H,\T) = \{\varrho \colon H \rightarrow \T \mid \varrho \textrm{ is a continuous group homomorphism}\},
                \end{align*}
                equipped with pointwise defined multiplication as well as the compact-open topology, and
            \item mapping each continuous group homomorphism $\Phi \colon H_1 \rightarrow H_2$ to the pullback 
                \begin{align*}
                    \Phi^* \colon H_2^* \rightarrow H_1^*, \quad \varrho \mapsto \varrho \circ \Phi.
                \end{align*}
        \end{enumerate}
\end{example}

To compare different functors, one uses the concept of natural transformations.

\begin{definition}
    Let $F_1,F_2 \colon \mathscr{C} \rightarrow \mathscr{D}$ be two functors between categories $\mathscr{C}$ and $\mathscr{D}$. A \textbf{natural transformation} $\eta \colon F_1 \rightarrow F_2$ assigns to each object $X$ of $\mathscr{C}$ a morphism $\eta_X \colon F_1(X) \rightarrow F_2(X)$ in $\mathscr{D}$ in such a way that the diagram
            \[
		\xymatrix{
			F_1(X) \ar[r]^{\eta_X} \ar[d]_{F_1(\alpha)} & F_2(X)\ar[d]^{F_2(\alpha)}\\
			 F_1(Y) \ar[r]^{\eta_Y} & F_2(Y)
		}
	       \]	
    is commutative for any morphism $\alpha \colon X \rightarrow Y$ in $\mathscr{C}$. It is a \textbf{natural isomorphism} if, in addition, $\eta_X$ is an isomorphism for each object $X$ in $\mathscr{C}$. If such a natural isomorphism exists, then the functors $F_1$ and $F_2$ are \textbf{naturally isomorphic} (in symbols: $F_1 \simeq F_2$).
\end{definition}

Via natural isomorphisms we obtain the notion of categorical equivalence.

\begin{definition}
    Let $\mathscr{C}$ and $\mathscr{D}$ be categories. Two functors $F_1 \colon \mathscr{C} \rightarrow \mathscr{D}$ and $F_2 \colon \mathscr{D} \rightarrow \mathscr{C}$  are \textbf{essentially inverse} to each other if $F_2 \circ F_1 \simeq \mathrm{Id}_{\mathscr{C}}$ and $F_1 \circ F_2 \simeq \mathrm{Id}_{\mathscr{D}}$. If such functors exist, then the categories $\mathscr{C}$ and $\mathscr{D}$ are said to be \textbf{equivalent}.
\end{definition}

\begin{example}\label{exampleequiv}
    Consider the functor $\mathrm{D} \colon \mathbf{LCA} \rightarrow \mathbf{LCA}^{\mathrm{op}}$ from Example \ref{basicdual}. By Pontryagin duality (see, e.g., \cite[Section 4.3]{Foll2016}), assigning to each locally compact abelian group $H$ the evaluation map
        \begin{align*}
            \iota^H \colon H \rightarrow H^{**}, \quad x \mapsto [\varrho \mapsto \varrho(x)] 
        \end{align*}
    defines natural isomorphisms $\mathrm{Id}_{\mathbf{LCA}} \rightarrow \mathrm{D}^{\mathrm{op}} \circ \mathrm{D}$ and $\mathrm{D} \circ \mathrm{D}^{\mathrm{op}} \rightarrow \mathrm{Id}_{\mathbf{LCA}^{\mathrm{op}}}$. Therefore, the functors $\mathrm{D} \colon \mathbf{LCA} \rightarrow \mathbf{LCA}^{\mathrm{op}}$ and $\mathrm{D}^{\mathrm{op}} \colon \mathbf{LCA}^{\mathrm{op}}\rightarrow \mathbf{LCA}$ establish an equivalence between the categories $\mathbf{LCA}$ and $\mathbf{LCA}^{\mathrm{op}}$.\medskip

    Since the dual of a discrete group is compact and vice versa, the functors
        \begin{align*}
            \mathrm{D}|_{\mathbf{DA}} \colon \mathbf{DA} \rightarrow \mathbf{CA}^{\mathrm{op}} \quad \textrm{ and } \quad (\mathrm{D}|_{\mathbf{CA}})^{\mathrm{op}} \colon \mathbf{CA}^{\mathrm{op}}\rightarrow \mathbf{DA}
        \end{align*}
    define an equivalence of the category $\mathbf{DA}$ of discrete abelian groups and the opposite category of compact abelian groups $\mathbf{CA}^{\mathrm{op}}$. 
\end{example}

\subsection{Topological Bundles}\label{subsectopbun} Based on \cite[Subsection 1.1]{EJK2023} (see also \cite{Jame1989}) we now briefly recall the most fundamental concepts for this work:

\begin{definition}\label{deftopbundle}
A \textbf{topological bundle} over a topological space $M$ is a continuous surjection $q \colon \Omega \rightarrow M$ from a topological space $\Omega$ to $M$. We call $\Omega$ the  \textbf{total space} and $M$ the \textbf{base space} of the bundle $q$. Moreover, the subspace $\Omega_m \coloneqq q^{-1}(\{m\}) \subseteq \Omega$ is the \textbf{fiber} over $m \in M$.\medskip

A \textbf{morphism} $\Phi \colon q \rightarrow q'$ of topological bundles $q\colon \Omega \rightarrow M$ and $q' \colon \Omega' \rightarrow M$ over the same topological space $M$ is a continuous map $\Phi \colon \Omega \rightarrow \Omega'$ such that the diagram
\[
		\xymatrix{
			\Omega \ar[rr]^{\Phi}  \ar[dr]_q &  &\Omega' \ar[dl]^{q'}\\
			 & M & 
		}
	\]	
is commutative. We write $\Phi_m \coloneqq \Phi|_{\Omega_m} \colon \Omega_m \rightarrow \Omega_m'$ for the induced \textbf{fiber map} over $m \in M$.\medskip

    A topological bundle $q\colon \Omega \rightarrow M$ is called 
        \begin{enumerate}[(i)]
            \item \textbf{Hausdorff} if $\Omega$ and $M$ are Hausdorff spaces.
            \item \textbf{compact} if $\Omega$ and $M$ are compact spaces.
            \item \textbf{locally compact} if $\Omega$ and $M$ are locally compact spaces.
            \item \textbf{open} if $q$ is an open map.
            \item \textbf{proper} if $q$ is a proper map\footnote{This means that $q^{-1}(C)$ is quasi-compact for each quasi-compact subset $C \subseteq M$.}.
            \item \textbf{\'{e}tale} if $q$ is a local homeomorphism.\footnote{Recall that this means that each point $x \in \Omega$ has an open neighborhood $V \subseteq \Omega$ such that $q(V)$ is an open subset of $M$ and the restriction $q|_V \colon V \rightarrow q(V)$ is a homeomorphism.}
        \end{enumerate}
\end{definition}

Usually, the spaces $\Omega$ and $M$ will be locally compact. We also mostly work with open bundles, which can be characterized via the following approximation property (see \cite[Theorems 17.7 and 17.19]{AliBor}).

\begin{lemma}\label{charopen}
    For a topological bundle $q \colon \Omega \rightarrow M$ the following assertions are equivalent.
        \begin{enumerate}[(a)]
            \item The bundle $q$ is open.
            \item Whenever $(m_{\alpha})_{\alpha}$ is a net in $M$ converging to an element $m \in M$ and $x \in \Omega_m$, then there are a subnet $(m_{\beta})_{\beta}$ of $(m_{\alpha})_{\alpha}$ and elements $x_\beta \in \Omega_{m_{\beta}}$ for each $\beta$, such that $(x_{\beta})_{\beta}$ converges to $x$.
        \end{enumerate}
\end{lemma}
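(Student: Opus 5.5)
Both implications go through neighborhood bases, using the standard fact that a net has a subnet converging to $x$ if and only if $x$ is a cluster point of it. The idea is to index the subnet by pairs consisting of an index of the original net and a neighborhood of $x$.

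\emph{(a) $\Rightarrow$ (b).} Assume $q$ is open, $m_\alpha \to m$ and $x \in \Omega_m$. Let $\mathcal{U}$ be the set of open neighborhoods of $x$, directed by reverse inclusion. For every $U \in \mathcal{U}$ the set $q(U)$ is an open neighborhood of $m$, so there is an index $\alpha_U$ with $m_\alpha \in q(U)$ for all $\alpha \geq \alpha_U$.

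We build the subnet on the directed set
\begin{align*}
    B \coloneqq \{(\alpha, U) \in A \times \mathcal{U} \mid m_\alpha \in q(U)\},
\end{align*}
with the product order. This set is directed: given $(\alpha_1,U_1)$ and $(\alpha_2,U_2)$ in $B$, put $U \coloneqq U_1 \cap U_2$ and choose $\alpha \geq \alpha_1, \alpha_2, \alpha_U$. The map $(\alpha,U) \mapsto \alpha$ is monotone and cofinal, so $\beta = (\alpha,U) \mapsto m_\alpha$ is a subnet. For each $\beta = (\alpha,U) \in B$ we pick $x_\beta \in U$ with $q(x_\beta) = m_\alpha$. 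This uses the axiom of choice, and it is the only delicate bookkeeping in the argument. Then $x_\beta \to x$: given $U_0 \in \mathcal{U}$, every $\beta = (\alpha,U) \geq (\alpha_{U_0}, U_0)$ satisfies $x_\beta \in U \subseteq U_0$.

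\emph{(b) $\Rightarrow$ (a).} Suppose $q$ is not open. Then some open $U \subseteq \Omega$ has non-open image, so there are $x \in U$ and $m = q(x)$ such that no neighborhood of $m$ lies in $q(U)$. For each neighborhood $N$ of $m$ choose $m_N \in N \setminus q(U)$. Directed by reverse inclusion, $(m_N)_N$ converges to $m$. By (b) there are a subnet $(m_\beta)_\beta$ and points $x_\beta \in \Omega_{m_\beta}$ with $x_\beta \to x$. Hence eventually $x_\beta \in U$, and so $m_\beta = q(x_\beta) \in q(U)$, which contradicts the choice of the $m_N$.

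The main obstacle is the construction of the directed set $B$ in the first implication. The proof has to check carefully that $B$ is directed and that its projection to $A$ is cofinal, so that it really defines a subnet (in the Willard sense).
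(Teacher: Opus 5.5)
Your proof is correct. The set $B$ of pairs $(\alpha,U)$ with $m_\alpha\in q(U)$ is directed, and its projection to $A$ is cofinal, since $(\alpha,\Omega)\in B$ for every $\alpha$ by surjectivity of $q$. The contrapositive argument for (b) $\Rightarrow$ (a) is also sound, and the opening remark about cluster points is never actually used.

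The paper gives no argument of its own and only cites Aliprantis--Border: it reads openness of $q$ as lower hemicontinuity of the fiber correspondence $m\mapsto\Omega_m$ (the lower inverse of an open $U$ being $q(U)$), and then applies the net characterization of lower hemicontinuity. Your argument is exactly the standard proof of that characterization, written out directly for this correspondence.
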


The subsequent characterization of proper bundles over a locally compact base space via net convergence is easy to check.

\begin{lemma}\label{charproper}
    For a topological bundle $q \colon \Omega \rightarrow M$ over a locally compact space $M$ the following assertions are equivalent.
        \begin{enumerate}[(a)]
            \item The bundle $q$ is proper.
            \item Whenever $(x_{\alpha})_{\alpha}$ is a net in $\Omega$ such that the net $(q(x_{\alpha}))_{\alpha}$ converges in $M$, the net $(x_{\alpha})_{\alpha}$ has a subnet converging in $\Omega$.
        \end{enumerate}
\end{lemma}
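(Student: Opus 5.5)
We prove the two implications separately; the argument only uses that $M$ is locally compact (hence Hausdorff, with a base of compact neighborhoods) and the standard characterization of quasi-compactness by subnets: a topological space $C$ is quasi-compact if and only if every net in $C$ has a subnet converging in $C$.

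For (a) $\Rightarrow$ (b), let $(x_\alpha)_\alpha$ be a net in $\Omega$ with $q(x_\alpha) \to m \in M$. Since $M$ is locally compact, choose a compact neighborhood $C$ of $m$. Then $q(x_\alpha) \in C$ for all $\alpha \geq \alpha_0$ for some index $\alpha_0$, so the tail $(x_\alpha)_{\alpha \geq \alpha_0}$ is a net in $q^{-1}(C)$. By properness, $q^{-1}(C)$ is quasi-compact. Hence this tail, and thus the original net, has a subnet converging in $q^{-1}(C) \subseteq \Omega$.

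For (b) $\Rightarrow$ (a), let $C \subseteq M$ be quasi-compact and let $(x_\alpha)_\alpha$ be a net in $q^{-1}(C)$. Since $C$ is quasi-compact, the net $(q(x_\alpha))_\alpha$ has a subnet $(q(x_{\beta}))_\beta$ converging to some $m \in C$. Applying (b) to $(x_\beta)_\beta$ gives a further subnet $(x_\gamma)_\gamma$ converging to some $x \in \Omega$. By continuity of $q$, $q(x_\gamma) \to q(x)$. Also $q(x_\gamma) \to m$ as a subnet, and $M$ is Hausdorff, so $q(x) = m \in C$, i.e.\ $x \in q^{-1}(C)$. Thus every net in $q^{-1}(C)$ has a subnet converging in $q^{-1}(C)$, so $q^{-1}(C)$ is quasi-compact.

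The only delicate point is the last step, where uniqueness of limits in $M$ is needed to place the limit point inside $q^{-1}(C)$. Local compactness of $M$ guarantees this, since locally compact spaces are assumed Hausdorff. Note that no Hausdorff assumption on $\Omega$ is required.
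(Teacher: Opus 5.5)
Your proof is correct. The paper gives no argument for this lemma (it only says it is easy to check), and your verification via the subnet characterization of quasi-compactness is the standard one intended: a compact neighborhood of the limit gives (a)$\Rightarrow$(b), and Hausdorffness of $M$ places the limit point in $q^{-1}(C)$ for (b)$\Rightarrow$(a).
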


The following can be seen as the most basic example of a topological bundle.
\begin{example}\label{exampletriv}
    Let $M$ and $\Omega$ be topological spaces. Then the \textbf{trivial bundle} over $M$ with fiber $\Omega$ is the topological bundle
            \begin{align*}
             \mathrm{triv}_M^{\Omega} \colon M \times \Omega \rightarrow M, \quad (m, x) \mapsto m,
            \end{align*}
    where $M \times \Omega$ is equipped with the product topology. 
\end{example}

We now introduce some basic constructions for topological bundles.
\begin{definition}\label{constructiontopbund}
    Let $M$ be a topological space.
    \begin{enumerate}[(i)]
        \item For a topological bundle $q \colon \Omega \rightarrow M$ call $q_L \coloneqq q|_{q^{-1}(L)} \colon q^{-1}(L) \rightarrow L$ the \textbf{restriction} of $p$ to a subspace $L \subseteq M$.
        %, and abbreviate $\Omega_L \coloneqq q^{-1}(L)$
        \item If $q \colon \Omega \rightarrow M$ is a topological bundle  and $\Omega' \subseteq \Omega$ is a subset with $q(\Omega') = M$, then $q|_{\Omega'} \colon \Omega' \rightarrow M$ is called a \textbf{subbundle} of $q$.
        \item If $q \colon \Omega \rightarrow M$ and $q' \colon \Omega' \rightarrow M$ are topological bundles over $M$, then 
            \begin{align*}
                \Omega \times_{q,q'} \Omega' \coloneqq \{(x,x') \in \Omega \times \Omega' \mid q(x) = q'(x')\} \subseteq \Omega \times \Omega'
            \end{align*}
        defines a topological bundle over $M$ called the \textbf{fiber product} of $q$ and $q'$. We sometimes write $\Omega \times_M \Omega'$ instead of $\Omega \times_{q,q'} \Omega'$ if the maps $q$ and $q'$ are clear from the context. Notice that all the properties of bundles (i) -- (vi) listed in Definition \ref{deftopbundle} are compatible with fiber products, i.e., if $q$ and $q'$ have one of these properties, then so does their fiber product.
    \end{enumerate}
\end{definition}

The following basic \enquote{closed mapping theorem} has some important consequences. 
\begin{proposition}\label{properclosed}
    Let $q\colon \Omega \rightarrow M$ be a proper and $q' \colon \Omega' \rightarrow M$ a Hausdorff topological bundle over a locally compact space $M$. If $\Phi \colon q \rightarrow q'$ is a morphism of topological bundles, then $\Phi$ is a closed map.
\end{proposition}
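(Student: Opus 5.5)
We argue with nets, combining Lemma \ref{charproper} with the Hausdorff property of $\Omega'$. Let $A \subseteq \Omega$ be closed. To see that $\Phi(A)$ is closed in $\Omega'$, take a net $(x_\alpha)_\alpha$ in $A$ with $\Phi(x_\alpha) \to y$ for some $y \in \Omega'$. It suffices to produce a point $x \in A$ with $\Phi(x) = y$.

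First we transport the convergence down to the base. Since $\Phi$ is a bundle morphism, $q(x_\alpha) = q'(\Phi(x_\alpha))$. Continuity of $q'$ then gives $q(x_\alpha) \to q'(y)$ in $M$.

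Next we use properness. As $M$ is locally compact, Lemma \ref{charproper} provides a subnet $(x_\beta)_\beta$ converging to some $x \in \Omega$, and $x \in A$ because $A$ is closed. Continuity of $\Phi$ yields $\Phi(x_\beta) \to \Phi(x)$. The subnet $(\Phi(x_\beta))_\beta$ also still converges to $y$. Since $\Omega'$ is Hausdorff, limits are unique, so $\Phi(x) = y$. Hence $y \in \Phi(A)$ and $\Phi(A)$ is closed.

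The argument has no real obstacle. The only points needing care are that the direction (b) of Lemma \ref{charproper} we use requires local compactness of $M$, which is assumed, and that uniqueness of limits requires $\Omega'$ to be Hausdorff, which is exactly where that hypothesis enters. The Hausdorffness of $M$ is not needed separately beyond its role in Lemma \ref{charproper}.
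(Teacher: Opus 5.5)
Your proposal is correct and follows essentially the same route as the paper's proof. Both push the convergence down to $M$ via $q = q' \circ \Phi$, use Lemma \ref{charproper} to extract a subnet converging to a point of the closed set, and conclude $y = \Phi(x)$ by continuity of $\Phi$ together with uniqueness of limits in the Hausdorff space $\Omega'$.
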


\begin{proof}
    Let $D \subseteq \Omega$ be a closed subset and $(x_\alpha)_{\alpha}$ a net in $D$ such that $(\Phi(x_{\alpha}))_{\alpha}$ converges to some $y \in \Omega'$. Then the net $(q(x_{\alpha}))_{\alpha} = (q'(\Phi(x_{\alpha})))_{\alpha}$ converges to $q'(y)$. Since $q$ is proper, we can apply Lemma \ref{charproper} and pass to a subnet to assume that $(x_\alpha)_{\alpha}$ converges to some $x \in D$. By continuity of $\Phi$, the net $(\Phi(x_{\alpha}))_{\alpha}$ converges to $\Phi(x)$. Since $\Omega'$ is a Hausdorff space, we obtain that $y = \Phi(x) \in \Phi(D)$.
\end{proof}

The following is now obvious.

\begin{corollary}\label{automaticcontprop}
     Let $q\colon \Omega \rightarrow M$ be a proper and $q' \colon \Omega' \rightarrow M$ a Hausdorff topological bundle over a locally compact space $M$. If $\Phi \colon q \rightarrow q'$ is a bijective morphism of topological bundles, then it is a homeomorphism and hence an isomorphism of topological bundles.
\end{corollary}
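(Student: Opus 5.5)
The plan is to combine Proposition \ref{properclosed} with the elementary fact that a continuous bijection which is also a closed map is a homeomorphism. Let $\Phi \colon q \rightarrow q'$ be a bijective morphism of topological bundles, where $q\colon \Omega \rightarrow M$ is proper, $q'\colon \Omega' \rightarrow M$ is Hausdorff and $M$ is locally compact. Because $\Phi$ is bijective, it has a set-theoretic inverse $\Psi \coloneqq \Phi^{-1} \colon \Omega' \rightarrow \Omega$.

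First I show that $\Psi$ is continuous. Take a closed subset $D \subseteq \Omega$. Its preimage under $\Psi$ is $\Psi^{-1}(D) = \Phi(D)$, and Proposition \ref{properclosed} says exactly that $\Phi(D)$ is closed in $\Omega'$. So preimages of closed sets under $\Psi$ are closed, and $\Psi$ is continuous. Together with the continuity of $\Phi$, this makes $\Phi$ a homeomorphism.

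Next I check that $\Psi$ is a morphism of bundles, namely that $q \circ \Psi = q'$. For $y \in \Omega'$ we have $q'(y) = q'(\Phi(\Psi(y))) = q(\Psi(y))$, using the commutativity $q' \circ \Phi = q$. Hence $\Psi \colon q' \rightarrow q$ is a morphism with $\Psi \circ \Phi = \mathrm{id}_\Omega$ and $\Phi \circ \Psi = \mathrm{id}_{\Omega'}$, so $\Phi$ is an isomorphism of topological bundles.

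No step is a real obstacle, since all the work is done by Proposition \ref{properclosed}. The only point needing care is that the hypotheses of that proposition (properness of $q$, the Hausdorff property of $q'$, local compactness of $M$) are precisely the assumptions of the corollary, so it applies directly.
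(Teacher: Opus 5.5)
Your proposal is correct and is exactly the argument the paper has in mind when it calls the corollary ``now obvious'' after Proposition \ref{properclosed}: a continuous closed bijection is a homeomorphism, and the inverse automatically commutes with the bundle projections. Your extra check that $q \circ \Phi^{-1} = q'$ is the right detail for concluding that $\Phi$ is an isomorphism of bundles.
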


We also obtain a \enquote{closed graph theorem} for proper bundles over locally compact spaces.

\begin{proposition}\label{lemmaclosedgraph}
    Let $q\colon \Omega \rightarrow M$ be  topological bundle and $q' \colon \Omega' \rightarrow M$ a proper Hausdorff topological bundle over a locally compact space $M$. If $\Phi \colon \Omega \rightarrow \Omega'$ satisfies $q' \circ \Phi = q$, then the following assertions are equivalent:
        \begin{enumerate}[(a)]
            \item $\Phi$ is continuous, i.e., a morphism of topological bundles.
            \item The graph of $\Phi$ is closed in $\Omega \times_M \Omega'$.
        \end{enumerate}
\end{proposition}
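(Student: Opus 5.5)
The direction (a) $\Rightarrow$ (b) is the standard fact that continuous maps into Hausdorff spaces have closed graphs. Concretely, the graph $\{(x,\Phi(x)) \mid x \in \Omega\}$ is the preimage of the diagonal of $\Omega' \times \Omega'$ under the continuous map $\Omega \times_M \Omega' \to \Omega' \times \Omega'$, $(x,y) \mapsto (\Phi(x),y)$. The diagonal is closed because $\Omega'$ is Hausdorff. (Note that pairs $(x,\Phi(x))$ automatically lie in $\Omega \times_M \Omega'$ since $q'\circ \Phi = q$.)

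For (b) $\Rightarrow$ (a) I would argue with nets, using Lemma \ref{charproper} in the same way as in the proof of Proposition \ref{properclosed}. Let $(x_\alpha)_\alpha$ be a net in $\Omega$ converging to $x \in \Omega$. It suffices to show that every subnet of $(\Phi(x_\alpha))_\alpha$ has a further subnet converging to $\Phi(x)$. After passing to a subnet, $q'(\Phi(x_\alpha)) = q(x_\alpha)$ converges to $q(x)$ by continuity of $q$. Since $q'$ is proper and $M$ is locally compact, Lemma \ref{charproper} provides a further subnet $(x_\beta)_\beta$ with $\Phi(x_\beta) \to y$ for some $y \in \Omega'$. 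Then $(x_\beta,\Phi(x_\beta)) \to (x,y)$ in $\Omega \times \Omega'$. By continuity of $q'$ and uniqueness of limits in the Hausdorff space $M$, we get $q'(y) = q(x)$, so $(x,y)$ lies in the closed subspace topology of $\Omega \times_M \Omega'$ and the convergence takes place there. Closedness of the graph then gives $y = \Phi(x)$.

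The only delicate point is this last bookkeeping step: the limit $(x,y)$ must lie in the fiber product, since the graph is only assumed closed in $\Omega\times_M\Omega'$ and not in $\Omega\times\Omega'$. This is exactly where the Hausdorffness of $M$ is used, and it is available because locally compact spaces are Hausdorff by convention. Once that is settled, the subnet criterion for convergence yields $\Phi(x_\alpha) \to \Phi(x)$, so $\Phi$ is continuous and hence a morphism of topological bundles.
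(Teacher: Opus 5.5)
Your proof is correct and essentially matches the paper's. The paper reduces to the restrictions $\Phi|_{q^{-1}(L)} \colon q^{-1}(L) \to (q')^{-1}(L)$ for compact $L \subseteq M$ and cites the classical closed graph theorem for maps into the compact Hausdorff space $(q')^{-1}(L)$; you instead unfold that theorem directly with nets via Lemma \ref{charproper}, which avoids the localization step, and you make explicit the detail the paper leaves implicit, namely that the limit $(x,y)$ lies in $\Omega \times_M \Omega'$ because $M$ is Hausdorff.
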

\begin{proof}
    The implication \enquote{(a) $\Rightarrow$ (b)} is trivial. Conversely, assume that the graph of $\Phi$ is closed in $\Omega \times_M \Omega'$. Since continuity can be checked locally and $M$ is locally compact, it suffices to prove that the restrictions $\Phi|_{q^{-1}(L)} \colon q^{-1}(L) \rightarrow (q')^{-1}(L)$ are continuous for each compact subset $L \subseteq M$. But this follows from the usual closed graph theorem for compact spaces (see, e.g., \cite[Exercise 5.1.22]{Sing2019}) since $(q')^{-1}(L)$ is a compact (Hausdorff) space for every compact subset $L \subseteq M$.
\end{proof}

For \'{e}tale bundles we have the following counterpart to Proposition \ref{properclosed}.
\begin{proposition}\label{lemmaopenmap}
    Let $q\colon \Omega \rightarrow M$ be an open and $q' \colon \Omega' \rightarrow M$ an \'{e}tale topological bundle over a topological space $M$. If $\Phi \colon q \rightarrow q'$ is a morphism of topological bundles, then $\Phi$ is an open map.
\end{proposition}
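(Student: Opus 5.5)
Given an open set $U \subseteq \Omega$ and a point $x \in U$, we will produce an open neighborhood of $\Phi(x)$ contained in $\Phi(U)$. Put $y \coloneqq \Phi(x) \in \Omega'$. Since $q'$ is \'{e}tale, there is an open neighborhood $V \subseteq \Omega'$ of $y$ such that $q'(V)$ is open in $M$ and $q'|_V \colon V \rightarrow q'(V)$ is a homeomorphism. By continuity of $\Phi$, the set $W \coloneqq U \cap \Phi^{-1}(V)$ is an open neighborhood of $x$ in $\Omega$. Since $q$ is open, $q(W)$ is an open subset of $M$. It is contained in $q'(V)$, because $q(w) = q'(\Phi(w))$ with $\Phi(w) \in V$ for each $w \in W$.

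The candidate neighborhood is $N \coloneqq (q'|_V)^{-1}(q(W))$. It is open in $V$, since $q'|_V$ is a homeomorphism onto $q'(V)$ and $q(W)$ is open in $q'(V)$. As $V$ is open in $\Omega'$, the set $N$ is open in $\Omega'$. It contains $y$, since $q'(y) = q(x) \in q(W)$.

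The key step is to check that $N \subseteq \Phi(W) \subseteq \Phi(U)$, and this is where the injectivity of $q'$ on $V$ is used. Let $z \in N$. Then $q'(z) \in q(W)$, so there is $w \in W$ with $q(w) = q'(z)$. Now $\Phi(w) \in V$ and $q'(\Phi(w)) = q(w) = q'(z)$, with both $z$ and $\Phi(w)$ lying in $V$. Injectivity of $q'|_V$ gives $z = \Phi(w) \in \Phi(U)$. Hence $\Phi(U)$ is a neighborhood of each of its points and is therefore open.

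The argument is elementary, and there is no serious obstacle. The one point needing care is to intersect $U$ with $\Phi^{-1}(V)$ before applying openness of $q$. Without this intersection, a preimage $w$ could map outside the sheet $V$, and the local injectivity of $q'$ could not be invoked. No Hausdorff or local compactness assumptions are needed.
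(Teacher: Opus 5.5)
Your proof is correct and is essentially the paper's argument. The paper also works on an \'{e}tale sheet $O$ around $\Phi(x)$ and notes that $\Phi|_{\Phi^{-1}(O)} = (q'|_O)^{-1} \circ q|_{\Phi^{-1}(O)}$ is open, being an open map followed by a homeomorphism; your set $N$ is exactly $\Phi(W)$ computed via this factorization.
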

\begin{proof}
    It suffices to show that $\Phi$ is open locally. Take $x \in \Omega$. Since $q'$ is \'{e}tale, we find an open neighborhood $O$ of  $\Phi(x)$ such that $q'(O)$ is open in $M$ and $q'|_O \colon O \rightarrow q'(O)$ is a homeomorphism. Since $\Phi$ is continuous, the preimage $\Phi^{-1}(O)$ is an open neighborhood of $x$ in $\Omega$. Clearly, the diagram
        \[
		\xymatrix{
			\Phi^{-1}(O) \ar[rr]^{\Phi|_{\Phi^{-1}(O)}}  \ar[dr]_{q|_{\Phi^{-1}(O)}} &  & O \ar[dl]^{q'|_{O}}\\
			 & q'(O) & 
		}
	\]	
    is commutative. Since  $q'|_O$ is a homeomorphism and $q|_{\Phi^{-1}(O)}$ is open,  this implies that the restriction $\Phi|_{\Phi^{-1}(O)} \colon \Phi^{-1}(O) \rightarrow O$ is open as well.
\end{proof}

\begin{corollary}\label{lemmaiso}
     Let $q\colon \Omega \rightarrow M$ be an open and $q' \colon \Omega' \rightarrow M$ an \'{e}tale topological bundle over a topological space $M$. If $\Phi \colon q \rightarrow q'$ is a bijective morphism of topological bundles, then it is a homeomorphism and hence an isomorphism of topological bundles.
\end{corollary}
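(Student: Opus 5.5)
The plan is to deduce the statement directly from Proposition \ref{lemmaopenmap}, in the same way Corollary \ref{automaticcontprop} follows from Proposition \ref{properclosed}. Since $\Phi \colon q \rightarrow q'$ is a morphism of topological bundles, it is continuous by definition and satisfies $q' \circ \Phi = q$. Since it is bijective, it has a set-theoretic inverse $\Phi^{-1} \colon \Omega' \rightarrow \Omega$. What remains is to show that $\Phi^{-1}$ is continuous and that it is again a morphism of bundles.

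For continuity, note that the hypotheses of Proposition \ref{lemmaopenmap} are exactly our hypotheses: $q$ is open and $q'$ is \'{e}tale. So $\Phi$ is an open map. A continuous, open bijection is a homeomorphism, because for every open $U \subseteq \Omega$ the preimage $(\Phi^{-1})^{-1}(U) = \Phi(U)$ is open in $\Omega'$.

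For the bundle-morphism property, apply $q$ to $\Phi^{-1}$ and use $q = q' \circ \Phi$. This gives $q \circ \Phi^{-1} = q' \circ \Phi \circ \Phi^{-1} = q'$, so $\Phi^{-1}$ is a morphism $q' \rightarrow q$. Hence $\Phi^{-1}$ is inverse to $\Phi$ in the category of topological bundles over $M$, and $\Phi$ is an isomorphism.

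There is no real obstacle here: the only substantive input, openness of $\Phi$, is already supplied by Proposition \ref{lemmaopenmap}.
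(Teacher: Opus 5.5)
Your proposal is correct and is exactly the intended argument: the paper states this corollary without a separate proof, as an immediate consequence of Proposition \ref{lemmaopenmap}. That proposition makes $\Phi$ open, so the continuous open bijection $\Phi$ is a homeomorphism, and $q \circ \Phi^{-1} = q'$ shows that $\Phi^{-1}$ is a bundle morphism.
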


At various points throughout the article we use the concept of (local) continuous sections of a topological bundle. Here is the definition. 
\begin{definition}
    A \textbf{(continuous) section} of a topological bundle $q \colon \Omega \rightarrow M$ is a continuous map $\tau \colon M \rightarrow \Omega$ such that $q \circ \tau = \mathrm{id}_M$. For an open subset $U \subseteq M$, a section $\tau \colon U \rightarrow q^{-1}(U)$ of the restriction $q_U \colon q^{-1}(U) \rightarrow U$ is called a \textbf{local section} of $q$ (over $U$). Write $\Gamma_q(U)$ for the set of all local sections of $q$ over $U$.
\end{definition}

For \'{e}tale bundles there always exist \enquote{many} local sections as demonstrated by the following observation (see \cite[Proposition II.6.1]{MaMo1992}).

\begin{lemma}\label{etaleneighborhoodbase}
    Let $q \colon \Omega \rightarrow M$ be an \'{e}tale bundle. For every $x \in \Omega$ the collection 
        \begin{align*}
            \{\tau(U)\mid U \textrm{ is open neighborhood of } q(x), \tau \in \Gamma_q(U) \textrm{ with } \tau(q(x)) = x\}
        \end{align*}
    is an open neighborhood basis of $x$. In particular, every element of $\Omega$ is contained in the image of some local section.
\end{lemma}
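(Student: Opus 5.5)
We must show that the sets $\tau(U)$ are open, contain $x$, and form a neighborhood basis at $x$; the last claim then follows immediately.

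\emph{Openness.} Let $U \subseteq M$ be open and $\tau \in \Gamma_q(U)$. For $y \in \tau(U)$ choose, since $q$ is \'{e}tale, an open neighborhood $V$ of $y$ such that $q(V)$ is open and $q|_V \colon V \rightarrow q(V)$ is a homeomorphism. Set $W \coloneqq \tau^{-1}(V) \subseteq U$. It is open because $\tau$ is continuous, and it contains $q(y)$. For $m \in W$ both $\tau(m)$ and $(q|_V)^{-1}(m)$ lie in $V \cap \Omega_m$. Since $q|_V$ is injective, they coincide, so $\tau|_W = (q|_V)^{-1}|_W$. Hence $\tau(W) = (q|_V)^{-1}(W)$, which is open in $V$ and therefore in $\Omega$. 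We have $y \in \tau(W) \subseteq \tau(U)$, so $\tau(U)$ is open. Each such set contains $x$ because $\tau(q(x)) = x$.

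\emph{Basis property and nonemptiness.} Let $O$ be an open neighborhood of $x$. Choose $V$ as above for $x$ and put $V' \coloneqq V \cap O$. Then $V'$ is open in $V$, so $U \coloneqq q(V')$ is open in $q(V)$, hence open in $M$, and it contains $q(x)$. Define $\tau \coloneqq (q|_V)^{-1}|_U \colon U \rightarrow q^{-1}(U)$. This map is continuous, satisfies $q \circ \tau = \mathrm{id}_U$ and $\tau(q(x)) = x$, and $\tau(U) = V' \subseteq O$. The same construction with $O = \Omega$ shows that the collection is nonempty, so every point lies in the image of some local section.

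The only point needing care is the openness step, namely that an arbitrary local section through $x$ agrees locally with the inverse of a local homeomorphism. This is handled by the injectivity of $q|_V$ together with the continuity of $\tau$.
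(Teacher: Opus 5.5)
Your proof is correct and complete. The openness step shows that any local section agrees, near each of its points, with the inverse of a local homeomorphism $q|_V$ (by injectivity of $q|_V$), and the basis step restricts $(q|_V)^{-1}$ to $q(V\cap O)$. The paper gives no proof of its own and simply cites \cite[Proposition II.6.1]{MaMo1992}; your argument is the standard local argument behind that result.
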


\subsection{Spaces of Continuous Fiber Maps}
\enquote{Relativizing} the space of continuous maps between topological spaces to the framework of topological bundles, we introduce the following definition (see \cite[Section 2]{EdKr2021} and \cite[Subsection 2.2]{EJK2023}).  

\begin{definition}\label{defcontfiber}
For topological bundles $q \colon \Omega \rightarrow M$ and $q' \colon \Omega' \rightarrow M'$ define the \textbf{space of continuous fiber maps}
        \begin{align*}
            \mathrm{C}_q^{q'}(\Omega,\Omega') \coloneqq \bigsqcup_{\substack{m \in M\\m' \in M'}} \mathrm{C}(\Omega_m,\Omega'_{m'}).
        \end{align*}
    If $\vartheta \in \mathrm{C}(\Omega_m,\Omega'_{m'}) \subseteq \mathrm{C}_q^{q'}(\Omega,\Omega')$ for some $m \in M$ and $m' \in M'$, we set $\mathrm{s}(\vartheta) \coloneqq m$ and $\mathrm{r}(\vartheta) \coloneqq m'$. The mappings
        \begin{align*}
            \mathrm{s} &\colon \mathrm{C}_q^{q'}(\Omega,\Omega') \rightarrow M, \quad \vartheta \mapsto \mathrm{s}(\vartheta), \textrm{ and}\\
            \mathrm{r} &\colon \mathrm{C}_q^{q'}(\Omega,\Omega') \rightarrow M', \quad \vartheta \mapsto \mathrm{r}(\vartheta)
        \end{align*}
    are called the \textbf{source map} and \textbf{range map}, respectively.\medskip\\
    In the special case of $\Omega' = \C$ and $M' = \{\mathrm{pt}\}$ being a singleton set,  we use the abbreviation $\mathrm{C}_q(\Omega) \coloneqq  \mathrm{C}_q^{q'}(\Omega,\Omega')$.
\end{definition}
We equip the space of continuous fiber maps with a suitable topology (cf. \cite[Definition 2.3]{EdKr2021} and \cite[Definition 2.3]{EJK2023}).

\begin{definition}\label{defcompactopen}
    For topological bundles $q \colon \Omega \rightarrow M$ and $q' \colon \Omega' \rightarrow M'$ the topology on   $\mathrm{C}_q^{q'}(\Omega,\Omega')$
    generated by the sets 
        \begin{align*}
            \mathrm{W}(U,C,O) \coloneqq \{\vartheta \in \mathrm{s}^{-1}(U) \mid \vartheta(\Omega_{\mathrm{s}(\vartheta)} \cap C) \subseteq \Omega_{\mathrm{r}(\vartheta)} \cap O\}
        \end{align*}
    for $U \subseteq M$ open, $C \subseteq \Omega$ quasi-compact, and $O \subseteq \Omega'$ open, is called the \textbf{compact-open topology}.
\end{definition}

From now on spaces of continuous fiber maps are always equipped with the compact-open topology. The following characterization of convergence with respect to this topology (cf. \cite[Proposition 2.4]{EdKr2021} and \cite[Proposition 2.4]{EJK2023}) will be used on many occasions.

\begin{proposition}\label{netconv}
    Let $q \colon \Omega \rightarrow M$ and $q' \colon \Omega' \rightarrow M'$ be topological bundles, and assume that $q$ is locally compact. For a net $(\vartheta_{\alpha})_{\alpha}$ in $\mathrm{C}_q^{q'}(\Omega,\Omega')$ and $\vartheta \in \mathrm{C}_q^{q'}(\Omega,\Omega')$ the following assertions are equivalent.
        \begin{enumerate}[(a)]
            \item $\vartheta_{\alpha} \rightarrow \vartheta$ in $\mathrm{C}_q^{q'}(\Omega,\Omega')$.
            \item The subsequent two conditions are satisfied:
                \begin{enumerate}[(i)]
                    \item $\lim_{\alpha} \mathrm{s}(\vartheta_{\alpha}) = \mathrm{s}(\vartheta)$ in $M$.
                    \item Whenever $(\vartheta_{\beta})_{\beta}$ is a subnet of $(\vartheta_{\alpha})_{\alpha}$ and $(x_{\beta})_{\beta}$ is a net in $\Omega$ with $x_\beta \in \Omega_{\mathrm{s}(\vartheta_{\beta})}$ for each $\beta$, we have $\vartheta_{\beta}(x_{\beta}) \rightarrow \vartheta(x)$.
                \end{enumerate}
        \end{enumerate}
\end{proposition}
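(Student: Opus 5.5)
The plan is to verify both implications directly from the definition of the subbasic sets $\mathrm{W}(U,C,O)$, using local compactness of $\Omega$ in one direction and quasi-compactness of $C$ in the other. I read condition (ii) as: whenever $x_\beta \to x$ with $x \in \Omega_{\mathrm{s}(\vartheta)}$, then $\vartheta_\beta(x_\beta) \to \vartheta(x)$.

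For \enquote{(a) $\Rightarrow$ (b)}, condition (i) is immediate. Taking $C = \emptyset$ gives $\mathrm{W}(U,\emptyset,O) = \mathrm{s}^{-1}(U)$, so $\mathrm{s}$ is continuous.

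For (ii), fix a subnet $(\vartheta_\beta)_\beta$, points $x_\beta \in \Omega_{\mathrm{s}(\vartheta_\beta)}$ with $x_\beta \to x \in \Omega_m$, where $m = \mathrm{s}(\vartheta)$, and an open neighborhood $O$ of $\vartheta(x)$ in $\Omega'$. We proceed as follows.
\begin{enumerate}[(1)]
    \item Since $\vartheta \colon \Omega_m \to \Omega'_{\mathrm{r}(\vartheta)}$ is continuous for the subspace topology, there is an open set $V \subseteq \Omega$ containing $x$ with $\vartheta(V \cap \Omega_m) \subseteq O$.
    \item Because $\Omega$ is locally compact, we choose a compact neighborhood $C \subseteq V$ of $x$.
    \item Then $\vartheta \in \mathrm{W}(M,C,O)$, which is open, so $\vartheta_\beta \in \mathrm{W}(M,C,O)$ eventually.
    \item Since $C$ is a neighborhood of $x$, also $x_\beta \in C \cap \Omega_{\mathrm{s}(\vartheta_\beta)}$ eventually.
    \item Hence $\vartheta_\beta(x_\beta) \in O$ eventually, which gives $\vartheta_\beta(x_\beta) \to \vartheta(x)$.
\end{enumerate}

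For \enquote{(b) $\Rightarrow$ (a)}, it suffices to show that $\vartheta_\alpha$ eventually lies in each subbasic set $\mathrm{W}(U,C,O)$ containing $\vartheta$. By (i), $\mathrm{s}(\vartheta_\alpha) \in U$ eventually. Suppose the conclusion fails. Then there are a subnet $(\vartheta_\beta)_\beta$ and points $x_\beta \in C \cap \Omega_{\mathrm{s}(\vartheta_\beta)}$ with $\vartheta_\beta(x_\beta) \notin O$. The argument then runs:
\begin{enumerate}[(1)]
    \item Since $C$ is quasi-compact, passing to a further subnet we may assume $x_\beta \to x$ for some $x \in C$.
    \item By continuity of $q$ we get $\mathrm{s}(\vartheta_\beta) = q(x_\beta) \to q(x)$, while (i) gives $\mathrm{s}(\vartheta_\beta) \to \mathrm{s}(\vartheta)$.
    \item Since $M$ is Hausdorff, $q(x) = \mathrm{s}(\vartheta)$, i.e. $x \in C \cap \Omega_{\mathrm{s}(\vartheta)}$.
    \item As $\vartheta \in \mathrm{W}(U,C,O)$, this yields $\vartheta(x) \in O$.
    \item Condition (ii) gives $\vartheta_\beta(x_\beta) \to \vartheta(x)$, so $\vartheta_\beta(x_\beta) \in O$ eventually, a contradiction.
\end{enumerate}

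The main delicate step is (1) of \enquote{(a) $\Rightarrow$ (b)}. There we must produce a quasi-compact $C$ that is a genuine neighborhood of $x$ in the total space $\Omega$, not merely in the fiber, while $\vartheta$ still maps $C \cap \Omega_m$ into $O$. This works because fiber continuity gives a relatively open set of the form $V \cap \Omega_m$, and local compactness of $\Omega$ lets us shrink inside $V$; it is exactly where the hypothesis that $q$ is locally compact is needed. The reverse direction only uses quasi-compactness of $C$ and the Hausdorffness of $M$ to identify the fiber of the limit point.
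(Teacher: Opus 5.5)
Your proof is correct and follows the paper's argument essentially step for step. In (a) $\Rightarrow$ (b) you choose a compact neighborhood $C$ of $x$ with $\vartheta(\Omega_{\mathrm{s}(\vartheta)} \cap C) \subseteq O$ and use the subbasic set $\mathrm{W}(M,C,O)$; in (b) $\Rightarrow$ (a) you argue by contradiction with a subnet escaping a subbasic set $\mathrm{W}(U,C,O)$ and use quasi-compactness of $C$. You also make explicit a step the paper leaves tacit: Hausdorffness of $M$ forces the limit $x$ to lie in $\Omega_{\mathrm{s}(\vartheta)}$.
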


\begin{proof}
    Assume that (a) holds. It is clear that the source map $\mathrm{s}$ is continuous with respect to the compact-open topology, hence $\lim_{\alpha} \mathrm{s}(\vartheta_{\alpha}) = \mathrm{s}(\vartheta)$ in $M$. Now, let $(\vartheta_{\beta})_{\beta}$ be a subnet of $(\vartheta_{\alpha})_{\alpha}$ and $(x_{\beta})_{\beta}$ a net in $\Omega$ converging to $x \in \Omega$  as in (ii). Let further $O$ be an open neighborhood of $\vartheta(x)$ in $\Omega'$. Since $\Omega$ is locally compact and $\vartheta$ is continuous, we find a compact neighborhood $C$ of $x$ such that $\vartheta(\Omega_{\mathrm{s}(\vartheta)} \cap C) \subseteq \Omega_{\mathrm{r}(\vartheta)} \cap O$. But then there is an index $\beta_0$ such that $\vartheta_{\beta} \in \mathrm{W}(M,C,O)$ for all $\beta \geq \beta_0$. We may further assume that $x_{\beta} \in C$ for all $\beta \geq \beta_0$. But then $\vartheta_{\beta}(x_{\beta}) \in O$ for all $\beta \geq \beta_0$. This shows $\vartheta_{\beta}(x_{\beta}) \rightarrow \vartheta(x)$ as desired. \medskip

    Suppose now that (b) holds, but (a) does not. We then find $U \subseteq M$ open, $C \subseteq \Omega$ compact, and $O \subseteq \Omega'$ open, as well as a subnet $(\vartheta_{\beta})_{\beta}$ of $(\vartheta_{\alpha})_{\alpha}$ such that $\vartheta \in \mathrm{W}(U,C,O)$, but $\vartheta_{\beta} \notin  \mathrm{W}(U,C,O)$ for every $\beta$. Since $\lim_{\alpha} \mathrm{s}(\vartheta_{\alpha}) = \mathrm{s}(\vartheta)$ by (i), we may assume that $\mathrm{s}(\vartheta_{\beta}) \in U$ for all $\beta$. But then we find for each $\beta$ some $x_{\beta} \in  \Omega_{\mathrm{s}(\vartheta_{\beta})} \cap C$ with $\vartheta_{\beta}(x_{\beta}) \notin O$. Since $C$ is compact, we may further assume  -- by again passing to a subnet -- that $x \coloneqq \lim_{\beta} x_{\beta}$ exists in $C$. But then $\vartheta_{\beta}(x_{\beta}) \rightarrow \vartheta(x)$, hence $\vartheta(x) \notin O$. However, this contradicts $\vartheta \in \mathrm{W}(U,C,O)$.
\end{proof}

\begin{remark}
    In \cite[Section 2]{EdKr2021} (and then also \cite[Subsection 2.2]{EJK2023}) the compact-open topology is defined slightly differently, and then the characterization of net convergence from Proposition \ref{netconv} is only established for compact bundles $q$ (albeit \cite[Proposition 2.4]{EdKr2021} being erroneously stated for locally compact bundles). In particular, our definition here is equivalent to the one of \cite[Section 2]{EdKr2021} in the case of compact bundles, but also yields the \enquote{correct} notion of convergence in the locally compact case.
\end{remark}

In view of Lemma \ref{charopen}, the following is an easy consequence of Proposition \ref{netconv} (cf. \cite[Remark 2.5]{EdKr2021} and \cite[Corollary 2.6]{EJK2023}).

\begin{corollary}\label{corhausdorff}
    Let $q \colon \Omega \rightarrow M$ be an open locally compact bundle, and $q' \colon \Omega' \rightarrow M'$ a Hausdorff topological bundle. Then $\mathrm{C}_q^{q'}(\Omega,\Omega')$ is a Hausdorff space.
\end{corollary}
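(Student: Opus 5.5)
We show that any two distinct points of $\mathrm{C}_q^{q'}(\Omega,\Omega')$ can be separated, arguing with nets: it suffices that every convergent net has a unique limit. So let $(\vartheta_\alpha)_\alpha$ be a net converging to both $\vartheta$ and $\eta$. We show $\vartheta = \eta$, meaning $\mathrm{s}(\vartheta)=\mathrm{s}(\eta)$, $\mathrm{r}(\vartheta)=\mathrm{r}(\eta)$ and $\vartheta(x)=\eta(x)$ for all $x$ in the common domain fiber. (A domain fiber $\Omega_m$ is never empty, since $q$ is surjective.)

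First, the source map is continuous, as noted in the proof of Proposition \ref{netconv}. Hence $\mathrm{s}(\vartheta_\alpha)$ converges to both $\mathrm{s}(\vartheta)$ and $\mathrm{s}(\eta)$, and $M$ is Hausdorff because $q$ is locally compact. So $\mathrm{s}(\vartheta)=\mathrm{s}(\eta)=:m$.

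Next fix $x \in \Omega_m$. The net $m_\alpha \coloneqq \mathrm{s}(\vartheta_\alpha)$ converges to $m$, and $q$ is open. By Lemma \ref{charopen} there are a subnet $(\vartheta_\beta)_\beta$ and points $x_\beta \in \Omega_{\mathrm{s}(\vartheta_\beta)}$ with $x_\beta \to x$. (Formally, Lemma \ref{charopen} gives a subnet of $(m_\alpha)$; we index $\vartheta$ along the same subnet.) Applying Proposition \ref{netconv}(b)(ii) to both limits gives $\vartheta_\beta(x_\beta) \to \vartheta(x)$ and $\vartheta_\beta(x_\beta)\to\eta(x)$ in $\Omega'$. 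Since $\Omega'$ is Hausdorff, $\vartheta(x)=\eta(x)$.

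Finally, the ranges agree: $\mathrm{r}(\vartheta)=q'(\vartheta(x))=q'(\eta(x))=\mathrm{r}(\eta)$, using any $x\in\Omega_m$. Hence $\vartheta=\eta$ as elements of the disjoint union, and the limit is unique.

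The only delicate step is producing the approximating net $x_\beta$ in the fibers of the subnet. This is exactly where openness of $q$ is used, and Lemma \ref{charopen} handles it directly. The remaining care is to note that Proposition \ref{netconv} needs $q$ locally compact, which is assumed.
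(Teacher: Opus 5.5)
Your proof is correct and is the argument the paper intends when it calls the corollary an easy consequence of Proposition \ref{netconv} in view of Lemma \ref{charopen}. Like the paper, you show uniqueness of net limits using continuity of $\mathrm{s}$, Hausdorffness of $M$, the openness-based approximation $x_\beta \to x$, and condition (b)(ii). You also spell out the details the paper leaves implicit: transporting the subnet from $(\mathrm{s}(\vartheta_\alpha))_\alpha$ to $(\vartheta_\alpha)_\alpha$, and using non-empty fibers to conclude $\mathrm{r}(\vartheta)=\mathrm{r}(\eta)$.
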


For \'{e}tale Hausdorff bundles over locally compact spaces we obtain the following description of the compact-open topology. This will be useful later.

\begin{lemma}\label{compopenetale}
    Let $q \colon \Omega \rightarrow M$ be an \'{e}tale Hausdorff bundle over a locally compact space $M$. For $f \in \mathrm{C}_q(\Omega)$ the sets
        \begin{align*}
            V(f,U,D,\varepsilon) \coloneqq \{g \in \mathrm{s}^{-1}(U) \mid |g(\tau(\mathrm{s}(g))) - f(\tau(\mathrm{s}(f)))| < \varepsilon \textrm{ for every } \tau \in D\} 
        \end{align*}
    for an open neighborhood $U$ of $\mathrm{s}(f)$,  $D \subseteq \Gamma_q(U)$ finite and $\varepsilon > 0$, define an open neighborhood basis of $f$.
\end{lemma}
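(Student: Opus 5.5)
We need to show two things: each $V(f,U,D,\varepsilon)$ is an open set containing $f$, and every open neighborhood of $f$ contains one of them. Here $\mathrm{C}_q(\Omega)$ carries the compact-open topology of Definition \ref{defcompactopen} with $\Omega' = \C$ and $M'$ a point. Since $q$ is étale and $M$ is locally compact, $\Omega$ is locally compact as well, so the net characterization of Proposition \ref{netconv} is available.

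\emph{Openness.} Let $g_0 \in V(f,U,D,\varepsilon)$ and let $(g_\alpha)_\alpha$ be a net with $g_\alpha \to g_0$; we show that $g_\alpha$ eventually lies in $V(f,U,D,\varepsilon)$. By (i) of Proposition \ref{netconv}, $m_\alpha \coloneqq \mathrm{s}(g_\alpha) \to m_0 \coloneqq \mathrm{s}(g_0)$, so eventually $m_\alpha \in U$. For each $\tau \in D$, continuity of $\tau$ gives $\tau(m_\alpha) \to \tau(m_0)$, and (ii) then yields $g_\alpha(\tau(m_\alpha)) \to g_0(\tau(m_0))$. As $D$ is finite and the inequalities defining $V$ are strict, they hold for $g_\alpha$ eventually. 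Since the argument passes to subnets, this shows that no subnet stays outside $V$, so $V$ is open. It clearly contains $f$, because the difference at $g=f$ is $0$.

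\emph{Neighborhood basis.} I expect this to be the main step. Suppose the sets $V(f,U,D,\varepsilon)$ do not form a basis. Then there is an open neighborhood $W$ of $f$ and, for every triple $(U,D,\varepsilon)$, some $g_{(U,D,\varepsilon)} \in V(f,U,D,\varepsilon)\setminus W$. We direct the triples by shrinking $U$, enlarging $D$ and decreasing $\varepsilon$; here $D$ ranges over finite subsets of $\Gamma_q(U)$, and we restrict larger sections to the smaller $U$. This gives a net, and we verify (b) of Proposition \ref{netconv} for it, which contradicts $g \notin W$.

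Condition (i) is immediate, since $\mathrm{s}(g) \in U$ and $U$ shrinks to $m_0 = \mathrm{s}(f)$.

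For (ii), take a subnet $g_\beta$ and points $x_\beta \in \Omega_{\mathrm{s}(g_\beta)}$ with $x_\beta \to x$. Since $x_\beta$ lies over $\mathrm{s}(g_\beta) \to m_0$ and $\Omega$ is Hausdorff, we get $x \in \Omega_{m_0}$. By Lemma \ref{etaleneighborhoodbase}, there is a local section $\tau \in \Gamma_q(U_0)$ with $\tau(m_0) = x$ and $\tau(U_0)$ an open neighborhood of $x$. Then eventually $x_\beta \in \tau(U_0)$, and since $q$ is injective on $\tau(U_0)$ this forces $x_\beta = \tau(\mathrm{s}(g_\beta))$. Eventually the index triples satisfy $U \subseteq U_0$ and $\tau|_U \in D$ with $\varepsilon$ small, so
\[
|g_\beta(x_\beta) - f(x)| = |g_\beta(\tau(\mathrm{s}(g_\beta))) - f(\tau(m_0))| < \varepsilon .
\]
Hence $g_\beta(x_\beta) \to f(x)$, and Proposition \ref{netconv} gives $g \to f$. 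This contradicts $g \notin W$ for all indices.

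The delicate point is making the index set a genuine directed set, since the sets $D$ live over varying $U$. I would handle this by taking $D$ to be a finite set of pairs $(U_\tau,\tau)$ with $U \subseteq U_\tau$ and using restrictions. Hausdorffness of $\Omega$ is used to pin down the limit $x$ in the fiber.
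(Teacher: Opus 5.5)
Your proof is correct and follows essentially the same route as the paper: both directions rest on the net characterization of Proposition \ref{netconv}, and the key step is the same use of Lemma \ref{etaleneighborhoodbase} to force $x_\beta = \tau(\mathrm{s}(g_\beta))$ eventually. The paper packages this as showing that the topology generated by all the sets $V(f,U,D,\varepsilon)$ coincides with the compact-open topology, whereas your net indexed by triples $(U,D,\varepsilon)$ does the same job directly (minor point: it is Hausdorffness of $M$, not of $\Omega$, that gives $x \in \Omega_{m_0}$).
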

\begin{proof}
    For fixed $f \in \mathrm{C}_q(\Omega)$ denote by $\mathcal{V}_f$ the collection  of subsets $V(f,U,D,\varepsilon)$ for open neighborhoods $U$ of $\mathrm{s}(f)$, finite subsets  $D \subseteq \Gamma_q(U)$ and numbers $\varepsilon > 0$. Observe that for $f \in \mathrm{C}_q(\Omega)$  any finite intersection of elements of $\mathcal{V}_f$ again contains an element of $\mathcal{V}_f$. To prove the claim, it therefore suffices to check that the topology $\mathcal{T}$ generated by $\bigcup_{f \in \mathrm{C}_q(\Omega)} \mathcal{V}_f$ coincides with the compact-open topology. Noting that $q$ is a locally compact bundle, we can apply Proposition \ref{netconv}.\medskip
    
    So let $(f_{\alpha})_{\alpha}$ be a net in $\mathrm{C}_q(\Omega)$ converging to some $f \in \mathrm{C}_q(\Omega)$ with respect to the compact-open topology. Let further $U$ be an open neighborhood of $\mathrm{s}(f)$,  $D \subseteq \Gamma_q(U)$ finite and $\varepsilon > 0$. Since $\lim_{\alpha} \mathrm{s}(f_{\alpha}) = \mathrm{s}(f)$, we find some index $\alpha_0$ such that $\mathrm{s}(f_{\alpha}) \in U$ for each $\alpha \geq \alpha_0$. For $\tau \in D$ the net $(\tau(\mathrm{s}(f_{\alpha})))_{\alpha \geq \alpha_0}$ converges to $\tau(\mathrm{s}(f))$, and hence $\lim_{\alpha} f_{\alpha}(\tau(\mathrm{s}(f_{\alpha})))= f(\tau(\mathrm{s}(f)))$. This shows $f_{\alpha} \in  V(f,U,D,\varepsilon)$ for large enough $\alpha$.\medskip

    Conversely, assume that a net $(f_{\alpha})_{\alpha}$ in $\mathrm{C}_q(\Omega)$ converges to some $f \in \mathrm{C}_q(\Omega)$ with respect to $\mathcal{T}$. Clearly, $\lim_{\alpha} \mathrm{s}(f_{\alpha})_{\alpha} = \mathrm{s}(f)$. To verify condition (ii) of Proposition \ref{netconv} (b), take a subnet $(f_{\beta})_{\beta}$ of  $(f_{\alpha})_{\alpha}$ and $x_{\beta} \in \Omega_{\mathrm{s}(f_{\beta})}$ for each $\beta$ such that $\lim_{\beta} x_{\beta} = x \in \Omega$. Then $x \in \Omega_{\mathrm{s}(f)}$. By Lemma \ref{etaleneighborhoodbase} we find an open neighborhood $U$ of $\mathrm{s}(f)$ and a local section $\tau \in \Gamma_q(U)$ with $\tau(\mathrm{s}(f)) = x$. Since (by the same lemma) the set $\tau(U)$ is an open neighborhood of $x$, we find some index $\beta_0$ with $x_{\beta} = \tau(\mathrm{s}(f_{\beta}))$ for all $\beta \geq \beta_0$. But then it is clear from the definition of $\mathcal{T}$ that
        \begin{align*}
            \lim_{\beta} f_{\beta}(x_{\beta}) =  \lim_{\beta} f_{\beta} (\tau(\mathrm{s}(f_{\beta}))) = f(\tau(\mathrm{s}(f))) = f(x),
        \end{align*}
    as desired.
\end{proof}

 For compact bundles we also quote an Arzel\'a--Ascoli-type theorem from \cite{EdKr2021}. Here, for continuous surjections $q \colon \Omega \rightarrow L$ and $q' \colon \Omega' \rightarrow L'$ from uniform to compact spaces, a subset $E \subseteq \mathrm{C}_{q}^{q'}(\Omega,\Omega')$ is \textbf{uniformly equicontinuous} if for each entourage $W$ of $\Omega'$ there exists an entourage $V$ of $\Omega$ such that $(\vartheta(x),\vartheta(y)) \in W$ for each $\vartheta \in E$ and all $(x,y) \in V$ with $q(x) = q(y) = \mathrm{s}(\vartheta)$. Recalling that any compact space carries a unique uniform structure, we have the following result (see   \cite[Theorem 3.13]{EdKr2021}).

\begin{theorem}\label{arzasc}
    Let $q \colon \Omega \rightarrow M$ be an open compact bundle, and $q' \colon \Omega' \rightarrow M'$ a continuous surjection from a Hausdorff uniform space to a compact space $M'$. Then a subset $E \subseteq \mathrm{C}_{q}^{q'}(\Omega,\Omega')$ is precompact if and only if both of the subsequent two assertions are satisfied.
        \begin{enumerate}[(i)]
            \item $\bigcup \{\mathrm{im}(\vartheta)\mid \vartheta \in E\} \subseteq \Omega'$ is precompact.
            \item $E$ is uniformly equicontinuous.
        \end{enumerate}
\end{theorem}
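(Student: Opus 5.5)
The plan is to mimic the classical Arzelà--Ascoli argument, using Proposition \ref{netconv} as the main tool. Since $q$ is compact, $\Omega$ and $M$ are compact and the net characterization of Proposition \ref{netconv} applies. I read \enquote{precompact} as \enquote{relatively compact}, i.e., having compact closure in $\mathrm{C}_q^{q'}(\Omega,\Omega')$, which is Hausdorff by Corollary \ref{corhausdorff}.

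\textbf{Necessity.} Assume $E$ is precompact, with closure $\overline{E}$. For (i), consider the evaluation set $\{(\vartheta,x) \mid \vartheta \in \overline{E},\ x \in \Omega_{\mathrm{s}(\vartheta)}\}$. It is closed in the compact space $\overline{E} \times \Omega$, because $\mathrm{s}$ is continuous and $q$ is continuous. The evaluation map $(\vartheta,x) \mapsto \vartheta(x)$ is continuous on this set by Proposition \ref{netconv}. The union of all images is the continuous image of this compact set, hence compact, which gives (i).

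For (ii), argue by contradiction. If equicontinuity fails for some entourage $W$, then for every entourage $V$ of $\Omega$ there are $\vartheta_V \in E$ and $(x_V,y_V) \in V$ in a common fiber with $(\vartheta_V(x_V),\vartheta_V(y_V)) \notin W$. Pass to subnets so that $\vartheta_V \to \vartheta \in \overline{E}$ and $x_V \to x$ (using compactness of $\Omega$). Since the pairs $(x_V,y_V)$ become arbitrarily close, also $y_V \to x$. Proposition \ref{netconv} then gives $\vartheta_V(x_V) \to \vartheta(x)$ and $\vartheta_V(y_V) \to \vartheta(x)$. Choosing $W$ to contain the closure of a symmetric entourage $W'$ with $W' \circ W' \subseteq W$ yields the contradiction.

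\textbf{Sufficiency.} Let $(\vartheta_\alpha)$ be a net in $E$; it suffices to produce a convergent subnet. Pass to a subnet so that $m_\alpha \coloneqq \mathrm{s}(\vartheta_\alpha) \to m$ in the compact space $M$, and so that $\mathrm{r}(\vartheta_\alpha) \to m'$ in the compact space $M'$. Then define a candidate limit $\vartheta$ on $\Omega_m$.

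\begin{itemize}
\item \emph{Pointwise limits.} Take $x \in \Omega_m$. By openness of $q$ and Lemma \ref{charopen}, after passing to a subnet there are $x_\alpha \in \Omega_{m_\alpha}$ with $x_\alpha \to x$. By (i), the values $\vartheta_\alpha(x_\alpha)$ lie in a precompact set, so they have cluster points in the completion.
\item \emph{Independence of choices.} Equicontinuity shows that any two approximating nets $x_\alpha$, $x'_\alpha$ for the same $x$ give asymptotically $W$-close values, for every entourage $W$. Thus cluster points do not depend on the approximating net.
\end{itemize}

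\textbf{Main obstacle.} The hard part is a diagonal subnet argument that makes the limits exist for \emph{all} $x \in \Omega_m$ simultaneously. I would handle it via a universal subnet. Define $\vartheta(x) \coloneqq \lim \vartheta_\alpha(x_\alpha)$. The limit exists in a compact subset of the completion of $\Omega'$, and it must be shown to lie in $\Omega'$ and in the fiber over $m'$; I would obtain this from (i) together with the closedness implicit in precompactness within $\Omega'$, which is a point to handle carefully.

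The remaining checks are then routine:

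\begin{itemize}
\item Equicontinuity passes to the limit, so $\vartheta$ is continuous.
\item A $W'\circ W'\circ W'$ argument verifies condition (ii) of Proposition \ref{netconv} for an arbitrary convergent net $y_\beta \to y$.
\end{itemize}

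This shows $\vartheta_\alpha \to \vartheta$ along the subnet, so $E$ is precompact.
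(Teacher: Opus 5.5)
Your necessity half is fine: evaluation is continuous on the closed, hence compact, set $\{(\vartheta,x)\in\overline{E}\times\Omega\mid \mathrm{s}(\vartheta)=q(x)\}$ by Proposition \ref{netconv}, and your contradiction argument for uniform equicontinuity works. The paper gives no proof of this theorem; it only quotes \cite[Theorem 3.13]{EdKr2021}, so your argument has to stand on its own.

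\textbf{The gap in sufficiency.} It sits exactly at the step you call the main obstacle. \enquote{Pass to a universal subnet and set $\vartheta(x)\coloneqq\lim\vartheta_\alpha(x_\alpha)$} does not close it. Lemma \ref{charopen} only provides points $x_\gamma\in\Omega_{\mathrm{s}(\vartheta_\gamma)}$ with $x_\gamma\to x$ along a further subnet that depends on $x$. Nothing in your sketch produces approximants on the index set of the universal net itself. Along such a subnet, $\gamma\mapsto\vartheta_\gamma(x_\gamma)$ is not the image of the universal net under a fixed map, since it depends on the chosen points and not only on $\vartheta_\gamma$. So universality does not make it converge. Your independence argument only compares two approximating nets indexed by the same directed set, not limits taken along two different $x$-dependent subnets. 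Hence $\vartheta(x)$ is not yet well defined.

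\textbf{The missing idea.} Make the approximation a function of the map alone.
Let $K'\coloneqq\overline{\bigcup_{\eta\in E}\mathrm{im}(\eta)}$. For each open $N\subseteq\Omega$, choose $\sigma_N(\eta)\in\eta(\Omega_{\mathrm{s}(\eta)}\cap N)$ if this set is non-empty, and any point of $K'$ otherwise.
Then $(\sigma_N(\vartheta_\beta))_\beta$ is universal in the compact set $K'$, so it converges to some $z_N$.
For $x\in\Omega_m$, openness of $q$ makes $\Omega_{\mathrm{s}(\vartheta_\beta)}\cap N$ eventually non-empty for every open $N\ni x$.
Now take a closed symmetric entourage $W_1$ with $W_1\circ W_1\circ W_1\subseteq W$, its equicontinuity entourage $V$, and $N$ with $N\times N\subseteq V$. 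Uniform equicontinuity then shows that $(z_N)_{N\ni x}$ is Cauchy, so it converges in $K'$ to a point $\vartheta(x)$.
The same three-step estimate gives $\vartheta_\gamma(x_\gamma)\to\vartheta(x)$ for every subnet and every approximating net. This is condition (ii) of Proposition \ref{netconv}, and it also yields continuity of $\vartheta$.
A Tychonoff argument in a power of $K'$, or Vietoris convergence of the graphs in $\Omega\times K'$, achieves the same.

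\textbf{Reading of \enquote{precompact} in (i).} It has to mean relatively compact in $\Omega'$, as for $E$ itself. Take $\Omega=M=M'=\{\mathrm{pt}\}$ and $\Omega'=(0,1)$: the maps $\mathrm{pt}\mapsto 1/(n+1)$ have totally bounded union of images and are trivially uniformly equicontinuous, yet they are not relatively compact in $\mathrm{C}_q^{q'}(\Omega,\Omega')\cong(0,1)$. With the correct reading, the detour through the completion disappears. All limits lie in the compact set $K'\subseteq\Omega'$, and they lie in $\Omega'_{m'}$ because $q'(\vartheta_\beta(x_\beta))=\mathrm{r}(\vartheta_\beta)\to m'$. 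No \enquote{closedness implicit in precompactness} is needed, and under the totally bounded reading none is available.

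\textbf{From nets in $E$ to compactness of $\overline{E}$.} Showing that every net in $E$ has a convergent subnet only gives compactness of $\overline{E}$ if you either know $\mathrm{C}_q^{q'}(\Omega,\Omega')$ is regular, or check that (i) and (ii) pass from $E$ to $\overline{E}$. The latter is easy: approximate points of $\Omega_{\mathrm{s}(\vartheta)}$ via Lemma \ref{charopen} and use closed entourages. After that, run your argument for nets in $\overline{E}$.
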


\section{Some Relative Harmonic Analysis}\label{secharmonic}

\subsection{Topological Semigroup and Group Bundles}\label{sectopgrpbundles}
We now recall the concept of topological (semi)group bundles (see, e.g., \cite[Section 3.1]{EJK2023}).

\begin{definition}\label{defsgrpbundle}
    A topological bundle $q \colon H \rightarrow M$ over a topological space $M$ is a
    \textbf{topological semigroup bundle} if each fiber $H_m$ for $m \in M$ is equipped with the structure of a semigroup such that the mapping
    \begin{align*}
        \cdot \colon H \times_{M} H \rightarrow H, \quad (x, y) \mapsto x \cdot y
    \end{align*}
    is continuous. Is is a \textbf{group bundle} if, in addition, each fiber $H_m$ for $m \in M$ is a group, and the maps
        \begin{align*}
        \,^{-1} &\colon H \rightarrow H, \quad x \mapsto x^{-1} \quad \textrm{ and } \quad   1\colon M \rightarrow H, \quad m \mapsto 1_{H_m}
        \end{align*}
    are also continuous. We call a topological (semi)group bundle $q \colon H \rightarrow M$ \textbf{abelian} if all the fiber (semi)groups $H_m$ for $m \in M$ are abelian. \medskip

A \textbf{morphism} $\Phi \colon q \rightarrow q'$ between topological (semi)group bundles $q\colon H \rightarrow M$ and $q' \colon H' \rightarrow M$ over the same topological space $M$ is a morphism of topological bundles having the additional property that the fiber maps $\Phi_m \colon H_m \rightarrow H_{m}'$ are (semi)group homomorphisms for each $m \in M$.
\end{definition}

Here are a few simple examples (cf. \cite[Examples 3.2]{EJK2023}).
\begin{examples}\label{examplesgrpbundles}
    \begin{enumerate}[(i)]
        \item A topological group defines a topological group bundle over a singleton space $\{\mathrm{pt}\}$.
        \item Let $M$ be a topological space and $H$ be a topological group. Then the  trivial bundle over $M$ with fiber $H$
            \begin{align*}
                \mathrm{triv}_M^H \colon M \times H \rightarrow M, \quad (m, x) \mapsto m
            \end{align*}	
        from Example \ref{exampletriv} defines a topological group bundle over $M$.
        \item Take a Hausdorff topological space $M$, a topological group $H$ and a point $m_0 \in M$. The quotient space of $M \times H$ defined by identifying all points $(m_0,x)$ for $x \in H$ yields a topological group bundle over $M$. The fiber over $m_0$ is a trivial group, whereas each fiber over $m \in M \setminus \{m_0\}$ can be identified with $H$.
    \end{enumerate}
\end{examples}

The next proposition introduces further important examples of (semi)group bundles. Here we denote by $\mathrm{Homeo}(\Omega)$ the homeomorphism group of a topological space $\Omega$. 

\begin{proposition}\label{semigrpbundlefibermaps}
     Consider any locally compact bundle $q \colon \Omega \rightarrow M$. Then the subspace
        \begin{align*}
            \mathrm{C}(q) \coloneqq \bigsqcup_{m \in M} \mathrm{C}(\Omega_m, \Omega_m) \subseteq \mathrm{C}_q^q(\Omega, \Omega),
        \end{align*}
        equipped with the fiberwise composition of maps, defines a topological semigroup bundle over $M$ with respect to the restriction of the source map $\mathrm{s}$. If $q$ is proper, then
        \begin{align*}
            \mathrm{Homeo}(q) \coloneqq \bigsqcup_{m \in M} \mathrm{Homeo}(\Omega_m) \subseteq \mathrm{C}(q)
        \end{align*}
        defines a topological group bundle over $M$. 
\end{proposition}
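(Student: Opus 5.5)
The plan is to verify everything through the net characterization of convergence in Proposition \ref{netconv}, which applies because $q$ is locally compact. Surjectivity of $\mathrm{s}\colon \mathrm{C}(q) \rightarrow M$ is clear, since $\mathrm{id}_{\Omega_m} \in \mathrm{C}(\Omega_m,\Omega_m)$, and $\mathrm{s}$ is continuous. The main task is continuity of the fiberwise composition $\mathrm{C}(q) \times_M \mathrm{C}(q) \rightarrow \mathrm{C}(q)$, $(\vartheta,\eta) \mapsto \vartheta \circ \eta$.

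For this, take nets $\vartheta_\alpha \rightarrow \vartheta$ and $\eta_\alpha \rightarrow \eta$ with $\mathrm{s}(\vartheta_\alpha) = \mathrm{s}(\eta_\alpha)$. Condition (i) of Proposition \ref{netconv} (b) is immediate for the composites. For (ii), let $(\beta)$ index a subnet and let $x_\beta \in \Omega_{\mathrm{s}(\eta_\beta)}$ with $x_\beta \rightarrow x$. Applying (ii) to $(\eta_\alpha)$ gives $y_\beta := \eta_\beta(x_\beta) \rightarrow \eta(x)$. The points $y_\beta$ lie in the fibers $\Omega_{\mathrm{s}(\vartheta_\beta)}$, because range and source agree fiberwise in $\mathrm{C}(q)$. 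Applying (ii) to $(\vartheta_\alpha)$ along the same subnet then yields $\vartheta_\beta(y_\beta) \rightarrow \vartheta(\eta(x))$, which is what is needed. The only subtlety is that the net-characterization must be used along subnets, and Proposition \ref{netconv} is stated exactly to allow this. Associativity is fiberwise, so $\mathrm{C}(q)$ is a topological semigroup bundle.

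For the second claim, suppose $q$ is proper. Fiberwise $\mathrm{Homeo}(\Omega_m)$ is a group and $\mathrm{Homeo}(q)$ is a subbundle. Continuity of multiplication follows from the first part. Continuity of the unit $m \mapsto \mathrm{id}_{\Omega_m}$ follows from Proposition \ref{netconv}: if $m_\alpha \rightarrow m$ and $x_\beta \in \Omega_{m_\beta}$ converges to $x$, then trivially $\mathrm{id}(x_\beta) = x_\beta \rightarrow x$.

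I expect the main obstacle to be the continuity of inversion. Let $\vartheta_\alpha \rightarrow \vartheta$ in $\mathrm{Homeo}(q)$, and let $x_\beta \in \Omega_{\mathrm{s}(\vartheta_\beta)}$ with $x_\beta \rightarrow x$. Put $z_\beta := \vartheta_\beta^{-1}(x_\beta)$. Since $q(z_\beta) = \mathrm{s}(\vartheta_\beta)$ converges, properness and Lemma \ref{charproper} provide, for any subnet, a further subnet along which $z_\gamma \rightarrow z$. Then (ii) gives $x_\gamma = \vartheta_\gamma(z_\gamma) \rightarrow \vartheta(z)$. Provided $\Omega$ is Hausdorff (which I expect is intended for the locally compact bundle, as locally compact spaces are assumed Hausdorff), this forces $\vartheta(z) = x$, so $z = \vartheta^{-1}(x)$. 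Since every subnet of $(z_\beta)$ has a further subnet converging to the same point $\vartheta^{-1}(x)$, the whole net converges to it. Hence $\vartheta_\beta^{-1}(x_\beta) \rightarrow \vartheta^{-1}(x)$, and inversion is continuous by Proposition \ref{netconv}.
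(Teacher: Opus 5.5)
Your proof is correct. For the semigroup structure and the unit section it matches the paper, which simply says these follow from Proposition \ref{netconv}. The two arguments differ only in the step you both single out, continuity of inversion.

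The paper first localizes on $M$ to reduce to a compact bundle. It then observes that inversion permutes the subbasic sets of the compact-open topology. For a homeomorphism $\vartheta$ of $\Omega_m$, the inclusion $\Omega_m\cap C\subseteq \vartheta(\Omega_m\cap O)$ holds if and only if $\vartheta(\Omega_m\setminus O)\subseteq \Omega_m\setminus C$. Hence $\mathrm{W}(U,C,O)^{-1}=\mathrm{W}(U,\Omega\setminus O,\Omega\setminus C)$, which is again subbasic because $\Omega\setminus O$ is compact and $\Omega\setminus C$ is open once $\Omega$ is compact Hausdorff.

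You instead stay entirely within the net characterization. Properness, via Lemma \ref{charproper}, supplies convergent subnets of $\vartheta_\beta^{-1}(x_\beta)$. Hausdorffness together with injectivity of $\vartheta$ forces every such limit to be $\vartheta^{-1}(x)$.

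What each route buys:
\begin{itemize}
\item The paper's argument is shorter and shows directly that inversion maps subbasic open sets to subbasic open sets. However, it leans on the localization step, which tacitly needs that over a compact $L\subseteq M$ the subspace topology on $\mathrm{s}^{-1}(L)$ agrees with the compact-open topology of the restricted bundle. It also uses compactness of the total space essentially.
\item Your argument needs no localization and works directly over a locally compact base. It makes explicit exactly where properness and the Hausdorff property enter. It is the same subnet-extraction technique the paper itself uses later in Lemma \ref{compactsubgrpbundle}.
\end{itemize}

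Your hedge about $\Omega$ being Hausdorff is unnecessary. By the paper's conventions locally compact spaces are Hausdorff, so a locally compact bundle automatically has Hausdorff total space.
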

\begin{proof}
    In view of Proposition \ref{netconv} the only difficult part is to show that, in the case of a proper bundle $q$, the inversion map $\mathrm{Homeo}(q) \rightarrow \mathrm{Homeo}(q), \, \vartheta \mapsto \vartheta^{-1}$ is continuous. Since this can be checked locally on $M$, we may restrict to the case of a compact bundle. But then, with the subbase for the compact open topology from Definition \ref{defcompactopen}, we have
        \begin{align*}
            W(U,C,O)^{-1} &= \{\vartheta \in \mathrm{s}^{-1}(U) \mid \Omega_{\mathrm{s}(\vartheta)} \cap C \subseteq \vartheta(\Omega_{\mathrm{s}(\vartheta)} \cap O)\} \\
            &= \{\vartheta \in \mathrm{s}^{-1}(U) \mid  \vartheta(\Omega_{\mathrm{s}(\vartheta)} \cap \Omega \setminus O)  \subseteq \Omega_{\mathrm{s}(\vartheta)} \cap (\Omega \setminus C)\} \\
            &= W(U,\Omega \setminus O,\Omega \setminus C),
        \end{align*}
    which yields the claim.
\end{proof}
Next, recall the concept of (sub)semigroup bundles.

\begin{definition}
    Let $q \colon H \rightarrow M$ be a topological semigroup bundle over a topological space $M$. The restriction $q|_{H'} \colon H' \rightarrow M$ to a subset $H' \subseteq H$ is called a \textbf{subsemigroup bundle} of $q$ if $H'_m \coloneqq H_m \cap H'$ is a non-empty subsemigroup of $H_m$ for each $m \in M$. If $q$ is a topological group bundle, and $H_m'$ is even a subgroup of $H_m$ for each $m \in M$, then $q|_{H'} \colon H' \rightarrow M$ is called a \textbf{subgroup bundle} of $q$.
\end{definition}
Clearly, every sub(semi)group bundle defines a topological (semi)group bundle in its own right.\medskip

We highlight that -- in contrast to the situation of topological semigroups -- the closure of the total space of a subsemigroup bundle does generally \emph{not} define a subsemigroup bundle:

\begin{example}
    Consider the Klein four-group $H \coloneqq \Z/2\Z \times \Z/2\Z$ as a discrete topological group, and then the trivial bundle $q \coloneqq \mathrm{triv}_M^H$ over $M= [-1,1]$ with fiber $H$ (cf. Example \ref{examplesgrpbundles}). Setting
        \begin{align*}
            H_m' \coloneqq \begin{cases} \{m\} \times (\Z/2\Z \times \{[0]_{2\Z}\}) & \textrm{ for } m \in [-1,0),\\
            \{m\} \times (\{[0]_{2\Z}\} \times \{ [0]_{2\Z}\})  & \textrm{ for } m = 0,\\
            \{m\} \times  (\{[0]_{2\Z}\} \times \Z/2\Z) & \textrm{ for } m\in  (0,1]\\
            \end{cases}
        \end{align*}
    defines a subgroup bundle $q|_{H'} \colon H' \rightarrow M$. The closure of $H'$ is 
        \begin{align*}
            ([-1,0] \times  (\Z/2\Z \times \{[0]_{2\Z}\})) \cup ([0,1] \times (\{[0]_{2\Z}\} \times \Z/2\Z)),
        \end{align*}
    and the restriction $q|_{H'} \colon H' \rightarrow M$ is not a subsemigroup bundle (and, in particular, not a subgroup bundle) of $q$. 
\end{example}

This observation motivates the following notions (see also \cite[Construction 4.1.41]{Herm2026} in the framework of topological \emph{poloids}).

\begin{definition}\label{generatedbundle}
    For a topological semigroup bundle $q \colon H \rightarrow M$ and a subbundle $q|_C \colon C \rightarrow M$ (in the sense of Definition \ref{constructiontopbund} (ii)) we write 
        \begin{align*}
            \langle C\rangle \coloneqq \bigcap \{H' \subseteq H \mid H' \subset H \textrm{ closed}, q|_{H'} \textrm{ is a subsemigroup bundle of } q, C \subseteq H'\}.
        \end{align*}
    We say that $C$ \textbf{generates} $q$ if $\langle C \rangle = H$.
\end{definition}

It is clear that the restriction $q|_{\langle C\rangle}$ is then indeed a subsemigroup bundle of $q$ and that $\langle C\rangle$ is a closed subset of $H$. The following basic result will be of use later.

\begin{lemma}\label{abeliangenerator}
    Let $q \colon H \rightarrow M$ be an open topological semigroup bundle. If $q|_{H'} \colon H' \rightarrow M$ is an open abelian subsemigroup bundle and $H'$ generates $q$, then $q$ is also abelian. 
\end{lemma}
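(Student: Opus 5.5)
The plan is to show that a suitable closed subset of $H$ containing $H'$ carries a subsemigroup bundle structure, so that by the definition of $\langle H'\rangle$ it must be all of $H$. Concretely, I would consider the \emph{centralizer-type} set
\begin{align*}
    Z \coloneqq \{x \in H \mid xy = yx \textrm{ for all } y \in H'_{q(x)}\}.
\end{align*}
Three things need checking: $Z$ is closed, $Z_m$ is a subsemigroup of $H_m$ for each $m$, and $H' \subseteq Z$. The last two are immediate. Each fiber $Z_m$ is the centralizer of $H'_m$ in $H_m$, which is closed under multiplication and nonempty because $H'_m \subseteq Z_m$ by commutativity of $H'_m$. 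Granting closedness, $\langle H'\rangle \subseteq Z$, so $Z = H$. In other words, every $x \in H_m$ commutes with every element of $H'_m$.

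Closedness of $Z$ is where openness of $q|_{H'}$ enters, via Lemma \ref{charopen}. Take a net $x_\alpha \in Z$ with $x_\alpha \to x$, and set $m = q(x)$. Let $y \in H'_m$. Since $q(x_\alpha) \to m$ and $q|_{H'}$ is open, there are a subnet $x_\beta$ and elements $y_\beta \in H'_{q(x_\beta)}$ with $y_\beta \to y$ in $H'$, hence in $H$. Then $(x_\beta,y_\beta) \to (x,y)$ in $H \times_M H$, and by continuity of multiplication
\begin{align*}
    xy = \lim_\beta x_\beta y_\beta = \lim_\beta y_\beta x_\beta = yx.
\end{align*}
This step needs limits to be unique, i.e. $H$ Hausdorff. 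That is not stated in the lemma. If it is not implicitly assumed, I would instead replace $Z$ by its closure and argue with the closed set directly. That is the main obstacle, and I would first check whether the paper's standing assumptions give Hausdorffness.

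Second, I would repeat the argument with
\begin{align*}
    Z' \coloneqq \{x \in H \mid xy = yx \textrm{ for all } y \in H_{q(x)}\},
\end{align*}
the fiberwise center. By the first step, $H' \subseteq Z'$. Fiberwise, centers are subsemigroups, and they are nonempty since they contain $H'_m$. For closedness, take $x_\alpha \to x$ in $Z'$ and an arbitrary $y \in H_m$. Now openness of $q$ itself, again via Lemma \ref{charopen}, gives $y_\beta \in H_{q(x_\beta)}$ with $y_\beta \to y$, and the same limit argument shows $xy = yx$. Hence $Z' = H$, so every fiber is abelian. This explains why both $q$ and $q|_{H'}$ are assumed open: the first step uses openness of $q|_{H'}$ and the second uses openness of $q$.
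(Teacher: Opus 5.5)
Your argument is exactly the paper's proof. Your $Z$ and $Z'$ are the paper's $S_1$ and $S_2$. Closedness of the first comes from openness of $q|_{H'}$ and of the second from openness of $q$, both via Lemma~\ref{charopen}. In each case the definition of $\langle H'\rangle$ forces the set to be all of $H$.

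Your worry about uniqueness of limits is justified, and the paper's proof passes over it silently too: closedness of $S_1$ and $S_2$ really needs $H$ to be Hausdorff. Your fallback of passing to the closure of $Z$ cannot fix this, because without that hypothesis the statement is false.

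For a counterexample, let $\mathrm{sgn}\colon \mathrm{Sym}(3)\rightarrow\{\pm 1\}$ be the sign homomorphism and set
\begin{align*}
    H \coloneqq \bigl(([-1,1]\setminus\{0\})\times\{\pm 1\}\bigr)\sqcup\bigl(\{0\}\times\mathrm{Sym}(3)\bigr).
\end{align*}
Give it the fiberwise group structures and the topology generated by two kinds of sets:
\begin{itemize}
    \item $V\times\{\varepsilon\}$ for open $V\subseteq[-1,1]\setminus\{0\}$ and $\varepsilon\in\{\pm1\}$;
    \item $\{(0,g)\}\cup\bigl((U\setminus\{0\})\times\{\mathrm{sgn}(g)\}\bigr)$ for open $U\ni 0$ and $g\in\mathrm{Sym}(3)$.
\end{itemize}
Since $\mathrm{sgn}$ is a homomorphism, this is an \'{e}tale, hence open, topological group bundle over $[-1,1]$. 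Let $H'_m=\{\pm1\}$ for $m\neq 0$ and $H'_0=\{\mathrm{id}\}$. Then $H'$ is an open abelian subgroup bundle that is dense in $H$, so $\langle H'\rangle=H$. But $H_0=\mathrm{Sym}(3)$ is not abelian.

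In this example your $Z'$ equals $H'$ and is not closed. The points $(1/n,-1)$ converge both to $(0,(1\,2))$ and to $(0,(1\,3))$. Their product $(1/n,-1)(1/n,-1)=(1/n,1)$ converges both to $(0,(1\,2)(1\,3))$ and to $(0,(1\,3)(1\,2))$. The closure of $Z'$ is all of $H$, so it tells you nothing.

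The correct fix is to add the hypothesis that $H$ is Hausdorff; with it, your proof is complete. This costs the paper nothing, since every place it uses the lemma has a Hausdorff total space. The bundle of a group bundle compactification is Hausdorff by Definition~\ref{defgrpcomp}, and $\mathrm{E}_{\mathrm{u}}(q,\varphi)\subseteq\mathrm{C}_q^q(K,K)$ is Hausdorff by Corollary~\ref{corhausdorff}.
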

\begin{proof}
    Since $q|_{H'} \colon H' \rightarrow M$ is open, we can use Lemma \ref{charopen} to deduce that 
        \begin{align*}
            S_1 \coloneqq \{x \in H \mid xy = yx \textrm{ for every } y \in H'_{q(x)}\}
        \end{align*}
    is closed in $H$. Since $S_1$ also contains $H'$ and defines a subsemigroup bundle, it must be $H$ itself. Now consider 
        \begin{align*}
            S_2 \coloneqq \{x \in H \mid xy = yx \textrm{ for every } y \in H_{q(x)}\}.
        \end{align*}
    Since $q$ is open, the set $S_2$ is also closed in $H$. By the above, it contains $H'$. Since $S_2$ also defines a subsemigroup bundle, it again has to be $H$.
\end{proof}

For subsets of topological group bundles which are closed with respect to inversion we can also characterize the generated subsemigroup bundle in terms of sub\emph{group} bundles.

\begin{lemma}
    Let $q \colon H \rightarrow M$ be a topological group bundle and $q|_C \colon C \rightarrow M$ a subbundle with $C^{-1} = C$. Then 
         \begin{align*}
            \langle C\rangle = \bigcap \{H' \subseteq H \mid H' \subset H \textrm{ closed}, q'|_{H'} \textrm{ is a subgroup bundle of } q, C \subseteq H'\}.
        \end{align*}
\end{lemma}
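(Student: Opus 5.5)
We prove the two inclusions separately. Write $\mathcal{G}$ for the family of closed subsets $H' \subseteq H$ such that $q|_{H'}$ is a subgroup bundle of $q$ containing $C$. Write $S \coloneqq \bigcap \mathcal{G}$ for the intersection on the right-hand side.

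For the inclusion $\langle C\rangle \subseteq S$ we observe that every subgroup bundle is in particular a subsemigroup bundle. Hence $\mathcal{G}$ is contained in the family of closed subsemigroup bundles containing $C$ used in Definition \ref{generatedbundle}. Intersecting a larger family gives a smaller set, so $\langle C\rangle \subseteq S$. For this we only need $\mathcal{G}$ to be non-empty, which holds since $H \in \mathcal{G}$.

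For the reverse inclusion it suffices to show that $\langle C\rangle$ itself belongs to $\mathcal{G}$. We already know that $\langle C\rangle$ is closed, contains $C$, and that each fiber $\langle C\rangle_m$ is a non-empty subsemigroup of $H_m$. It remains to show that each fiber is closed under inversion and contains $1_{H_m}$. Note that the unit follows from inversion: for $x \in \langle C\rangle_m$ we get $1_{H_m} = x x^{-1} \in \langle C\rangle_m$.

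To obtain closedness under inversion, we show $\langle C\rangle^{-1} = \langle C\rangle$. Inversion $x \mapsto x^{-1}$ is a homeomorphism of $H$, since it is continuous and is its own inverse. It also preserves fibers. Because the fibers are groups, it maps subsemigroups of $H_m$ to subsemigroups of $H_m$: for $x,y \in T$ we have $x^{-1}y^{-1} = (yx)^{-1} \in T^{-1}$. Consequently, for every closed subsemigroup bundle $H'$ containing $C$, the set $(H')^{-1}$ is again a closed subsemigroup bundle, and it contains $C^{-1} = C$. So inversion permutes the family in Definition \ref{generatedbundle}, and therefore
\[
  \langle C\rangle^{-1} = \bigcap\{(H')^{-1}\} = \langle C\rangle .
\]
Hence each fiber $\langle C\rangle_m$ is a subgroup of $H_m$, so $\langle C\rangle \in \mathcal{G}$ and $S \subseteq \langle C\rangle$.

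No step is a real obstacle. The only point needing care is that inversion maps subsemigroups to subsemigroups, which uses $(yx)^{-1} = x^{-1}y^{-1}$ and does not require commutativity. The hypothesis $C^{-1} = C$ is used exactly once, to ensure that the inverted family still contains $C$.
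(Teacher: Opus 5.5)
Your proof is correct and follows essentially the paper's route, reducing everything to showing that $\langle C\rangle$ is closed under inversion. The paper gets this from minimality, by checking that $\{x \in \langle C\rangle \mid x^{-1} \in \langle C\rangle\}$ is a closed subsemigroup bundle containing $C$, whereas you note that inversion permutes the defining family; both use the same ingredients (continuity of inversion, $(xy)^{-1}=y^{-1}x^{-1}$, and $C^{-1}=C$).
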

\begin{proof}
    It suffices to observe that the restriction $q'|_{H'} \colon H' \rightarrow M$ to
        \begin{align*}
            H' \coloneqq \{x \in \langle C \rangle \mid x^{-1} \in \langle C \rangle\}
        \end{align*}
    is a subsemigroup bundle of $q$, and that $H'$ is closed in $H$ with $C \subseteq H'$.
\end{proof}

In this article, quotients of topological group bundles are only considered in the following special case:

\begin{proposition}\label{quotient}
    Let $q \colon H \rightarrow M$ be a proper Hausdorff abelian group bundle over a locally compact space $M$. Let further $q|_{H'} \colon H' \rightarrow M$ be a subgroup bundle such that $H'$ is closed in $H$. If we equip
        \begin{align*}
            H/H' \coloneqq \bigsqcup_{m \in M} H_m/H_m'
        \end{align*}
    with the quotient topology, then the canonical map $q_{/H'} \colon H/H' \rightarrow M$ again defines a proper Hausdorff abelian group bundle. Moreover, if $q$ is open, then so is  $q_{/H'}$.
\end{proposition}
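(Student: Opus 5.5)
The proof goes through the quotient map $\pi \colon H \rightarrow H/H'$ and a single equivalence relation on $H$. We use that $\pi$ is continuous and surjective with $q_{/H'}\circ\pi = q$. Hence $q_{/H'}$ is continuous: for $O \subseteq M$ open, $\pi^{-1}(q_{/H'}^{-1}(O)) = q^{-1}(O)$ is open. Consider the relation
\begin{align*}
    R \coloneqq \{(x,y) \in H \times_M H \mid x y^{-1} \in H'\},
\end{align*}
which is the preimage of the closed set $H'$ under the continuous map $(x,y)\mapsto xy^{-1}$. Since $M$ is Hausdorff, $H\times_M H$ is closed in $H \times H$, so $R$ is closed in $H \times H$. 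This is the relation whose quotient is $H/H'$.

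\textbf{Step 1: $\pi$ is open.} For $W \subseteq H$ open,
\begin{align*}
    \pi^{-1}(\pi(W)) = \{x \in H \mid \exists\, h \in H'_{q(x)} \colon xh \in W\}.
\end{align*}
To see that this set is open, we would like $H'$ to be ``locally movable.'' Instead we show the complement is closed, using properness. Suppose $x_\alpha \to x$ with $x_\alpha H'_{q(x_\alpha)} \cap W = \emptyset$, and $xh \in W$ for some $h \in H'_{q(x)}$. We need $h_\alpha \in H'_{q(x_\alpha)}$ converging to $h$, and this is exactly where openness of $q|_{H'}$ would be needed; $H'$ is only assumed closed. So I would avoid openness of $\pi$ altogether and work with closedness instead.

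\textbf{Step 1 (revised): $\pi$ is closed.} For $D \subseteq H$ closed, $\pi^{-1}(\pi(D)) = \mathrm{pr}_1((H\times_M D)\cap R)$. This set is closed: take $x_\alpha \to x$ with $y_\alpha \in D$ and $(x_\alpha,y_\alpha)\in R$. Then $q(y_\alpha) = q(x_\alpha) \to q(x)$, so by Lemma \ref{charproper} a subnet of $(y_\alpha)_\alpha$ converges to some $y \in D$, and $(x,y) \in R$ because $R$ is closed. Thus $\pi$ is a closed continuous surjection. Every fiber of $\pi$ is a coset $xH'_{q(x)}$, which is compact, being a closed subset of the compact set $q^{-1}(q(x))$. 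Hence $\pi$ is a perfect map.

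\textbf{Step 2: $H/H'$ is Hausdorff and $q_{/H'}$ is proper.} The image of a Hausdorff (locally compact) space under a perfect map is Hausdorff. Alternatively, one separates $\pi(x)\neq\pi(y)$ using closedness of $R$ and compactness of the fibers via the standard tube argument. Properness follows from $q_{/H'}^{-1}(C) = \pi(q^{-1}(C))$, which is quasi-compact for $C \subseteq M$ quasi-compact. The fibers of $q_{/H'}$ are the abelian groups $H_m/H'_m$.

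\textbf{Step 3: continuity of the operations.} The unit section is $\pi\circ 1$ and inversion is induced by the inversion of $H$ through $\pi$; since $\pi$ is a quotient map, inversion is continuous. For multiplication, the key is that $\pi\times_M\pi \colon H\times_M H \rightarrow (H/H')\times_M(H/H')$ is again a quotient map. This is the main obstacle, since fiber products of quotient maps need not be quotient maps in general. Here it holds because $\pi\times\pi$ is perfect: a product of perfect maps is perfect, and its restriction to the closed saturated subset $H\times_M H$ is again closed. A closed continuous surjection is a quotient map. Multiplication on $H/H'$ composed with $\pi\times_M\pi$ equals $\pi\circ\cdot$, which is continuous, so multiplication on $H/H'$ is continuous.

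\textbf{Step 4: openness.} Assume $q$ is open. For $V \subseteq H/H'$ open, $q_{/H'}(V) = q(\pi^{-1}(V))$ is open, since $\pi^{-1}(V)$ is open and $q$ is open. Hence $q_{/H'}$ is open.
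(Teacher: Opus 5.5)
Your proof is correct, but it takes a different route from the paper's for the two substantive points: Hausdorffness of $H/H'$ and continuity of the multiplication.

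The paper proves Hausdorffness with Bourbaki's criterion for quotients of locally compact spaces. It checks that the saturation $\pi^{-1}(\pi(C))$ of each compact $C$ is compact, by writing it as the image under multiplication of the compact set $(H\times_M H)\cap(C\times(H'\cap q^{-1}(q(C))))$.

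For the group operations, the paper never asks whether $\pi\times_M\pi$ is a quotient map. It instead applies its closed graph theorem (Proposition \ref{lemmaclosedgraph}) to the proper Hausdorff bundle $q_{/H'}$. The graph of multiplication on $H/H'$ is the image of the closed graph of multiplication on $H$ under $\pi\times_M\pi\times_M\pi$, hence closed by the closed mapping theorem (Proposition \ref{properclosed}). Inversion is handled the same way.

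You instead prove directly, using nets, the closedness of $R$, and Lemma \ref{charproper}, that $\pi$ is closed with compact fibers, i.e.\ perfect. You then get Hausdorffness from the general fact about perfect images. Multiplication follows from the universal property of $\pi\times_M\pi$, which you justify as the restriction of the perfect map $\pi\times\pi$ to the closed set $H\times_M H$. Inversion follows from the quotient property of $\pi$ itself.

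What each route buys:
\begin{itemize}
\item Your route isolates the one fact doing all the work, perfectness of $\pi$, and uses only standard general topology. The cost is importing the product theorem for perfect maps.
\item The paper's route stays inside its own bundle toolkit. It sidesteps entirely the fiber-product-of-quotient-maps problem that you correctly flagged as the main obstacle.
\end{itemize}

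Your treatment of properness and openness matches the paper's. You were also right to discard the first version of Step 1, since $\pi$ need not be open. For example, take $H=[-1,1]\times\Z/2\Z$ and let $H'_m$ be everything for $m\leq 0$ and trivial for $m>0$. Then $W=[-1,1]\times\{[0]_{2\Z}\}$ is open, but $\pi(W)$ is not, because $(0,[1]_{2\Z})\in\pi^{-1}(\pi(W))$ is a limit of points outside it.
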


For compact base spaces the result is an easy consequence of \cite[Lemma 3.4]{EJK2023}, and one can reduce the general case to this situation. However, for the reader's convenience we provide a full proof here. 
\begin{proof}[Proof of Proposition \ref{quotient}]
    First show that $H/H'$ is a Hausdorff space. Since $H$ is locally compact, it suffices by \cite[Proposition I.10.4.9]{Bour1995} to check that saturations of compact subsets with respect to the quotient map $\pi \colon H \rightarrow H/H'$ are again compact (indeed, this even shows that $H/H'$ is a locally compact space). So take a compact subset $C \subseteq H$. Then $q(C)$ is compact, and hence $H' \cap q^{-1}(q(C))$ is also compact since $q$ is proper. But then
        \begin{align*}
            \pi^{-1}(\pi(C)) = \{xy\mid (x,y) \in H \times_M H \textrm{ with } x \in C \textrm{ and } y \in H' \cap q^{-1}(q(C))\}
        \end{align*}
    is compact as the image of the compact subset
        \begin{align*}
           (H \times_M H) \cap (C \times (H' \cap q^{-1}(q(C)))) \subseteq H \times_M H
        \end{align*}
    with respect to the multiplication map of $q$.\medskip
    
    Since $q$ is continuous and $H/H'$ carries the final topology with respect to $\pi$, it is clear that $q_{/H'}$ is continuous. Since $q$ proper, the map $\pi$ is a continuous surjection and the diagram 
    \[
		\xymatrix{
			H  \ar[dd]_q \ar[dr]^{\pi} & \\
             & H/H'\ar[dl]^{q_{/H'}}  \\
			M &\\
		}
	\]	
    commutes, we immediately obtain that $q_{/H'}$ is proper as well. If $q$ is open, then we similarly obtain that also $q_{/H'}$ is open.\medskip
    
    To check that the multiplication map of $q_{/H'}$ is continuous, we apply the implication \enquote{(b) $\Rightarrow$ (a)} from Proposition \ref{lemmaclosedgraph} and then only need to show that it has a closed graph. However, the graph of the multiplication map of $p_{/H'}$ is the image of the graph of the multiplication map of $q$ with respect to the morphism
        \begin{align*}
            \pi \times_M \pi \times_M \pi \colon (H \times_M H) \times_M H &\rightarrow (H/H' \times_M H/H') \times_M H/H'\\
            \quad (x,y,z) &\mapsto (\pi(x),\pi(y),\pi(z)).
        \end{align*}
    So the claim follows from Proposition \ref{properclosed} combined with \enquote{(a) $\Rightarrow$ (b)} from Proposition \ref{lemmaclosedgraph} for the multiplication map of $q$. Continuity of the inverse mapping follows similarly. Finally, continuity of the map $M \rightarrow H/H', \, m \mapsto 1_{H_m/H_{m}'}$ is a direct consequence of the continuity of $M \rightarrow H, \, m \mapsto 1_{H_m}$ and of the map $\pi$.
\end{proof}

In the last part of this subsection we briefly discuss a relative version of the Haar measure (see, e.g., \cite[Section I.2]{Rena1980} in the more general framework of groupoids).

\begin{definition}
    Let $q \colon H \rightarrow M$ be a locally compact group bundle. A \textbf{continuous (left) Haar system} $(\lambda_m)_{m \in M}$ for $q$ consists of a left Haar measure $\lambda_m$ of $H_m$ for each $m \in M$ such that the map
        \begin{align*}
            M \rightarrow \C, \quad m \mapsto \int_{H_m} f \, \mathrm{d}\lambda_m
        \end{align*}
    is continuous for each $f \in \mathrm{C}_{\mathrm{c}}(H)$.
\end{definition}

The subsequent criterion for the existence of a continuous Haar system basically follows from \cite[Lemma 1.3]{Rena1991} (noticing that the proof of the easy implication \enquote{(a) $\Rightarrow$ (b)} works for any locally compact group bundle), see also \cite[Theorem 6.9 and its proof]{Will2019}.

\begin{proposition}\label{existenceconthaar}
    For a locally compact group bundle $q \colon H \rightarrow M$ consider the following assertions.
        \begin{enumerate}[(a)]
            \item $q$ has a continuous Haar system.
            \item $q$ is open.
        \end{enumerate}
    Then \enquote{(a) $\Rightarrow$ (b)}. If $M$ is compact, then (a) and (b) are equivalent.
\end{proposition}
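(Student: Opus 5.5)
To show \enquote{(a) $\Rightarrow$ (b)}, I use Lemma \ref{charopen}. Let $(m_\alpha)_\alpha$ be a net in $M$ converging to $m \in M$, and let $x \in H_m$. Suppose, for contradiction, that no subnet admits points $x_\beta \in H_{m_\beta}$ with $x_\beta \to x$. Then there is an open neighborhood $V$ of $x$ and a subnet (still written $(m_\alpha)_\alpha$) with $V \cap H_{m_\alpha} = \emptyset$ for all $\alpha$. The argument runs as follows.
\begin{enumerate}[(1)]
    \item By Urysohn's lemma for the locally compact space $H$, choose $f \in \mathrm{C}_{\mathrm{c}}(H)$ with $0 \leq f \leq 1$, $f(x) = 1$ and $\supp f \subseteq V$.
    \item Then $\int_{H_{m_\alpha}} f \,\mathrm{d}\lambda_{m_\alpha} = 0$ for every $\alpha$.
    \item On the other hand, $f|_{H_m}$ is nonnegative, continuous and nonzero at $x$. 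Since a Haar measure charges every nonempty open subset of $H_m$, we get $\int_{H_m} f \,\mathrm{d}\lambda_m > 0$.
    \item This contradicts the continuity of $m \mapsto \int f\,\mathrm{d}\lambda_m$.
\end{enumerate}
Hence condition (b) of Lemma \ref{charopen} holds and $q$ is open. The only subtlety is the passage to a subnet when negating (b) of Lemma \ref{charopen}. Concretely, if for every neighborhood $V$ of $x$ the set of $\alpha$ with $H_{m_\alpha} \cap V \neq \emptyset$ is cofinal, then indexing by pairs $(\alpha, V)$ produces the required subnet together with points $x_\beta \to x$. So its failure yields one $V$ and a tail that misses $V$, which is exactly what the argument uses.

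For the converse with $M$ compact, I would construct a Haar system following Renault's method as cited (\cite[Lemma 1.3]{Rena1991}, \cite[Theorem 6.9]{Will2019}). I would not reprove this in full, but the plan is the following.
\begin{enumerate}[(1)]
    \item Fix an arbitrary family of Haar measures $\mu_m$.
    \item Use openness of $q$ together with a partition-of-unity argument over the compact base to find a nonnegative $g \in \mathrm{C}_{\mathrm{c}}(H)$ whose restriction to each fiber is not identically zero near the identity. Openness and the continuity of $m \mapsto 1_{H_m}$ make the open set $\{g>0\}$ meet each fiber in a neighborhood of $1_{H_m}$.
    \item Define normalized measures $\lambda_m \coloneqq \mu_m / \int g\,\mathrm{d}\mu_m$.
    \item Show that $m \mapsto \int f\,\mathrm{d}\lambda_m$ is continuous for all $f \in \mathrm{C}_{\mathrm{c}}(H)$. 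Lower semicontinuity comes from openness via Lemma \ref{charopen} and approximation of $f$ by translates. Upper semicontinuity comes from a compactness argument: limit points of the measures are Haar measures on the limit fiber (left invariance passes to the limit), and the normalization pins the scalar down.
\end{enumerate}
The main obstacle is this last step: identifying every weak* limit of $\lambda_{m_\alpha}$ (viewed as measures on $H$) with $\lambda_m$. I would first check that such limits are supported on $H_m$ and are left invariant under $H_m$. For this I would use openness to lift elements $y \in H_m$ to nearby $y_\alpha \in H_{m_\alpha}$ and continuity of multiplication, and then invoke uniqueness of Haar measure up to scalar.
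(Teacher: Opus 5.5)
Your proof of (a)$\Rightarrow$(b) is correct. It is essentially the easy direction of Renault's Lemma 1.3, which is all the paper offers: it gives no proof of its own beyond citing Renault and Williams' Theorem 6.9. Deferring the converse for compact $M$ to those sources is likewise exactly what the paper does.

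If you want to complete your sketch of the converse, the ingredient it leaves out is the existence of weak* cluster points of $(\lambda_{m_\alpha})_\alpha$. That needs a bound $\sup_\alpha \lambda_{m_\alpha}(K) < \infty$ for every compact $K \subseteq H$. The bound follows from $\int g \,\mathrm{d}\lambda_{m_\alpha} = 1$ and left invariance: cover $K$ by finitely many open sets $O$ such that $y^{-1}z \in \{g > 1/2\}$ whenever $y,z \in O$ lie in a common fiber. Openness of $q$ is needed only for the invariance of the limit, not for choosing $g$. Urysohn's lemma gives $g$ directly, because $\{1_{H_m} \mid m \in M\}$ is compact.
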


\subsection{Pontryagin Duality}
A crucial tool for the proof of our main results is a \enquote{relative version} of the classical Pontryagin duality. Recall from Example \ref{basicdual} that for any locally compact abelian group $H$ its Pontryagin dual is given by
    \begin{align*}
        H^* = \{\varrho \colon H \rightarrow \T\mid \varrho \textrm{ is a continuous group homomorphism}\}.
    \end{align*}
The following is the generalization of this concept to open locally compact abelian group bundles.

\begin{definition}\label{defdualbundle}
    Let $q \colon H \rightarrow M$ be an open locally compact abelian group bundle. Equip $H^* \coloneqq \bigsqcup_{m \in M} H_m^*$ with the compact-open topology inherited from $\mathrm{C}_q(H)$. The canonical map $q^* \colon H^* \rightarrow M$, sending $\varrho \in H_m^*$ to $m \in M$, is called the \textbf{dual group bundle} of $q$.
\end{definition}

For a locally compact abelian group bundle $q \colon H \rightarrow M$ with a continuous Haar system $(\lambda_m)_{m \in M}$, the dual group bundle is examined in \cite{MRW1996}, \cite{Goeh2008}, and \cite{Rena2021} from a C*-algebra perspective. The dual is identified with the spectrum of the group bundle C*-algebra\footnote{This is usually done under the assumption of second countability, but the relevant parts of  \cite{MRW1996} for this identification still work without this assumption, see \cite[Footnote 3]{MRW1996}.} $\mathrm{C}^*(H)$. In particular, in these works $H^*$ is equipped with the initial topology with respect to the maps
    \begin{align*}
           \xi_f \colon H^* \rightarrow \C, \quad \varrho \mapsto \int_{H_{q^*(\varrho)}} \varrho(x) f(x) \, \mathrm{d}\lambda_{q^*(\varrho)}(x)
    \end{align*}
for $f \in \mathrm{C}_{\mathrm{c}}(H)$. The following result shows the compatibility with our definition and is basically a version of \cite[Proposition 3.3]{MRW1996} for bundles without the assumption of second countability (see also \cite[Lemma 3.14]{EJK2023} for the case of compact group bundles). It also uses an idea from the proof of \cite[Theorem 6.9]{Will2019}. Notice here that by Proposition \ref{existenceconthaar} every locally compact abelian group bundle with continuous Haar system is open.

\begin{proposition}\label{agreetop}
    Let $q \colon H \rightarrow M$ be a locally compact abelian group bundle with continuous Haar system $(\lambda_m)_{m \in M}$. Then the compact-open topology on $H^*$ agrees with the initial topology defined by the maps $\xi_f$ for $f \in \mathrm{C}_{\mathrm{c}}(H)$.
\end{proposition}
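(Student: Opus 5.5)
We compare the two topologies through nets, using Proposition \ref{netconv} for the compact-open topology on $H^*\subseteq \mathrm{C}_q(H)$. Since $q$ is locally compact and open (Proposition \ref{existenceconthaar}), a net $\varrho_\alpha \to \varrho$ in the compact-open topology means exactly two things. First, $q^*(\varrho_\alpha)\to q^*(\varrho)$. Second, $\varrho_\beta(x_\beta)\to\varrho(x)$ whenever $x_\beta\to x$ with $x_\beta\in H_{q^*(\varrho_\beta)}$. It therefore suffices to show that a net converges in one topology if and only if it converges in the other.

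\emph{Compact-open convergence implies $\xi_f$-convergence.} Let $\varrho_\alpha\to\varrho$ in the compact-open topology, write $m_\alpha = q^*(\varrho_\alpha)$ and $m=q^*(\varrho)$, and fix $f\in\mathrm{C}_{\mathrm{c}}(H)$ with support in a compact set $C$.
\begin{enumerate}[(1)]
\item The map $F\colon (\vartheta,x)\mapsto \vartheta(x)f(x)$ on $H^*\times_M H$ is jointly continuous by Proposition \ref{netconv}.
\item Using the Tietze extension theorem and local compactness, choose $g\in \mathrm{C}_{\mathrm{c}}(H)$ with $g=\varrho f$ on $H_m$.
\item The functions $\varrho_\alpha f - g$ restricted to $H_{m_\alpha}$ tend to zero uniformly. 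If not, there are a subnet and points $x_\beta\in C\cap H_{m_\beta}$ with $|\varrho_\beta(x_\beta)f(x_\beta)-g(x_\beta)|\ge\varepsilon$. By compactness of $C$ we may assume $x_\beta\to x\in H_m$, and continuity then gives $|\varrho(x)f(x)-g(x)|\ge\varepsilon$, which contradicts $g=\varrho f$ on $H_m$.
\item The measures $\lambda_{m_\alpha}(C)$ are eventually bounded. To see this, fix $h\in\mathrm{C}_{\mathrm{c}}(H)$ with $0\le h$ and $h\geq \1$ on $C$; the integrals $\int h\,\mathrm{d}\lambda_{m_\alpha}$ converge by continuity of the Haar system.
\item Combining (3) and (4) gives $\xi_f(\varrho_\alpha)-\int g\,\mathrm{d}\lambda_{m_\alpha}\to 0$. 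Continuity of the Haar system gives $\int g\,\mathrm{d}\lambda_{m_\alpha}\to\int g\,\mathrm{d}\lambda_m=\xi_f(\varrho)$.
\end{enumerate}

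\emph{$\xi_f$-convergence implies compact-open convergence.} This is the main obstacle. Let $\xi_f(\varrho_\alpha)\to\xi_f(\varrho)$ for all $f\in\mathrm{C}_{\mathrm{c}}(H)$.
\begin{enumerate}[(1)]
\item Convergence of base points. Take $\phi\in \mathrm{C}_{\mathrm{c}}(M)$ and $f=(\phi\circ q)\cdot k$ with $k\in\mathrm{C}_{\mathrm{c}}(H)$ chosen so that $\xi_k(\varrho)\ne0$; such a $k$ exists by taking it nonnegative near $1_{H_m}$, where $\varrho\approx 1$. Then $\xi_f(\varrho_\alpha)=\phi(m_\alpha)\xi_k(\varrho_\alpha)$. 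Letting $\phi$ vary, this forces $m_\alpha\to m$: if $m_\alpha$ stayed outside a neighbourhood $U$ of $m$ along a subnet, choose $\phi$ supported in $U$ with $\phi(m)=1$ to get a contradiction.
\item Pointwise joint convergence. Following the idea of \cite[Theorem 6.9]{Will2019} and \cite[Proposition 3.3]{MRW1996}, use translates. For $y\in H$ and $f\in\mathrm{C}_{\mathrm{c}}(H)$ one has $\xi_{f(\cdot\,y^{-1})}(\vartheta)=\vartheta(y)\xi_f(\vartheta)$ fiberwise. To make this global, choose $f$ with $\xi_f(\varrho)\neq0$ and, by openness (Lemma \ref{charopen}), lift a net $y_\beta\to x$ with $y_\beta \in H_{m_\beta}$. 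Let $f_\beta(z)=f(zy_\beta^{-1})$ and $f_x(z) = f(z x^{-1})$, extended via a continuous family. The identity
\begin{align*}
\varrho_\beta(y_\beta)=\frac{\xi_{f_\beta}(\varrho_\beta)}{\xi_f(\varrho_\beta)}
\end{align*}
holds by left invariance of $\lambda_{m_\beta}$. Uniform convergence of the translates $f_\beta\to f_x$ in $\mathrm{C}_{\mathrm{c}}$ on a common compact set, together with the bound from the first direction, gives $\xi_{f_\beta}(\varrho_\beta)-\xi_{f_x}(\varrho_\beta)\to0$. Hence $\varrho_\beta(y_\beta)\to\varrho(x)$.
\item Arbitrary nets $x_\beta\to x$ with $x_\beta\in H_{m_\beta}$ are handled directly by this argument, since it applies to any net $y_\beta$ in the fibers converging to $x$. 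This gives condition (ii) of Proposition \ref{netconv}.
\end{enumerate}
The delicate points are making the translation $f(\cdot\,y_\beta^{-1})$ a single function in $\mathrm{C}_{\mathrm{c}}(H)$ varying continuously and uniformly in $\beta$, which uses continuity of multiplication on the fiber product, and ensuring that $\xi_f(\varrho_\beta)$ stays away from zero.
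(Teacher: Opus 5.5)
Your proposal is correct and follows essentially the same route as the paper. Both arguments characterise the two topologies by nets via Proposition \ref{netconv}. For compact-open $\Rightarrow$ initial, both use a compactly supported extension $g$ of $\varrho f|_{H_m}$, uniform convergence on the fibres (the paper packages this as Lemma \ref{convergencehaar}), and a bound via $\int h\,\mathrm{d}\lambda_{m_\alpha}$. For the converse, both first use a test function localised over a neighbourhood of $m$, then the translation identity $\xi_{f(\cdot\,y^{-1})}(\vartheta)=\vartheta(y)\xi_f(\vartheta)$ together with $\xi_f(\varrho_\beta)\to\xi_f(\varrho)\neq 0$.

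One small slip: in steps (3)--(4) of your first direction, $\varrho_\alpha f-g$ lives on $C\cup\supp(g)$, not only on $C$. You can fix this in either of two ways:
\begin{enumerate}[(a)]
\item work on that union, as the paper does by taking $h=1$ on $\supp(f)\cup\supp(g)$; or
\item choose $g=f\tilde\varrho$, where $\tilde\varrho\in\mathrm{C}_{\mathrm{c}}(H)$ extends $\varrho|_{H_m\cap C}$, so that $\supp(g)\subseteq C$.
\end{enumerate}
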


We first prove an auxiliary result (cf. \cite[Proof of Proposition 2.9]{EJK2023}). Here, we denote by $\|\cdot\|_{\infty}$
the supremum norm.
\begin{lemma}\label{convergencehaar}
    Consider a locally compact group bundle $q \colon \Omega \rightarrow M$, some function $f \in \mathrm{C}_{\mathrm{c}}(\Omega)$ and $m \in M$. Let further $(f_\alpha)_\alpha$ be a net in $\mathrm{C}_{q}(\Omega)$ converging to $f|_{\Omega_m}$. If $\bigcup_{\alpha} \supp(f_{\alpha})$ is precompact in $\Omega$, then $\lim_{\alpha} \|f_{\alpha} - f|_{\Omega_{\mathrm{s}(f_{\alpha})}}\|_{\infty} = 0$.
\end{lemma}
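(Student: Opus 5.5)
We argue by contradiction and reduce everything to the net characterization of convergence in Proposition \ref{netconv}. Here $f_\alpha \in \mathrm{C}_q(\Omega)$ with $\mathrm{s}(f_\alpha) = m_\alpha$, and $f_\alpha \to f|_{\Omega_m}$ in the compact-open topology. So Proposition \ref{netconv} gives $m_\alpha \to m$. It also says that whenever $x_\beta \in \Omega_{m_\beta}$ along a subnet converges to $x$, then $f_\beta(x_\beta) \to f(x)$.

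Suppose the claim fails. Then there are $\varepsilon > 0$, a subnet $(f_\beta)_\beta$, and points $x_\beta \in \Omega_{m_\beta}$ with
\begin{align*}
    |f_\beta(x_\beta) - f(x_\beta)| \geq \varepsilon \quad \textrm{for all } \beta.
\end{align*}
We may choose $x_\beta$ attaining (up to $\varepsilon/2$) the supremum, which is finite since only the compact set below matters. Let $K \coloneqq \overline{\bigcup_\alpha \supp(f_\alpha)} \cup \supp(f)$; this is compact by hypothesis. Since the difference vanishes outside $K$, we have $x_\beta \in K$. By compactness we pass to a further subnet with $x_\beta \to x \in K$. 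Continuity of $q$ gives $q(x) = \lim_\beta m_\beta = m$, so $x \in \Omega_m$.

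Now Proposition \ref{netconv} (b)(ii) yields $f_\beta(x_\beta) \to f(x)$. Continuity of $f$ on $\Omega$ yields $f(x_\beta) \to f(x)$. Hence $|f_\beta(x_\beta) - f(x_\beta)| \to 0$, which contradicts the lower bound $\varepsilon$.

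The only delicate point is making sure the supremum in $\|f_\alpha - f|_{\Omega_{m_\alpha}}\|_\infty$ is effectively taken over the fixed compact set $K$. This is exactly why we include $\supp(f)$ in $K$ together with the precompactness hypothesis on the supports of the $f_\alpha$. Local compactness of $\Omega$ is needed only to apply Proposition \ref{netconv}. Note also that the group structure plays no role; the argument is purely topological.
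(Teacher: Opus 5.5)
Your argument is correct and is essentially the paper's proof: you pick points in the fibers (nearly) realizing the sup norm, use the compactness of $\overline{\bigcup_\alpha \supp(f_\alpha)} \cup \supp(f)$ to extract a subnet converging to some $x \in \Omega_m$, and conclude with Proposition \ref{netconv} and the continuity of $f$. Your contradiction setup, in which the positive lower bound ($\varepsilon$, or $\varepsilon/2$ if you only nearly attain the supremum) automatically forces $x_\beta$ into the compact set $K$, even removes the need for the paper's separate case of points lying outside the supports.
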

\begin{proof}
    For each $\alpha$ we choose $x_{\alpha} \in \Omega_{\mathrm{s}(f_{\alpha})}$ with 
        \begin{align*}
            \|f_{\alpha} - f|_{\Omega_{\mathrm{s}(f_{\alpha})}}\|_{\infty} = |f_{\alpha}(x_{\alpha}) - f(x_{\alpha})|.
        \end{align*}
    It suffices to check that there is no subnet $(f_\beta)_{\beta}$ of $(f_\alpha)_{\alpha}$ such that $(|f_{\beta}(x_{\beta}) - f(x_{\beta})|)_{\beta}$ is bounded from below by some constant $c > 0$. So pick any subnet $(f_{\beta})_{\beta}$ of $(f_{\alpha})_{\alpha}$. If $x_{\beta} \in \Omega \setminus(\supp(f_{\beta}) \cup \supp(f))$ for all large enough $\beta$, clearly $\lim_{\beta} |f_{\beta}(x_{\beta}) - f(x_{\beta})| = 0$. Otherwise, we find a subnet  $(f_{\gamma})_{\gamma}$ of  $(f_{\beta})_{\beta}$ such that $x_{\gamma} \in \supp(f_{\gamma}) \cup \supp(f)$ for each $\gamma$. Using that $\bigcup_{\alpha} \supp(f_{\alpha})$ is precompact, we may pass to a subnet once more to assume that $(x_{\gamma})_{\gamma}$ converges to some $x \in \Omega$. We clearly have $x \in \Omega_m$. But then
        \begin{align*}
            \lim_{\gamma} |f_{\gamma}(x_{\gamma}) - f(x_{\gamma})| = |f(x) - f(x)| = 0.
        \end{align*}
\end{proof}

\begin{proof}[Proof of Proposition \ref{agreetop}]
    Take a net $(\varrho_{\alpha})_{\alpha}$ in $H^*$ and $\varrho \in H^*$. We write $m_{\alpha} \coloneqq q^*(\varrho_{\alpha})$ for each $\alpha$, and $m \coloneqq q^*(\varrho)$.\medskip
    
    Assume first that $(\varrho_{\alpha})_{\alpha}$ converges to $\varrho$ with respect to the compact-open topology and take $f \in \mathrm{C}_{\mathrm{c}}(H)$. Then $(f|_{H_{m_{\alpha}}})_{\alpha}$ converges to $f|_{H_m}$ with respect to the compact-open topology, and hence $(\varrho_{\alpha}f|_{H_{m_{\alpha}}})_{\alpha}$ converges to $\varrho f|_{H_m}$. Let $g \in \mathrm{C}_{\mathrm{c}}(H)$ be a continuous and compactly supported extension of $\varrho f|_{H_m}$. Since $\supp (\varrho_{\alpha}f|_{H_{m_{\alpha}}}) \subseteq \supp(f)$ for each $\alpha$ and $\supp(f)$ is compact, Lemma \ref{convergencehaar} yields $\lim_{\alpha} \|\varrho_{\alpha}f|_{H_{m_{\alpha}}} - g|_{H_{m_{\alpha}}}\|_{\infty} = 0$. Choose $h \in \mathrm{C}_{\mathrm{c}}(H)$ with $h \geq 0$ and $h(x) = 1$ for all $x \in \supp(f) \cup \supp(g)$. Then 
        \begin{align*}
            \biggl|\int_{H_{m_{\alpha}}} \varrho_{\alpha}f|_{H_{m_{\alpha}}} - g|_{H_{m_{\alpha}}} \, \mathrm{d}\lambda_{m_{\alpha}}\biggr| \leq  \left(\int_{H_{m_{\alpha}}} h\, \mathrm{d}\lambda_{m_{\alpha}}  \right)  \cdot  \|\varrho_{\alpha} f|_{H_{m_{\alpha}}} - g|_{H_{m_{\alpha}}}\|_{\infty}
        \end{align*}
    for each $\alpha$. Continuity of the Haar system implies that the right-hand side tends to zero. Again using continuity of the the Haar system, we conclude that  
        \begin{align*}
            \lim_{\alpha} \xi_f(\varrho_{\alpha}) = \lim_{\alpha} \int_{H_{m_{\alpha}}} \varrho_{\alpha}f|_{H_{m_{\alpha}}}  \, \mathrm{d}\lambda_{m_{\alpha}} = \lim_{\alpha}  \int_{H_{m_\alpha}} g \, \mathrm{d}\lambda_{m_{\alpha}} =  \int_{H_m} g \, \mathrm{d}\lambda_m = \xi_f(\varrho),
        \end{align*}
    as desired.\medskip
    
    Assume conversely that  $(\varrho_{\alpha})_{\alpha}$ converges to $\varrho$ with respect to the initial topology defined by the maps  $\xi_f$ for $f \in \mathrm{C}_{\mathrm{c}}(H)$. Suppose first that $(m_{\alpha})_{\alpha}$ does not converge to $m$. We then find a subnet $(m_{\beta})_{\beta}$ of $(m_{\alpha})_{\alpha}$ and an open neighborhood $U \subseteq M$ of $m$ with $m_{\beta} \notin U$ for each $\beta$. Choose $g \in \mathrm{C}_{\mathrm{c}}(H_m)$ with $\int_{H_m} \varrho g\, \mathrm{d}\lambda_m = 1$, and let $f \in \mathrm{C}_{\mathrm{c}}(H)$ be an extension of $g$ with $\supp(f) \subseteq q^{-1}(U)$. Then 
        \begin{align*}
            \lim_{\beta} \xi_f(\varrho_{\beta}) = 0 \neq 1 = \xi_f(\varrho),
        \end{align*}
    a contradiction. Thus, the net $(m_{\alpha})_{\alpha}$ converges to $m$ in $M$.\medskip
    
    Now take a subnet $(\varrho_{\beta})_{\beta}$ of $(\varrho_{\alpha})_{\alpha}$ and $x_{\beta} \in H_{m_{\beta}}$ for each $\beta$ such that $x \coloneqq \lim_{\beta} x_{\beta}$ exists in $\Omega$. We have to show that $\lim_{\beta} \varrho_{\beta}(x_{\beta}) = \varrho(x)$. Let $f \in \mathrm{C}_{\mathrm{c}}(H)$ with $\xi_f(\varrho) = 1$, and find some $g \in \mathrm{C}_{\mathrm{c}}(H)$ with $g(y) = f(x^{-1}y)$ for each $y \in H_m$.  For each $\beta$ define $g_{\beta} \in \mathrm{C}_{\mathrm{c}}(H_{m_\beta})$ by $g_{\beta}(y) \coloneqq f(x_{\beta}^{-1}y)$ for $y \in H_{m_{\beta}}$. Then $\lim_{\beta} g_{\beta} = g|_{H_m}$ with respect to the compact-open topology. Moreover, if $C$ is a compact neighborhood of $x$, then 
        \begin{align*}
            \supp(g_{\beta}) \subseteq x_{\beta}\cdot (\supp(f)\cap H_{m_{\beta}}) \subseteq \{yz\mid (y,z) \in (H \times_M H) \cap (C \times \supp(f))\} 
        \end{align*}
    for all large enough $\beta$. Since the set on the right hand side is compact by continuity of multiplication, we can apply Lemma  \ref{convergencehaar} to obtain
        \begin{align*}
            \lim_{\beta} \|g_{\beta} - g|_{H_{m_{\beta}}}\|_{\infty} = 0.
        \end{align*}
    Let $h \in \mathrm{C}_{\mathrm{c}}(H)$ with $h \geq 0$ and $h(x) = 1$ for all $x \in \supp(f) \cup \supp(g)$. Then
        \begin{align*}
            \biggl|\int_{H_{m_{\beta}}} \varrho_{\beta}(y) (f(x_{\beta}^{-1}y)  - g(y) )\, \mathrm{d}\lambda_{m_{\beta}}(y)\biggr| &\leq \lambda_{m_{\beta}}(x_{\beta}\supp(f) \cup \supp(g)) \cdot \|g_{\beta} - g|_{H_{m_{\beta}}}\|_{\infty}
        \end{align*}
    for each $\beta$. But the right-hand side converges to zero since 
        \begin{align*}
             \lambda_{m_{\beta}}(x_{\beta}\supp(f) \cup \supp(g)) \leq \lambda_{m_{\beta}}(\supp(f)) + \lambda_{m_{\beta}}(\supp(g)) \leq 2 \int_{H_{m_{\beta}}} h\, \mathrm{d}\lambda_{m_{\beta}}
        \end{align*}
    for all $\beta$ and the Haar system is continuous. This shows that
        \begin{align*}
            \lim_{\beta} \int_{H_{m_{\beta}}} \varrho_{\beta}(y) f(x_{\beta}^{-1}y) \, \mathrm{d}\lambda_{m_{\beta}}(y) = \lim_{\beta} \xi_g(\varrho_{\beta}) = \xi_g(\varrho) = \int_{H_m} \varrho(y) f(x^{-1}y)\, \mathrm{d}\lambda_m(y).
        \end{align*}
    But, for each $\beta$, 
        \begin{align*}
            \int_{H_{m_{\beta}}} \varrho_{\beta}(y) f(x_{\beta}^{-1}y) \, \mathrm{d}\lambda_{m_{\beta}}(y) = \varrho_{\beta}(x_{\beta}) \int_{H_{m_{\beta}}} \varrho_{\beta}(y) f(y) \,  \mathrm{d}\lambda_{m_{\beta}}(y)  = \varrho(x_{\beta}) \xi_f(\varrho_{\beta})
        \end{align*}
    and, similarly, 
        \begin{align*}
             \int_{H_{m}} \varrho(y) f(x^{-1}y) \, \mathrm{d}\lambda_{m}(y) = \varrho(x) \xi_f(\varrho) = \varrho(x).
        \end{align*}
    Since $\lim_{\beta} \xi_f(\varrho_{\beta}) = \xi_f(\varrho) = 1$, we must have $\lim_{\beta} \varrho(x_{\beta}) = \varrho(x)$.
\end{proof}

Proposition \ref{agreetop}, combined with the results from \cite{MRW1996} and \cite{Rena2021}, allows us to establish the following crucial statement.

\begin{proposition}
    If $q \colon H \rightarrow M$ is an open locally compact abelian group bundle, then $q^* \colon H^* \rightarrow M$ is also an open locally compact abelian group bundle.
\end{proposition}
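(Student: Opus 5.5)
The plan is to reduce to the situation covered by the C*-algebraic literature via Proposition \ref{agreetop}, after first passing to compact pieces of the base. Openness, local compactness of the total space, and continuity of the group operations are all local on $M$: if $U \subseteq M$ is open, the restriction $q^*_U$ is the dual bundle of the restriction $q_U$, by Proposition \ref{netconv}. So it suffices to check each property over a neighborhood of each point $m \in M$.

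\textbf{Reduction.} Fix $m \in M$ and choose an open neighborhood $U$ of $m$ with compact closure $L = \overline{U}$. The restriction $q_L \colon q^{-1}(L) \rightarrow L$ has the following properties.
\begin{enumerate}[(1)]
    \item It is still an open locally compact abelian group bundle: $q^{-1}(L)$ is closed in $H$, and restrictions of open maps to full preimages remain open.
    \item It has compact base, so by Proposition \ref{existenceconthaar} it admits a continuous Haar system $(\lambda_l)_{l \in L}$.
\end{enumerate}
By Proposition \ref{agreetop}, the compact-open topology on $q^{-1}(L)^*$ coincides with the initial topology defined by the maps $\xi_f$. This is exactly the topology used in \cite{MRW1996} and \cite{Rena2021}, where the dual is identified with the spectrum of the commutative C*-algebra $\mathrm{C}^*(q^{-1}(L))$. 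I will also check that the compact-open topology on $(q^*)^{-1}(U)$ is the subspace topology coming from $q^{-1}(L)^*$, using Proposition \ref{netconv} and the fact that $U$ is open in $L$.

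\textbf{Local compactness and openness.} Local compactness of $H^*$ comes from its identification with the spectrum of a commutative C*-algebra, which is always locally compact Hausdorff. Openness of $q^*$ is the substantive input from \cite{MRW1996} and \cite{Rena2021}: $\mathrm{C}^*(H)$ is a $\mathrm{C}_0(M)$-algebra whose associated upper semicontinuous bundle is in fact continuous. Equivalently, the map from the spectrum to the base is open when a continuous Haar system exists. I would cite this, checking that second countability is not needed (cf. \cite[Footnote 3]{MRW1996}).

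If I had to verify openness directly, I would use Lemma \ref{charopen}. Given $l_\alpha \rightarrow l$ and $\varrho \in H_l^*$, I need characters $\varrho_\beta \in H_{l_\beta}^*$, along a subnet, converging to $\varrho$. This lifting of characters is the main obstacle. The route I would try first:
\begin{enumerate}[(1)]
    \item Pick $f \in \mathrm{C}_{\mathrm{c}}(H)$ with $\xi_f(\varrho) \neq 0$.
    \item Use lower semicontinuity of the norms $l \mapsto \|\cdot\|_{\mathrm{C}^*(H_l)}$, together with a positive-definite function argument, to produce characters $\varrho_\beta$ with $\xi_f(\varrho_\beta) \rightarrow \xi_f(\varrho)$ for all $f$.
    \item Conclude $\varrho_\beta \rightarrow \varrho$ by Proposition \ref{agreetop}.
\end{enumerate}

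\textbf{Group structure.} Each fiber $H_m^*$ is an abelian group. The identity section $m \mapsto \1_{H_m}$ is continuous, by Proposition \ref{netconv}. Inversion $\varrho \mapsto \overline{\varrho}$ is continuous, again by Proposition \ref{netconv}. For multiplication $(\varrho,\sigma) \mapsto \varrho\sigma$ on $H^* \times_M H^*$, let $(\varrho_\alpha,\sigma_\alpha) \rightarrow (\varrho,\sigma)$ and $x_\beta \rightarrow x$ with $x_\beta \in H_{m_\beta}$. Then $\varrho_\beta(x_\beta)\sigma_\beta(x_\beta) \rightarrow \varrho(x)\sigma(x)$, so condition (b) of Proposition \ref{netconv} holds and the product converges. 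Together with local compactness and openness, this shows that $q^*$ is an open locally compact abelian group bundle.
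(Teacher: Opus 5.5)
Your proposal is correct and shares the paper's skeleton. The group operations are handled via Proposition \ref{netconv}. You reduce to a compact base, where Proposition \ref{existenceconthaar} supplies a continuous Haar system. Local compactness comes from identifying $H^*$ with the spectrum of $\mathrm{C}^*(H)$ through Proposition \ref{agreetop}.

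The real difference is the openness step. The paper cites \cite[Corollary 5]{Rena2021}: the dual Haar system is a continuous Haar system for $q^*$. It then needs only the easy implication (a) $\Rightarrow$ (b) of Proposition \ref{existenceconthaar}. This stays within the paper's own toolkit, and as a bonus shows that $q^*$ locally carries a continuous Haar system.

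You instead use norm continuity of $\mathrm{C}^*(H)\cong\mathrm{C}_0(H^*)$ as a $\mathrm{C}_0(M)$-algebra, combined with the standard fact that $\mathrm{C}_0(X)$ is a continuous $\mathrm{C}_0(M)$-algebra exactly when $X \rightarrow M$ is open. Norm continuity here means lower semicontinuity of $m \mapsto \sup_{\varrho \in H_m^*}|\xi_f(\varrho)| = \|f|_{H_m}\|_{\mathrm{C}^*(H_m)}$.

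Your route avoids the dual Haar system entirely, and the lower semicontinuity is elementary. For abelian (hence amenable) fibers these norms equal the reduced norms $\|\lambda_m(f|_{H_m})\|$. These are suprema of normalized matrix coefficients, which depend continuously on $m$ by continuity of the Haar system. Make sure your citation actually states this, since the paper uses \cite{Rena2021} for the dual Haar system result, not for field continuity.

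Your fallback of lifting characters via positive-definite functions is unnecessary. Once lower semicontinuity is known, openness at $\varrho$ follows directly:
\begin{enumerate}[(1)]
    \item Take $F \in \mathrm{C}_{\mathrm{c}}(H^*)$ with $F(\varrho)=1$, $0 \le F \le 1$, supported in a neighborhood $W$ of $\varrho$.
    \item Approximate $F$ uniformly within $\tfrac{1}{3}$ by some $\xi_f$. This is possible because $f \mapsto \xi_f$ is the Gelfand transform on the dense subalgebra $\mathrm{C}_{\mathrm{c}}(H)$.
    \item The fiberwise suprema of $|\xi_f|$ exceed $\tfrac{2}{3}$ over $q^*(\varrho)$ and are below $\tfrac{1}{3}$ off $q^*(W)$. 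Lower semicontinuity then forces $q^*(W)$ to be a neighborhood of $q^*(\varrho)$.
\end{enumerate}
No net $(\varrho_\beta)_\beta$ ever has to be constructed.
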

\begin{proof}
    Using Proposition \ref{netconv} it is straightforward to check that $q^*$ is a topological group bundle. We also know from Corollary \ref{corhausdorff} that $H^*$ is a Hausdorff space. Since local compactness and openness of $q^*$ can be checked locally, we may assume that the base space $M$ is compact and hence that $q$ has a continuous Haar system $(\lambda_m)_{m \in M}$ by Proposition \ref{existenceconthaar}. But then $H^*$ is locally compact since it is homeomorphic to the spectrum of the commutative C*-algebra $\mathrm{C}^*(H)$ by Proposition \ref{agreetop}. Moreover, by \cite[Corollary 5]{Rena2021} the dual Haar system of $(\lambda_m)_{m \in M}$ is a continuous Haar system for $q^*$. In particular, $q^*$ has a continuous Haar system. Therefore, $q^*$ is open by Proposition \ref{existenceconthaar}.
\end{proof}

The following result for morphisms of locally compact abelian group bundles is straightforward to verify. Recall here the notation for induced fiber maps from Definition \ref{deftopbundle}.
\begin{proposition}\label{dualmorph}
    Let $q \colon H \rightarrow M$ and $q' \colon H' \rightarrow M$ be two open locally compact abelian group bundles. If $\Phi\colon q \rightarrow q'$ is a morphism, then 
        \begin{align*}
            \Phi^* \colon (H')^* \rightarrow H^*, \quad \varrho \mapsto \varrho \circ \Phi_{(q')^*(\varrho)}
        \end{align*}
     defines a morphism $\Phi^* \colon (q')^* \rightarrow q^*$.
\end{proposition}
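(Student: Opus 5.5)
We need to check three things: that $\Phi^*$ is well defined, that it commutes with the bundle maps, that it is a fiberwise group homomorphism, and, the only real issue, that it is continuous for the compact-open topologies on $(H')^*$ and $H^*$.

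\emph{Algebraic part.} For $\varrho \in (H')^*_m$ the fiber map $\Phi_m \colon H_m \rightarrow H'_m$ is a continuous group homomorphism, because $\Phi$ is continuous and a morphism of group bundles. Hence $\varrho \circ \Phi_m$ is a continuous character of $H_m$, that is, an element of $H_m^*$. This gives $q^* \circ \Phi^* = (q')^*$. Since multiplication in the dual fibers is pointwise, $(\varrho_1\varrho_2)\circ\Phi_m = (\varrho_1\circ\Phi_m)(\varrho_2\circ\Phi_m)$, so each fiber map $\Phi^*_m$ is a group homomorphism.

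\emph{Continuity.} Both $H$ and $H'$ are locally compact, so we can use the net characterization of Proposition \ref{netconv} for $\mathrm{C}_q(H)$. Let $(\varrho_\alpha)_\alpha$ be a net in $(H')^*$ converging to $\varrho$, and write $m_\alpha \coloneqq (q')^*(\varrho_\alpha)$ and $m \coloneqq (q')^*(\varrho)$.

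Condition (i) of Proposition \ref{netconv} (b) for $(\Phi^*(\varrho_\alpha))_\alpha$ reads $\mathrm{s}(\Phi^*(\varrho_\alpha)) = m_\alpha \rightarrow m$. This follows from condition (i) for the convergent net $(\varrho_\alpha)_\alpha$.

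For condition (ii), take a subnet $(\varrho_\beta)_\beta$ and points $x_\beta \in H_{m_\beta}$ with $x_\beta \rightarrow x$ in $H$. By continuity of $\Phi$ we get $\Phi(x_\beta) \rightarrow \Phi(x)$ in $H'$, and $\Phi(x_\beta) \in H'_{m_\beta}$. Applying condition (ii) for the convergent net $(\varrho_\alpha)_\alpha$ in $\mathrm{C}_{q'}(H')$ to the points $\Phi(x_\beta)$ gives
\[
\Phi^*(\varrho_\beta)(x_\beta) = \varrho_\beta(\Phi(x_\beta)) \rightarrow \varrho(\Phi(x)) = \Phi^*(\varrho)(x).
\]
Proposition \ref{netconv} then gives $\Phi^*(\varrho_\alpha) \rightarrow \Phi^*(\varrho)$.

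The only step needing any care is matching the hypotheses of Proposition \ref{netconv}: it requires the domain bundles $q$ and $q'$ to be locally compact, which they are by assumption. Openness of $q$ and $q'$ is used only to know that the dual bundles are topological group bundles at all (the preceding proposition), so that $\Phi^*$ is a morphism between objects of the right kind. With this, $\Phi^* \colon (q')^* \rightarrow q^*$ is a morphism of topological group bundles.
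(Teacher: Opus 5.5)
Your proof is correct and is the straightforward verification the paper has in mind (the paper omits it): the fiberwise algebraic checks, plus continuity from Proposition \ref{netconv}, applied once in $\mathrm{C}_{q'}(H')$ to the points $\Phi(x_\beta) \to \Phi(x)$. One small correction to your closing remark: the group bundle axioms for $q^*$ already follow from Proposition \ref{netconv} without openness, and openness is instead needed for the dual bundle to be defined at all and for $q^*$ to be locally compact and open.
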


For any given locally compact space $M$ the construction of the dual bundle therefore gives rise to a functor from the category $\mathbf{LCA}(M)$ of open locally compact abelian group bundles to its opposite category.
\begin{definition}\label{dualfunctor}
    Let $M$ be a locally compact space. Define the functor 
        \begin{align*}
            \mathrm{D} \colon \mathbf{LCA}(M) \rightarrow \mathbf{LCA}(M)^{\mathrm{op}}
        \end{align*}
    by setting
        \begin{enumerate}[(i)]
            \item $\mathrm{D}(q) \coloneqq q^*$ for each open locally compact abelian group bundle $q \colon H \rightarrow M$, and
            \item $\mathrm{D}(\Phi) \coloneqq \Phi^*$ for each morphism $\Phi\colon q \rightarrow q'$ of open locally compact abelian group bundles.
        \end{enumerate}
\end{definition}

Here, we are only interested in the following two special cases of open locally compact abelian group bundles $q \colon H \rightarrow M$:
\begin{enumerate}[(i)]
    \item $q$ is an open and proper Hausdorff bundle, or
    \item $q$ is an \'{e}tale Hausdorff bundle. 
\end{enumerate}

We establish that these two types of bundles are - exactly as compact and discrete groups - dual to each other.

\begin{proposition}\label{dualpropetal}
    Let $M$ be a locally compact space. 
        \begin{enumerate}[(i)]
            \item If $q \colon H \rightarrow M$ is an open and proper Hausdorff abelian group bundle, then the dual bundle $q^* \colon H^* \rightarrow M$ is an \'{e}tale Hausdorff abelian group bundle.
            \item If $q \colon H \rightarrow M$ is an \'{e}tale Hausdorff abelian group bundle, then the dual bundle $q^* \colon H^* \rightarrow M$ is an open and proper Hausdorff abelian group bundle.
        \end{enumerate}
\end{proposition}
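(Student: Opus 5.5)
Both parts rest on the preceding proposition: $q^*$ is an open locally compact abelian group bundle, so $H^*$ is Hausdorff by Corollary \ref{corhausdorff}. It remains to show that $q^*$ is étale in (i) and proper in (ii). All convergence questions are handled with the net description of Proposition \ref{netconv}.

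For (i), fix $\varrho \in H^*_m$. The plan is to find a neighborhood of $\varrho$ on which $q^*$ is injective; an injective, continuous, open map onto an open set is a homeomorphism. Since $q^*$ is already open, this local injectivity suffices. Suppose injectivity fails near $\varrho$. Then there are nets $\varrho_\alpha \neq \varrho'_\alpha$ in the same fiber $H_{m_\alpha}$, both converging to $\varrho$.

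Consider the quotients $\chi_\alpha \coloneqq \varrho_\alpha \overline{\varrho'_\alpha}$. By continuity of the group operations on $q^*$, these are nontrivial characters of $H_{m_\alpha}$ converging to the trivial character $1_m$. By properness of $q$ and Lemma \ref{charproper}, the fibers near $m$ lie in a compact set. Convergence in the compact-open topology should therefore give $\sup_{x\in H_{m_\alpha}}|\chi_\alpha(x)-1| \to 0$. I would argue this by contradiction: choose near-maximizers $x_\alpha$, extract a convergent subnet by properness, and apply Proposition \ref{netconv}. But a nontrivial character of a compact group has image a nontrivial subgroup of $\T$, so its supremum distance to $1$ is at least $\sqrt{3}$, a contradiction. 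Hence $q^*$ is étale.

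For (ii), properness is checked via Lemma \ref{charproper}. Let $\varrho_\alpha \in H^*_{m_\alpha}$ with $m_\alpha \to m$. The functions take values in the compact set $\T$, so I pass to a subnet along which the net converges in the product sense on the relevant sections. Concretely, I use Lemma \ref{compopenetale}: the topology near a point is determined by values on local sections. For each $x \in H_m$, Lemma \ref{etaleneighborhoodbase} gives a local section $\tau_x$ through $x$. By Tychonoff, I choose a subnet along which $\varrho_\beta(\tau_x(m_\beta))$ converges for every $x\in H_m$ simultaneously, and define $\varrho(x)$ as this limit. Because the product of sections is a section and the identity section is continuous, $\varrho$ is a homomorphism $H_m \to \T$. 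It is automatically continuous since $H_m$ is discrete, being a fiber of an étale bundle.

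The main obstacle is the Hausdorff/étale subtlety in this last step. The limit must not depend on the chosen section, and I must show $\varrho_\beta \to \varrho$. Two local sections agreeing at $m$ agree on a neighborhood of $m$, because their equalizer is open in an étale bundle. This gives well-definedness. Convergence then follows from Lemma \ref{compopenetale}: any finite $D \subseteq \Gamma_q(U)$ consists of sections each eventually equal to some $\tau_x$ near $m$. This yields a convergent subnet, so $q^*$ is proper.
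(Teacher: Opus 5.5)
Your proof is correct. Part (ii) matches the paper. Part (i) takes a genuinely different route.

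\textbf{Part (ii).} The paper first reduces to a compact base and shows that $H^*$ is compact. You instead verify properness directly through Lemma~\ref{charproper}. The ingredients are the same:
\begin{enumerate}[(1)]
\item Tychonoff applied to the values of the characters along local sections;
\item independence of the chosen section, because two local sections of an \'{e}tale bundle that agree at $m$ agree near $m$;
\item Lemma~\ref{compopenetale} for the convergence.
\end{enumerate}
Choosing one section $\tau_x$ per point $x\in H_m$, instead of taking limits over all pairs $(U,\tau)$, is only a cosmetic economy.

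\textbf{Part (i).} The paper reduces to a compact base and imports the whole \'{e}tale property from \cite[Theorem 3.18 and Lemma 3.19]{EJK2023}. You split \'{e}taleness into two parts:
\begin{enumerate}[(1)]
\item openness of $q^*$, already available from the preceding proposition;
\item local injectivity, which you prove by a relative \enquote{no small subgroups} argument.
\end{enumerate}
In the second step, the nontrivial characters $\chi_\alpha=\varrho_\alpha\overline{\varrho'_\alpha}$ cannot converge to the trivial character of $H_m$. Each satisfies $\sup_{x\in H_{m_\alpha}}|\chi_\alpha(x)-1|\geq\sqrt3$. On the other hand, properness of $q$ together with Proposition~\ref{netconv} forces this supremum to tend to $0$.

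\textbf{What each route buys.} Your argument works verbatim over a locally compact base and keeps everything inside the paper. It is the bundle analogue of the classical fact that the characters of a compact group are uniformly separated. It does not remove the hard analytic input, however. Openness of $q^*$ in the preceding proposition rests on the C*-algebraic identification of $H^*$ and on Renault's continuity of the dual Haar system. So your route makes explicit that this openness is the only non-elementary part of (i). The paper's citation instead packages the entire \'{e}tale property into an external result for compact bundles.
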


\begin{proof}
    For part (i) assume that $q \colon H \rightarrow M$ is an open and proper Hausdorff abelian group bundle. We can check locally on $M$ that $q$ is \'{e}tale. We may therefore assume that $M$ is compact and hence that $q$ is an open compact abelian group bundle. But then the result is a consequence of \cite[Theorem 3.18 and Lemma 3.19]{EJK2023} (for the special case of an \emph{abelian} group bundle).\medskip
    
    For part (ii) let  $q \colon H \rightarrow M$ be an \'{e}tale Hausdorff abelian group bundle. We may once again assume that $M$ is compact and then have to check that $H^*$ is compact as well. To do so, let $(\varrho_{\alpha})_{\alpha}$ be a net in $H^*$ and show that it has a convergent subnet. By compactness of $M$, we find a subnet $(\varrho_{\beta})_{\beta}$ of $(\varrho_{\alpha})_{\alpha}$ such that the limit $m \coloneqq \lim_{\beta} q^*(\varrho_{\beta})$ exists in $M$. Applying Tychonoff's theorem to a power of $\T$ we may assume, by passing to a subnet once again, that $d_{U,\tau} \coloneqq \lim_{\beta} \varrho_{\beta}(\tau(q^*(\varrho_{\beta})))$ exists in $\T$ for each open neighborhood $U$ of $m$ and each local section $\tau \in \Gamma_q(U)$.\medskip
    
    Now if $x \in H_m$, then (since $q$ is \'{e}tale, see Lemma \ref{etaleneighborhoodbase}) there is a local continuous section $\tau \in \Gamma_q(U)$ on an open neighborhood $U$ of $m$ with $\tau(m) = x$. Moreover, any two such sections $\tau_1 \in \Gamma_q(U_1)$ and $\tau_2 \in \Gamma_q(U_2)$ have to agree on an open neighborhood of $m$ which is contained in $U_1 \cap U_2$, hence the nets $(\varrho_{\beta}(\tau_1(q^*(\varrho_{\beta}))))_{\beta}$ and  $(\varrho_{\beta}(\tau_2(q^*(\varrho_{\beta}))))_{\beta}$ have to eventually coincide, and this implies $d_{U_1,\tau_1} = d_{U_2,\tau_2}$. As a consequence there is a unique map $\varrho \colon H_m \rightarrow \T$ with the property that for every $x \in H_m$ we have
        \begin{align*}
            \varrho(x) = \lim_{\beta} \varrho_{\beta}(\tau(q^*(\varrho_{\beta})))
        \end{align*}
    for every local continuous section $\tau \in \Gamma_q(U)$ on an open neighborhood $U$ of $m$ with $\tau(m) = x$. One can readily check that $\varrho$ is a group homomorphism. Since $H_m$ is discrete, this implies $\varrho \in H_m^*$. By Lemma \ref{compopenetale} the net $(\varrho_{\beta})_{\beta}$ converges to $\varrho$ in $H^*$.
\end{proof}

It is a well known fact that a subset $C \subseteq H$ of a locally compact abelian group $H$ generates a dense subgroup if and only if the only character $\varrho \in H^*$ satisfying $\varrho(x) =1$ for all $x \in C$ is the trivial character $\varrho = \1_H$.
We show the following version for generating subsets of open and proper Hausdorff abelian group bundles over a locally compact space.

\begin{corollary}\label{chargenerating}
    Let $q \colon H \rightarrow M$ be an open and proper Hausdorff abelian group bundle over a locally compact space $M$. For a subbundle $q|_C \colon C \rightarrow M$ of $q$ the following assertions are equivalent.
        \begin{enumerate}[(a)]
            \item $C$ is generating for $q$.
            \item Whenever $\tau \in \Gamma_{q^*}(U)$ is a local section of $q^*$ on some open subset $U \subseteq M$ with 
                \begin{align*}
                    (\tau(m))(x) = 1 \quad \textrm{ for all } \, m \in U \, \textrm{ and } \, x \in C_m, 
                \end{align*}
            then $\tau(m) = \1_{H_m}$ for each $m \in M$.
        \end{enumerate}
\end{corollary}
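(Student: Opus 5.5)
The plan is to prove the two implications separately. For \enquote{(b) $\Rightarrow$ (a)} I pass to the quotient by $\langle C \rangle$ and use Proposition \ref{dualpropetal}. For \enquote{(a) $\Rightarrow$ (b)} I build a closed subgroup bundle out of the kernels of $\tau$. Throughout, I read the conclusion of (b) as \enquote{$\tau(m) = \1_{H_m}$ for each $m \in U$}, since $\tau$ is only defined on $U$.

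\emph{(a) $\Rightarrow$ (b).} Let $\tau \in \Gamma_{q^*}(U)$ vanish on $C$ in the sense of (b), and fix $m_0 \in U$. Since $M$ is locally compact, choose an open neighborhood $V$ of $m_0$ with $\overline{V} \subseteq U$ compact. Define
\begin{align*}
    H' \coloneqq q^{-1}(M \setminus V) \cup \{x \in q^{-1}(\overline{V}) \mid (\tau(q(x)))(x) = 1\}.
\end{align*}
The first set is closed because $q$ is continuous. For the second set, note that the evaluation map $H^* \times_M H \rightarrow \T$, $(\varrho,x) \mapsto \varrho(x)$, is continuous by Proposition \ref{netconv}. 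Composing it with the continuous map $q^{-1}(U) \rightarrow H^* \times_M H$, $x \mapsto (\tau(q(x)),x)$, shows that the second set is closed in $q^{-1}(\overline{V})$, and $q^{-1}(\overline{V})$ is closed in $H$. Hence $H'$ is closed. Each fiber $H'_m$ is either all of $H_m$ (for $m \notin V$) or $\ker \tau(m)$ (for $m \in V$), so $q|_{H'}$ is a subgroup bundle, and it contains $C$ by hypothesis. By (a) we get $H' = H$, so $\tau(m_0)$ is trivial on $H_{m_0}$. This step is the only subtle point: one cannot simply take $q^{-1}(M \setminus U)$ together with the kernels over $U$, since that set need not be closed near $\partial U$. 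Shrinking to $\overline{V} \subseteq U$ fixes this.

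\emph{(b) $\Rightarrow$ (a).} Put $H' \coloneqq \langle C \rangle$, a closed subgroup bundle, and suppose $H'_{m_0} \neq H_{m_0}$ for some $m_0$. By Proposition \ref{quotient}, $q_{/H'} \colon H/H' \rightarrow M$ is an open and proper Hausdorff abelian group bundle, and the quotient map $\pi \colon H \rightarrow H/H'$ is a morphism. The fiber $H_{m_0}/H'_{m_0}$ is a nontrivial compact abelian group, so classical Pontryagin duality provides a nontrivial character $\sigma_0$ of it.

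By Proposition \ref{dualpropetal} (i), the dual bundle $q_{/H'}^*$ is \'{e}tale. Lemma \ref{etaleneighborhoodbase} then gives an open $U \ni m_0$ and a local section $\sigma \in \Gamma_{q_{/H'}^*}(U)$ with $\sigma(m_0) = \sigma_0$. Set $\tau \coloneqq \pi^* \circ \sigma$, i.e., $\tau(m) = \sigma(m) \circ \pi_m$. This is a continuous local section of $q^*$ over $U$ by Proposition \ref{dualmorph}.

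For $x \in C_m \subseteq H'_m$ we have $\pi_m(x) = 1$, hence $(\tau(m))(x) = 1$ for all $m \in U$. So (b) forces $\tau(m_0) = \1_{H_{m_0}}$. But $\pi_{m_0}$ is surjective and $\sigma_0$ is nontrivial, so $\tau(m_0) = \sigma_0 \circ \pi_{m_0}$ is nontrivial, a contradiction. Therefore $\langle C \rangle = H$.
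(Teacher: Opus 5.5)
Your proof is correct and follows the paper's argument in both directions. For (a) $\Rightarrow$ (b), the fiberwise kernels of $\tau$ form a closed subgroup bundle containing $C$. For (b) $\Rightarrow$ (a), you pass to $H/\langle C\rangle$, use that its dual is \'{e}tale, and pull back a local section through a nontrivial character.

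One correction: your shrinking to $\overline{V} \subseteq U$ is harmless but unnecessary, and the remark motivating it is mistaken. The set the paper uses, $q^{-1}(M \setminus U) \cup \{x \in q^{-1}(U) \mid (\tau(q(x)))(x) = 1\}$, is already closed. Its complement $\{x \in q^{-1}(U) \mid (\tau(q(x)))(x) \neq 1\}$ is open in the open set $q^{-1}(U)$, hence open in $H$.
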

\begin{proof}
    The implication \enquote{(a) $\Rightarrow$ (b)} follows from the fact that for any given local section $\tau \in \Gamma_{q^*}(U)$ as in (b) the set  
        \begin{align*}
            H' \coloneqq \{x \in H \mid \tau(q(x))(x) = 1 \textrm{ if } q(x) \in U\}
        \end{align*}
    is closed in $H$, contains $C$ and defines a subsemigroup bundle of $q$.\medskip
    
    Now assume that (a) does not hold and let $H' \coloneqq \langle C \rangle \subsetneq H$. Then the quotient bundle $q_{/H'} \colon H/H' \rightarrow M$ from Lemma \ref{quotient} has a non-trivial fiber. Thus, since characters separate points, we find some $m \in M$ and some non-trivial character $\varrho \in (H_m/H_m')^*$. The bundle $(q_{/H'})^* \colon (H/H')^* \rightarrow M$ is \'{e}tale by Proposition \ref{dualpropetal}, hence there is a local section $\tau \in \Gamma_{(q_{/H'})^*}(U)$ defined on an open neighborhood $U$ of $m$ with $\tau(m) = \varrho$. Write $\pi \colon H \rightarrow H/H'$ for the quotient map, and set $\tau(m') \coloneqq \tau(m') \circ \pi|_{H_{m'}}$ for each $m' \in U$. One can readily check that the map 
        \begin{align*}
            \tau \colon U \rightarrow H^*, \quad m' \mapsto \tau(m')
        \end{align*}
    is a local section for $q^*$. Moreover, $\tau(q(x))(x) = 1$ for each $x \in H'$ (and in particular each $x \in C$) with $q(x) \in U$. Since $\tau(m) = \varrho \circ \pi|_{H_m} \neq \1_{H_m}$, condition (b) is not fulfilled.
\end{proof}

We now establish Pontryagin duality for the above two classes of open locally compact abelian group bundles.

\begin{theorem}[Pontryagin Duality]\label{pontryagin}
    Let $q \colon H \rightarrow M$ be an open and proper Hausdorff abelian group bundle or an \'{e}tale Hausdorff abelian group bundle over a locally compact space $M$. Then the canonical map
        \begin{align*}
            \iota^q \colon H \rightarrow H^{**}, \quad x \mapsto \iota^q_x
        \end{align*}
    with $\iota^q_x(\varrho) \coloneqq \varrho(x)$ for $(x,\varrho) \in H \times_{M} H^*$ defines an isomorphism of topological group bundles.
\end{theorem}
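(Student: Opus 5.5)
The map $\iota^q$ is well defined fiberwise, with $\iota^q_x \in H_m^{**}$ for $x \in H_m$. By classical Pontryagin duality (Example \ref{exampleequiv}), each fiber map $\iota^q_m \colon H_m \rightarrow H_m^{**}$ is an isomorphism of groups. So $\iota^q$ is a bijection commuting with the projections $q$ and $q^{**}$, and it is a group homomorphism on each fiber. What remains is to show that $\iota^q$ is continuous and that its inverse is continuous. I will first prove continuity, then obtain openness from the corollaries for proper and étale bundles, treating the two cases separately.

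\emph{Continuity of $\iota^q$.} I will use Proposition \ref{netconv} for the bundle $q^*$, which is locally compact by the preceding proposition. Let $x_\alpha \to x$ in $H$, and put $m_\alpha = q(x_\alpha)$ and $m = q(x)$. Then $\mathrm{s}(\iota^q_{x_\alpha}) = m_\alpha \to m$. Now take a subnet $(x_\beta)_\beta$ and characters $\varrho_\beta \in H^*_{m_\beta}$ with $\varrho_\beta \to \varrho$ in $H^*$. Applying Proposition \ref{netconv} to the convergent net $(\varrho_\beta)_\beta$ in $\mathrm{C}_q(H)$, evaluated along $x_\beta \to x$, gives $\varrho_\beta(x_\beta) \to \varrho(x)$. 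This says exactly that $\iota^q_{x_\beta}(\varrho_\beta) \to \iota^q_x(\varrho)$. Hence $\iota^q_{x_\alpha} \to \iota^q_x$. This argument works for any open locally compact abelian bundle.

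\emph{Inverse continuity.} By Proposition \ref{dualpropetal}, applied twice, $q^{**}$ is of the same type as $q$: open and proper Hausdorff if $q$ is, and étale Hausdorff if $q$ is.
\begin{enumerate}[(i)]
    \item If $q$ is open and proper, then $q$ is proper and $q^{**}$ is Hausdorff. Corollary \ref{automaticcontprop} then shows that the continuous bijective morphism $\iota^q$ is a homeomorphism.
    \item If $q$ is étale, then $q$ is open and $q^{**}$ is étale. Corollary \ref{lemmaiso} then shows that $\iota^q$ is a homeomorphism.
\end{enumerate}
In either case $\iota^q$ is an isomorphism of topological group bundles.

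I expect the main obstacle to be the continuity step. One has to apply the net characterization carefully, both to identify the topology on $H^{**}$ and to evaluate along moving points. The topology on $H^{**}$ is the compact-open topology on $\mathrm{C}_{q^*}(H^*)$, and Proposition \ref{netconv} requires $q^*$ to be locally compact. This is exactly what the preceding proposition, or Proposition \ref{dualpropetal}, supplies. The evaluation along $x_\beta \to x$ uses the direction (a)$\Rightarrow$(b) of Proposition \ref{netconv}, which requires $q$ to be locally compact; this holds by hypothesis. Once continuity is settled, the rest follows from the automatic-continuity corollaries.
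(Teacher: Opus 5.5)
Your proof is correct and follows the paper's argument. The paper likewise gets continuity of $\iota^q$ from Proposition \ref{netconv}, fiberwise bijectivity from classical Pontryagin duality, and then concludes with Corollary \ref{automaticcontprop} in the proper case and Corollary \ref{lemmaiso} in the \'{e}tale case. You are somewhat more explicit than the paper in checking the hypotheses of these results: you verify that $q^*$ is locally compact, and you use Proposition \ref{dualpropetal} twice to see that $q^{**}$ is Hausdorff, respectively \'{e}tale.
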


Under the assumption of second countability, Theorem \ref{pontryagin} holds for general open locally compact group bundles by \cite{Goeh2008} (notice that these have a continuous Haar system by \cite[Lemma 1.3]{Rena1991} or  \cite[Theorem 6.9]{Will2019}). In our situation the proof is an easy consequence of the basic results from Subsection \ref{subsectopbun}.

\begin{proof}[Proof of Theorem \ref{pontryagin}]
    It is clear from Proposition \ref{netconv} that $\iota^q$ is continuous and then clearly a morphism of topological group bundles. It follows from classical Pontryagin duality (see \cite[Section 4.3]{Foll2016}, or \cite[Section 14.3]{EFHN2015} for the simpler case of compact or discrete groups) that $\iota^q$ is bijective. Thus, Corollary \ref{automaticcontprop} (in the case of an open and proper bundle) or Corollary \ref{lemmaiso} (in the case of an \'{e}tale bundle) yields the claim.
\end{proof}

For a locally compact space $M$ write $\mathbf{\acute{E}H}(M)$ for the category of \'{e}tale Hausdorff abelian group bundles over $M$, and $\mathbf{PH}(M)$ for the category of open and proper Hausdorff abelian group bundles over $M$. Then the duality functor from Definition \ref{dualfunctor} induces functors
    \begin{align*}
        \mathrm{D}|_{\mathbf{\acute{E}H}(M)} &\colon \mathbf{\acute{E}H}(M) \rightarrow \mathbf{PH}(M)^{\mathrm{op}},\\
        (\mathrm{D}|_{\mathbf{PH}(M)})^{\mathrm{op}} &\colon  \mathbf{PH}(M)^{\mathrm{op}} \rightarrow \mathbf{\acute{E}H}(M).
    \end{align*}

Theorem \ref{pontryagin} shows that these define a categorical equivalence extending the classical duality between discrete and compact abelian groups from Example \ref{exampleequiv}.

\begin{theorem}\label{equivproper}
    Let $M$ be a locally compact space. 
        \begin{enumerate}[(i)]
            \item The isomorphisms $\iota_q \colon q \rightarrow q^{**}$ for \'{e}tale Hausdorff abelian group bundles $q \colon H \rightarrow M$ define a natural isomorphism $\mathrm{Id}_{\mathbf{\acute{E}H}(M)} \rightarrow  (\mathrm{D}|_{\mathbf{PH}(M)})^{\mathrm{op}} \circ \mathrm{D}|_{\mathbf{\acute{E}H}(M)}$.
            \item The isomorphisms $\iota_q \colon q \rightarrow q^{**}$ for open and proper Hausdorff abelian group bundles $q \colon H \rightarrow M$ define a natural isomorphism $\mathrm{D}|_{\mathbf{\acute{E}H}(M)} \circ (\mathrm{D}|_{\mathbf{PH}(M)})^{\mathrm{op}} \rightarrow \mathrm{Id}_{\mathbf{PH}(M)^{\mathrm{op}}}$.
        \end{enumerate}
    In particular, the functors $\mathrm{D}|_{\mathbf{\acute{E}H}(M)}$ and $(\mathrm{D}|_{\mathbf{PH}(M)})^{\mathrm{op}}$ are essentially inverse to each other.
\end{theorem}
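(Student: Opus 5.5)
By Proposition \ref{dualpropetal} the functors $\mathrm{D}|_{\mathbf{\acute{E}H}(M)}$ and $(\mathrm{D}|_{\mathbf{PH}(M)})^{\mathrm{op}}$ land in the stated categories, and by Theorem \ref{pontryagin} each $\iota^q$ is an isomorphism of topological group bundles. What remains is to show that $q \mapsto \iota^q$ is natural. The "in particular" statement then follows from the definition of essential inverses.

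For (i), fix a morphism $\Phi \colon q \rightarrow q'$ of \'{e}tale Hausdorff abelian group bundles $q \colon H \rightarrow M$ and $q' \colon H' \rightarrow M$. Unwinding the opposite categories, we must show
\begin{align*}
    \Phi^{**} \circ \iota^q = \iota^{q'} \circ \Phi \colon H \rightarrow (H')^{**},
\end{align*}
where $\Phi^{**} = (\Phi^*)^*$ is given by Proposition \ref{dualmorph}. This is a fiberwise identity. For $x \in H_m$ and $\varrho \in (H'_m)^*$ we have
\begin{align*}
    (\Phi^{**}(\iota^q_x))(\varrho) = \iota^q_x(\Phi^*(\varrho)) = \varrho(\Phi_m(x)) = \iota^{q'}_{\Phi(x)}(\varrho).
\end{align*}
This is exactly the classical naturality of the evaluation map.

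For (ii), take a morphism in $\mathbf{PH}(M)^{\mathrm{op}}$ from $q$ to $q'$, i.e.\ a morphism $\Psi \colon q' \rightarrow q$ of open proper Hausdorff bundles. The required commutativity in $\mathbf{PH}(M)^{\mathrm{op}}$ amounts, in $\mathbf{PH}(M)$, to $\iota^{q} \circ \Psi = \Psi^{**} \circ \iota^{q'}$. The same fiberwise evaluation computation verifies it. Because each $\iota^q$ is an isomorphism, reversing arrows is harmless: the inverses $(\iota^q)^{-1}$, viewed as morphisms in the opposite category, give the natural isomorphism in the direction written in the statement.

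There is no real obstacle. The main care needed is bookkeeping of arrow directions in the opposite categories, together with checking that $\Phi^{**}$ is computed with the correct fiber maps as in Proposition \ref{dualmorph}. Continuity needs no separate argument, since all maps involved are already known to be morphisms of bundles.
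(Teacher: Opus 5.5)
Your proposal matches the paper, which obtains the theorem directly from Proposition \ref{dualpropetal} and Theorem \ref{pontryagin}, leaving naturality as the classical evaluation identity that you check fiberwise. One small bookkeeping slip: in (ii) no inverses are needed, since $\iota^q\colon q\rightarrow q^{**}$ in $\mathbf{PH}(M)$ is by definition a morphism $q^{**}\rightarrow q$ in $\mathbf{PH}(M)^{\mathrm{op}}$, so the identity $\iota^{q}\circ\Psi=\Psi^{**}\circ\iota^{q'}$ you verify is already naturality in the stated direction (the inverses $(\iota^q)^{-1}$ would give the reverse direction).
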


\subsection{Sheaves of Abelian Groups}
As an additional preparation for our main results, we now recall the concept of sheaves (of abelian groups) (see, e.g. \cite[Section II.1]{MaMo1992} or \cite[Chapter 3]{Wedh2016}). The remaining part of this subsection then briefly discusses the well-known correspondence  between  \'{e}tale bundles and sheaves (see, e.g., \cite[Sections II.5, II.6 and II.7]{MaMo1992}).

\begin{definition}
   For a given topological space $M$, a  \textbf{presheaf (of abelian groups)} over $M$ is a functor $\mathcal{F} \colon \mathcal{O}_M \rightarrow \mathbf{Ab}$ from the category of open subsets of $M$ to the category of abelian groups (see Examples \ref{examplescat} (ii) and (vi)), i.e., $\mathcal{F}$ assigns 
    \begin{enumerate}[(1)]
        \item to every open subset $U \subseteq M$ an abelian group $\mathcal{F}(U)$, and 
        \item to each pair $(V,U)$ of open subsets of $M$ with $U \subseteq V$ a group homomorphism $\mathcal{F}_U^V \colon F(V) \rightarrow F(U)$,
    \end{enumerate}
    such that 
    \begin{enumerate}[(i)]
        \item $\mathcal{F}_U^W = \mathcal{F}_U^V \circ \mathcal{F}_V^W$ for all open subsets $U,V,W \subseteq M$ with $U \subseteq V \subseteq W$, and
        \item $\mathcal{F}_U^U = \mathrm{id}_{\mathcal{F}(U)}$ for each open subset $U \subseteq M$.
    \end{enumerate}
    It is convenient to then abbreviate $\tau|_U \coloneqq \mathcal{F}_U^V(\tau)$ for $\tau \in \mathcal{F}(V)$ and open subsets $U,V \subseteq M$ with $U \subseteq V$.\medskip
    
    We say that $F$ is a \textbf{sheaf} if, in addition, the following condition is satisfied.
    \begin{enumerate}[(S)]
        \item Given an open cover $U = \bigcup_{i \in I} U_i$  of an open subset $U \subseteq M$ and elements $\tau_{i} \in \mathcal{F}(U_i)$ for each $i \in I$ with $(\tau_i)|_{U_i \cap U_j} = (\tau_j)|_{U_i \cap U_j}$ for all $i,j \in I$, there is a unique $\tau \in \mathcal{F}(U)$ with $\tau|_{U_i} = \tau_i$ for each $i \in I$.
    \end{enumerate}

    A \textbf{morphism} $\eta \colon \mathcal{F}_1 \rightarrow \mathcal{F}_2$ of (pre)sheaves over a topological space $M$ is a natural transformation from the functor $\mathcal{F}_1 \colon \mathcal{O}_M \rightarrow \mathbf{Ab}$ to the functor $\mathcal{F}_2 \colon \mathcal{O}_M \rightarrow \mathbf{Ab}$, i.e., $\eta$ maps each open set $U \subseteq M$ to a group homomorphism $\eta_U \colon \mathcal{F}_1(U) \rightarrow \mathcal{F}_2(U)$ such that 
        \begin{align*}
            \eta_U(\tau|_U) = \eta_V(\tau)|_U \quad \textrm{ for all } \tau \in \mathcal{F}_1(V) \textrm{ and } U,V \subseteq M \textrm{ open with } U \subseteq V. 
        \end{align*}

    For a given topological space $M$ write $\mathbf{PSh}(M)$ and $\mathbf{Sh}(M)$ for the categories of presheaves and sheaves over $M$, respectively.
\end{definition}
Here are simple examples.
\begin{examples}\label{examplessheaves}
    Let $M$ be a topological space and $H$ a topological group. The \textbf{sheaf of continuous maps} $\mathrm{C}(\,\cdot\,,H)$ assigns to each open subset $U \subseteq M$ the group $\mathrm{C}(U,H)$ of continuous maps from $U$ to $H$, and to each pair $(U,V)$ of open subsets of $M$ with $U \subseteq V$ the natural restriction map 
        \begin{align*}
            \mathrm{C}(V,H) \rightarrow \mathrm{C}(U,H), \quad \vartheta \mapsto \vartheta|_U.
        \end{align*}
    In particular, we can consider these sheaves for $H= \C$ with addition or $H= \T$ with multiplication. If $H$ is a discrete group, then $\mathrm{C}(\,\cdot\,,H)$ is the sheaf of locally constant maps.
\end{examples}

We also recall the following notions.
\begin{definition}
    Given a presheaf $\mathcal{F}$ over a topological space $M$, a \textbf{subpresheaf} $\mathcal{F}'$ of $\mathcal{F}$ assigns to each open subset $U \subseteq M$ a subgroup $\mathcal{F}'(U)$ of $\mathcal{F}(U)$ in such a way that $\tau|_U \in \mathcal{F}'(U)$ for each $\tau \in \mathcal{F}'(V)$ and all open subsets $U,V \subseteq M$ with $U \subseteq V$.\medskip

    If $\mathcal{F}$ is even a sheaf, then a subpresheaf $\mathcal{F}'$ of $\mathcal{F}$ is a \textbf{subsheaf} of $\mathcal{F}$ if for every open cover $U = \bigcup_{i \in I} U_i$ of an open subset $U \subseteq M$ and each $\tau \in \mathcal{F}(U)$ the implication
        \begin{align*}
            \tau|_{U_i} \in \mathcal{F}'(U_i) \quad \textrm{ for all } i \in I \quad \quad \Rightarrow \quad \quad \tau \in \mathcal{F}'(U)
        \end{align*}
    holds.
\end{definition}
Clearly, sub(pre)sheaves of a (pre)sheaf naturally define (pre)sheaves in their own right.\medskip

We now turn to the connection between bundles and (pre)sheaves. Every topological abelian group bundle gives rise to a sheaf in the following way.
\begin{definition}
    Let $q \colon H\rightarrow M$ be a topological abelian group bundle over a topological space $M$. The \textbf{section sheaf} $\Gamma_q$ assigns
        \begin{enumerate}[(i)]
            \item to every open subset $U \subseteq M$ the abelian group $\Gamma_q(U)$ of local sections over $U$ (with multiplication of sections defined pointwise), and
            \item to every pair $(U,V)$ of open subsets of $M$ with $U \subseteq V$ the natural restriction mapping $\Gamma_p(V) \rightarrow \Gamma_q(U)$.
        \end{enumerate}
\end{definition}

On the level of morphisms we obtain the following. The proof is straightforward.

\begin{proposition}\label{propsectfunctor}
    Let $\Phi \colon q \rightarrow q'$ be a morphism of topological abelian group bundles over a topological space $M$. Then the group homomorphisms
        \begin{align*}
            (\Phi_*)|_U \colon \Gamma_q(U) \rightarrow \Gamma_{q'}(U), \quad \tau \mapsto \Phi \circ \tau
        \end{align*}
    for open subsets $U \subseteq M$ define a morphism $\Phi_* \colon \Gamma_q \rightarrow \Gamma_{q'}$ of sheaves. 
\end{proposition}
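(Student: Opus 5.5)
We have to check three things: each $(\Phi_*)|_U$ is well defined, it is a group homomorphism, and the family is natural with respect to restrictions. Each is a direct check, so the plan is to go through them in that order.

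\emph{Well-definedness.} Fix an open set $U \subseteq M$ and a section $\tau \in \Gamma_q(U)$. The composite $\Phi \circ \tau \colon U \rightarrow H'$ is continuous, since $\tau$ and $\Phi$ are. It takes values in $(q')^{-1}(U)$, because $\Phi$ is a morphism of topological bundles and so $q' \circ \Phi = q$. This gives $q' \circ (\Phi \circ \tau) = q \circ \tau = \mathrm{id}_U$, hence $\Phi \circ \tau \in \Gamma_{q'}(U)$.

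\emph{Homomorphism property.} Multiplication in $\Gamma_q(U)$ is pointwise: $(\tau_1\tau_2)(m) = \tau_1(m)\tau_2(m)$ in the fiber $H_m$. Each fiber map $\Phi_m \colon H_m \rightarrow H'_m$ is a group homomorphism, so for every $m \in U$
\begin{align*}
(\Phi\circ(\tau_1\tau_2))(m) = \Phi_m(\tau_1(m))\,\Phi_m(\tau_2(m)) = ((\Phi\circ\tau_1)(\Phi\circ\tau_2))(m).
\end{align*}
Hence $(\Phi_*)|_U$ is a group homomorphism. It automatically preserves the neutral section $m \mapsto 1_{H_m}$.

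\emph{Naturality.} Let $U \subseteq V$ be open and $\tau \in \Gamma_q(V)$. Restriction commutes with composition, so $(\Phi\circ\tau)|_U = \Phi\circ(\tau|_U)$. This is exactly the required compatibility of $\Phi_*$ with the restriction maps of the two section sheaves.

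There is no real obstacle here. The only point needing any care is the first step, where $q'\circ\Phi = q$ is used to see that $\Phi\circ\tau$ really is a section of $q'$ over $U$.
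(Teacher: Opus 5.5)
Your proposal is correct and is exactly the routine verification the paper has in mind; the paper omits the proof as straightforward. The three checks are all there: $\Phi\circ\tau$ is a section because $q'\circ\Phi = q$, it is a homomorphism because each fiber map $\Phi_m$ is one, and naturality holds because restriction commutes with composition.
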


Writing  $\mathbf{Ab}(M)$ for the category of abelian topological group bundles over a topological space $M$, we introduce the following functor.

\begin{definition}\label{defsecfunctor}
    Let $M$ be a topological space. The functor $\Gamma \colon \mathbf{Ab}(M) \rightarrow \mathbf{Sh}(M)$ defined by
        \begin{enumerate}[(i)]
            \item assigning the section sheaf $\Gamma_q$ to each topological abelian group bundle $q$ over $M$, and
            \item mapping every morphism $\Phi \colon q \rightarrow q'$ of topological abelian group bundles over $M$ to $\Phi_* \colon \Gamma_q \rightarrow \Gamma_{q'}$,
        \end{enumerate}
    is called the \textbf{section functor}.
\end{definition}
On the other hand, every presheaf gives rise to an \'{e}tale abelian group bundle via a well-known construction (see, e.g., \cite[Section II.5]{MaMo1992} or \cite[Construction 3.36]{Wedh2016}):

\begin{definition}\label{defstalk}
Let $\mathcal{F}$ be a presheaf over a topological space $M$. For $m \in M$ set 
    \begin{align*}
        \mathcal{F}_m \coloneqq \bigsqcup \{\mathcal{F}(U) \mid U \subseteq M \textrm{ is an open neighborhood of } m\}/\sim
    \end{align*}
where $\tau \sim \tau'$ for $\tau \in \mathcal{F}(U)$ and $\tau' \in \mathcal{F}(U')$ if there is an open neighborhood $V \subseteq U \cap U'$ with $\tau|_{V} = \tau'|_{V}$. The abelian group $\mathcal{F}_m$ (with respect to the natural multiplication) is called the \textbf{stalk} of $\mathcal{F}$ at $m$. Its elements $[\tau]_m$ for $\tau \in \mathcal{F}(U)$, where $U \subseteq M$ is an open neighborhood of $m$, are called the \textbf{germs} of $\mathcal{F}$ at $m$.
\end{definition}
One can then prove the following assertions (see \cite[Section II.5]{MaMo1992} or \cite[Section 3.4]{Wedh2016}).

\begin{proposition}\label{prepbun}
    Let $M$ be a topological space.
    \begin{enumerate}[(i)]
        \item For a presheaf $\mathcal{F}$ over $M$ equip the disjoint union $H_{\mathcal{F}} \coloneqq \bigsqcup_{m \in M} \mathcal{F}_m$ with the topology generated by the sets 
        \begin{align*}
            \{[\tau]_m \in  \mathcal{F}_m \mid m \in U\} \quad \textrm{ for }\,  \tau \in \mathcal{F}(U) \, \textrm{ and } \,  U \subseteq M \textrm{ open}.
        \end{align*}
    Then the natural map $\pi_{\mathcal{F}} \colon H_{\mathcal{F}} \rightarrow M, \, [\tau]_m \mapsto m$ is an \'{e}tale abelian group bundle.
        \item If $\eta \colon \mathcal{F}  \rightarrow \mathcal{F}'$ is a morphism of presheaves over $M$, then the map
            \begin{align*}
                \Phi_{\eta} \colon H_{\mathcal{F}} \rightarrow H_{\mathcal{F}'}, \quad [\tau]_m \mapsto [\eta(\tau)]_m
            \end{align*}
        is a (well-defined) morphism of topological group bundles $\Phi_{\eta}  \colon \pi_{\mathcal{F}} \rightarrow \pi_{\mathcal{F}'}$.
    \end{enumerate}
\end{proposition}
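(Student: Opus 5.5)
For part (i), we first check that $\pi_{\mathcal{F}}$ is well defined and continuous. The basic sets
\begin{align*}
    O(U,\tau) \coloneqq \{[\tau]_m \mid m \in U\}, \qquad \tau \in \mathcal{F}(U),
\end{align*}
form a base: if $[\sigma]_m \in O(U,\tau) \cap O(U',\tau')$, then $\tau$ and $\tau'$ agree on some open neighborhood $V \subseteq U \cap U'$ of $m$. Hence $O(V,\tau|_V)$ contains $[\sigma]_m$ and lies in the intersection. Since $\pi_{\mathcal{F}}^{-1}(U) = \bigcup_{\tau \in \mathcal{F}(U)} O(U,\tau)$, the map $\pi_{\mathcal{F}}$ is continuous. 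It is surjective because $\mathcal{F}(M) \neq \emptyset$, as it contains the neutral element. The \'{e}tale property is then immediate. The restriction of $\pi_{\mathcal{F}}$ to $O(U,\tau)$ is a continuous bijection onto the open set $U$, with inverse $m \mapsto [\tau]_m$. This inverse is continuous because the preimage of a basic set $O(V,\sigma)$ under it is $\{m \in U \cap V \mid [\tau]_m = [\sigma]_m\}$, which is open by the definition of germs.

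Next we verify the group bundle axioms. The unit map $m \mapsto [1]_m$, with $1 \in \mathcal{F}(M)$, is continuous by the same argument as for the inverse of $\pi_{\mathcal{F}}|_{O(U,\tau)}$. Inversion maps $O(U,\tau)$ onto $O(U,\tau^{-1})$, so it is continuous. The main step, though still routine, is continuity of multiplication on $H_{\mathcal{F}} \times_M H_{\mathcal{F}}$. Take $([\tau]_m,[\sigma]_m)$ with $\tau \in \mathcal{F}(U)$ and $\sigma \in \mathcal{F}(V)$, and a basic neighborhood $O(W,\rho)$ of the product $[\tau|_{U\cap V}\,\sigma|_{U\cap V}]_m$. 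Shrinking $W$, we may assume that $\rho = (\tau\sigma)|_W$ near $m$. Then
\begin{align*}
    \bigl(O(W,\tau|_W) \times O(W,\sigma|_W)\bigr) \cap (H_{\mathcal{F}} \times_M H_{\mathcal{F}})
\end{align*}
is a neighborhood of the pair that multiplication maps into $O(W,\rho)$. The fiber product restriction ensures that both germs live over the same point. Commutativity holds stalkwise, since each $\mathcal{F}(U)$ is abelian.

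For part (ii), well-definedness follows from naturality. If $\tau|_V = \tau'|_V$, then $\eta_V(\tau|_V) = \eta_U(\tau)|_V$, and likewise for $\tau'$, so the germs of $\eta(\tau)$ and $\eta(\tau')$ agree. The map $\Phi_\eta$ commutes with the projections and is a homomorphism on stalks because each $\eta_U$ is one. For continuity it suffices to note that $\Phi_\eta(O(U,\tau)) = O(U,\eta_U(\tau))$. Since $\pi_{\mathcal{F}'}$ is \'{e}tale, the preimage of a basic open set $O(V,\sigma)$ meets each $O(U,\tau)$ in the set of $m$ where the germs of $\eta_U(\tau)$ and $\sigma$ agree, which is open. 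Therefore $\Phi_\eta$ is continuous.
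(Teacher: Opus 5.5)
Your argument is the standard direct verification of the \'{e}tale-space construction, which is exactly what the paper relies on (it gives no proof and refers to Mac Lane--Moerdijk and Wedhorn), and it is correct. One formula needs repair: $\pi_{\mathcal{F}}^{-1}(U)$ is in general strictly larger than $\bigcup_{\tau\in\mathcal{F}(U)}O(U,\tau)$, since a germ at $m\in U$ need not come from an element of $\mathcal{F}(U)$; it equals $\bigcup\{O(V,\tau)\mid V\subseteq U\ \text{open},\ \tau\in\mathcal{F}(V)\}$, which gives continuity just as well (likewise, in (ii) the openness of $\Phi_\eta^{-1}(O(V,\sigma))\cap O(U,\tau)$ comes from $\pi_{\mathcal{F}}|_{O(U,\tau)}$ being a homeomorphism onto $U$, not from $\pi_{\mathcal{F}'}$ being \'{e}tale).
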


Denote the category of all \'{e}tale abelian group bundles over a topological space $M$ by $\mathbf{\acute{E}}(M)$. Proposition \ref{prepbun} gives rise to the following functor.
\begin{definition} \label{bunfunc}
    Let $M$ be a topological space. The functor $\mathrm{Bun} \colon \mathbf{PSh}(M) \rightarrow \mathbf{\acute{E}}(M)$ defined by assigning 
        \begin{enumerate}[(i)]
            \item to every presheaf $\mathcal{F}$ over $M$ the \'{e}tale abelian group bundle $\pi_{\mathcal{F}} \colon H_{\mathcal{F}} \rightarrow M$, and
            \item to every morphism $\eta \colon \mathcal{F}\rightarrow \mathcal{F}'$ of presheaves over $M$ the morphism $\Phi_{\eta} \colon \pi_{\mathcal{F}} \rightarrow \pi_{\mathcal{F}'}$
        \end{enumerate}
    is called the \textbf{bundle functor}.    
\end{definition}

Restricting the section functor from Definition \ref{defsecfunctor} and the bundle functor of Definition \ref{bunfunc} to the categories of \'{e}tale abelian group bundles and sheaves, respectively, yields the following categorical equivalence (see  \cite[Section II.6]{MaMo1992} or \cite[Section 3.4]{Wedh2016}).

\begin{theorem}\label{bunseciso}
    Let $M$ be a topological space. 
        \begin{enumerate}[(i)]
            \item There is a natural isomorphism $\gamma \colon \mathrm{Id}_{\mathbf{Sh}(M)} \rightarrow \Gamma \circ \mathrm{Bun}|_{\mathbf{Sh}(M)}$ given on a sheaf $\mathcal{F}$ over $M$ by
                \begin{align*}
                    (\gamma_{\mathcal{F}})_U \colon \mathcal{F}(U) \rightarrow \Gamma_{\pi_{\mathcal{F}}}(U), \quad \tau \mapsto [m \mapsto [\tau]_m]  
                \end{align*}
            for every open subset $U \subseteq M$.
            \item There is a natural isomorphism $\delta \colon \mathrm{Bun}  \circ \Gamma|_{\mathbf{\acute{E}}(M)} \rightarrow \mathrm{Id}_{\mathbf{\acute{E}}(M)}$ given on an \'{e}tale abelian group bundle $q$ over $M$ by
                \begin{align*}
                    \delta_p \colon \pi_{\Gamma_q} \rightarrow q, \quad [\tau]_m \rightarrow \tau(m).
                \end{align*}
        \end{enumerate}
    In particular, the functors $\mathrm{Bun}|_{\mathbf{Sh}(M)} \colon \mathbf{Sh}(M) \rightarrow \mathbf{\acute{E}}(M)$ and $\Gamma|_{\mathbf{\acute{E}}(M)} \colon \mathbf{\acute{E}}(M) \rightarrow \mathbf{Sh}(M)$ establish an equivalence between the categories of sheaves over $M$ and \'{e}tale abelian group bundles over $M$.
\end{theorem}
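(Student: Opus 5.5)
The plan is to check the two natural isomorphisms directly, in the order (i) then (ii), and to do all the topological work with the tools of Subsection~\ref{subsectopbun}. For (i), let $\mathcal{F}$ be a sheaf over $M$ and $U\subseteq M$ open. I first show that $(\gamma_{\mathcal{F}})_U(\tau)$, the map $m\mapsto[\tau]_m$, is a continuous local section of $\pi_{\mathcal{F}}$. For continuity I take $m\in U$ and a basic open neighbourhood $\{[\sigma]_{m'}\mid m'\in V\}$ of $[\tau]_m$ with $\sigma\in\mathcal{F}(V)$. Then $[\sigma]_m=[\tau]_m$, so $\sigma$ and $\tau$ agree on some open $W\ni m$. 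Hence $m'\mapsto[\tau]_{m'}$ maps $W$ into this neighbourhood.

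Next I check that $(\gamma_{\mathcal{F}})_U$ is a homomorphism compatible with restrictions, which is immediate since germs are formed pointwise. This gives naturality in $U$, and naturality in $\mathcal{F}$ follows because $\Phi_\eta([\tau]_m)=[\eta_U(\tau)]_m$.

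Bijectivity is where the sheaf axiom (S) enters, and I expect it to be the main point. For injectivity, suppose $[\tau]_m=[1]_m$ for all $m\in U$. Then every point has a neighbourhood on which $\tau$ is trivial, so the uniqueness part of (S) gives $\tau=1$. For surjectivity, let $s\in\Gamma_{\pi_{\mathcal{F}}}(U)$ and $m\in U$. Write $s(m)=[\tau_m]_m$ with $\tau_m\in\mathcal{F}(U_m)$. The set $\{[\tau_m]_{m'}\mid m'\in U_m\}$ is open and $s$ is continuous, so there is an open $V_m\ni m$ with $s(m')=[\tau_m]_{m'}$ for all $m'\in V_m$. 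On an overlap $V_m\cap V_{m''}$ the germs of $\tau_m$ and $\tau_{m''}$ agree at every point, so by the injectivity argument $\tau_m=\tau_{m''}$ there. The gluing part of (S) then produces $\tau\in\mathcal{F}(U)$ with $(\gamma_{\mathcal{F}})_U(\tau)=s$.

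For (ii), let $q\colon H\to M$ be étale. The map $\delta_q([\tau]_m)=\tau(m)$ is well defined, because sections with the same germ agree at $m$. It is fibrewise a homomorphism and commutes with the projections. I then verify the following properties.
\begin{enumerate}[(1)]
\item Surjectivity: every $x\in H$ lies in the image of a local section, by Lemma~\ref{etaleneighborhoodbase}.
\item Injectivity: if $\tau_1(m)=\tau_2(m)$ with $\tau_i\in\Gamma_q(U_i)$, then $\{m'\mid\tau_1(m')=\tau_2(m')\}$ contains $\tau_1^{-1}(\tau_2(U_1\cap U_2))$. This set is open, since $\tau_2(U_1\cap U_2)$ is open in $H$ by Lemma~\ref{etaleneighborhoodbase} and $\tau_1$ is continuous, so the two germs coincide.
\item Continuity: $\delta_q$ maps the basic open set determined by $\tau\in\Gamma_q(U)$ onto $\tau(U)$. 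By Lemma~\ref{etaleneighborhoodbase}, every open subset of $H$ is a union of such sets $\tau(U)$. Hence the preimage of an open set under $\delta_q$ is a union of basic open sets, and $\delta_q$ is continuous.
\end{enumerate}
Thus $\delta_q$ is a bijective morphism from the open (indeed étale) bundle $\pi_{\Gamma_q}$ to the étale bundle $q$, so Corollary~\ref{lemmaiso} shows it is an isomorphism of topological group bundles. Naturality in $q$ is the identity $\Phi(\tau(m))=(\Phi\circ\tau)(m)$. Together, (i) and (ii) give the claimed equivalence of categories.
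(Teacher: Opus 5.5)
Your proof is correct and is essentially the standard argument from the references the paper cites in place of a proof (\cite[Section II.6]{MaMo1992}, \cite[Section 3.4]{Wedh2016}): $\gamma_{\mathcal{F}}$ is bijective by the uniqueness and gluing parts of (S), and $\delta_q$ is a bijective morphism into an \'{e}tale bundle, hence an isomorphism. The only implicit step is in your continuity argument (3), where the identity $\delta_q^{-1}(\tau(U)) = \{[\tau]_{m'} \mid m' \in U\}$ relies on the injectivity shown in (2), which is legitimate since (2) comes first.
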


The \'{e}tale bundle associated with a sheaf is generally not a Hausdorff space (see \cite[Section page 85]{MaMo1992}). To characterize the sheaves corresponding to \'{e}tale Hausdorff bundles, we need the following concepts (cf. \cite[Definition 3.24]{Wedh2016}).

\begin{definition}\label{definitionsupport}
    Let $M$ be a topological space and consider a sheaf $\mathcal{F}$ over $M$. Given an element $\tau \in \mathcal{F}(U)$, where $U \subseteq M$ is open, the set
        \begin{align*}
            \supp(\tau) \coloneqq \{m \in U \mid [\tau]_m \neq [1]_m\}
        \end{align*}
    is the \textbf{support} of $\tau$, and we call $\tau$ \textbf{well-supported} if $\supp(\tau)$ is open. The sheaf $\mathcal{F}$ is \textbf{well-supported} if every element $\tau \in \mathcal{F}(U)$, for $U \subseteq M$ open, is well-supported. Denote by $\mathbf{WSh}(M)$ the category of all well-supported sheaves over $M$.
\end{definition}

The following is \cite[Lemma 3.25]{Wedh2016}.

\begin{lemma}
     Let $\mathcal{F}$ be a sheaf over a topological space $M$ and $U \subseteq M$ open. For $\tau \in \mathcal{F}(U)$ the support $\supp(\tau)$ is a closed subset of $U$. In particular, $\tau$ is well-supported precisely when $\supp(\tau)$ is clopen in $U$.
\end{lemma}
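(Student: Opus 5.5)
The plan is to prove directly that the complement $U \setminus \supp(\tau) = \{m \in U \mid [\tau]_m = [1]_m\}$ is open in $U$, using only the definition of the stalk $\mathcal{F}_m$ from Definition \ref{defstalk}. The second assertion will then be a formal consequence.

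First step: fix $m \in U$ with $[\tau]_m = [1]_m$. Here $1 \in \mathcal{F}(U)$ denotes the neutral element of the group $\mathcal{F}(U)$. Since restriction maps are group homomorphisms, $1|_V$ is the neutral element of $\mathcal{F}(V)$ for every open $V \subseteq U$, so the germ $[1]_{m'}$ is the neutral element of $\mathcal{F}_{m'}$ for every $m' \in U$. By the definition of the equivalence relation $\sim$, the equality $[\tau]_m = [1]_m$ means there is an open neighborhood $V \subseteq U$ of $m$ with $\tau|_V = 1|_V$.

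Second step: for every $m' \in V$, the set $V$ is an open neighborhood of $m'$ on which $\tau$ and $1$ agree. Hence $[\tau]_{m'} = [1]_{m'}$, i.e.\ $m' \notin \supp(\tau)$. So $V \subseteq U \setminus \supp(\tau)$, which shows that the complement of the support is open in $U$, and therefore $\supp(\tau)$ is closed in $U$.

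Finally, the ``in particular'' follows immediately: $\tau$ is well-supported exactly when $\supp(\tau)$ is open, and since $\supp(\tau)$ is always closed in $U$, this holds precisely when it is clopen in $U$.

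There is no genuine obstacle. The only point needing care is that the neutral germ is represented compatibly by restrictions of the neutral section $1$, and this holds because the restriction maps are homomorphisms. Notably, the sheaf axiom (S) is not needed, so the argument works verbatim for presheaves.
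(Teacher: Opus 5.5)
Your proof is correct: the neighborhood $V \subseteq U$ of $m$ with $\tau|_V = 1|_V$ witnesses $[\tau]_{m'} = [1]_{m'}$ for every $m' \in V$, so $U \setminus \supp(\tau)$ is open in $U$. The paper gives no proof of its own and simply cites Lemma 3.25 of Wedhorn's book; that lemma is the standard argument you give, and, as you note, it uses only the presheaf structure.
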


As demonstrated by the following example, the notion of support introduced above generalizes the classical support of continuous functions.

\begin{example}\label{examplesupportsheaves}
    For a topological space $M$ and a topological group $H$ consider the sheaf  $\mathrm{C}(\,\cdot\,, H)$ of continuous maps to $H$ from Example \ref{examplessheaves}. A moment's thought reveals that for an open subset $U \subseteq M$ and $\vartheta \in \mathrm{C}(U,H)$ we have
        \begin{align*}
            \supp( \vartheta) = \overline{\{m \in U \mid \vartheta(m) \neq 1_H\}},
        \end{align*}
    where the closure is taken in $U$. In particular, if $H = \C$ with addition, then
        \begin{align*}
            \supp(\vartheta) = \overline{\{m \in U \mid  \vartheta(m) \neq 0\}},
        \end{align*}
    cf. \cite[Example 3.26]{Wedh2016}.
\end{example}

For Hausdorff base spaces we obtain the following (see \cite[Corollary 1.27 as well as Problems 3.12 and 3.13]{Wedh2016}).

\begin{proposition}
    Let $\mathcal{F}$ be a sheaf over a Hausdorff space $M$. Then $\mathcal{F}$ is well-supported if and only if the \'{e}tale abelian group bundle $\pi_{\mathcal{F}}$ is Hausdorff.
\end{proposition}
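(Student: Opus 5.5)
We show both implications directly from the definitions, using two observations about the étale bundle $\pi_{\mathcal{F}} \colon H_{\mathcal{F}} \rightarrow M$. First, by Lemma \ref{etaleneighborhoodbase} and Theorem \ref{bunseciso}, the sets $\tilde\tau(U) \coloneqq \{[\tau]_m \mid m \in U\}$ for $\tau \in \mathcal{F}(U)$ form a basis of the topology of $H_{\mathcal{F}}$. Second, for $\tau \in \mathcal{F}(U)$, the complement $U \setminus \supp(\tau)$ is exactly the set of $m \in U$ where the sections $m \mapsto [\tau]_m$ and $m \mapsto [1]_m$ of $\pi_{\mathcal{F}}$ agree.

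\emph{Hausdorff $\Rightarrow$ well-supported.} Suppose $H_{\mathcal{F}}$ is Hausdorff and let $\tau \in \mathcal{F}(U)$. We show that $\supp(\tau)$ is open. Let $m \in \supp(\tau)$, so $[\tau]_m \neq [1]_m$. Choose disjoint open neighborhoods $O_1$ of $[\tau]_m$ and $O_2$ of $[1]_m$ in $H_{\mathcal{F}}$. The maps $m' \mapsto [\tau]_{m'}$ and $m' \mapsto [1]_{m'}$ are continuous sections over $U$. Hence there is an open neighborhood $V \subseteq U$ of $m$ with $[\tau]_{m'} \in O_1$ and $[1]_{m'} \in O_2$ for all $m' \in V$. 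Then $[\tau]_{m'} \neq [1]_{m'}$ on $V$, so $V \subseteq \supp(\tau)$. Note that this direction does not use that $M$ is Hausdorff.

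\emph{Well-supported $\Rightarrow$ Hausdorff.} Let $x \neq y$ in $H_{\mathcal{F}}$.

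If $\pi_{\mathcal{F}}(x) \neq \pi_{\mathcal{F}}(y)$, separate the base points by disjoint open sets in $M$ and pull them back along $\pi_{\mathcal{F}}$.

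If $x, y$ lie over the same point $m$, write $x = [\sigma]_m$ and $y = [\rho]_m$ with $\sigma, \rho$ defined on a common open neighborhood $U$ of $m$ (restrict both if necessary). Set $\tau \coloneqq \sigma\rho^{-1} \in \mathcal{F}(U)$, using that $\mathcal{F}(U)$ is an abelian group. Since taking germs is a group homomorphism, $[\tau]_{m'} = [1]_{m'}$ holds exactly when $[\sigma]_{m'} = [\rho]_{m'}$. As $x \neq y$, we have $m \in \supp(\tau)$, and by assumption $W \coloneqq \supp(\tau)$ is open. Then $\tilde\sigma(W)$ and $\tilde\rho(W)$ are open neighborhoods of $x$ and $y$. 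They are disjoint, since a common point would be $[\sigma]_{m'} = [\rho]_{m'}$ for some $m' \in W$, contradicting $m' \in \supp(\tau)$.

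Together with $M$ being Hausdorff by assumption, this shows the bundle is Hausdorff.

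The only step needing care is the identification of $\supp(\sigma\rho^{-1})$ with the locus where the germs of $\sigma$ and $\rho$ differ. This follows because the stalk multiplication is induced from the multiplication in $\mathcal{F}(U)$, as in Definition \ref{defstalk}. I do not expect a genuine obstacle here.
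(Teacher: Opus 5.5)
Your proof is correct. Both directions are argued soundly, and reducing a pair of germs $[\sigma]_m \neq [\rho]_m$ to the single section $\sigma\rho^{-1}$ with open support is exactly what is needed to separate points in a common fiber. The paper gives no proof of its own here; it cites \cite[Corollary 1.27 and Problems 3.12, 3.13]{Wedh2016}, whose content is the route you take. There, equalizers of continuous sections into a Hausdorff space are closed, and over a Hausdorff base the \'{e}tale space is Hausdorff if and only if all such equalizers are clopen, which for sheaves of groups reduces to every section having open support.
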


\begin{corollary}\label{wsuphausdorff}
    For any Hausdorff space $M$ the functors 
        \begin{align*}
            \mathrm{Bun}|_{\mathbf{WSh}(M)} \colon \mathbf{WSh}(M) \rightarrow \mathbf{\acute{E}H}(M) \quad \textrm{and} \quad \Gamma|_{\mathbf{\acute{E}H}(M)} \colon \mathbf{\acute{E}H}(M) \rightarrow \mathbf{WSh}(M)
        \end{align*}
    establish and equivalence between the categories of well-supported sheaves over $M$ and \'{e}tale Hausdorff abelian group bundles over $M$.
\end{corollary}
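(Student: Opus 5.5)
The corollary follows by restricting the equivalence of Theorem \ref{bunseciso} to full subcategories. By the preceding proposition, for a well-supported sheaf $\mathcal{F}$ over the Hausdorff space $M$ the bundle $\pi_{\mathcal{F}}$ is Hausdorff. Hence $\mathrm{Bun}|_{\mathbf{WSh}(M)}$ takes values in $\mathbf{\acute{E}H}(M)$. On morphisms nothing needs checking, since $\mathbf{WSh}(M) \subseteq \mathbf{Sh}(M)$ and $\mathbf{\acute{E}H}(M) \subseteq \mathbf{\acute{E}}(M)$ are full subcategories.

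For the other direction, take an \'{e}tale Hausdorff abelian group bundle $q$. We must show that its section sheaf $\Gamma_q$ is well-supported. I would do this directly. For $\tau \in \Gamma_q(U)$, the germ $[\tau]_m$ equals $[1]_m$ exactly when $\tau$ agrees with the unit section $1$ on a neighborhood of $m$. Two sections $\tau, 1 \colon U \to H$ of an \'{e}tale bundle agree on an open set, since by Lemma \ref{etaleneighborhoodbase} images of local sections are open and the bundle map is injective on them. Because $H$ is Hausdorff, they also agree on a closed subset of $U$. So $\{m \in U \mid \tau(m) = 1_{H_m}\}$ is clopen, and this set is precisely $U \setminus \supp(\tau)$. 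Hence $\supp(\tau)$ is clopen and $\Gamma_q$ is well-supported.

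Alternatively, one can use the natural isomorphism $\delta_q \colon \pi_{\Gamma_q} \to q$ to see that $\pi_{\Gamma_q}$ is Hausdorff, and then apply the preceding proposition to $\Gamma_q$.

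Finally, the natural isomorphisms $\gamma$ and $\delta$ of Theorem \ref{bunseciso} restrict to the subcategories. Their components at objects of $\mathbf{WSh}(M)$, respectively $\mathbf{\acute{E}H}(M)$, are isomorphisms between objects that lie in these full subcategories. Naturality is inherited, so the two restricted functors are essentially inverse to each other.

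The only step with content is showing that $\Gamma_q$ lands in $\mathbf{WSh}(M)$. It is short via the clopen agreement set, or immediate from the proposition, so I expect no real obstacle.
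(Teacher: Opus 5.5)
Your proof is correct and follows the paper's implicit argument: you restrict the equivalence of Theorem \ref{bunseciso} to the full subcategories, using the preceding proposition (together with $\delta_q$) to see that both functors land in the right place. Your direct clopen-agreement argument that $\Gamma_q$ is well-supported is also valid. It is the same observation the paper uses in the proof of Corollary \ref{properwellsupp}, and it has the small bonus of not needing $M$ to be Hausdorff for that direction.
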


In combination with Theorem \ref{equivproper} we therefore conclude that for any locally compact space $M$ the following categories are pairwise equivalent:
\begin{enumerate}[(i)]
    \item the category $\mathbf{WSh}(M)$ of well-supported sheaves over $M$,
    \item the category $\mathbf{\acute{E}H}(M)$ of \'{e}tale Hausdorff abelian group bundles over $M$, and
    \item the opposite category $\mathbf{HP}(M)^{\mathrm{op}}$ of open and proper Hausdorff abelian group bundles over $M$.
\end{enumerate}

We will use these correspondences in the following sections to classify compactifications of open group bundles over locally compact spaces.

\section{Classification of Group Bundle Compactifications}\label{sec3}
\subsection{Group Bundle Compactifications}

Throughout this section, let $p \colon G \rightarrow M$ be an arbitrary, but fixed open topological abelian group bundle over a locally compact space $M$. Analogously to the case of a group compactification (see \cite[Example 1.6]{kreidler-hermle}), we now define the notion of group \emph{bundle}  
compactifications. We also refer to \cite[Section II.8]{Jame1989} and \cite[Appendix A]{Anan2026} for compactification concepts for topological bundles, and to \cite[Section 4.1.4]{Herm2026} for compactifications in the framework of groupoids. Recall the definition of generating subsets of a topological group bundle from Definition \ref{generatedbundle}.

\begin{definition}\label{defgrpcomp}
    Call a pair $(q, c)$, consisting of an open and proper Hausdorff group bundle $q \colon H \rightarrow M$ and a morphism $c \colon p \rightarrow q$ of group bundles,  a \textbf{group bundle compactification} of $p$ if the image $c(G)$ generates $q$.\medskip

    A \textbf{morphism} $\Phi \colon (q_1, c_1) \rightarrow (q_2, c_2)$ between group bundle compactifications $(q_1, c_1)$ and $(q_2, c_2)$ of $p$ is a 
morphism of group bundles $\Phi \colon q_1 \rightarrow q_2$ such that the diagram
	\[
		\xymatrix{
			p \ar[rr]^{c_1}  \ar[dr]_{c_2} &  &q_1 \ar[dl]^{\Phi}\\
			 & q_2 & 
		}
	\]
is commutative.\medskip

We denote the category of group compactifications of $p$ by $\mathbf{Comp}(p)$.
\end{definition}

\begin{remarks}
    \begin{enumerate}[(i)]
        \item Note that for a group bundle compactification $(q, c)$ we do \emph{not} require the morphism $c$ to be injective.
        \item We highlight that by Lemma \ref{abeliangenerator} the open and proper Hausdorff group bundle $q$ of a group bundle bundle compactification $(q,c)$ is automatically abelian.
    \end{enumerate}
\end{remarks}

 The following are some simple examples of group bundle compactifications.

\begin{examples}\label{examplescomp}
    \begin{enumerate}[(i)]
        \item Consider the trivial discrete group $\{1\}$ and the corresponding trivial group bundle $\mathrm{triv}_M^{\{1\}}$ over $M$ from Example \ref{examplesgrpbundles} (ii). Then the map
            \begin{align*}
               c \colon G \rightarrow M \times \{1\}, \quad t \mapsto (p(t),1)
            \end{align*}
        defines a group compactification $(\mathrm{triv}_M^{\{1\}},c)$, called the \textbf{trivial group compactification}, of $p$.
        %can form a \textbf{trivial group bundle compactification} $(\mathrm{id}_M, c_M)$, where $c_M \colon p \rightarrow \mathrm{id}_M$ is the trivial group bundle 
%morphism $p$.
        \item Let $(c',H')$ be a compactification of a topological group $G'$ in the sense of  \cite[Example 1.6]{kreidler-hermle}, i.e., $H'$ is a compact group and $c' \colon G' \rightarrow H'$ is a continuous group homomorphism with dense range. If $p = \mathrm{triv}_M^{G'}$ is the trivial bundle with fiber $G'$ over the locally compact space $M$ (see Example \ref{examplesgrpbundles} (ii)), then the trivial group bundle $q \coloneqq \mathrm{triv}_M^{H'}$ together with the morphism of group bundles
            \begin{align*}
                c \colon \mathrm{triv}_M^{G'} \rightarrow \mathrm{triv}_M^{H'}, \quad (m, t) \mapsto (m, c'(t))
            \end{align*}
        defines a group bundle compactification $(q,c)$ of $p$.
        \item Assume that  $M = [-1, 1]$ and the open topological group bundle $p \colon G \rightarrow M$ is given as the subgroup bundle of the trivial bundle $\mathrm{triv}_{[-1,1]}^{\Z/2\Z}$ defined by the fibers
    \begin{align*}
        G_m \coloneqq \begin{cases}\{ m \} \times (\mathbb{Z} / 2 \mathbb{Z}) & \textrm{ for
} m \in [-1,0),\\
\{ m \} \times \{ [0]_{2\Z}\} & \textrm{ for
} m \in [0,1].
    \end{cases}
    \end{align*}
The \enquote{larger} subgroup bundle   $q' \colon H' \rightarrow M$ of $\mathrm{triv}_{[-1,1]}^{\Z/2\Z}$ given by the fibers 
    \begin{align*}
        H_m' \coloneqq \begin{cases}\{ m \} \times (\mathbb{Z} / 2 \mathbb{Z}) & \textrm{ for
} m \in [-1,0],\\
\{ m \} \times \{ [0]_{2\Z}\} & \textrm{ for
} m \in (0,1].
    \end{cases}
    \end{align*}
is compact, but not open. However, identifying all the points of the fiber $H_{\frac{1}{2}}'$ yields an open and compact abelian group bundle $q \colon H \rightarrow M$. The composition $q \coloneqq c \circ \iota$ of the inclusion map $\iota \colon G \hookrightarrow H'$ with the quotient map $\pi \colon H' \rightarrow H$ then yields a group bundle compactification $(q,c)$ of $p$.
    \end{enumerate}
\end{examples}

As the only result of this subsection we prove the following.

\begin{lemma} \label{morgbcsurj}
    If $\Phi \colon (q_1, c_1) \rightarrow (q_2, c_2)$ is a morphism of group bundle compactifications of $p$, then the map $\Phi$ is surjective.
\end{lemma}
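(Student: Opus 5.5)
The plan is to show that the image $\Phi(H_1)$ is a closed subset of $H_2$ and that $q_2|_{\Phi(H_1)}$ is a subgroup bundle containing $c_2(G)$. Since $c_2(G)$ generates $q_2$ (Definition \ref{defgrpcomp}), Definition \ref{generatedbundle} then gives $H_2 = \langle c_2(G)\rangle \subseteq \Phi(H_1)$, so $\Phi$ is surjective.

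First, the algebraic part. For each $m \in M$ the fiber map $\Phi_m \colon (H_1)_m \rightarrow (H_2)_m$ is a group homomorphism. Hence $\Phi(H_1) \cap (H_2)_m = \Phi_m((H_1)_m)$ is a non-empty subgroup of $(H_2)_m$, and $q_2|_{\Phi(H_1)}$ is a subgroup bundle (in particular a subsemigroup bundle) of $q_2$. Moreover, the commutative diagram $\Phi \circ c_1 = c_2$ gives $c_2(G) = \Phi(c_1(G)) \subseteq \Phi(H_1)$.

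Second, the topological part, which is the only nontrivial step: closedness of $\Phi(H_1)$ in $H_2$. I would invoke Proposition \ref{properclosed}. Here $q_1$ is a proper bundle, $q_2$ is a Hausdorff bundle, $M$ is locally compact, and $\Phi \colon q_1 \rightarrow q_2$ is a morphism of topological bundles. Hence $\Phi$ is a closed map, and in particular $\Phi(H_1)$ is closed in $H_2$.

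Combining both steps, $\Phi(H_1)$ is one of the sets in the intersection defining $\langle c_2(G)\rangle$. Therefore $H_2 = \langle c_2(G)\rangle \subseteq \Phi(H_1)$. I do not expect any real obstacle; the only point to check is that the hypotheses of Proposition \ref{properclosed} are met, which they are because group bundle compactifications are open, proper and Hausdorff by definition.
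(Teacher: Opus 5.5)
Your proposal is correct and follows essentially the same argument as the paper. The paper likewise notes that $\Phi(H_1)$ defines a subgroup bundle containing $c_2(G) = \Phi(c_1(G))$ and is closed by Proposition \ref{properclosed}, and then concludes $H_2 = \langle c_2(G)\rangle \subseteq \Phi(H_1)$.
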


\begin{proof}
    We write $q_i \colon H_i \rightarrow M$ for $i = 1,2$. The 
bundle $q_2|_{\Phi(H_1)}$ is a subgroup bundle of $q_2$ with
    \begin{align*}
        c_2(G) = \Phi(c_1(G)) \subseteq \Phi(H_1).
    \end{align*}
Since $\Phi(H_1)$ is also closed in $H_2$ by Proposition \ref{properclosed}, we obtain that $\langle c_2(G) \rangle \subseteq \Phi(H_1)$. As $(q_2, c_2)$ is a group bundle compactification of $p$, it follows that 
$H_2 = \Phi(H_1)$.
\end{proof}

\subsection{The Dual Sheaf and its Well-Supported Subsheaves}

As discussed in the introduction, the category of group compactifications of a topological abelian group is equivalent to the opposite category of all subgroups of its dual group. In our situation, we consider the following replacement for the dual group of a topological abelian group. Recall here that for a topological bundle $q \colon \Omega \rightarrow M$ and an open subset $U \subseteq M$ we denote by $q_U$ the restriction of $q$ to the subspace $q^{-1}(U)$ (see Definition \ref{constructiontopbund} (i)).

\begin{definition}\label{defdualsheaf}
    Let $q \colon H \rightarrow M$ be an open topological abelian group bundle. For an open subset $U \subset \T$ a morphism $\chi \colon q_U \rightarrow \mathrm{triv}_U^{\mathbb{T}}$ to the trivial bundle $\mathrm{triv}_U^{\mathbb{T}}$ with fiber $\T$ is called a \textbf{local character} of $q$ over $U$. We define the \textbf{dual sheaf} $\mathcal{D}_q$ of $q$ by considering 
    \begin{align*}
        \mathcal{D}_q(U) \coloneqq \{ \chi \colon q_U \rightarrow \mathrm{triv}_U^{\mathbb{T}} \mid \chi \text{ is a local character} \} 
    \end{align*}
    with the natural multiplication for each open subset $U \subseteq M$, and taking the restriction morphisms
    \begin{align*}
        \mathcal{D}_q(U) \rightarrow \mathcal{D}_q(V), \quad \chi \mapsto \chi|_{q^{-1}(V)}
    \end{align*}
for all open subsets $U, V \subseteq M$ with $V \subseteq U$. 
\end{definition}

It is easy to check that the dual sheaf is indeed a sheaf (of abelian groups). The following basic result -- concerned with the notion of support (see Definition \ref{definitionsupport}) in the framework of dual sheaves -- is also straightforward to prove (cf. Example \ref{examplesupportsheaves}). Here and in the following, given a trivial bundle $\mathrm{triv}_L^\T$ with fiber $\T$ over any topological space $L$, we write $\mathrm{pr}_\T \colon L \times \T \rightarrow \T$ for the projection onto the second component.

\begin{lemma}\label{lemdescsupp}
    Let $q \colon H \rightarrow M$ be an open topological abelian group bundle. For each local character $\chi \in \mathcal{D}_q(U)$ over an open subset $U \subseteq M$ we have
        \begin{align*}
            \mathrm{supp}(\chi) =  \overline{\{ m \in U \mid (\mathrm{pr}_\T \circ \chi)|_{H_m} \neq \1_{H_m}\}}
        \end{align*}
    where the closure is taken in $U$.
\end{lemma}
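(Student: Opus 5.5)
We prove the two inclusions directly from Definition \ref{definitionsupport}, with the open set
\begin{align*}
    N \coloneqq \{ m \in U \mid (\mathrm{pr}_\T \circ \chi)|_{H_m} \neq \1_{H_m}\}.
\end{align*}
The neutral element of $\mathcal{D}_q(V)$ is the local character $x \mapsto (q(x),1)$, so that $[\chi]_m = [1]_m$ holds precisely when there is an open neighborhood $V \subseteq U$ of $m$ with $(\mathrm{pr}_\T \circ \chi)|_{H_{m'}} = \1_{H_{m'}}$ for every $m' \in V$, i.e., with $V \cap N = \emptyset$.

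With this reformulation, the complement of $\supp(\chi)$ in $U$ is
\begin{align*}
    U \setminus \supp(\chi) = \{ m \in U \mid \textrm{there is an open neighborhood } V \subseteq U \textrm{ of } m \textrm{ with } V \cap N = \emptyset\}.
\end{align*}
This is exactly the interior (relative to $U$) of $U \setminus N$, and hence $U \setminus \overline{N}$, where the closure is taken in $U$. Taking complements in $U$ then gives $\supp(\chi) = \overline{N}$.

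The argument needs no continuity of $\chi$ and no openness of $q$. The only point requiring care is checking that the germ of $\chi$ equals the germ of the identity section exactly when $\chi$ is fiberwise trivial near $m$. This is the step where restriction in $\mathcal{D}_q$ is identified with restriction of maps to $q^{-1}(V)$, and it follows directly from Definition \ref{defdualsheaf}. So there is no real obstacle beyond this unwinding of definitions.
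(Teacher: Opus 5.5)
Your proof is correct. It is exactly the routine unwinding of the definitions of germs and support that the paper intends; the paper states the lemma as straightforward and gives no written proof. One cosmetic point: you call $N$ ``the open set''. That is true, since $N = q(\{x \in q^{-1}(U) \mid \mathrm{pr}_\T(\chi(x)) \neq 1\})$ is open by continuity of $\chi$ and openness of $q$, but you never use it. It also sits oddly with your closing remark that neither hypothesis is needed.
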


We use this description to compute the support in a simple example.

\begin{example}\label{examplenonwellsup}
    Consider the discrete group $\mathbb{Z}$ and assume that $p =  \mathrm{triv}_{[-1, 1]}^{\mathbb{Z}}$ is the corresponding trivial bundle over $M = [-1,1]$. Then we obtain a local character 
            \begin{align*}
                   \chi \colon [-1, 1] \times \mathbb{Z} \rightarrow [-1, 1] \times \mathbb{T}, \quad   (m,k)  \mapsto             \begin{cases}
                (m, 1) \quad &\text{ if } m < 0, \\
                (m, \mathrm{e}^{2 \pi \mathrm{i} m k}) \quad &\text{ if } m \geq 0
                    \end{cases}
            \end{align*}
        of $p$ over $U = [-1,1]$. Moreover, $\mathrm{supp}(\chi) = \overline{(0, 1)} = [0, 1] \subseteq [-1, 1]$. Since the support is not open in $[-1,1]$, we obtain that $\chi$ is not well-supported.
\end{example}

In particular, we see that -- even for trivial bundles -- the dual sheaf is generally not a well-supported sheaf. However, for open and proper Hausdorff bundles over a locally compact base space the situation is different. In fact, this will follow from the subsequent alternative description of the dual sheaf via local sections of the dual bundle (see Definition \ref{defdualbundle}) for  open locally compact abelian group bundles. The easy proof is omitted.

\begin{proposition}\label{dualshvseciso}
   Consider an open locally compact abelian group bundle $q \colon H \rightarrow M$. Then the maps
        \begin{align*}
            &\mathcal{D}_q(U) \rightarrow \Gamma_{q^*}(U), \quad \chi \mapsto
        \tau_\chi\\
          &\Gamma_{q^*}(U) \rightarrow \mathcal{D}_q(U), \quad \tau \mapsto \chi_{\tau}
        \end{align*}
    for each open subset $U \subseteq M$ given by 
        \begin{align*}
            (\tau_\chi(m))(x) &\coloneqq \mathrm{pr}_{\T}(\chi(x)) \quad \textrm{ for }\, x \in H_m, \, m \in M \,\textrm{ and }\, \chi \in \mathcal{D}_q(U) \textrm{ as well as}\\
            \chi_\tau(x) &\coloneqq (q(x),(\tau(q(x)))(x)) \quad \textrm{ for } \, x \in H \, \textrm{ and } \, \tau \in  \Gamma_{q^*}(U) 
        \end{align*}
    define mutually inverse sheaf isomorphisms.
\end{proposition}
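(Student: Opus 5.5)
The plan is to check four things in turn: the two maps are well defined (they land where claimed), they are mutually inverse, they are group homomorphisms, and they commute with restrictions. The last three are formal. Both constructions are pointwise, so $\chi_{\tau_\chi} = \chi$ and $\tau_{\chi_\tau} = \tau$ follow immediately from the defining formulas. Multiplicativity holds because the group structures on $\mathcal{D}_q(U)$ and $\Gamma_{q^*}(U)$ are both pointwise multiplication in $\T$. Compatibility with restriction to an open $V \subseteq U$ is clear, since both sides are computed fiberwise over points of $V$. So the substance lies entirely in well-definedness, i.e.\ in continuity.

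\emph{From characters to sections.} Fix $\chi \in \mathcal{D}_q(U)$ and $m \in U$. The restriction $\mathrm{pr}_\T \circ \chi|_{H_m}$ is a continuous homomorphism $H_m \to \T$, because $\chi$ is a morphism of group bundles; hence $\tau_\chi(m) \in H_m^*$ and $q^* \circ \tau_\chi = \mathrm{id}_U$. For continuity of $\tau_\chi \colon U \to H^*$ I use Proposition \ref{netconv}, which applies since $q$ is locally compact. Let $m_\alpha \to m$ in $U$. Condition (i) holds because $\mathrm{s}(\tau_\chi(m_\alpha)) = m_\alpha$. For condition (ii), take a subnet and points $x_\beta \in H_{m_\beta}$ with $x_\beta \to x$. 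Then $\tau_\chi(m_\beta)(x_\beta) = \mathrm{pr}_\T(\chi(x_\beta)) \to \mathrm{pr}_\T(\chi(x)) = \tau_\chi(m)(x)$ by continuity of $\chi$. Note that $x_\beta$ eventually lies in $q^{-1}(U)$, since $U$ is open.

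\emph{From sections to characters.} Fix $\tau \in \Gamma_{q^*}(U)$. The map $\chi_\tau$ commutes with the projections to $U$ and is a homomorphism on each fiber, so the only issue is continuity of $x \mapsto \tau(q(x))(x)$ on $q^{-1}(U)$. This is the step I expect to need the most care, although it reduces to the same convergence criterion. Take $x_\alpha \to x$ in $q^{-1}(U)$. Then $q(x_\alpha) \to q(x)$, so $\tau(q(x_\alpha)) \to \tau(q(x))$ in $H^*$ by continuity of $\tau$. The implication (a)$\Rightarrow$(b)(ii) of Proposition \ref{netconv}, applied with the trivial subnet and the points $x_\alpha \in H_{\mathrm{s}(\tau(q(x_\alpha)))}$, then gives $\tau(q(x_\alpha))(x_\alpha) \to \tau(q(x))(x)$. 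Equivalently, the evaluation map $H^* \times_M H \to \T$ is continuous, and $\chi_\tau$ is the composition $x \mapsto (q(x), \mathrm{ev}(\tau(q(x)), x))$ of continuous maps.

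Combining these steps shows that the families of maps are mutually inverse, natural group isomorphisms $\mathcal{D}_q(U) \cong \Gamma_{q^*}(U)$, and therefore sheaf isomorphisms.
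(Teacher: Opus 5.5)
Your proof is correct. The paper omits this proof as easy, and your argument is the intended routine verification: everything reduces to the two continuity checks, which follow from Proposition~\ref{netconv}, using (b)$\Rightarrow$(a) for $\tau_\chi$ and (a)$\Rightarrow$(b) (i.e.\ continuity of the evaluation $H^*\times_M H\to\T$) for $\chi_\tau$.

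One small precision in the first check: what matters is that the limit point $x$ lies in $H_m\subseteq q^{-1}(U)$, which holds because $M$ is Hausdorff. The points $x_\beta$ lie in $q^{-1}(U)$ automatically, not just eventually.
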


\begin{corollary}\label{properwellsupp}
    Let $q \colon H \rightarrow M$ be an open proper Hausdorff abelian group bundle over $M$. Then the dual sheaf $\mathcal{D}_q$ is well-supported. Moreover, for each local character $\chi \in \mathcal{D}_q(U)$ over an open subset $U \subseteq M$ we have
        \begin{align*}
            \mathrm{supp}(\chi) = \{m \in U \mid (\mathrm{pr}_{\T} \circ \chi)|_{H_m} \neq \1_{H_m} \},
        \end{align*}
    i.e., the set $\{ m \in U \mid (\mathrm{pr}_\T \circ \chi)|_{H_m} \neq \1_{H_m} \}$ is already closed in $U$.
\end{corollary}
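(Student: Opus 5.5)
The plan is to transport the question to the dual bundle. By Proposition \ref{dualshvseciso}, the dual sheaf $\mathcal{D}_q$ is isomorphic to the section sheaf $\Gamma_{q^*}$ via $\chi \mapsto \tau_\chi$. Sheaf isomorphisms preserve germs and the trivial element, so they preserve supports and hence well-supportedness. It therefore suffices to show that $\Gamma_{q^*}$ is well-supported and to compute the support of a local section there.

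By Proposition \ref{dualpropetal} (i), the dual bundle $q^* \colon H^* \rightarrow M$ is an \'{e}tale Hausdorff abelian group bundle. Corollary \ref{wsuphausdorff} (or the proposition preceding it, applied through Theorem \ref{bunseciso}, which identifies $\Gamma_{q^*}$ with a sheaf whose associated bundle is $q^*$) then shows that $\Gamma_{q^*}$ is well-supported. Hence $\mathcal{D}_q$ is well-supported.

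For the explicit description of the support, I would argue directly. Let $\tau = \tau_\chi \in \Gamma_{q^*}(U)$ and $Z \coloneqq \{m \in U \mid \tau(m) = \1_{H_m}\}$, that is, the complement in $U$ of the set in the statement. The germ $[\tau]_m$ is trivial iff $\tau$ agrees with the unit section $1$ of $q^*$ on a neighborhood of $m$. So $U\setminus\supp(\chi)$ is the interior of $Z$, and the claim amounts to showing that $Z$ is open, i.e. the equalizer of $\tau$ and $1$ is open. This is where the \'{e}tale property enters. Take $m \in Z$. By Lemma \ref{etaleneighborhoodbase}, there is an open neighborhood $O$ of $\1_{H_m}$ in $H^*$ on which $q^*$ is injective. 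Both $\tau$ and $1$ are continuous into $H^*$ and both take the value $\1_{H_m}$ at $m$, so on some neighborhood $V$ of $m$ both map into $O$. Since $q^*\circ\tau = q^*\circ 1 = \mathrm{id}$ and $q^*$ is injective on $O$, they agree on $V$. Thus $Z$ is open.

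Consequently $\supp(\chi) = U\setminus Z = \{m \in U \mid (\mathrm{pr}_\T\circ\chi)|_{H_m} \neq \1_{H_m}\}$. Comparing with Lemma \ref{lemdescsupp}, this also shows that the latter set is closed. It is open as well, being the support of a well-supported element. I expect no real obstacle here. The only substantive input is Proposition \ref{dualpropetal} (i), which is where openness, properness and the Hausdorff property of $q$ are used.
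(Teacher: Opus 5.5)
Your proposal is correct and follows essentially the paper's argument: you transfer to $\Gamma_{q^*}$ via Proposition \ref{dualshvseciso}, use that $q^*$ is \'{e}tale Hausdorff by Proposition \ref{dualpropetal}~(i), and note that the equalizer of $\tau_\chi$ with the unit section is open by the \'{e}tale property. The only difference is cosmetic: the paper notes that this equalizer is clopen (closedness comes from $H^*$ being Hausdorff), which gives well-supportedness at once, whereas you take it from Corollary \ref{wsuphausdorff}; both routes are valid.
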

\begin{proof}
    Given an étale topological bundle $r \colon \Omega \rightarrow M$ over $M$, for any two local sections $\tau_1,\tau_2 \in \Gamma_p(U)$ on some open subset $U \subseteq M$ the set 
        \begin{align*}
            \{m \in U \mid \tau_1(m) = \tau_2(m)\}
        \end{align*}
    is clopen. Since $q^*$ is étale by Proposition \ref{dualpropetal}, we can apply this observation to obtain that $\{m \in U \mid \tau(m) \neq \1_{H_m}\}$ is clopen in $U$ for each local section $\tau \in \Gamma_{q^*}(U)$ defined on some open subset $U \subseteq M$. Via Proposition \ref{dualshvseciso} this yields the claim.
\end{proof}

In the classical case, i.e., $M = \{\mathrm{pt}\}$ is a singleton and $G$ is a topological abelian group, each compactification $(H,c)$ of $G$ (see Example \ref{examplescomp} (ii)) gives rise to an injective group homomorphism 
    \begin{align*}
        c^* \colon H^* \rightarrow G^*, \quad \chi \mapsto \chi \circ c
    \end{align*}
and, in particular, a subgroup  $c^* H^* = \{\chi \circ c \mid \chi \in H^* \}$ of the dual $G^*$. We now adjust this construction for group bundle compactifications.\medskip

Recall first that a morphism $\eta \colon \mathcal{F}_1 \rightarrow \mathcal{F}_2$ of sheaves over a topological space is a \textbf{monomorphism} if $\eta_U \colon \mathcal{F}_1(U) \rightarrow \mathcal{F}_2(U)$ is injective for each open subset $U \subseteq M$ (this is equivalent to the usual category theoretical definition, see \cite[Problem 3.7]{Wedh2016}). 

\begin{proposition}\label{pullback}
    Let $(q, c)$ be a group bundle compactification of $p$. Then the maps
        \begin{align*}
            c_U^* \colon \mathcal{D}_q(U) \rightarrow \mathcal{D}_p(U), \quad \chi \mapsto \chi \circ c|_{p^{-1}(U)},
        \end{align*}
    for $U \subseteq M$ open, define a monomorphism of sheaves $c^* \colon \mathcal{D}_q \rightarrow \mathcal{D}_p$.
\end{proposition}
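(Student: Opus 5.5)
First I check that each $c_U^*$ is a well-defined group homomorphism. Since $c \colon p \rightarrow q$ is a morphism of group bundles over $M$, we have $q \circ c = p$, so $c$ maps $p^{-1}(U)$ into $q^{-1}(U)$. Thus $c|_{p^{-1}(U)}$ is a morphism $p_U \rightarrow q_U$, and for $\chi \in \mathcal{D}_q(U)$ the composite $\chi \circ c|_{p^{-1}(U)}$ is a continuous morphism $p_U \rightarrow \mathrm{triv}_U^{\T}$. On each fiber it is a composition of group homomorphisms, so it lies in $\mathcal{D}_p(U)$. Because multiplication in the dual sheaf is pointwise, $c_U^*$ is a homomorphism. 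Naturality with respect to restrictions is immediate: for $V \subseteq U$ both $(\chi \circ c|_{p^{-1}(U)})|_{p^{-1}(V)}$ and $\chi|_{q^{-1}(V)} \circ c|_{p^{-1}(V)}$ are the same map. Hence $c^*$ is a morphism of sheaves.

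The substantive part is injectivity of each $c_U^*$. Let $\chi \in \mathcal{D}_q(U)$ with $\chi \circ c|_{p^{-1}(U)}$ trivial, i.e.\ $\mathrm{pr}_\T(\chi(c(t))) = 1$ for all $t \in p^{-1}(U)$. Via Proposition \ref{dualshvseciso}, $\chi$ corresponds to a local section $\tau = \tau_\chi \in \Gamma_{q^*}(U)$ with $(\tau(m))(x) = 1$ for all $m \in U$ and $x \in c(G)_m = c(G_m)$. Since $(q,c)$ is a group bundle compactification, $c(G)$ generates $q$, and $q$ is open, proper and Hausdorff over the locally compact space $M$. So I apply the implication ``(a) $\Rightarrow$ (b)'' of Corollary \ref{chargenerating} to the subbundle $q|_{c(G)}$; this is a subbundle because $p$ is surjective and hence $q(c(G)) = M$. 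This gives $\tau(m) = \1_{H_m}$ for every $m \in U$, so $\chi$ is the trivial local character, and $c_U^*$ is injective.

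Alternatively, and more directly, I can use the argument from the proof of that implication. The set
\begin{align*}
H' \coloneqq \{x \in H \mid q(x) \notin U \textrm{ or } \mathrm{pr}_\T(\chi(x)) = 1\}
\end{align*}
contains $c(G)$ and is a subsemigroup bundle, since it contains the fiberwise kernels and the whole fibers outside $U$. It is also closed: its complement $\{x \in q^{-1}(U) \mid \mathrm{pr}_\T(\chi(x)) \neq 1\}$ is open by continuity of $\chi$ on the open set $q^{-1}(U)$. Hence $H = \langle c(G) \rangle \subseteq H'$, so $\chi$ is trivial.

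The main obstacle I expect is exactly this closedness of $H'$ in all of $H$, together with making sure the generation hypothesis is used on the closed subsemigroup bundle in the right sense. Everything else is routine bookkeeping.
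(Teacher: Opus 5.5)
Your proposal is correct and follows the paper's proof: the paper also translates a $\chi$ in the kernel of $c_U^*$ into a local section of $q^*$ via Proposition \ref{dualshvseciso} and then invokes the implication (a) $\Rightarrow$ (b) of Corollary \ref{chargenerating}. Your alternative argument with the closed subsemigroup bundle $H'$ just inlines the proof of that implication and is also sound. Closedness of $H'$, the step you flagged as the main obstacle, is already settled by your own observation that $q^{-1}(U)$ is open. The alternative also shows that injectivity needs only the generation property of $c(G)$.
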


\begin{proof}
    We write $q \colon H \rightarrow M$. It is straightforward to check that the maps $c_U^*$ for open subsets $U \subseteq M$ define a morphism of sheaves $c^* \colon \mathcal{D}_q \rightarrow \mathcal{D}_p$. To show that this is a monomorphism,  take an open subset $U \subseteq M$ and prove that the group homomorphism $c_U^*$ has trivial kernel. However, if $\chi \in \mathcal{D}_q(U)$ with $\mathrm{pr}_{\T}(\chi(c(t))) = 1$ for each $t \in p^{-1}(U)$, then $\mathrm{pr}_{\T}(\chi(x)) = 1$ for every $x \in q^{-1}(U)$ by Proposition \ref{dualshvseciso} and Corollary \ref{chargenerating}. This yields the claim.
\end{proof}

It is straightforward to verify that, given any monomorphism $\eta \colon \mathcal{F}_1 \rightarrow \mathcal{F}_2$ of sheaves over a topological space $M$, the images $(\eta \mathcal{F}_1)(U) \coloneqq \eta(\mathcal{F}_1(U))$ for open subsets $U \subseteq M$ define a subsheaf $\eta(\mathcal{F}_1)$ of $\mathcal{F}_2$, and that the corestriction $\eta \colon \mathcal{F}_1 \rightarrow \eta(\mathcal{F}_1)$ is a sheaf isomorphism. In view of Corollary \ref{properwellsupp}, we therefore obtain the following consequence of Proposition \ref{pullback}.

\begin{corollary}
    Let $(q, c)$ be a group bundle compactification of $p$. Then the sets
        \begin{align*}
            (c^*\mathcal{D}_q)(U) = \{\chi \circ c|_{p^{-1}(U)} \mid \chi \in \mathcal{D}_q(U)\}  \quad \textrm{ for open subsets } U \subseteq M 
        \end{align*}
    define a well-supported subsheaf  $c^*\mathcal{D}_q$ of $\mathcal{D}_p$.
\end{corollary}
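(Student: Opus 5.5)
The corollary follows by combining Proposition \ref{pullback} with the general remark preceding the statement and with Corollary \ref{properwellsupp}. I would proceed in three steps.

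First, since $c^* \colon \mathcal{D}_q \rightarrow \mathcal{D}_p$ is a monomorphism of sheaves by Proposition \ref{pullback}, the assignment $U \mapsto c_U^*(\mathcal{D}_q(U))$ is a subpresheaf of $\mathcal{D}_p$. This is because $c^*$ commutes with restrictions: $(\chi \circ c|_{p^{-1}(V)})|_{p^{-1}(U)} = \chi|_{q^{-1}(U)} \circ c|_{p^{-1}(U)}$.

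Second, I check the subsheaf condition. Take an open cover $U = \bigcup_i U_i$ and $\psi \in \mathcal{D}_p(U)$ with $\psi|_{U_i} = \chi_i \circ c$ for some $\chi_i \in \mathcal{D}_q(U_i)$. Injectivity of $c^*_{U_i \cap U_j}$ gives $\chi_i|_{U_i\cap U_j} = \chi_j|_{U_i \cap U_j}$, since both have the same image $\psi|_{U_i \cap U_j}$. The sheaf property of $\mathcal{D}_q$ then yields $\chi \in \mathcal{D}_q(U)$ with $\chi|_{U_i} = \chi_i$. Consequently $c^*_U(\chi)$ and $\psi$ agree on every $U_i$, so by the uniqueness part of (S) for $\mathcal{D}_p$ we get $\psi = c^*_U(\chi)$. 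This also shows that the corestriction $c^* \colon \mathcal{D}_q \rightarrow c^*\mathcal{D}_q$ is a sheaf isomorphism.

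Third, well-supportedness transfers along this isomorphism, and this is the only point needing care. Supports are defined via germs, and the germ $[\psi]_m$ in the subsheaf $c^*\mathcal{D}_q$ is trivial precisely when $\psi$ is trivial on some neighborhood of $m$. That condition is the same whether we compute in $c^*\mathcal{D}_q$, in $\mathcal{D}_p$, or, after pulling back via the injective maps $c^*$, for $\chi$ in $\mathcal{D}_q$. Hence $\supp(c^*_U\chi) = \supp(\chi)$, and this set is open because $\mathcal{D}_q$ is well-supported by Corollary \ref{properwellsupp}, as $q$ is open, proper and Hausdorff. So $c^*\mathcal{D}_q$ is a well-supported subsheaf of $\mathcal{D}_p$. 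Note that $\mathcal{D}_p$ itself need not be well-supported (Example \ref{examplenonwellsup}), which is why the support computation must go through $\mathcal{D}_q$.
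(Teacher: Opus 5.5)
Your proposal is correct and takes the same route as the paper: the paper notes that the image of the sheaf monomorphism $c^*$ from Proposition \ref{pullback} is a subsheaf of $\mathcal{D}_p$ isomorphic to $\mathcal{D}_q$, and then transfers well-supportedness from $\mathcal{D}_q$ via Corollary \ref{properwellsupp}. You spell out the gluing and germ computations that the paper calls ``straightforward''; the one small detail to add is that Corollary \ref{properwellsupp} also needs $q$ to be abelian, which holds automatically by Lemma \ref{abeliangenerator}.
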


Each compactification therefore gives rise to a well-supported subsheaf of the dual sheaf. We collect these subsheaves in a category:

\begin{definition}
    We order the set of all well-supported subsheaves of the dual sheaf $\mathcal{D}_p$ by setting $\mathcal{F}_1 \leq \mathcal{F}_2$ for such subsheaves $\mathcal{F}_1$ and $\mathcal{F}_2$ precisely when $\mathcal{F}_1$ is a subsheaf of $\mathcal{F}_2$. The induced category $\mathbf{WSub}(\mathcal{D}_p)$ (see Example \ref{examplescat}) is the \textbf{category of well-supported subsheaves of $\mathcal{D}_p$}.   
\end{definition}

With the following basic observation we can define a functor from the category $\mathbf{Comp}(p)$ of group bundle compatifications of $p$ to the opposite category $\mathbf{WSub}(\mathcal{D}_p)^{\mathrm{op}}$ of well-supported subsheaves of the dual sheaf $\mathcal{D}_p$.

\begin{lemma}
     If $\Phi \colon (q_1, c_1) \rightarrow (q_2, c_2)$ is a morphism of group bundle compactifications of $p$, then $c_2^*\mathcal{D}_{q_2}$ is a subsheaf of $c_1^*\mathcal{D}_{q_1}$.
\end{lemma}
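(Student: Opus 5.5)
The plan is to show directly that every element of $(c_2^*\mathcal{D}_{q_2})(U)$ lies in $(c_1^*\mathcal{D}_{q_1})(U)$, for each open subset $U \subseteq M$. By the description in the preceding corollary, such an element has the form $\chi \circ c_2|_{p^{-1}(U)}$ for some local character $\chi \in \mathcal{D}_{q_2}(U)$. Write $q_i \colon H_i \rightarrow M$ for $i=1,2$.

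The natural candidate is the pullback $\chi' \coloneqq \chi \circ \Phi|_{q_1^{-1}(U)}$. We first check that $\chi'$ is a local character of $q_1$ over $U$. Since $\Phi$ is a morphism of group bundles over $M$, it maps $q_1^{-1}(U)$ into $q_2^{-1}(U)$, so the composition makes sense. It is continuous as a composition of continuous maps. It commutes with the projections to $U$, because $\Phi$ preserves fibers and $\chi$ is a bundle morphism into $\mathrm{triv}_U^{\T}$. Its fiber maps $\chi'_m = \chi_m \circ \Phi_m$ are group homomorphisms as compositions of homomorphisms. Hence $\chi' \in \mathcal{D}_{q_1}(U)$.

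Next we compute its image under $c_1^*$. Using the commutativity $\Phi \circ c_1 = c_2$ from the definition of a morphism of compactifications, we get
\begin{align*}
    c_1^*\chi' = \chi \circ \Phi \circ c_1|_{p^{-1}(U)} = \chi \circ c_2|_{p^{-1}(U)}.
\end{align*}
So every element of $(c_2^*\mathcal{D}_{q_2})(U)$ lies in $(c_1^*\mathcal{D}_{q_1})(U)$. Since both assignments are already known to be subsheaves of $\mathcal{D}_p$, these inclusions for all open $U$ say exactly that $c_2^*\mathcal{D}_{q_2}$ is a subsheaf of $c_1^*\mathcal{D}_{q_1}$: the restriction compatibility is inherited from $\mathcal{D}_p$.

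There is no real obstacle here; the argument is essentially bookkeeping. The only point needing care is checking that $\chi \circ \Phi$ is again a morphism into the trivial $\T$-bundle over $U$, which holds because $\Phi$ is fiber-preserving over the identity of $M$. Lemma \ref{morgbcsurj} (surjectivity of $\Phi$) is not needed for the inclusion itself. It would only be relevant if one also wanted to identify $c_2^*\mathcal{D}_{q_2}$ with the image of $\Phi^*$ injectively.
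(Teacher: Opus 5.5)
Your proof is correct and is exactly the argument the paper has in mind; the paper states this lemma as a basic observation and gives no proof. Pulling a local character $\chi\in\mathcal{D}_{q_2}(U)$ back along $\Phi$ gives $\chi\circ\Phi|_{q_1^{-1}(U)}\in\mathcal{D}_{q_1}(U)$, and $\Phi\circ c_1=c_2$ then yields the inclusion on every open set, with the subsheaf axiom inherited from $c_2^*\mathcal{D}_{q_2}$ being a subsheaf of $\mathcal{D}_p$.
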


\begin{definition}\label{defdiscertespectrum}
    Define a functor\footnote{The notation is borrowed from \cite{kreidler-hermle}.} $\mathrm{DDual} \colon \mathbf{Comp}(p) \rightarrow \mathbf{WSub}(\mathcal{D}_p)^{\mathrm{op}}$ by setting
        \begin{enumerate}[(i)]
            \item $\mathrm{DDual}(q, c) \coloneqq c^*\mathcal{D}_q$ for every group bundle compactification $(q, c)$ of $p$, and
            \item $\mathrm{DDual}(\Phi) \coloneqq (c_2^*\mathcal{D}_{q_2}, c_1^*\mathcal{D}_{q_1})$ for each morphism $\Phi \colon (q_1, c_1) \rightarrow (q_2, c_2)$ of group bundle compactifications of $p$.
        \end{enumerate}
\end{definition}

\subsection{Classifying Group Bundle Compactifications}\label{subsectclasscomp}

We now construct an essential inverse to the functor $\mathrm{DDual}$ and thereby establish that the categories $\mathbf{Comp}(p)$ and $\mathbf{WSub}(\mathcal{D}_p)^{\mathrm{op}}$ are equivalent. In the classical situation, i.e., $M = \{\mathrm{pt}\}$ is a singleton space and $G$ is a topological abelian group, one can construct a compactification $(H,c)$ of $G$ out of a subgroup $\sigma \subseteq G^*$ as follows (see \cite[Lemma 4.21]{kreidler-hermle}): Consider $\sigma$ as a discrete group and then form the Pontryagin dual $H \coloneqq \sigma^*$. Then $H$ is a compact group, and one can check that the map 
    \begin{align*}
        c \colon G \rightarrow H, \quad t \mapsto [\chi \mapsto \chi(t)]
    \end{align*}
is a continuous group homomorphism with dense range.\medskip

In our situation we start from a well-supported subsheaf $\mathcal{F} \subseteq \mathcal{D}_p$. Via the equivalence between well-supported sheaves and Hausdorff étale bundles (see Theorem \ref{wsuphausdorff}) we then obtain a Hausdorff étale abelian group bundle $\mathrm{Bun}(\mathcal{F}) = \pi_{\mathcal{F}}$ over $M$. By Proposition \ref{dualpropetal} its dual bundle $\pi_{\mathcal{F}}^*$ is an open and proper Hausdorff abelian group bundle over $M$. The subsequent result turns this into a group bundle compactification of $p$.

\begin{proposition}\label{constructioncomp}
    Let $\mathcal{F}$ be a well-supported subsheaf of the dual sheaf $\mathcal{D}_p$. By setting $c_{\mathcal{F}}(t)([\chi]_{p(t)}) \coloneqq \mathrm{pr}_{\T}(\chi(t))$ for every $t \in G$ and each $\chi \in \mathcal{F}(U)$, where $U \subseteq M$ is an open neighborhood of $p(t)$, we obtain a morphism
    \begin{align*}
        c_{\mathcal{F}} \colon p \rightarrow \pi_{\mathcal{F}}^*, \quad t \mapsto c_{\mathcal{F}}(t)
    \end{align*}
    of topological group bundles.  Moreover, the pair $(\pi_{\mathcal{F}}^*,c_{\mathcal{F}})$ is a group bundle compactification of $p$.
\end{proposition}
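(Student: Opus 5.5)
First I check that $c_{\mathcal{F}}$ is well defined and lands in the right place. Fix $t \in G$ with $m \coloneqq p(t)$. If $\chi \in \mathcal{F}(U)$ and $\chi' \in \mathcal{F}(U')$ have the same germ at $m$, they agree on $p^{-1}(V)$ for some open neighborhood $V \subseteq U \cap U'$ of $m$. Hence $\mathrm{pr}_\T(\chi(t)) = \mathrm{pr}_\T(\chi'(t))$, and $c_{\mathcal{F}}(t)$ is a well-defined map $\mathcal{F}_m \to \T$. The group operations on $\mathcal{F}_m$ and on $\mathcal{F}(U)$ are pointwise, so $c_{\mathcal{F}}(t)$ is a homomorphism. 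Since $\mathcal{F}_m$ is discrete (the bundle $\pi_{\mathcal{F}}$ is étale), it follows that $c_{\mathcal{F}}(t) \in (\mathcal{F}_m)^* = (H_{\mathcal{F}}^*)_m$. Each $\chi$ is fiberwise a homomorphism $G_m \to \T$, which gives $c_{\mathcal{F}}(ts) = c_{\mathcal{F}}(t)c_{\mathcal{F}}(s)$. By construction $\pi_{\mathcal{F}}^* \circ c_{\mathcal{F}} = p$. By Corollary \ref{wsuphausdorff} and Proposition \ref{dualpropetal}, $\pi_{\mathcal{F}}^*$ is an open and proper Hausdorff abelian group bundle.

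Next comes continuity, which I expect to be the main technical step. I would apply Lemma \ref{compopenetale} to the étale Hausdorff bundle $\pi_{\mathcal{F}}$ over the locally compact space $M$. Take a net $t_\alpha \to t$ in $G$, so that $m_\alpha \coloneqq p(t_\alpha) \to m = p(t)$. Fix a basic neighborhood $V(c_{\mathcal{F}}(t),U,D,\varepsilon)$ with $D \subseteq \Gamma_{\pi_{\mathcal{F}}}(U)$ finite. By Theorem \ref{bunseciso}(i), every $\tau \in D$ has the form $\tau = \gamma(\chi)$, i.e.\ $\tau(n) = [\chi]_n$, for a unique $\chi \in \mathcal{F}(U)$. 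Then
\begin{align*}
c_{\mathcal{F}}(t_\alpha)(\tau(m_\alpha)) = \mathrm{pr}_\T(\chi(t_\alpha)) \longrightarrow \mathrm{pr}_\T(\chi(t)) = c_{\mathcal{F}}(t)(\tau(m)),
\end{align*}
because $\chi$ is continuous on the open set $p^{-1}(U)$ and $t_\alpha$ eventually lies in it. As $D$ is finite, $c_{\mathcal{F}}(t_\alpha)$ eventually lies in $V(c_{\mathcal{F}}(t),U,D,\varepsilon)$. So $c_{\mathcal{F}}$ is continuous and therefore a morphism of group bundles.

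Finally I show that $c_{\mathcal{F}}(G)$ generates $\pi_{\mathcal{F}}^*$, using Corollary \ref{chargenerating}. Let $\sigma \in \Gamma_{(\pi_{\mathcal{F}}^*)^*}(U)$ be a local section with $\sigma(n)(c_{\mathcal{F}}(s)) = 1$ for all $n \in U$ and $s \in G_n$. By Pontryagin duality (Theorem \ref{pontryagin}), $\iota^{\pi_{\mathcal{F}}}$ is a bundle isomorphism, so $\sigma = \iota^{\pi_{\mathcal{F}}} \circ \tau$ for a local section $\tau \in \Gamma_{\pi_{\mathcal{F}}}(U)$. Using Theorem \ref{bunseciso}(i) again, $\tau = \gamma(\chi)$ for some $\chi \in \mathcal{F}(U)$. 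The hypothesis now reads $\mathrm{pr}_\T(\chi(s)) = c_{\mathcal{F}}(s)([\chi]_n) = 1$ for all $s \in p^{-1}(U)$. Thus $\chi$ is the identity element of $\mathcal{F}(U) \subseteq \mathcal{D}_p(U)$, so $\tau(n) = [1]_n$ and hence $\sigma(n)$ is trivial for every $n \in U$. Corollary \ref{chargenerating} then gives generation, so $(\pi_{\mathcal{F}}^*, c_{\mathcal{F}})$ is a group bundle compactification. The one subtle point is the use of the sheaf isomorphism $\gamma$ to pass between local sections of $\pi_{\mathcal{F}}$ and elements of $\mathcal{F}(U)$. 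This identification is what makes both the continuity argument and the generation argument reduce to elementwise statements about local characters.
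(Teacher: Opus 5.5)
Your proposal is correct and follows essentially the same route as the paper. It checks well-definedness via germs, proves continuity with the neighborhood basis of Lemma \ref{compopenetale} and the identification $\mathcal{F}(U) \cong \Gamma_{\pi_{\mathcal{F}}}(U)$ from Theorem \ref{bunseciso} (i), and proves generation with Corollary \ref{chargenerating}, pulling a section of $\pi_{\mathcal{F}}^{**}$ back through $\iota^{\pi_{\mathcal{F}}}$ to an element of $\mathcal{F}(U)$ that must be the trivial local character.
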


\begin{proof}
    Firstly, we show that $c_{\mathcal{F}}$ is well-defined. Let $t \in G$, $U_1,U_2 \subseteq M$ be open neighborhoods of $p(t)$ in $M$ and $\chi_i \in \mathcal{F}(U_i)$ for $i \in \{1,2\}$. Assume further that $[\chi_1]_{p(t)} = [\chi_2]_{p(t)}$. We then find an open neighborhood  $V \subseteq U_1 \cap U_2$ of $p(t)$ with $(\chi_1)|_{p^{-1}(V)} = (\chi_2)|_{p^{-1}(V)}$, and in particular $\chi_1(t) = \chi_2(t)$. \medskip

    We now prove that $c_{\mathcal{F}} \colon G \rightarrow H_{\mathcal{F}}^*$ is continuous (where $H_{\mathcal{F}}$ is the total space of the bundle $\pi_{\mathcal{F}}$, see Proposition \ref{prepbun}). It is then easy to check that $c_{\mathcal{F}}$ is a morphism of topological group bundles.  So let $(t_\alpha)_{\alpha}$ be a net in $G$ converging to some $t \in G$. Since $p$ is continuous, we have
        \begin{align*}
            \lim_{\alpha} \pi_{\mathcal{F}}^* (c_{\mathcal{F}}(t_\alpha)) = \lim_{\alpha} p(t_\alpha) = p(t) = \pi_{\mathcal{F}}^*(c(t)).
        \end{align*}
    Moreover, if $\chi \in \mathcal{F}(U)$ for some open neighborhood $U \subseteq M$ of $p(t)$, then
        \begin{align*}
            \lim_{\alpha} c_{\mathcal{F}}(t_\alpha)([\chi]_{\pi_{\mathcal{F}}(t_\alpha)})) = \mathrm{pr}_{\T}(\chi(t_{\alpha})) = \mathrm{pr}_\T(\chi(t)) = c_{\mathcal{F}}(t)(\chi_{\pi_{\mathcal{F}}(t)}).
        \end{align*}
    Continuity of $c_{\mathcal{F}}$ therefore follows from Lemma \ref{compopenetale} and Theorem \ref{bunseciso} (i).\medskip
    
    Lastly, we prove that the image of $c_{\mathcal{F}}$ generates $\pi_{\mathcal{F}}^*$ by making use of Corollary \ref{chargenerating}. So assume that $\tau \in \Gamma_{\pi_{\mathcal{F}}^{**}}(U)$ is a local section of $\pi_{\mathcal{F}}^{**}$ on some open subset $U \subseteq M$ such that $\tau(p(t))(c_{\mathcal{F}}(t)) = 1$ for all $t \in p^{-1}(U)$. We then have to show that $\tau(m) = \1_{\mathcal{F}_m^*}$ for each $m \in U$ (recall that $\mathcal{F}_m$ denotes the stalk of $\mathcal{F}$ at $m$, see Definition \ref{defstalk}).\medskip

    Pontryagin duality (see Theorem \ref{pontryagin}) yields an isomorphism
        \begin{align*}
            \iota^{\pi_{\mathcal{F}}} \colon \pi_{\mathcal{F}} \rightarrow \pi_{\mathcal{F}}^{**}, \quad
            &[\tau]_m \mapsto \iota^{\pi_{\mathcal{F}}}_{[\tau]_m}
        \end{align*}
    of topological group bundles over $M$. Thus $\tau' \coloneqq (\iota^{\pi_{\mathcal{F}}})^{-1} \circ \tau$ is a local section of $\pi_{\mathcal{F}}$ over $U$. By Theorem \ref{bunseciso} (i) there exists a unique $\tau'' \in \mathcal{F}(U)$ with $\tau'(m) = [\tau'']_m$ for all $m \in M$. But then
        \begin{align*}
            \tau(m)(x) = \iota(\tau'(m))(x) = x(\tau'(m)) = x([\tau'']_m) \quad \text{for every } m \in U \text{ and } x \in \mathcal{F}_m^*.
        \end{align*}
    By our assumption we thus know that 
        \begin{align*}
            \mathrm{pr}_{\T}(\tau''(t)) = c_{\mathcal{F}}(t)([\tau'']_{p(t)}) = \tau(p(t))(c(t)) = 1
        \end{align*}
    for all $t \in p^{-1}(U) \subseteq G$. 
Therefore, $\tau'' \in \mathcal{F}(U)$ is the trivial local character 
    \begin{align*}
        p_U \rightarrow \mathrm{triv}_U^{\mathbb{T}}, \quad t \mapsto (p(t), 1)
    \end{align*}
over $U$. We conclude that $\tau(m)(x) = x([\tau'']_m) = 1$ for every $x \in H$ and consequently $\tau(m) = \1_{\mathcal{F}_m^*}$ for each $m \in U$.
\end{proof}

We complement Proposition \ref{constructioncomp} with the following basic observation to define our desired functor. Here we use the notation from Propositions \ref{dualmorph} and \ref{prepbun} (ii).

\begin{lemma}
    Let $\mathcal{F}_2, \mathcal{F}_1$ be two well-supported subsheaves of $\mathcal{D}_p$ with $\mathcal{F}_2 \leq \mathcal{F}_1$, and let $i_{\mathcal{F}_2}^{\mathcal{F}_1} \colon \mathcal{F}_2 \rightarrow \mathcal{F}_1$ be the canonical inclusion morphism. The induced morphism of topological bundles
        \begin{align*}
            \Psi_{\mathcal{F}_2}^{\mathcal{F}_1}\coloneqq (\Phi_{i_{\mathcal{F}_2}^{\mathcal{F}_1}})^* \colon \pi_{\mathcal{F}_1}^* \rightarrow \pi_{\mathcal{F}_2}^*,
        \end{align*}
   given by $\Psi_{\mathcal{F}_2}^{\mathcal{F}_1}(x)([\chi]_m) = x([\chi]_m)$ for each $m \in M$, every $x \in (\mathcal{F}_1)_m^*$ and each $\chi \in \mathcal{F}_2(U)$ on an open neighborhood $U$ of $m$, defines a morphism 
        \begin{align*}
            \Psi_{\mathcal{F}_2}^{\mathcal{F}_1} \colon (\pi_{\mathcal{F}_1}^*,c_{\mathcal{F}_1}) \rightarrow (\pi_{\mathcal{F}_2}^*,c_{\mathcal{F}_2})
        \end{align*}
    of group compatifications of $p$.
\end{lemma}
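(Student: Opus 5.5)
The plan is to verify three things, all essentially formal consequences of earlier results.

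First, $\Psi_{\mathcal{F}_2}^{\mathcal{F}_1}$ is a morphism of topological group bundles $\pi_{\mathcal{F}_1}^* \rightarrow \pi_{\mathcal{F}_2}^*$. By Proposition \ref{prepbun} (ii), the inclusion $i_{\mathcal{F}_2}^{\mathcal{F}_1}$ induces a morphism $\Phi_{i_{\mathcal{F}_2}^{\mathcal{F}_1}} \colon \pi_{\mathcal{F}_2} \rightarrow \pi_{\mathcal{F}_1}$ with $[\chi]_m \mapsto [\chi]_m$. Both bundles are \'{e}tale Hausdorff by Corollary \ref{wsuphausdorff}, so they are open locally compact abelian group bundles. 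Proposition \ref{dualmorph} then says that the dual map $(\Phi_{i_{\mathcal{F}_2}^{\mathcal{F}_1}})^*$ is a morphism $\pi_{\mathcal{F}_1}^* \rightarrow \pi_{\mathcal{F}_2}^*$. The stated formula follows directly: $\Psi_{\mathcal{F}_2}^{\mathcal{F}_1}(x) = x \circ \Phi_{i_{\mathcal{F}_2}^{\mathcal{F}_1}}|_{(\mathcal{F}_2)_m}$, evaluated on the germ $[\chi]_m$ of some $\chi \in \mathcal{F}_2(U)$, gives $x([\chi]_m)$, where the germ is now read in $(\mathcal{F}_1)_m$.

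Second, we need both pairs to be group bundle compactifications of $p$. This is Proposition \ref{constructioncomp}, applied to $\mathcal{F}_1$ and to $\mathcal{F}_2$.

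Third, the triangle must commute, i.e.\ $\Psi_{\mathcal{F}_2}^{\mathcal{F}_1} \circ c_{\mathcal{F}_1} = c_{\mathcal{F}_2}$. Take $t \in G$, put $m = p(t)$, and let $\chi \in \mathcal{F}_2(U)$ with $U$ an open neighborhood of $m$. Then
\begin{align*}
\Psi_{\mathcal{F}_2}^{\mathcal{F}_1}(c_{\mathcal{F}_1}(t))([\chi]_m) = c_{\mathcal{F}_1}(t)([\chi]_m) = \mathrm{pr}_{\T}(\chi(t)) = c_{\mathcal{F}_2}(t)([\chi]_m).
\end{align*}
The middle equality uses that $\chi \in \mathcal{F}_2(U) \subseteq \mathcal{F}_1(U)$ together with the definition of $c_{\mathcal{F}_1}$. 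Every element of the stalk $(\mathcal{F}_2)_m$ is such a germ, so the two characters agree on $(\mathcal{F}_2)_m$. They also lie over the same base point $m$, so they coincide.

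The only point needing care is the identification of germs in the first step. A germ $[\chi]_m$ in $(\mathcal{F}_2)_m$ is mapped to the germ of the same $\chi$ in $(\mathcal{F}_1)_m$, and this assignment is well defined because $\mathcal{F}_2$ is a subsheaf of $\mathcal{F}_1$ with compatible restriction maps. This is the closest thing to an obstacle, and it is routine; everything else is bookkeeping.
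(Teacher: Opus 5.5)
Your proposal is correct and is the natural argument here. The paper states this lemma as a ``basic observation'' without giving a proof, and your route (Proposition \ref{dualmorph} for the bundle morphism, Proposition \ref{constructioncomp} for the two compactifications, and the one-line evaluation on germs for $\Psi_{\mathcal{F}_2}^{\mathcal{F}_1} \circ c_{\mathcal{F}_1} = c_{\mathcal{F}_2}$) is the one it leaves to the reader.
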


\begin{definition}\label{defcompactdualfunctor}
    Define a functor $\mathrm{CDual} \colon \mathbf{WSub}(\mathcal{D}_p)^{\mathrm{op}} \rightarrow \mathbf{Comp}(p)$ by setting
        \begin{enumerate}[(i)]
            \item $\mathrm{CDual}(\mathcal{F}) \coloneqq (\pi_{\mathcal{F}}, c_{\mathcal{F}})$ for every well-supported subsheaf $\mathcal{F}$ of $\mathcal{D}_p$, and
            \item $\mathrm{CDual}((\mathcal{F}_2, \mathcal{F}_1)) \coloneqq  (\Psi_{\mathcal{F}_2}^{\mathcal{F}_1} \colon (\pi_{\mathcal{F}_1}^*,c_{\mathcal{F}_1}) \rightarrow (\pi_{\mathcal{F}_2}^*,c_{\mathcal{F}_2}))$ for two well-supported subsheaves $\mathcal{F}_1, \mathcal{F}_2$ of $\mathcal{D}_p$ with $\mathcal{F}_2 \leq \mathcal{F}_1$.
        \end{enumerate}
\end{definition}

To show that the functors $\mathrm{CDual}$ and $\mathrm{DDual}$ are essentially inverse to each other, we construct two natural isomorphisms. The first one is introduced in the following result.

\begin{proposition}\label{natiso1}
    For a group bundle compactification $(q, c)$ of $p$ given by the bundle $q \colon H \rightarrow M$ we obtain an isomorphism
        \begin{align*}
            \gamma_{(q, c)} \colon (q, c) \rightarrow \mathrm{CDual}(\mathrm{DDual}(q, c))
        \end{align*}
        by setting $\gamma_{(q, c)}(x)([\chi \circ c|_{p^{-1}(U)}]_{q(x)}) = \mathrm{pr}_{\T}(\chi(x))$ for every $x \in H$ and every local character $\chi \in \mathcal{D}_q(U)$ defined on an open neighborhood $U \subseteq M$ of $q(x)$. Furthermore, these isomorphisms define a natural isomorphism
            \begin{align*}
                \gamma \colon \mathrm{Id}_{\mathbf{Comp}(p)} \rightarrow \mathrm{CDual} \circ \mathrm{DDual}.
            \end{align*}
\end{proposition}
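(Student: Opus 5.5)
The plan is to factor $\gamma_{(q,c)}$ through Pontryagin duality and the sheaf/bundle equivalence, so that it becomes a composition of maps already known to be isomorphisms. Write $\mathcal{F} \coloneqq c^*\mathcal{D}_q$. By Proposition \ref{pullback}, the corestriction $c^* \colon \mathcal{D}_q \rightarrow \mathcal{F}$ is a sheaf isomorphism. Proposition \ref{dualshvseciso} gives a sheaf isomorphism $\Gamma_{q^*} \cong \mathcal{D}_q$. Composing these yields an isomorphism $\eta \colon \Gamma_{q^*} \rightarrow \mathcal{F}$ of well-supported sheaves. Applying $\mathrm{Bun}$ and using Theorem \ref{bunseciso} (ii), we obtain an isomorphism of étale Hausdorff bundles
\begin{align*}
    \Theta \coloneqq \Phi_{\eta} \circ \delta_{q^*}^{-1} \colon q^* \rightarrow \pi_{\mathcal{F}}, \quad \tau_\chi(m) \mapsto [\chi \circ c|_{p^{-1}(U)}]_m .
\end{align*}
Dualizing gives an isomorphism $\Theta^* \colon \pi_{\mathcal{F}}^* \rightarrow q^{**}$ by Proposition \ref{dualmorph}. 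We then set
\begin{align*}
    \gamma_{(q,c)} \coloneqq (\Theta^*)^{-1} \circ \iota^q ,
\end{align*}
where $\iota^q$ is the Pontryagin isomorphism of Theorem \ref{pontryagin}. Unwinding the definitions shows that this is exactly the map in the statement: for $x \in H_m$, the element $\gamma_{(q,c)}(x)$ sends $[\chi \circ c]_m$ to $\iota^q_x(\tau_\chi(m)) = \mathrm{pr}_\T(\chi(x))$. In particular, this unwinding shows the formula is well defined (independent of the chosen representative $\chi$), which is the one point that is not immediate from the definition.

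Since $\gamma_{(q,c)}$ is a composite of isomorphisms of topological group bundles, it is itself an isomorphism of group bundles. No separate continuity argument is needed, although Corollary \ref{automaticcontprop} would also supply one once bijectivity is known. It remains to see that $\gamma_{(q,c)}$ is a morphism of compactifications, i.e.\ $\gamma_{(q,c)} \circ c = c_{\mathcal{F}}$. For $t \in G$ this is the computation
\begin{align*}
    \gamma_{(q,c)}(c(t))([\chi \circ c]_{p(t)}) = \mathrm{pr}_\T(\chi(c(t))) = c_{\mathcal{F}}(t)([\chi\circ c]_{p(t)}),
\end{align*}
using the definition of $c_{\mathcal{F}}$ in Proposition \ref{constructioncomp}. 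The inverse of $\gamma_{(q,c)}$ is then automatically a morphism of compactifications as well.

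For naturality, let $\Phi \colon (q_1,c_1) \rightarrow (q_2,c_2)$ be a morphism of compactifications. Then $\mathrm{CDual}(\mathrm{DDual}(\Phi)) = \Psi^{\mathcal{F}_1}_{\mathcal{F}_2}$ with $\mathcal{F}_i = c_i^*\mathcal{D}_{q_i}$, and we must check $\Psi \circ \gamma_{(q_1,c_1)} = \gamma_{(q_2,c_2)} \circ \Phi$. It suffices to compare both sides on germs $[\chi \circ c_2]_m$ with $\chi \in \mathcal{D}_{q_2}(U)$, since these exhaust the stalks of $\mathcal{F}_2$. The key identity is $\chi \circ c_2 = (\chi \circ \Phi) \circ c_1$, where $\chi \circ \Phi \in \mathcal{D}_{q_1}(U)$. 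The left side evaluated at $x$ gives $\mathrm{pr}_\T(\chi(\Phi(x)))$, and so does the right side, so both sides agree.

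Alternatively, naturality follows formally: morphisms in $\mathbf{Comp}(p)$ between two given objects are unique when they exist, because they are determined on $c_1(G)$, which generates $q_1$, and the equalizer of two morphisms is a closed subgroup bundle. Thus the naturality square commutes automatically. I expect the main obstacle to be only the bookkeeping in identifying the composite with the stated formula, in particular matching $\delta_{q^*}$ and the germ description of the stalks of $\mathcal{F}$.
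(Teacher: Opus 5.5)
Your argument is correct, but it takes a different route from the paper's.

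\textbf{How the paper proceeds.} The paper works pointwise. It defines $\gamma_{(q,c)}$ by the formula and checks each property directly:
\begin{enumerate}[(i)]
\item well-definedness, because the monomorphism $c^*$ of Proposition \ref{pullback} induces injective maps on stalks;
\item continuity, via the neighborhood basis of Lemma \ref{compopenetale} together with Theorem \ref{bunseciso} (i);
\item surjectivity, from Lemma \ref{morgbcsurj} (morphisms of compactifications are onto);
\item injectivity, because local characters separate points of the fibers, using that $q^*$ is \'{e}tale.
\end{enumerate}
It then invokes Corollary \ref{automaticcontprop} to conclude, and leaves naturality as an exercise.

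\textbf{How your route differs.} You identify $\gamma_{(q,c)}$ with the composite $(\Theta^*)^{-1}\circ\iota^q$. Here $\Theta\colon q^*\rightarrow\pi_{c^*\mathcal{D}_q}$ is the bundle isomorphism induced by the sheaf isomorphism $\Gamma_{q^*}\cong\mathcal{D}_q\cong c^*\mathcal{D}_q$. Well-definedness, bijectivity and bicontinuity then all follow from functoriality of $\mathrm{Bun}$ and of dualization together with Theorem \ref{pontryagin}. In particular, the stalk-injectivity point the paper handles separately is absorbed into $\Phi_\eta$ being a bijection, and your unwinding of the composite to the stated formula is correct.

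\textbf{What each approach buys.} Yours is more conceptual: it shows that $\mathrm{CDual}(\mathrm{DDual}(q,c))$ is just $q^{**}$ transported along $\mathcal{D}_q\cong c^*\mathcal{D}_q$. It also dispenses with Lemma \ref{morgbcsurj} and with the explicit injectivity and continuity checks. The cost is that it leans entirely on the already-established duality machinery, including Proposition \ref{dualmorph}, whose proof the paper omits. The paper's argument is more elementary and self-contained at the level of points.

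\textbf{Naturality.} Your second argument is correct: $\mathbf{Comp}(p)$ has at most one morphism between any two objects, because the equalizer of two morphisms is a closed subgroup bundle containing $c_1(G)$. The paper does not make this explicit. It makes naturality automatic here, and likewise in Proposition \ref{natisocomprot2}.
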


\begin{proof}
    Let $(q, c)$ be a group bundle compactification of $p$ and write $q \colon H \rightarrow M$. We first take $x \in H$ and check that $\gamma_{q, c}(x)$ is
well-defined. Recall from Proposition \ref{pullback} that the maps
         \begin{align*}
            c_U^* \colon \mathcal{D}_q(U) \rightarrow \mathcal{D}_p(U), \quad \chi \mapsto \chi \circ c|_{p^{-1}(U)},
        \end{align*}
    for $U \subseteq M$ open, define a monomorphism of sheaves $c^* \colon \mathcal{D}_q \rightarrow \mathcal{D}_p$. But, by \cite[Proposition 3.18]{Wedh2016}, this implies that the induced group homorphism $(\mathcal{D}_q)_{q(x)} \rightarrow (\mathcal{D}_p)_{q(x)}$ between stalks is injective. Thus, if $[\chi_1 \circ c|_{p^{-1}(U_1)}]_{q(x)} = [\chi_2 \circ c|_{p^{-1}(U_2)}]_{q(x)}$ for some local characters $\chi_i \in \mathcal{D}_q(U_i)$ defined on open neighborhoods $U_i \subseteq M$ of $q(x)$ for $i \in \{1,2\}$, then $[\chi_1]_{q(x)} = [\chi_2]_{q(x)}$ and, in particular, $\chi_1(x) = \chi_2(x)$.\medskip

    Via Lemma \ref{compopenetale} and Theorem \ref{bunseciso} (i) we obtain that the map $\gamma_{(q, c)}$ is continuous, and is then straightforward to check that it is a morphism of topological group bundles. To see that it even defines a morphism of group bundle compactifications of $p$, let $t \in G$. Then
        \begin{align*}
            \gamma_{(q, c)}(c(t))([\chi \circ c|_{p^{-1}(U)}]_{p(t)}) &=  \mathrm{pr}_{\T}(\chi(c(t))) =  \mathrm{pr}_{\T}((\chi \circ  c|_{p^{-1}(U)})(t))) \\
            &= c_{c^*\mathcal{D}_q}(t)([\chi \circ  c|_{p^{-1}(U)}]_{p(t)})
        \end{align*}
    for each $t \in G$ and each $\chi \in \mathcal{D}_q(U)$ on an open neighborhood $U \subseteq M$ of $p(t)$. Therefore, $\gamma_{(q, c)} \circ c = c_{c^*\mathcal{D}_p}$ as desired. \medskip
   
    It follows from Lemma \ref{morgbcsurj} that $\gamma_{(q, c)}$ is surjective. We show that it is also injective, and hence an isomorphism of group bundle compactifications by Corollary \ref{automaticcontprop}. Let $x, y \in H$ with $x \neq y$. We may assume that $m := q(x) = q(y)$. Since the elements of $H_m^*$ separate the points of $H_m$ and $q^*$ is étale, we find by Lemma \ref{etaleneighborhoodbase} and Proposition \ref{dualshvseciso} some local character $\chi \in \mathcal{D}_q(U)$ on some open neighborhood $U$ of $m$ such that $\chi(x) \neq \chi(y)$. But this means 
        \begin{align*}
            \gamma_{(q, c)}(x)([\chi \circ c|_{p^{-1}(U)}]_{q(x)}) \neq \gamma_{(q, c)}(y)([\chi \circ c|_{p^{-1}(U)}]_{q(x)}),
        \end{align*}
    and hence $\gamma_{(q, c)}(x) \neq \gamma_{(q, c)}(y)$.\medskip 
    
    Finally, it is an easy exercise to verify that $\gamma$ defines a natural isomorphism.
\end{proof}

We now discuss the second natural isomorphism.

\begin{proposition}\label{natiso2}
    For each well supported subsheaf $\mathcal{F}$ of $\mathcal{D}_p$ we have 
        \begin{align*}
            \mathcal{F} = \mathrm{DDual}(\mathrm{CDual}(\mathcal{F})).
        \end{align*}
    In particular, the identity morphisms on these subsheaves define a natural isomorphism
        \begin{align*}
            \mathrm{Id}_{(\mathbf{WSub}(\mathcal{D}_p))^{\mathrm{op}}} \rightarrow \mathrm{DDual} \circ \mathrm{CDual}.
        \end{align*}
\end{proposition}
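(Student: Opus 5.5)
We have to show that for every open $U \subseteq M$ the subgroups $\mathcal{F}(U)$ and $(c_{\mathcal{F}}^*\mathcal{D}_{\pi_{\mathcal{F}}^*})(U)$ of $\mathcal{D}_p(U)$ coincide. Naturality is then automatic, because $\mathbf{WSub}(\mathcal{D}_p)$ is a poset category: any two parallel morphisms agree, so the identities form a natural transformation as soon as the objects are equal. We prove the two inclusions separately.

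\emph{The inclusion $\mathcal{F} \subseteq \mathrm{DDual}(\mathrm{CDual}(\mathcal{F}))$.} Fix $\chi \in \mathcal{F}(U)$.
\begin{enumerate}[(1)]
    \item By Theorem \ref{bunseciso} (i), the germs define a local section $m \mapsto [\chi]_m$ of $\pi_{\mathcal{F}}$ over $U$.
    \item Composing with the Pontryagin isomorphism $\iota^{\pi_{\mathcal{F}}}$ of Theorem \ref{pontryagin} gives a local section of $\pi_{\mathcal{F}}^{**} = (\pi_{\mathcal{F}}^*)^*$ over $U$.
    \item Proposition \ref{dualshvseciso}, applied to the open locally compact bundle $\pi_{\mathcal{F}}^*$, turns this section into a local character $\tilde\chi \in \mathcal{D}_{\pi_{\mathcal{F}}^*}(U)$. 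Explicitly, $\mathrm{pr}_\T(\tilde\chi(x)) = x([\chi]_m)$ for $x \in \mathcal{F}_m^*$.
    \item By the definition of $c_{\mathcal{F}}$ in Proposition \ref{constructioncomp}, for $t \in p^{-1}(U)$ we get
    \begin{align*}
        \mathrm{pr}_\T(\tilde\chi(c_{\mathcal{F}}(t))) = c_{\mathcal{F}}(t)([\chi]_{p(t)}) = \mathrm{pr}_\T(\chi(t)).
    \end{align*}
\end{enumerate}
Since $\tilde\chi \circ c_{\mathcal{F}}$ and $\chi$ have the same first coordinate $p(t)$, this shows $\chi = \tilde\chi \circ c_{\mathcal{F}}|_{p^{-1}(U)}$.

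\emph{The reverse inclusion.} Let $\psi \in \mathcal{D}_{\pi_{\mathcal{F}}^*}(U)$. We reverse the chain above.
\begin{enumerate}[(1)]
    \item Proposition \ref{dualshvseciso} gives a local section $\tau_\psi$ of $\pi_{\mathcal{F}}^{**}$ over $U$.
    \item Then $\tau' := (\iota^{\pi_{\mathcal{F}}})^{-1} \circ \tau_\psi$ is a local section of $\pi_{\mathcal{F}}$ over $U$.
    \item By Theorem \ref{bunseciso} (i), since $\mathcal{F}$ is a sheaf, there is a unique $\tau'' \in \mathcal{F}(U)$ with $\tau'(m) = [\tau'']_m$ for all $m \in U$.
    \item Hence $\mathrm{pr}_\T(\psi(x)) = x([\tau'']_m)$ for $x \in \mathcal{F}_m^*$. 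This is the same computation as in the proof of Proposition \ref{constructioncomp}.
    \item Evaluating at $x = c_{\mathcal{F}}(t)$ gives $\psi \circ c_{\mathcal{F}}|_{p^{-1}(U)} = \tau''$, which lies in $\mathcal{F}(U)$.
\end{enumerate}

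No step here is genuinely hard. The argument rests on two earlier results. The first is the sheaf--étale correspondence of Theorem \ref{bunseciso} (i), which needs $\mathcal{F}$ to be a sheaf and not merely a presheaf. The second is Pontryagin duality (Theorem \ref{pontryagin}) for the étale Hausdorff bundle $\pi_{\mathcal{F}}$; this applies by Corollary \ref{wsuphausdorff} because $\mathcal{F}$ is well-supported.

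The step that needs the most care is the bookkeeping of identifications, namely making sure the two composite bijections agree with the explicit formulas. Concretely, one must check that the character attached via Proposition \ref{dualshvseciso} to $\iota^{\pi_{\mathcal{F}}} \circ [\chi]_{(\cdot)}$ is exactly $x \mapsto x([\chi]_m)$. Once that formula is pinned down, both inclusions reduce to the one-line identity $c_{\mathcal{F}}(t)([\chi]_{p(t)}) = \mathrm{pr}_\T(\chi(t))$.
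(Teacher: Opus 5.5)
Your proposal is correct and is essentially the paper's proof: the paper composes the same three identifications (germs via Theorem~\ref{bunseciso}~(i), Pontryagin duality for $\pi_{\mathcal{F}}$, and Proposition~\ref{dualshvseciso}) into a single group isomorphism $\Psi\colon \mathcal{F}(U) \to \mathcal{D}_{\pi_{\mathcal{F}}^*}(U)$ with $\Psi(\chi)(x) = (\pi_{\mathcal{F}}^*(x), x([\chi]_{\pi_{\mathcal{F}}^*(x)}))$, and then verifies $(c_{\mathcal{F}})_U^* \circ \Psi = \mathrm{id}_{\mathcal{F}(U)}$. Bijectivity of $\Psi$ gives both of your inclusions at once (your reverse inclusion is just the surjectivity of $\Psi$ written out), and naturality is automatic in the poset category, as you say.
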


\begin{proof}
    Pick a well-supported subsheaf $\mathcal{F}$ of $\mathcal{D}_p$. We have to show that $\mathcal{F} = c_{\mathcal{F}}^*(\mathcal{D}_{\pi_{\mathcal{F}}})$. Let $U \subseteq M$ be an open subset. By Theorem \ref{bunseciso} (i) we have a group isomorphism
         \begin{align*}
            \Psi_1 \colon \mathcal{F}(U) \rightarrow \Gamma_{\pi_{\mathcal{F}}}(U), \quad \chi \mapsto [m \mapsto [\chi]_m].  
        \end{align*}
    Pontryagin duality (see Theorem \ref{pontryagin}) yields a group isomorphism
        \begin{align*}
            \Psi_2 \colon \Gamma_{\pi_{\mathcal{F}}}(U) \rightarrow \Gamma_{\pi_{\mathcal{F}}^{**}}(U), \quad \tau \mapsto [\iota^{\pi_{\mathcal{F}}} \circ \tau].
        \end{align*}
    Moreover, by Proposition \ref{dualshvseciso} the map
         \begin{align*}
            \Psi_3 \colon \Gamma_{\pi_{\mathcal{F}}^{**}}(U) \rightarrow \mathcal{D}_{\pi_{\mathcal{F}}^*}(U), \quad \tau \mapsto \chi_{\tau}
        \end{align*}
    where $\chi_\tau(x) \coloneqq (\pi_{\mathcal{F}}^*(x),[\tau(\pi_{\mathcal{F}}^*(x))](x))$ for  $x \in H_{\mathcal{F}}^*$ with $\pi_{\mathcal{F}^*}(x) \in U$ and $\tau \in  \Gamma_{\pi_{\mathcal{F}}^{**}}(U)$ is a group isomorphism as well. Thus, the composition 
        \begin{align*}
            \Psi \coloneqq \Psi_3 \circ \Psi_2 \circ \Psi_1 \colon  \mathcal{F}(U) \rightarrow  \mathcal{D}_{\pi_{\mathcal{F}}^*}(U)
        \end{align*}
    is also a group isomorphism. It is given by $\Psi(\chi)(x) = (\pi_{\mathcal{F}}^*(x), x([\chi]_{\pi_{\mathcal{F}}^*(x)}))$ for all $\chi \in \mathcal{F}(U)$ and $x \in H_{\mathcal{F}}^*$ with $\pi_{\mathcal{F}^*}(x) \in U$. If we can show that $((c_{\mathcal{F}})_U^* \circ \Psi)(\chi) = \chi$ for every $\chi \in \mathcal{F}(U)$, then the claim will follow. But for $\chi \in \mathcal{F}(U)$ and $t \in p^{-1}(U) \subseteq G$ we obtain that
        \begin{align*}
            (((c_{\mathcal{F}})_U^* \circ \Psi)(\chi))(t) &= \Psi(\chi)(c_{\mathcal{F}}(t)) =  (\pi_{\mathcal{F}}^*(c_{\mathcal{F}}(t)), c_{\mathcal{F}}(t)([\chi]_{\pi_{\mathcal{F}}^*(c_{\mathcal{F}}(t))})) \\
            &= (p(t), \mathrm{pr}_{\T}(\chi(t))) = \chi(t).
        \end{align*}
\end{proof}

With Propositions \ref{natiso1} and \ref{natiso2} we have shown the following equivalence of categories.
\begin{theorem}\label{maintheoremcomp}
    For the open topological abelian group bundle $p \colon G \rightarrow M$ over the locally compact space $M$ the functors 
        \begin{align*}
           \mathrm{DDual} \colon \mathbf{Comp}(p) \rightarrow \mathbf{WSub}(\mathcal{D}_p)^{\mathrm{op}} \quad \textrm{and} \quad \mathrm{CDual} \colon \mathbf{WSub}(\mathcal{D}_p)^{\mathrm{op}} \rightarrow \mathbf{Comp}(p)
        \end{align*}
    establish an equivalence between the category $\mathbf{Comp}(p)$ of group bundle compactifications of $p$ and the opposite category $\mathbf{WSub}(\mathcal{D}_p)^{\mathrm{op}}$ of well-supported subsheaves of the dual sheaf $\mathcal{D}_p$.
\end{theorem}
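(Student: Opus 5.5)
The plan is to read the theorem off the two natural isomorphisms already constructed, after checking that $\mathrm{DDual}$ and $\mathrm{CDual}$ really are functors between the stated categories. By definition, two functors are essentially inverse if $\mathrm{CDual}\circ\mathrm{DDual}\simeq \mathrm{Id}_{\mathbf{Comp}(p)}$ and $\mathrm{DDual}\circ\mathrm{CDual}\simeq\mathrm{Id}_{\mathbf{WSub}(\mathcal{D}_p)^{\mathrm{op}}}$. These are exactly the contents of Proposition \ref{natiso1} and Proposition \ref{natiso2}.

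\emph{Well-definedness and functoriality.} First I verify that $\mathrm{DDual}$ is a functor. On objects it lands in well-supported subsheaves of $\mathcal{D}_p$ by the corollary to Proposition \ref{pullback}. On morphisms it is well defined because, for $\Phi\colon(q_1,c_1)\to(q_2,c_2)$, we have $c_2^*\mathcal{D}_{q_2}\leq c_1^*\mathcal{D}_{q_1}$ by the preceding lemma. Concretely, for $\chi\in\mathcal{D}_{q_2}(U)$ one has $\chi\circ c_2=(\chi\circ\Phi)\circ c_1$, and $\chi\circ\Phi\in\mathcal{D}_{q_1}(U)$. Since $\mathbf{WSub}(\mathcal{D}_p)$ is a poset category, there is at most one morphism between any two objects, so the functor axioms hold automatically on the target side.

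For $\mathrm{CDual}$, Proposition \ref{constructioncomp} shows that it sends objects to group bundle compactifications, and the subsequent lemma shows that $\Psi_{\mathcal{F}_2}^{\mathcal{F}_1}$ is a morphism of compactifications. Functoriality ($\Psi_{\mathcal{F}}^{\mathcal{F}}=\mathrm{id}$ and compatibility with composition) follows directly from the formula $\Psi(x)([\chi]_m)=x([\chi]_m)$. It also follows from functoriality of $\mathrm{Bun}$ and $\mathrm{D}$, since $\Psi$ is the image of an inclusion under $\mathrm{D}\circ\mathrm{Bun}$.

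\emph{Applying the two propositions.} Proposition \ref{natiso1} provides the natural isomorphism $\gamma\colon\mathrm{Id}_{\mathbf{Comp}(p)}\to\mathrm{CDual}\circ\mathrm{DDual}$. Proposition \ref{natiso2} provides the identity natural isomorphism $\mathrm{Id}\to\mathrm{DDual}\circ\mathrm{CDual}$. In the poset category $\mathbf{WSub}(\mathcal{D}_p)^{\mathrm{op}}$, naturality is automatic, since all diagrams commute when hom-sets have at most one element. So the only substantive point is the object-level equality $\mathcal{F}=\mathrm{DDual}(\mathrm{CDual}(\mathcal{F}))$, which that proposition establishes.

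The main obstacle, already handled in Proposition \ref{natiso1}, is that $\gamma_{(q,c)}$ is an isomorphism. Its surjectivity comes from Lemma \ref{morgbcsurj}, its injectivity from separation of points by characters combined with étaleness of $q^*$, and its continuity of the inverse from Corollary \ref{automaticcontprop}. Given these results, the theorem follows by combining them.
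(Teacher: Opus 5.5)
Your proposal is correct and matches the paper, which obtains the theorem directly from the natural isomorphisms of Propositions \ref{natiso1} and \ref{natiso2}, exactly as you do. You additionally check that $\mathrm{DDual}$ and $\mathrm{CDual}$ are well-defined functors, which the paper leaves implicit in the preceding lemmas and definitions; this check is accurate, including your observation that naturality on the poset side is automatic.
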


Let us illustrate the duality result in a particular case.

\begin{example}\label{examplemainthm}
    Assume that $p =\mathrm{triv}_{M}^{\Z}$ is the trivial bundle over the locally compact space $M$ with fiber $\Z$. Since $p$ is an open locally compact group bundle, the dual sheaf $\mathcal{D}_p$ can be identified with the sheaf of local sections of the dual bundle $(\mathrm{triv}_{M}^{\Z})^*$ by Proposition \ref{dualshvseciso}. One can readily check that
        \begin{align*}
            \mathrm{triv}_{M}^{\T} \rightarrow (\mathrm{triv}_{M}^{\Z})^*, \quad (m,z) \mapsto [(m,k) \mapsto (m,z^k)]
        \end{align*}
    is an isomorphism of topological group bundles. Moreover, the local sections of the trivial bundle $\mathrm{triv}_{M}^{\T}$ can be identified with continuous maps $f \colon U \rightarrow \T$ for open subsets $U \subseteq M$. Combining these observations, we conclude that the maps
        \begin{align*}
            \mathrm{C}(U,\T) \rightarrow \mathcal{D}_p(U), \quad f \mapsto [(m,k) \mapsto (m,f(m)^k)]
        \end{align*}
    for open subsets $U \subseteq M$ define an isomorphism between the sheaf of continuous functions $\mathrm{C}(\, \cdot \, , \T)$ from Example \ref{examplessheaves} and the dual sheaf $\mathcal{D}_p$ of the trivial bundle $p =\mathrm{triv}_{M}^{\Z}$. By Theorem \ref{maintheoremcomp} the group compactifications of the trivial bundle $\mathrm{triv}_{M}^{\Z}$ are therefore classified by the well-supported subsheaves of $\mathrm{C}(\, \cdot \, , \T)$.\medskip

    For a concrete example consider the Bohr compactification $(\mathrm{b}\Z,c_{\mathrm{b}\Z})$ of $\Z$, i.e., the group compactification of $\Z$ defined by taking the dual $\mathrm{b}\Z \coloneqq ((\Z^*)_{\mathrm{d}})^*$ of the dual $\Z^*$ equipped with the discrete topology, and the evaluation map 
        \begin{align*}
            c_{\mathrm{b}\Z} \colon \Z \rightarrow \mathrm{b}\Z, \quad k \mapsto [\varrho \mapsto \varrho(k)]
        \end{align*}
    (see, e.g., \cite[Section 4.7]{Foll2016}). We identify $(\Z^*)_{\mathrm{d}}$ with the discrete torus $\T_{\mathrm{d}}$, and then have $c_{\mathrm{b}\Z}(k)\colon \T_{\mathrm{d}} \rightarrow \T, \, z \mapsto z^k$ for each $k \in \Z$. Form the induced group bundle compactification $(\mathrm{triv}_{M}^{\mathrm{b}\Z},c)$ of $p$ given by $c(m,k) = (m,c_{\mathrm{b}\Z}(k))$ for $(m,k) \in M \times \Z$ from Example \ref{examplescomp} (ii). Via Pontryagin duality we can identify the dual of $\mathrm{b}\Z$ with $\mathrm{\T}_{\mathrm{d}}$, and then the dual bundle of $\mathrm{triv}_{M}^{\mathrm{b}\Z}$ is given by $\mathrm{triv}_{M}^{\mathrm{\T}_{\mathrm{d}}}$. This implies that the maps
         \begin{align*}
            \mathrm{C}(U,\T_{\mathrm{d}}) \rightarrow \mathcal{D}_{\mathrm{triv}_{M}^{\mathrm{b}\Z}}(U), \quad f \mapsto [(m,x) \mapsto (m,x(f(m)))]
        \end{align*}
    for open subsets $U \subseteq M$ define an isomorphism between the sheaf  $\mathrm{C}(\, \cdot \, , \T_{\mathrm{d}})$ of locally constant functions to the torus and the dual sheaf $\mathcal{D}_{\mathrm{triv}_{M}^{\mathrm{b}\Z}}$. Thus, the well-supported subsheaf $c^*\mathcal{D}_{\mathrm{triv}_{M}^{\mathrm{b}\Z}}$ of $\mathcal{D}_p$ defined by the compactification $(\mathrm{triv}_{M}^{\mathrm{b}\Z},c)$ of $p$ is given on an open subset $U \subseteq M$ by
        \begin{align*}
            c^*\mathcal{D}_{\mathrm{triv}_{M}^{\mathrm{b}\Z}}(U) = \{[(m,k) \mapsto (m,f(m)^k)] \mid f\colon U \rightarrow \T \textrm{ locally constant}\}.
        \end{align*}
     With respect to the identification discussed in the first part of this example, the compactification $(\mathrm{triv}_{M}^{\mathrm{b}\Z},c)$ therefore corresponds precisely to the well-supported subsheaf of  $\mathrm{C}(\, \cdot \, , \T)$ defined by all locally constant functions to $\T$.
\end{example}

In the classical situation of group compactifications, i.e., if $M = \{\mathrm{pt}\}$ is a singleton space, one can always construct the Bohr compactification $(\mathrm{b}G,c_{\mathrm{b}G})$ as in Example \ref{examplemainthm}, and this compactification is maximal in the following sense\footnote{In category theoretical terms this means that the Bohr compactification $(\mathrm{b}G,c_{\mathrm{b}G})$ in an \emph{initial object} in the category of all compactifications of $G$.}: For any group compactification $(H,c)$ of $G$ there is a unique morphism of group compactifications $\Phi \colon (\mathrm{b}G,c_{\mathrm{b}G}) \rightarrow (H,c)$. In fact, this is a direct consequence of the duality between group compactifications of $G$ and subgroups of the dual group $G^*$ (see \cite[Theorem 4.25]{kreidler-hermle}) as well as the trivial observation that $G^*$ is a subgroup of itself (and hence gives rise to a compactification).\medskip

Does there also exist a Bohr compactification in our framework of group bundles? We have already seen in Example \ref{examplenonwellsup} that, even for a trivial bundle $p$, the dual sheaf $\mathcal{D}_p$ is not well-supported in general, and hence does not yield a group bundle compactification of $p$. The following example now even shows that there is generally no largest well-supported subsheaf of $\mathcal{D}_p$. As a consequence of Theorem \ref{maintheoremcomp} we can then conclude that in general there is no group bundle compactification $(\mathrm{b}p,c_{\mathrm{b}p})$ of $p$ with the maximality property that for any other group bundle compactification $(q,c)$ of $p$ there exists a unique morphism $\Phi \colon (\mathrm{b}p,c_{\mathrm{b}p}) \rightarrow (q,c)$.

\begin{example}
    Assume that $p= \mathrm{triv}_{[-1, 1]}^{\Z}$ is the trivial bundle over $M= [-1,1]$ with fiber $\Z$ as in Example \ref{examplenonwellsup}. By Example \ref{examplemainthm} the dual sheaf $\mathcal{D}_p$ of $p$ is isomorphic to the sheaf of continuous functions  $\mathrm{C}(\, \cdot \, ,\T)$ on $M$. Consider the two continuous functions
        \begin{align*}
            f_1 &\colon [-1,1] \rightarrow \T, \quad m \mapsto \mathrm{e}^{2 \pi \mathrm{i} m},\\
            f_2 &\colon [-1,1] \rightarrow \T, \quad m \mapsto \mathrm{e}^{2 \pi \mathrm{i} |m|},
        \end{align*}
    and the subsheaves $\mathcal{F}_1$ and $\mathcal{F}_2$ of $\mathrm{C}(\, \cdot \, ,\T)$ given by
        \begin{align*}
            \mathcal{F}_1(U) &\coloneqq \{f \in \mathrm{C}(U,\T) \mid \exists \, (U_k)_{k \in \Z} \textrm{ open cover of } U \textrm{ with } f|_{U_k} = f_1^k|_{U_k} \textrm{ for all } k \in \Z\},\\
            \mathcal{F}_2(U) &\coloneqq \{f \in \mathrm{C}(U,\T) \mid \exists \, (U_k)_{k \in \Z} \textrm{ open cover of } U \textrm{ with } f|_{U_k} = f_2^k|_{U_k} \textrm{ for all } k \in \Z\}
        \end{align*}
    for open subsets $U \subseteq M$. If $f \in \mathcal{F}_1(U)$ for $U \subseteq M$ open and $(U_k)_{k \in \Z}$ is the corresponding open cover of $U$, then a moment's thought reveals that $U_k$ for $k \in \Z$ must be pairwise disjoint and hence define a covering of clopen subsets in $U$. One can further check that $\supp(f) = U \setminus U_0$ since for $k \neq 0$ the equality $f_1^k(m) = 1$ can only hold on rational points $m \in [-1,1]$. This shows that $f$, and consequently also the sheaf $\mathcal{F}_1$, is well-supported. Similarly, we obtain that $\mathcal{F}_2$ is well-supported.\medskip

    Now if $\mathcal{F}_1$ and $\mathcal{F}_2$ are both subsheaves of some subsheaf $\mathcal{F}$ of $\mathrm{C}(\,\cdot \, ,\T)$, then we must have $f \coloneqq f_1f_2 \in \mathcal{F}([-1,1])$. But $\supp(f) = [0,1]$, and hence $f$ is not well-supported. We conclude that there cannot be any well-supported subsheaf $\mathrm{C}(\,\cdot \, ,\T)$ having both $\mathcal{F}_1$ and $\mathcal{F}_2$ as a subsheaf. In particular, there is no largest well-supported subsheaf of $\mathrm{C}(\,\cdot \, ,\T)$.
\end{example}

\begin{remark}
    Note that if the locally compact base space $M$ is extremally disconnected (i.e., the closure of an open subset is again open), then Lemma \ref{lemdescsupp} implies that $\mathcal{D}_p$ is well-supported. In particular, in this situation there is an analogue of the Bohr compactification for $p$.
\end{remark}
\section{Classification of Systems with Relative Discrete Spectrum}\label{discspsec}
\subsection{Systems with Relative Discrete Spectrum}

In the classical case, we define topological dynamical systems as group actions on a compact space. For actions of open topological abelian group bundles the following is a natural framework (see also, e.g., \cite[Subsection 2.1.b]{DeRe2000} in the more general situation of groupoids). We again fix an open topological abelian group bundle $p \colon G \rightarrow M$ over a locally compact space $M$ for the entire section.

\begin{definition}
    A \textbf{topological dynamical system  (over $p$)} is a pair $(q, \varphi)$, where $q \colon K \rightarrow M$ is an open proper Hausdorff topological bundle over $M$, and
        \begin{align*}
            \varphi \colon G \times_{M} K \rightarrow K, \quad (t, x) \mapsto \varphi_t(x)
        \end{align*}
    is a morphism of topological bundles over $M$ such that
        \begin{enumerate}[(i)]
            \item $\varphi_{st}(x) = \varphi_s(\varphi_t(x))$ for all $s,t \in G$ and $x \in K$ with $(s,x), (t,x) \in G \times_{p, q} K$, and 
            \item $\varphi_{1_{G_m}}(x) = x$ for every $m \in M$ and $x \in K_m$.
        \end{enumerate}

    A \textbf{morphism} $\Phi \colon (q, \varphi) \rightarrow (r, \psi)$ between such topological dynamical systems is a morphism $\Phi \colon q \rightarrow r$ of topological bundles such that
    $\Phi_{p(t)} \circ \varphi_t = \psi_t \circ \Phi_{p(t)}$ for each $t \in G$.
\end{definition}

Here are some simple, but important examples.

\begin{examples}\label{examplestds}
    \begin{enumerate}[(i)]
        \item Assume $G$ to be a topological group, i.e.,  $M = \{ \mathrm{pt} \}$ is a singleton. Every continuous group action $\varphi \colon G \times K \rightarrow K$ on a compact non-empty space $K$ gives 
rise to a topological dynamical system $(G \rightarrow \{ \mathrm{pt} \}, \varphi)$.
        \item If $q \colon K \rightarrow M$ is a proper, open and Hausdorff topological bundle over a locally compact space $M$, then the trivial action
            \begin{align*}
                \mathrm{id} \colon G \times_{M} K \rightarrow K, \quad (t, x) \mapsto x
            \end{align*}
        yields a topological dynamical system $(q, \mathrm{id})$.
        \item Let $(q, c)$ be a group bundle compactification of $p$ and write $q \colon H \rightarrow M$. Then the pair $(q, \varphi_c)$, where
        \begin{align*}
            \varphi_c \colon G \times_{M} H \rightarrow H, \quad (t, x) \mapsto c(t)x,
        \end{align*}
        is a topological dynamical system. We call this the \textbf{rotation system} induced by the group bundle compactification $(q, c)$.
        %\item As a concrete example of (iii) take $p = $
\end{enumerate}
\end{examples}

In the Halmos--von Neumann theorem for abelian groups we consider topological dynamical systems with \emph{discrete spectrum}. The notion of discrete spectrum however is purely operator theoretic and can be defined in the general framework of group representations on Banach spaces, see, e.g., \cite[Section 1.1]{kreidler-hermle}. Conretely, a representation $T \colon H \rightarrow \mathscr{L}(E)$ of a topological abelian group $H$ on a Banach space $E$ which is bounded (i.e., $\sup_{t \in G} \|T_t\| < \infty$) and strongly continuous (i.e., for each vector $v \in E$ the map $G \rightarrow E, \, x \mapsto T_xv$ is continuous) has discrete spectrum if 
    \begin{align*}  
        E = \overline{\lin} \{v \in E \mid \exists \chi \in H^* \textrm{ such that } T_xv = \chi(x)v \textrm{ for every } x \in H\},
    \end{align*}
see \cite[Proposition 1.3]{kreidler-hermle}. Thus, for such a representation the Banach space $E$ is generated by the eigenvectors of the representation $T$. \medskip

To formulate a generalization of this notion in the situation of open abelian group bundles over locally compact spaces, we first need to replace the notion of a Banach space by that of a Banach bundle (see, e.g., \cite[Definition 1.1]{DuGi1983}, \cite[Section 1 and Theorem 3.2]{Gierz1982} or \cite[Subsection 2.4]{EJK2023}).

\begin{definition}
    An open topological bundle $s \colon E \rightarrow M$ is called a \textbf{(continuous) Banach bundle} over the locally compact space $M$ if
        \begin{enumerate}[(i)]
            \item every fiber $E_m$ for $m \in M$ is a Banach space,
            \item the maps
                \begin{align*}
                   &E \times_{M} E \rightarrow E, \quad (v,w) \mapsto v + w, \\
                    &\mathbb{C} \times E \rightarrow E, \quad (\lambda, v) \mapsto \lambda \cdot v, \\
                    &E \rightarrow [0, \infty), \quad v \mapsto \|v\|
                \end{align*}
                are continuous, and
            \item the sets
                \begin{align*}
                    \mathrm{V}(U, \varepsilon) \coloneqq \{ v \in s^{-1}(U) \mid \lVert v \rVert < \varepsilon \},
                \end{align*}
                where $U \subseteq M$ is an open neighborhood of $m$ and where $\varepsilon > 0$, form a neighborhood base of the zero element $0_m \in E_m$ for every $m \in M$. 
        \end{enumerate}
\end{definition}

We note that part (iii) of this definition in particular yields that the zero elements of a Banach bundle define a (continuous) section.\medskip

Now, given a Banach bundle $s \colon E \rightarrow M$ over a locally compact space $M$ write
    \begin{align*}
        \Gamma_s^{\mathrm{b}}(U) \coloneqq \biggl\{\tau \in \Gamma_s(U) \mid \sup_{m \in U} \|\tau(m)\| < \infty\biggr\}
    \end{align*}
for its space of bounded local sections on an open subset $U\subseteq M$. This is a Banach space with respect to the supremum norm given by $\|\tau\| \coloneqq \sup_{m \in U} \|\tau(m)\|$ for each $\tau \in  \Gamma_s^{\mathrm{b}}(U)$. The following result (see, e.g., \cite[Theorem 3.2]{Gierz1982}) shows that every Banach bundle has \enquote{many bounded sections}:

\begin{theorem}\label{existencesec}
      Let $s \colon E \rightarrow M$ be a Banach bundle. For every vector $v \in E$ there is some $\tau \in \Gamma_s^{\mathrm{b}}(M)$ with $\tau(s(v)) = v$.
\end{theorem}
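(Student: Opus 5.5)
The plan is to reduce Theorem~\ref{existencesec} to the case of a compact base and then glue with a bump function. We may assume $v \neq 0$; otherwise the zero section works. Set $m_0 \coloneqq s(v)$. Since $M$ is locally compact, we fix a compact neighborhood $L$ of $m_0$ and a continuous function $\phi \in \mathrm{C}_{\mathrm{c}}(M)$ with $0 \leq \phi \leq 1$, $\phi(m_0) = 1$ and $\supp(\phi) \subseteq \mathrm{int}(L)$. If we can produce a continuous section $\sigma$ of the restricted bundle $s_L \colon s^{-1}(L) \rightarrow L$ with $\sigma(m_0) = v$, then we define $\tau(m) \coloneqq \phi(m)\sigma(m)$ for $m \in L$ and $\tau(m) \coloneqq 0_m$ otherwise.

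This $\tau$ is continuous. On $\mathrm{int}(L)$ this follows from continuity of scalar multiplication. Near points outside $\supp(\phi)$, $\tau$ is the zero section. At boundary points $m$ of $\supp(\phi)$, we have $\|\tau(m')\| \leq \phi(m')\sup_L\|\sigma\| \rightarrow 0$, and axiom (iii) of a Banach bundle turns norm convergence to $0$ (together with base convergence) into convergence to $0_m$. Boundedness of $\tau$ comes from compactness of $L$ and continuity of the norm. So everything reduces to the compact base case. Note that $s_L$ is still an open map, so it is again a Banach bundle.

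For compact base, the plan is the standard partition-of-unity/successive-approximation argument in the style of Douady--dal Soglio-Hérault and Gierz. The first step is \emph{local approximate sections}. For each $m \in L$ and $w \in E_m$, openness of $s$ (via Lemma~\ref{charopen}) alone does not give a continuous local section. The key intermediate goal is instead the following: for every $w \in E$ with $s(w) = m$ and every $\varepsilon > 0$, there is a continuous section $\rho$ over $L$ with $\|\rho(m) - w\| < \varepsilon$.

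To obtain this, we first build, for every $\varepsilon$, a family of continuous sections that is $\varepsilon$-dense in each fiber. One then iterates: choose $\rho_1$ with $\|\rho_1(m_0) - v\| < 1/2$, then $\rho_2$ approximating $v - \rho_1(m_0)$ within $1/4$, and so on. Each correction is multiplied by a truncation $\min\{1, 2^{-n+1}/\|\rho_n\|\}$ to keep sup norms summable. The series $\sum_n \rho_n$ then converges uniformly, and uniform limits of continuous sections are continuous by axioms (ii) and (iii). Its value at $m_0$ is $v$.

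The main obstacle is the existence of \emph{any} nontrivial continuous sections through prescribed fiber points approximately. This is where the topology axioms must be used genuinely, and the paper itself quotes Gierz for this reason. What I would try first is Michael's selection theorem. Consider the lower semicontinuous set-valued map $m \mapsto \{w \in E_m : \ldots\}$. Lower semicontinuity is exactly the openness of $s$ combined with axiom (iii). The fibers are Banach spaces, hence closed and convex.

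Michael selection requires paracompactness of $L$, which holds since $L$ is compact, and a Banach-space setting. To meet the latter, one works locally in a trivializing sense via $\varepsilon$-near selections, running Michael's convex-combination construction with a partition of unity on $L$ subordinate to the open sets $s(\mathrm{V}\text{-type neighborhoods of } w_i)$. This convex-combination step, with continuity of addition and scalar multiplication replacing linear structure on a fixed space, is what yields the $\varepsilon$-approximate continuous sections needed above.
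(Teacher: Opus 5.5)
Your reduction to a compact base via a bump function is fine. So is the outer successive-correction scheme that upgrades approximate interpolation at $m_0$ to exact interpolation, up to constants: truncating $\rho_1$ at norm $1$ destroys $\rho_1(m_0)\approx v$ when $\|v\|>1$, and for $n\geq 2$ you only know $\|\rho_n(m_0)\|<3\cdot 2^{-n}$, so leave $\rho_1$ alone and truncate at $2^{-n+2}$. But these are the routine parts. The paper does not prove this theorem; it quotes it from Gierz (it is the Douady--dal Soglio-H\'erault theorem). The whole content of that citation is exactly the step you leave open: producing \emph{some} continuous section $\rho$ over $L$ with $\|\rho(m)-w\|<\varepsilon$ for a prescribed $w\in E_m$.

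Your sketch for this step does not work. Michael's construction $\sum_i p_i(m)y_i$ uses the $y_i$ as \emph{constant} maps into one fixed Banach space. Here $w_i\in E_{s(w_i)}$ has no meaning over other base points, and a Banach bundle has no local trivializations in general (fiber dimensions can jump). Moreover, the only \enquote{$\mathrm{V}$-type} neighborhoods of a nonzero $w_i$ available are the tubes $W(\tau,V,\varepsilon)$ of Lemma~\ref{topbanachbun}, and these presuppose a continuous section $\tau$ through $w_i$, which is circular. Lower semicontinuity of $m\mapsto E_m$ is just openness of $s$, but Michael's theorem needs convex values in a common vector space, so it cannot be invoked. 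If you instead replace $w_i$ by an arbitrary selection $\theta_i(m)\in O_i\cap E_m$ from an open neighborhood $O_i$ of $w_i$ (which openness of $s$ does allow), then $\sum_i p_i\theta_i$ is a section but in general a discontinuous one, and nothing in your plan repairs this.

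What is missing is a notion of approximate continuity together with a second, inner approximation.
\begin{enumerate}[(1)]
\item Since $(c,d)\mapsto\|c-d\|$ is continuous on $E\times_M E$, every $w\in E$ has an open neighborhood $O$ that is fiberwise $\varepsilon$-small: $\|c-d\|<\varepsilon$ for all $c,d\in O$ with $s(c)=s(d)$.
\item Call a set-theoretic section $\theta$ \emph{$\varepsilon$-continuous} if for every $m$ and every open $O'\ni\theta(m)$ there is a neighborhood $U$ of $m$ with $\mathrm{dist}(\theta(m'),O'\cap E_{m'})<\varepsilon$ for all $m'\in U$. Any selection from a fiberwise $\varepsilon$-small $O$ over $s(O)$ is $\varepsilon$-continuous. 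Gluing such sections with a locally finite partition of unity preserves $\varepsilon$-continuity; this is where your convex-combination idea legitimately enters.
\item Given an $\varepsilon$-continuous $\theta$, select values close to $\theta$ from fiberwise $\varepsilon/2$-small neighborhoods of the points $\theta(m)$ and glue again. This yields an $\varepsilon/2$-continuous $\theta'$ with $\sup_m\|\theta'(m)-\theta(m)\|\leq\varepsilon$.
\item A uniform limit of $\varepsilon_n$-continuous sections with $\varepsilon_n\to 0$ is continuous. The reason is that the sets $\{b\in E\mid \mathrm{dist}(b,O\cap E_{s(b)})<\delta\}$, for open $O\ni b_0$ and $\delta>0$, are open (by openness of $s$ and continuity of subtraction and norm) and form a neighborhood base of $b_0$ (by axiom (iii)).
\end{enumerate}
Start from a bump-function multiple of a selection through $v$ and iterate the refinement step; this produces the continuous approximating section your outer iteration needs. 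Without this device, or an equivalent one, the core of the argument is absent.
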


Moreover, as shown by the subsequent lemma  (see \cite[Consequence 1.6 (vii) and Theorem 3.2]{Gierz1982}), the sections of a Banach bundle determine its topology.

\begin{lemma}\label{topbanachbun}
     Let $s \colon E \rightarrow M$ be a Banach bundle. For every bounded local section $\tau \in \Gamma_{s}^{\mathrm{b}}(U)$ on an open subset $U \subseteq M$ and $m \in M$ the sets  
        \begin{align*}
            W(\tau,V,\varepsilon) = \{v \in s^{-1}(V)\mid \|v - \tau(s(v))\| < \varepsilon\},
        \end{align*}
    for open neighborhoods $V \subseteq U$ of $m$ and $\varepsilon > 0$, define an open neighborhood base of the element $\tau(m) \in E$.
\end{lemma}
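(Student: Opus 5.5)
We need to show: for bounded $\tau\in\Gamma_s^{\mathrm b}(U)$ and $m\in U$, the sets $W(\tau,V,\varepsilon)$ are open and form a neighborhood base of $\tau(m)$. The idea is to transport the basic neighborhoods $\mathrm{V}(V,\varepsilon)$ of the zero element $0_m$ (axiom (iii)) along the translation by $\tau$. For this, consider on $s^{-1}(U)$ the map
\begin{align*}
    T_\tau \colon s^{-1}(U) \rightarrow s^{-1}(U), \quad v \mapsto v - \tau(s(v)).
\end{align*}
It is continuous, since $v \mapsto (v, -\tau(s(v)))$ is continuous into $E \times_M E$ (using continuity of $s$, of $\tau$ and of scalar multiplication by $-1$) and addition is continuous. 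It is bijective with continuous inverse $v \mapsto v + \tau(s(v))$, so $T_\tau$ is a homeomorphism of the open set $s^{-1}(U)$ onto itself, mapping $\tau(m)$ to $0_m$. Moreover $W(\tau,V,\varepsilon) = T_\tau^{-1}(\mathrm{V}(V,\varepsilon))$ for open $V \subseteq U$.

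Openness of $W(\tau,V,\varepsilon)$ then follows from continuity of $T_\tau$ and of the norm: $\mathrm{V}(V,\varepsilon)$ is the preimage of $[0,\varepsilon)$ under the norm intersected with $s^{-1}(V)$, hence open. For the base property, let $O$ be an open neighborhood of $\tau(m)$; shrinking, assume $O \subseteq s^{-1}(U)$. Then $T_\tau(O)$ is an open neighborhood of $0_m$, so by axiom (iii) there are an open neighborhood $V'$ of $m$ and $\varepsilon>0$ with $\mathrm{V}(V',\varepsilon) \subseteq T_\tau(O)$. Replacing $V'$ by $V \coloneqq V' \cap U$ gives $W(\tau,V,\varepsilon) = T_\tau^{-1}(\mathrm{V}(V,\varepsilon)) \subseteq O$.

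The main point requiring care is the continuity of $T_\tau$ and its inverse, i.e.\ that they take values in the fiber product and that all operations are restricted to $s^{-1}(U)$ where $\tau$ is defined; boundedness of $\tau$ is not actually needed for this argument. Everything else is a direct transfer of axiom (iii) along the homeomorphism $T_\tau$.
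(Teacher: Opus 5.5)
Your proof is correct. The paper gives no argument of its own for this lemma. It cites Gierz (Consequence 1.6 (vii) together with Theorem 3.2), and Theorem 3.2 is the section-existence result the paper also quotes as Theorem \ref{existencesec}.

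Your translation map $T_\tau \colon v \mapsto v - \tau(s(v))$ instead derives the statement directly from axioms (ii) and (iii) of the paper's definition of a Banach bundle. The advantage of your route is that it shows which hypotheses are actually used. You need only:
\begin{itemize}
\item continuity of $s$, $\tau$, addition on $E \times_M E$ and $v \mapsto -v$;
\item openness of $\mathrm{V}(V,\varepsilon)$, for which upper semicontinuity of the norm would already suffice.
\end{itemize}
Boundedness of $\tau$, openness of $s$, local compactness of $M$ and any existence theorem for sections are not needed. The citation is shorter, but it leaves the reader to reconcile Gierz's framework with the paper's definition.

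You also correctly read the statement with $m \in U$ rather than $m \in M$. This is the only sensible reading, since $\tau(m)$ must be defined.
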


\begin{example}\label{bgloseccorr}
    Let $q \colon K \rightarrow M$ be an open proper Hausdorff topological bundle. A moment's thought reveals that the source map $\mathrm{s} \colon \mathrm{C}_q(K) \rightarrow M$ defines a Banach bundle over $M$ if we equip the fibers $\mathrm{C}(K_m)$ with the supremum norm for $m \in M$, cf. \cite[Example 4.5]{EdKr2021}. Moreover, the space of bounded global sections $\Gamma_{\mathrm{s}}^{\mathrm{b}}(U)$ over some open subset $U \subseteq M$ can be identified with the space $\mathrm{C}_{\mathrm{b}}(q^{-1}(U))$ of bounded complex-valued continuous functions on $q^{-1}(U)$ via the mutually inverse bijections
        \begin{align*}
            &\mathrm{C}_{\mathrm{b}}(q^{-1}(U)) \rightarrow \Gamma_{\mathrm{s}}^{\mathrm{b}}(U), \quad f \mapsto [m \mapsto f|_{K_m}],\\
            &\Gamma_{\mathrm{s}}^{\mathrm{b}}(U) \rightarrow \mathrm{C}_{\mathrm{b}}(q^{-1}(U)), \quad \tau \mapsto [x \mapsto \tau(q(x))(x)].
        \end{align*}
\end{example}

We now turn to representations of group bundles (see \cite[Definition 3.1]{Bos2011}, \cite[Definition 4.6]{EdKr2021}, and \cite[Section 4.3]{Herm2026} in the more general framework of groupoids). 

\begin{definition}
     Let $q \colon H \rightarrow M$ be an open topological group bundle over $M$. A \textbf{bounded strongly continuous representation} of $q$ on a Banach bundle $s \colon E \rightarrow M$ is a map $T \colon H \rightarrow \bigsqcup_{m \in M} \mathscr{L}(E_m),\, x \mapsto T_x$ with the following properties.
        \begin{enumerate}[(i)]
            \item $T_x \in \mathscr{L}(E_{q(x)})$ for each $x \in H$.
            \item $T_{xy} = T_x T_y$ for all $(x,y) \in H \times_{M} H$.
            \item $\sup_{x \in H} \lVert T_x \rVert < \infty$.
            \item For each $\tau \in \Gamma_s^{\mathrm{b}}(M)$ the map $H \rightarrow E, \, x \mapsto T_x \tau(q(x))$ is continuous.
         \end{enumerate}
\end{definition}

Topological dynamical systems introduced in the previous subsection naturally give rise to bounded strongly continuous representations:

\begin{example}
    Let $(q, \varphi)$ be a topological dynamical system with $q \colon K \rightarrow M$. Then
        \begin{align*}
            T^{\varphi} \colon G \rightarrow \bigsqcup_{m \in M} \mathscr{L}(\mathrm{C}(K_m)), \quad t \mapsto T_t^{\varphi},
        \end{align*}
    where $T_t^{\varphi}f \coloneqq f \circ \varphi_t^{-1}$ for $m \in M$, $f \in \mathrm{C}(K_m)$ and $t \in G$, is a bounded strongly continuous representation of $p$ on the Banach bundle $\mathrm{s} \colon \mathrm{C}_q(K) \rightarrow M$. We call this the induced \textbf{Koopman representation} of the topological dynamical system $(q, \varphi)$.
\end{example}

We now extend the concepts of eigenvectors, eigenvalues and eigenspaces to the case of group bundle representations.

\begin{definition}\label{spectralnotions}
    Let $q \colon H \rightarrow M$ be an open topological group bundle over $M$, and $T$ a bounded strongly continuous representation of $q$ on a Banach bundle $s \colon E \rightarrow M$. For a local character $\chi \in \mathcal{D}_q(U)$ defined on an open subset $U \subseteq M$ we introduce the following notions:
        \begin{enumerate}[(i)]
            \item The set
                \begin{align*}
                    \, \,\quad \ker(\chi -T) \coloneqq \{\tau \in \Gamma_s^{\mathrm{b}}(U)\mid T_x \tau(p(x)) = \mathrm{pr}_{\T}(\chi(x)) \cdot \tau(p(x)) \textrm{ for all } x \in q^{-1}(U)\}
                \end{align*}
            is called the \textbf{eigenspace} of $T$ with respect to $\chi$.
            \item An element $\tau \in  \ker(\chi -T)$ with $\|\tau(m)\| = 1$ for every $m \in U$ is a \textbf{(normalized) eigensection} with respect to $\chi$.
            \item The local character $\chi$ is an \textbf{eigencharacter} of $T$ if there exists an eigensection with respect to $\chi$.
        \end{enumerate}
    The family $\sigma_{\mathrm{p}}^T = (\sigma_{\mathrm{p}}^T(U))_{U}$ defined by
        \begin{align*}
            \sigma_{\mathrm{p}}^T(U) \coloneqq \{\chi \in \mathcal{D}_q(U) \mid \chi \textrm{ is an eigencharacter of } T\}
        \end{align*}
    for each open subset $U \subseteq M$ is the \textbf{point spectrum} of $T$.
\end{definition}

\begin{remarks}
    \begin{enumerate}[(i)]
        \item If $M = \{\mathrm{pt}\}$ we recover the usual definitions of the notions of eigenspaces, normalized eigenvectors and eigencharacters for group representations (see, e.g., \cite[Definition 4.1]{kreidler-hermle}). In particular, for $G= \Z$ one obtains the classical spectral notions for (invertible and doubly power-bounded) operators on Banach spaces up to the identification $\Z^* \cong \T$ (see \cite[Remark 4.2]{kreidler-hermle}).
        \item Even for invertible unitary matrices it is evidently not the case that the point spectrum is a subgroup of $\T$ in general. Thus, the point spectrum of a bounded strongly continuous representation $T$ of an open topological group bundle $q$ over $M$ does \emph{not} have to be a subpresheaf (or even a subsheaf) of the dual sheaf $\mathcal{D}_q$.
    \end{enumerate}
\end{remarks}

\begin{example}\label{exampleeigensect}
    Consider a group bundle compactification $(q,c)$ of $p$ and the induced rotation system $(q,\varphi_c)$ from Example \ref{examplestds} (iii). If $\chi \in \mathcal{D}_q(U)$ is a local character of $q$ for some open subset $U \subseteq M$, then 
        \begin{align*}
            (T^{\varphi_c}_t (\mathrm{pr}_{\T} \circ \overline{\chi}))(x) = \mathrm{pr}_{\T}(\overline{\chi}(c(t)^{-1}x)) =  (\mathrm{pr}_\T \circ \chi \circ c|_{p^{-1}(U)})(t) \cdot   (\mathrm{pr}_\T \circ \overline{\chi})(x)
        \end{align*}
    for all $t \in G$ and $x \in H_{p(t)}$. Thus, the local section defined by the bounded continuous function $\mathrm{pr}_\T \circ \overline{\chi} \in \mathrm{C}_{\mathrm{b}}(q^{-1}(U))$ (see Example \ref{bgloseccorr}) is an eigensection of the Koopman representation $T^{\varphi_c}$ with respect to the eigenvalue $\chi \circ c|_{p^{-1}(U)} \in (c^*\mathcal{D}_q)(U) \subseteq \mathcal{D}_p(U)$.
\end{example}

Using the above concept of eigenspaces we finally introduce a relative version of discrete spectrum for group bundle representations.

\begin{definition}\label{defdiscretespectrum}
    Let $q \colon H \rightarrow M$ be an open topological abelian group bundle over $M$. Then  a bounded strongly continuous representation $T$ of $q$ on a Banach bundle $s\colon E \rightarrow M$ has \textbf{relative discrete spectrum} if 
        \begin{align*}
            E_m = \overline{\lin} \{\tau(m)\mid \tau \in \ker(\chi - T) \textrm{ for } \chi \in \mathcal{D}_q(U), \, U \subseteq M \textrm{ open neighborhood of } m\}
        \end{align*}
    for each $m \in M$.
\end{definition}

Thus, loosely speaking, a representation has discrete spectrum if the eigenspaces generate the underlying Banach bundle. We establish equivalent definitions in terms of local approximation properties for bounded sections with respect to the supremum norm.

\begin{proposition}
     Consider an open topological abelian group bundle $q \colon H \rightarrow M$. For a bounded strongly continuous representation $T$ of $q$ on a Banach bundle $s\colon E \rightarrow M$ the following assertions are equivalent.
        \begin{enumerate}[(a)]
            \item $T$ has relative discrete spectrum.
            \item For each bounded local section $\tau \in \Gamma_s^{\mathrm{b}}(U)$ on an open subset $U \subseteq M$ and all $\varepsilon > 0$ there is an open covering $(U_i)_{i \in I}$ of $U$ such that for each $i \in I$ we have
                \begin{align*}
                    \mathrm{dist}\biggl(\tau|_{U_i}, \lin \bigcup_{\chi \in \mathcal{D}_q(U_i)} \mathrm{ker}(\chi - T)\biggr) \leq \varepsilon.
                \end{align*}
            %with respect to the supremum norm on $\Gamma_s^{\mathrm{b}}(U_i)$.
            \item For each bounded global section $\tau \in \Gamma_s^{\mathrm{b}}(M)$ and all $\varepsilon > 0$ there is an open covering $(U_i)_{i \in I}$ of $M$ such that for each $i \in I$ we have
                \begin{align*}
                    \mathrm{dist}\biggl(\tau|_{U_i}, \lin \bigcup_{\chi \in \mathcal{D}_q(U_i)} \mathrm{ker}(\chi - T)\biggr) \leq \varepsilon.
                \end{align*}
            %with respect to the supremum norm on $\Gamma_s^{\mathrm{b}}(U_i)$.
        \end{enumerate}
\end{proposition}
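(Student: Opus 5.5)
I would prove the cycle (a) $\Rightarrow$ (b) $\Rightarrow$ (c) $\Rightarrow$ (a). The step (b) $\Rightarrow$ (c) is trivial, since global bounded sections are a special case of local ones. Throughout, I write
\begin{align*}
    L(V) \coloneqq \lin \bigcup_{\chi \in \mathcal{D}_q(V)} \ker(\chi - T) \subseteq \Gamma_s^{\mathrm{b}}(V)
\end{align*}
for $V \subseteq M$ open. Two observations about $L(V)$ will be used repeatedly. First, restriction maps $L(V)$ into $L(W)$ for $W \subseteq V$, because restricting a local character and an eigensection preserves the eigenvalue equation. Second, multiplying an element of $\ker(\chi - T)$ by a bounded continuous scalar function $g$ on $V$ again gives an element of $\ker(\chi - T)$, since $T_x$ is linear on the fiber $E_{q(x)}$.

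For (c) $\Rightarrow$ (a), fix $m \in M$ and $v \in E_m$. By Theorem \ref{existencesec} there is $\tau \in \Gamma_s^{\mathrm{b}}(M)$ with $\tau(m) = v$. Given $\varepsilon > 0$, (c) yields an open set $U_i \ni m$ and some $\sigma = \sum_k \sigma_k$ with $\sigma_k \in \ker(\chi_k - T)$, $\chi_k \in \mathcal{D}_q(U_i)$, such that $\|\tau|_{U_i} - \sigma\| \leq 2\varepsilon$. Evaluating at $m$ gives $\|v - \sum_k \sigma_k(m)\| \leq 2\varepsilon$. Each $\sigma_k(m)$ lies in the generating set of Definition \ref{defdiscretespectrum}, so $v$ lies in the closed linear span, which proves (a).

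For (a) $\Rightarrow$ (b), fix $\tau \in \Gamma_s^{\mathrm{b}}(U)$, $\varepsilon > 0$ and $m \in U$. It suffices to find an open neighborhood $U_m \subseteq U$ of $m$ with $\mathrm{dist}(\tau|_{U_m}, L(U_m)) \leq \varepsilon$; these neighborhoods then form the required cover.
\begin{enumerate}[(1)]
    \item By (a) there are open neighborhoods $V_k$ of $m$, local characters $\chi_k \in \mathcal{D}_q(V_k)$ and eigensections $\sigma_k \in \ker(\chi_k - T)$, for $k = 1, \dots, n$, such that $\|\tau(m) - \sum_k \sigma_k(m)\| < \varepsilon$.
    \item Set $V \coloneqq U \cap \bigcap_k V_k$ and restrict everything to $V$. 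Then $\rho \coloneqq \sum_k \sigma_k|_V \in L(V)$.
    \item The map $n \mapsto \|\tau(n) - \rho(n)\|$ is continuous on $V$, because addition and the norm are continuous on the Banach bundle and the sections are continuous. Hence there is an open neighborhood $U_m \subseteq V$ of $m$ on which $\|\tau - \rho\| < \varepsilon$ pointwise.
    \item Taking the supremum over $U_m$ gives $\|\tau|_{U_m} - \rho|_{U_m}\| \leq \varepsilon$, and $\rho|_{U_m} \in L(U_m)$ by the first observation above.
\end{enumerate}

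The main obstacle is this last passage from a pointwise approximation at $m$ to a uniform approximation on a neighborhood. The continuity argument in step (3) resolves it, provided the $\sigma_k$ are bounded continuous sections on a common open set, which step (2) ensures. It is also worth recording why the choice of the whole vector $v$ in (c) $\Rightarrow$ (a) causes no difficulty: Theorem \ref{existencesec} supplies a global bounded section through any vector, so (c) can be applied directly. Finally, the closure in the definition of relative discrete spectrum is handled by the $\varepsilon$-slack in both directions.
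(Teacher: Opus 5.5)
Your proof is correct and follows the paper's argument essentially verbatim. The paper packages the same steps into an auxiliary lemma for an arbitrary subpresheaf $\Delta$ of $\Gamma_s^{\mathrm{b}}$, applied here with $\Delta = L$: the pointwise approximation at $m$ becomes a uniform one on the open set where $\|\tau - \tau_m\| < \varepsilon$, and Theorem \ref{existencesec} gives (c) $\Rightarrow$ (a). Your second observation, about multiplying eigensections by bounded scalar functions, is never used and can be dropped.
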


The result is a direct consequence of the following basic lemma. Note here, that for a Banach bundle $s \colon E \rightarrow M$ its local bounded sections -- with respect to addition of sections and with the natural restriction maps -- form a presheaf $\Gamma_s^{\mathrm{b}}$ over $M$ (in fact, they even define a so-called \emph{approximation sheaf}, see \cite[Subsection 3.5]{HoKe1977}).

\begin{lemma}
    Let $s \colon E \rightarrow M$ be a Banach bundle over the locally compact space $M$ and $\Delta$ a subpresheaf of  $\Gamma_s^{\mathrm{b}}$. Then the following assertions are equivalent.
        \begin{enumerate}[(a)]
            \item The set $\{\tau(m)\mid \tau \in \Delta(U), \, U \subseteq M \textrm{ open neighborhood of } m\}$ is dense in $E_m$ for each $m \in M$.
            \item For each bounded local section $\tau \in \Gamma_s^{\mathrm{b}}(U)$ on an open subset $U \subseteq M$ and every $\varepsilon > 0$ there exists an open covering $(U_i)_{i \in I}$ of $U$ such that for each $i \in I$ we have $\mathrm{dist}(\tau|_{U_i}, \Delta(U_i)) \leq \varepsilon$.
            \item For each bounded global section $\tau \in \Gamma_s^{\mathrm{b}}(M)$ and every $\varepsilon > 0$ there exists an open covering $(U_i)_{i \in I}$ of $M$ such that for each $i \in I$ we have $\mathrm{dist}(\tau|_{U_i}, \Delta(U_i)) \leq \varepsilon$.
        \end{enumerate}
\end{lemma}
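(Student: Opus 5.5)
We prove the cycle of implications \enquote{(a) $\Rightarrow$ (b) $\Rightarrow$ (c) $\Rightarrow$ (a)}. The implication \enquote{(b) $\Rightarrow$ (c)} is trivial, since a global bounded section is a special case of a local one with $U = M$.

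For \enquote{(c) $\Rightarrow$ (a)}, fix $m \in M$, $v \in E_m$ and $\varepsilon > 0$. By Theorem \ref{existencesec} there is $\tau \in \Gamma_s^{\mathrm{b}}(M)$ with $\tau(m) = v$. Applying (c) with $\varepsilon/2$ yields an open covering $(U_i)_{i \in I}$ of $M$. We pick $i$ with $m \in U_i$ and then some $\sigma \in \Delta(U_i)$ with $\|\tau|_{U_i} - \sigma\| < \varepsilon$ in the supremum norm. Evaluating at $m$ gives $\|v - \sigma(m)\| < \varepsilon$, which shows density of the set in (a).

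The main step is \enquote{(a) $\Rightarrow$ (b)}. Let $\tau \in \Gamma_s^{\mathrm{b}}(U)$ and $\varepsilon > 0$, and fix $m \in U$. By (a) there are an open neighborhood $V$ of $m$ and $\sigma \in \Delta(V)$ with $\|\tau(m) - \sigma(m)\| < \varepsilon/2$. Since $\Delta$ is a subpresheaf, we may replace $V$ by $V \cap U$ and $\sigma$ by its restriction, so we can assume $V \subseteq U$. The key point is that the function
\begin{align*}
    V \rightarrow [0,\infty), \quad n \mapsto \|\tau(n) - \sigma(n)\|
\end{align*}
is continuous, being the composition of the continuous section $n \mapsto (\tau(n), \sigma(n))$ into $E \times_M E$ with the continuous subtraction and norm maps of the Banach bundle. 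Hence there is an open neighborhood $U_m \subseteq V$ of $m$ on which this function is $< \varepsilon/2$. Then $\sigma|_{U_m} \in \Delta(U_m)$ satisfies $\sup_{n \in U_m}\|\tau(n) - \sigma(n)\| \leq \varepsilon/2 \leq \varepsilon$; this restriction is bounded on $U_m$ because $\tau$ is bounded. The family $(U_m)_{m \in U}$ is the required covering. The only delicate point is this continuity and boundedness bookkeeping; nothing about $M$ beyond the bundle axioms is needed.

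Finally, the Proposition follows by applying the Lemma to the subpresheaf $\Delta(U) \coloneqq \lin \bigcup_{\chi \in \mathcal{D}_q(U)} \ker(\chi - T)$. This is a subpresheaf because restricting an eigensection for $\chi$ yields an eigensection for $\chi|_{q^{-1}(V)}$. Condition (a) of the Lemma is exactly relative discrete spectrum: a closed linear span equals $E_m$ iff the linear span is dense, and the linear span of the evaluations at $m$ equals the set of evaluations of elements of $\Delta(U)$. For the latter we use that finitely many eigensections defined on different neighborhoods of $m$ can all be restricted to a common neighborhood, their intersection.
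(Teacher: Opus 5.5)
Your proof is correct and follows essentially the same route as the paper. For (a) $\Rightarrow$ (b) you approximate $\tau(m)$ by some $\sigma(m)$ with $\sigma \in \Delta(V)$ and use continuity of $n \mapsto \|\tau(n) - \sigma(n)\|$ to pass to a neighborhood on which the supremum distance is small, and for (c) $\Rightarrow$ (a) you use a bounded global section through $v$ from Theorem \ref{existencesec}, exactly as the paper does (your remark that $\sigma|_{U_m}$ is bounded because $\tau$ is bounded is unnecessary, since $\Delta(V) \subseteq \Gamma_s^{\mathrm{b}}(V)$ already consists of bounded sections, but it is harmless).
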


\begin{proof}
    Assume that (a) holds and deduce (b). So let $\tau \in \Gamma_s^{\mathrm{b}}(U)$ for an open subset $U \subseteq M$. Then for all $m \in U$ there exists some open neighborhood $U_m'\subseteq U$ of $m$ and some $\tau_m \in \Delta(U_m')$ such that $\|\tau(m) - \tau_m(m)\|< \varepsilon$. Now, we set
        \begin{align*}
            U_m \coloneqq \{ m' \in U_m' \mid \| \tau(m') - \tau_m (m')
        \| < \varepsilon\}
    \end{align*}
    for each $m \in U$. Since $\tau$, $\tau_m$ and the norm are continuous maps, we obtain that $U_m$ is open for all $m \in U$. Furthermore, $U = \bigcup_{m \in U} U_m$. Since $\tau_m|_{U_m} \in \Delta(U_m)$ we obtain that $\mathrm{dist}(\tau|_{U_m}, \Delta(U_m)) \leq \varepsilon$ for each $m \in M$. Thus, we have shown (b).\medskip

    The implication \enquote{(b) $\Rightarrow$ (c)} is trivial. So assume now that (c) holds and show (a). Take $m \in M$, $v \in E_m$ and $\varepsilon > 0$.  By Theorem \ref{existencesec} there is a bounded global section $\tau \in \Gamma_s^{\mathrm{b}}(M)$ with $\tau(s(v)) = v$. By (c) (applied with $\frac{\varepsilon}{2}$) there exists an open covering $(U_i)_{i \in I}$ of $M$ and for each $i \in I$ some $\tau_i \in \Delta(U_i)$ such that $\|\tau|_{U_i} - \tau_i\| < \varepsilon$. In particular, we obtain
        \begin{align*}
            \|v - \tau_i(m)\| = \|\tau(m) - \tau_i(m)\|\leq \varepsilon
        \end{align*}
    as desired.
\end{proof}

Strongly continuous representations of compact abelian groups always have discrete spectrum, see, e.g., \cite[Corollary 15.18]{EFHN2015}. We obtain the subsequent analogue for group bundles.

\begin{proposition}\label{comprepdiscsp}
    Let $T$ be a bounded strongly continuous representation of an open proper Hausdorff abelian group bundle $q \colon H \rightarrow M$  on a Banach bundle $s \colon E \rightarrow M$. Then $T$ has relative discrete spectrum. 
\end{proposition}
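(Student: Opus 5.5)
The plan is to reduce to the classical fact that strongly continuous representations of compact abelian groups have discrete spectrum (\cite[Corollary 15.18]{EFHN2015}), applied fiberwise. To do this, I will construct Fourier projections onto eigensections that depend continuously on the base point, using a continuous Haar system.

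Fix $m \in M$, $v \in E_m$ and $\varepsilon > 0$. Since the claim is local, I choose a compact neighborhood $L$ of $m$ and work over its interior $U_0$. By Proposition \ref{existenceconthaar}, the restriction of $q$ to $L$ is open and compact, so it has a continuous Haar system $(\lambda_n)_{n \in L}$. Because $q$ is proper, the fibers are compact. Taking $h \in \mathrm{C}_{\mathrm{c}}(H)$ with $h = 1$ on $q^{-1}(L)$, the map $n \mapsto \lambda_n(H_n) = \int h \, \mathrm{d}\lambda_n$ is continuous and strictly positive. I may therefore normalize and assume each $\lambda_n$ is a Haar probability measure, with the system still continuous.

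By Theorem \ref{existencesec}, choose $\sigma \in \Gamma_s^{\mathrm{b}}(M)$ with $\sigma(m) = v$. The fiber representation $x \mapsto T_x$ of the compact abelian group $H_m$ on $E_m$ is bounded and strongly continuous: condition (iv) restricted to $H_m$ gives continuity of $x \mapsto T_x \sigma(m)$. The classical result then yields characters $\varrho_1, \dots, \varrho_k \in H_m^*$ and a vector $w \in \lin\{P_{\varrho_j} v\}$ with $\|v - w\| < \varepsilon$, where
\[
P_\varrho v = \int_{H_m} \overline{\varrho(x)}\, T_x v \, \mathrm{d}\lambda_m(x)
\]
is the Fourier projection. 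Since $q^*$ is étale (Proposition \ref{dualpropetal}), Lemma \ref{etaleneighborhoodbase} together with Proposition \ref{dualshvseciso} gives, for each $j$, a local character $\chi_j \in \mathcal{D}_q(U_j)$ with $U_j \subseteq U_0$ an open neighborhood of $m$ and $\mathrm{pr}_\T \circ \chi_j|_{H_m} = \varrho_j$. I then define
\[
(P_{\chi_j}\sigma)(n) \coloneqq \int_{H_n} \overline{\mathrm{pr}_\T(\chi_j(x))}\, T_x \sigma(n) \, \mathrm{d}\lambda_n(x), \qquad n \in U_j.
\]
This is bounded by $\sup_x \|T_x\| \cdot \|\sigma\|$. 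Translation invariance of $\lambda_n$ together with $T_{yx} = T_y T_x$ gives $T_y (P_{\chi_j}\sigma)(n) = \mathrm{pr}_\T(\chi_j(y))\,(P_{\chi_j}\sigma)(n)$ for $y \in H_n$. Moreover $(P_{\chi_j}\sigma)(m) = P_{\varrho_j} v$. Consequently, once $P_{\chi_j}\sigma$ is shown to be continuous, it lies in $\ker(\chi_j - T)$, and $v$ belongs to the closed span required in Definition \ref{defdiscretespectrum}.

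The main obstacle is continuity of $n \mapsto (P_{\chi}\sigma)(n)$ as a section of the Banach bundle. The integrand $F(x) \coloneqq \overline{\mathrm{pr}_\T(\chi(x))}\, T_x \sigma(q(x))$ is a continuous map from the locally compact space $q^{-1}(U_j)$ to $E$, by condition (iv). My first attempt is to approximate $F$ uniformly on $q^{-1}(L')$, for a compact neighborhood $L' \subseteq U_j$ of $m$, by finite sums $\sum_i f_i(x)\, \sigma_i(q(x))$ with $f_i \in \mathrm{C}_{\mathrm{c}}(H)$ and $\sigma_i \in \Gamma_s^{\mathrm{b}}(M)$. The approximation would proceed as follows:
\begin{enumerate}[(1)]
\item for each $x$, choose $\sigma_x$ with $\sigma_x(q(x)) = F(x)$ (Theorem \ref{existencesec});
\item by continuity of $F$, $\sigma_x$ and the norm, the set where $\|F(y) - \sigma_x(q(y))\| < \varepsilon$ is an open neighborhood of $x$;
\item cover the compact set $q^{-1}(L')$ by finitely many such sets and glue with a subordinate partition of unity $(f_i)$.
\end{enumerate}
Then
\[
\Bigl\| (P_\chi\sigma)(n) - \sum_i \Bigl(\int f_i \, \mathrm{d}\lambda_n\Bigr) \sigma_i(n) \Bigr\| \leq \varepsilon \qquad \text{for } n \in L'.
\]
The approximating sections $n \mapsto \sum_i \bigl(\int f_i \, \mathrm{d}\lambda_n\bigr)\sigma_i(n)$ are continuous because the Haar system is continuous. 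Since $\varepsilon$ is arbitrary, Lemma \ref{topbanachbun} shows that $P_\chi \sigma$ is continuous at every point of the interior of $L'$, and hence on $U_j$ by varying $L'$.
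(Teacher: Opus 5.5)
Your proof is correct. Its skeleton matches the paper's: a normalized continuous Haar system near $m$, local characters $\chi$ extending characters of $H_m$ via \'{e}taleness of $q^*$, the Fourier-type section $P_\chi\sigma$, and the eigen-identity from translation invariance.

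The genuine difference is how you prove that $n\mapsto (P_\chi\sigma)(n)$ is continuous. The paper argues by contradiction:
\begin{enumerate}[(1)]
\item it takes norming functionals $\gamma_\alpha$ of norm at most one from Hahn--Banach;
\item it extracts a limit functional $\gamma$ via the relative Banach--Alaoglu theorem (Lemma~\ref{compact});
\item it shows that the scalar integrands $x\mapsto \overline{\mathrm{pr}_\T(\chi(x))}\,\gamma_\beta(T_x\tau(m_\beta))$ converge in $\mathrm{C}_q(H)$;
\item it concludes with continuity of Haar integration on the Banach bundle $\mathrm{C}_q(H)$.
\end{enumerate}
You instead approximate the $E$-valued integrand uniformly on $q^{-1}(L')$ by finite sums $\sum_i f_i(x)\,\sigma_i(q(x))$ using a partition of unity. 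This makes $P_\chi\sigma$ a uniform limit on $L'$ of sections whose continuity needs only the continuity of the scalar Haar system. You then finish with the fact, via Lemma~\ref{topbanachbun}, that uniform limits of continuous sections are continuous.

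Your route is more elementary and constructive. It avoids the bundle of dual unit balls and the subnet extraction from weak$^*$-compactness. It also proves a reusable lemma: $n\mapsto\int_{H_n}F\,\mathrm{d}\lambda_n$ is a continuous section for every continuous $F\colon q^{-1}(U)\to E$ compatible with the projections. The paper's route instead relies on Lemma~\ref{compact}, taken from the literature, together with continuity of integration on $\mathrm{C}_q(H)$.

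One point needs tightening. The cited classical result says that $E_m$ is the closed span of the eigenspaces. That is not literally the statement $v\in\overline{\lin}\{P_\varrho v\mid \varrho\in H_m^*\}$ that you use. The statement is true: if $\gamma\in E_m'$ annihilates every $P_\varrho v$, then the continuous function $x\mapsto\gamma(T_xv)$ on $H_m$ has vanishing Fourier coefficients, so it vanishes identically and $\gamma(v)=0$. You should either include this argument, or do as the paper does and apply your construction directly to vectors $v\in E_m$ with $T_xv=\varrho(x)v$ for all $x\in H_m$, for which $(P_\chi\sigma)(m)=v$.
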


The following auxiliary result is needed in the proof. Its first assertion can be seen as a relative version of the Banach--Alaoglu theorem. 

\begin{lemma}\label{compact}
    Let $s \colon E \rightarrow L$ be a Banach bundle over a compact space $L$. Then the set
        \begin{align*}
            B_E \coloneqq \bigsqcup_{l \in L} \{\gamma \in \mathscr{L}(E_l,\C)\mid \|\gamma\| \leq 1\}
        \end{align*}
    together with the canonical map $q_E \colon B_E \rightarrow L$ defines a compact bundle over $L$ with respect to the initial topology induced by the maps $B_E \rightarrow \C, \, \gamma \mapsto \gamma(\tau(q_{E}(l)))$ where $\tau \in \Gamma_s(L)$. Moreover, the map
        \begin{align*}
            B_E \times_L E \rightarrow \C, \quad (\gamma,x) \mapsto \gamma(x)
        \end{align*}
    is continuous.
\end{lemma}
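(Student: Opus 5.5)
We have to show three things: $q_E$ is continuous and surjective, $B_E$ is compact Hausdorff, and the evaluation map is continuous. The topology is the initial topology for the maps $e_\tau\colon\gamma\mapsto\gamma(\tau(q_E(\gamma)))$, $\tau\in\Gamma_s(L)$. Since $L$ is compact, every continuous section is bounded, so $\Gamma_s(L)=\Gamma_s^{\mathrm{b}}(L)$. As stated, continuity of $q_E$ is not built into this initial topology, so we will also include $q_E$ among the defining maps; we read the statement this way. Surjectivity is clear because $0\in B_E$ in every fiber.

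For \textbf{Hausdorffness}, take $\gamma_1\neq\gamma_2$. If they lie in different fibers, $q_E$ separates them. If both lie over $l$, choose $v\in E_l$ with $\gamma_1(v)\neq\gamma_2(v)$. By Theorem \ref{existencesec} there is $\tau\in\Gamma_s^{\mathrm{b}}(L)$ with $\tau(l)=v$, and then $e_\tau$ separates $\gamma_1$ and $\gamma_2$.

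For \textbf{compactness} we use a Tychonoff argument. Consider the embedding
\begin{align*}
    \Theta\colon B_E\to L\times\prod_{\tau\in\Gamma_s(L)}\overline{\mathbb{D}}_{\|\tau\|},\quad \gamma\mapsto (q_E(\gamma),(e_\tau(\gamma))_\tau),
\end{align*}
where $\overline{\mathbb{D}}_r$ is the closed disc of radius $r$. It is a homeomorphism onto its image, because the topology is initial and $\Theta$ is injective (using Theorem \ref{existencesec} again). It therefore suffices to show the image is closed. Let $\Theta(\gamma_\alpha)\to(l,(c_\tau)_\tau)$. Define $\gamma$ on $E_l$ by $\gamma(\tau(l)):=c_\tau$.

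\emph{Well-definedness} is the main obstacle. Suppose $\tau(l)=\tau'(l)$. Then $\|\tau(l_\alpha)-\tau'(l_\alpha)\|\to0$ by continuity of the norm and of subtraction, so
\begin{align*}
    |e_\tau(\gamma_\alpha)-e_{\tau'}(\gamma_\alpha)|\le\|\tau(l_\alpha)-\tau'(l_\alpha)\|\to0,
\end{align*}
which gives $c_\tau=c_{\tau'}$. The same estimate shows $|c_\tau|\le\|\tau(l)\|$. Linearity of $\gamma$ follows because sections form a vector space and $e_{\tau+\lambda\tau'}=e_\tau+\lambda e_{\tau'}$. Hence $\gamma\in B_E$ with $\Theta(\gamma)$ equal to the limit, and the image is closed.

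For \textbf{continuity of evaluation}, let $(\gamma_\alpha,x_\alpha)\to(\gamma,x)$ with $x\in E_l$. Pick $\tau$ with $\tau(l)=x$. By Lemma \ref{topbanachbun}, $\|x_\alpha-\tau(s(x_\alpha))\|\to0$. Then
\begin{align*}
    |\gamma_\alpha(x_\alpha)-\gamma(x)|\le\|x_\alpha-\tau(s(x_\alpha))\|+|e_\tau(\gamma_\alpha)-e_\tau(\gamma)|\to0,
\end{align*}
which proves the claim.
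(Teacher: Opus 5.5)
Your proof is correct. The continuity of the evaluation map is argued exactly as in the paper: you choose $\tau$ with $\tau(l)=x$, use Lemma~\ref{topbanachbun} together with $\|\gamma_\alpha\|\le 1$, and then the defining maps $e_\tau$.

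The difference lies in the first part. The paper does not prove compactness; it simply cites \cite[Proposition 15.3]{Gierz1982}. You instead give a self-contained relative Banach--Alaoglu argument: you embed $B_E$ into $L\times\prod_\tau$ of closed discs and show that the image is closed. Theorem~\ref{existencesec} supplies both the injectivity of $\Theta$ and the fact that the limit functional is defined on all of $E_l$.

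Your route shows exactly which structure is used: compactness of $L$ makes every section bounded, and continuity of the norm and of addition give well-definedness of the limit functional. It also makes explicit that $q_E$ must be among the defining maps. This is not just a convenient reading but necessary. Every $e_\tau$ vanishes on all zero functionals $0_l$, so without $q_E$ these points would be topologically indistinguishable, and $q_E$ could not be continuous. The citation is shorter but leaves this point implicit in Gierz's construction.
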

\begin{proof}
    The first part is \cite[Proposition 15.3]{Gierz1982}. For the second, let $((\gamma_{\alpha},v_{\alpha}))_{\alpha}$ be a net in $B_E \times_L E$ converging to some $(\gamma,v) \in B_E \times_L E$. By Theorem \ref{existencesec} we find a section $\tau \in \Gamma_s(L)$ that satisfies $\tau(s(v)) = v$. Then $\lim_{\alpha} \|v_\alpha - \tau(s(v_{\alpha}))\| = 0$, and hence also
        \begin{align*}
            \lim_{\alpha} |\gamma_{\alpha}(v_\alpha) - \gamma_{\alpha}(\tau(s(v_{\alpha})))| =  \lim_{\alpha} |\gamma_{\alpha}(v_\alpha - \tau(s(v_{\alpha})))|= 0
        \end{align*}
    since $\|\gamma_{\alpha}\|\leq 1$ for each $\alpha$. But $\lim_{\alpha} \gamma_{\alpha}(\tau(s(v_{\alpha})))) = \gamma(\tau(s(v))) = \gamma(v)$ by the definition of the topology on $B_E$, and thus $\lim_{\alpha} \gamma_{\alpha}(v_\alpha) = \gamma(v)$ as well.
\end{proof}

\begin{proof}[Proof of Proposition \ref{comprepdiscsp}]
    Let $m_0 \in M$. Then the strongly continuous representation 
        \begin{align*}
            T^{m_0} \colon H_{m_0} \rightarrow \mathscr{L}(E_{m_0}), \quad x \mapsto T_x
        \end{align*}
    of the compact abelian group $H_{m_0}$ has discrete spectrum. Thus, the union of the eigenspaces $\ker(\varrho - T^{m_0})$ for $\varrho \in  H_{m_0}^*$ is total in $E_{m_0}$. We can therefore show the claim by establishing that for each $\varrho \in  H_{m_0}^*$ there is some local character $\chi \in \mathcal{D}_q(U)$ on an open neighborhood $U$ of $m_0$ with 
        \begin{align*}
            \ker(\varrho - T^{m_0}) \subseteq \{\tau(m_0) \mid \tau \in \ker(\chi - T)\}.
        \end{align*}
    To do so, we first use local compactness of $M$ to find an open neighborhood $V$ of $m_0$ with compact closure. By Proposition \ref{existenceconthaar} the restriction of $q|_{\overline{V}}$ has a continuous Haar system $(\lambda_{m})_{m \in \overline{V}}$. By rescaling we can assume that $\lambda_{m}$ is normalized, i.e., a probability measure on $H_{m}$ for each $m \in \overline{V}$.\medskip

    Since the dual bundle $q^*$ is \'{e}tale by Proposition \ref{dualpropetal}, we can apply Lemma \ref{etaleneighborhoodbase} and Proposition \ref{dualshvseciso} to find a local character $\chi \in \mathcal{D}_q(U)$ on an open neighborhood $U \subseteq V$ of $m$ such that $\mathrm{pr}_{\T} \circ \chi|_{H_{m_0}} = \varrho$. \medskip
    
    Now take $v \in \ker(\varrho - T^{m_0})$ and choose, using Theorem \ref{existencesec}, a bounded global section $\tau \in \Gamma_s^{\mathrm{b}}(M)$ with $\tau(m_0) = v$. We consider 
        \begin{align*}
            (P\tau)(m) \coloneqq  \int_{H_{m}} \overline{\mathrm{pr}_{\T}(\chi(x))} T_x\tau(m) \,\mathrm{d}\lambda_{m}(x) \quad \textrm{ for } m \in U,
        \end{align*}
    where the integral is understood in the weak sense (see, e.g., \cite[Definition 3.26]{rudinfa}). Then 
        \begin{align*}
            T_y (P\tau)(m) &= \int_{H_{m}} \overline{\mathrm{pr}_{\T}(\chi(x))} T_{yx}\tau(m) \,\mathrm{d}\lambda_{m}(x) = \int_{H_{m}} \overline{\mathrm{pr}_{\T}(\chi(y^{-1}x))} T_{x}\tau(m) \,\mathrm{d}\lambda_{m}(x)\\
            &= \mathrm{pr}_{\T}(\chi(y)) (P\tau)(m)
        \end{align*}
    for each $m \in U$ and $y \in H_m$ by rotation invariance of the Haar measure. Moreover, $(P\tau)(m_0) = \tau(m_0) = v$ since $v \in \ker(\varrho - T^{m_0})$. Thus, the proof is complete if we can show that $P\tau \colon U \rightarrow E$ is continuous, and hence defines a bounded local section of $s$.\medskip
    
    To do so, take a net $(m_{\alpha})_{\alpha}$ in $U$ converging to some $m \in U$. We use Lemma \ref{topbanachbun} to show that $\lim_{\alpha} (P\tau)(m_\alpha) = (P\tau)(m)$ in $E$. So take $\eta \in \Gamma_s^{\mathrm{b}}(M)$ with $\eta(m) = (P\tau)(m)$ and prove that $ \lim_{\alpha} \|P\tau(m_\alpha) - \eta(m_{\alpha})\| = 0$. By Hahn--Banach we find for each $\alpha$ some bounded linear functional $\gamma_{\alpha} \colon E_{m_\alpha} \rightarrow \C$ with $\|\gamma_{\alpha}\|\leq 1$ such that
        \begin{align*}
            \|(P\tau)(m_\alpha) - \eta(m_{\alpha})\| &= \gamma_{\alpha}(P\tau(m_\alpha) - \eta(m_{\alpha})) \\
            &= \int_{H_{m}} \mathrm{pr}_{\T}(\chi(x)) \gamma_{\alpha}(T_x\tau(m_{\alpha})) \,\mathrm{d}\lambda_{m_{\alpha}}(x) - \gamma_{\alpha}(\eta(m_{\alpha})).
        \end{align*}
    Assume that $\limsup_{\alpha}  \|P\tau(m_\alpha) - \eta(m_{\alpha})\| > 0$. We then find a subnet $(m_{\beta})_{\beta}$ of $(m_{\alpha})_{\alpha}$ such that the limit
        \begin{align*}
            c \coloneqq \lim_{\beta} \biggl|\int_{H_{m_{\beta}}} \mathrm{pr}_{\T} (\chi(x)) \gamma_{\beta}(T_x\tau(m_{\beta})) \,\mathrm{d}\lambda_{m_{\beta}}(x) - \gamma_{\beta}(\eta(m_{\beta}))\biggr| 
        \end{align*}
    exists and satisfies $c > 0$.\medskip
    
    Now let $W$ be a compact neighborhood of $m$ within $U$. By the first part of Lemma \ref{compact} we can pass to a subnet once again to assume that there is a bounded linear functional $\gamma \colon E_{m} \rightarrow \C$ such that $\gamma(\omega(m)) = \lim_{\beta} \gamma_{\beta}(\omega(m_\beta))$ for each $\omega \in \Gamma_{s_W}(s^{-1}(W))$. But then, as a consequence of the second part of Lemma \ref{compact}, the continuous functions
        \begin{align*}
            f_{\beta} \colon H_{m_{\beta}} \rightarrow \C, \quad x \mapsto \mathrm{pr}_{\T}(\chi(x)) \gamma_{\beta}(T_x\tau(m_{\beta}))
        \end{align*}
    converge to
        \begin{align*}
            f \colon  H_{m} \rightarrow \C, \quad x \mapsto \mathrm{pr}_{\T}(\chi(x)) \gamma(T_x\tau(m)).
        \end{align*}
    in $\mathrm{C}_q(H)$. If we now apply the second part of Lemma \ref{compact} to the Banach bundle $\mathrm{s} \colon \mathrm{C}_q(H) \rightarrow M$ (using that integration with respect to the Haar measure defines a bounded linear functional)\footnote{One can also directly apply \cite[Lemma 5.5]{EdKr2021} instead.}, we obtain that
        \begin{align*}
            \lim_{\beta} \gamma_{\beta}(P\tau(m)) =   \lim_{\beta} \int_{H_{m_{\beta}}} f_{\beta}(x)\, \mathrm{d}\lambda_{m_{\beta}}(x) =  \int_{H_{m}} f(x)\, \mathrm{d}\lambda_{m}(x) = \gamma(P\tau(m)).
        \end{align*}
    But we also obtain $\lim_{\beta}  \gamma_{\beta}(\eta(m_{\beta})) = \gamma(\eta(m)) = \gamma(P\tau(m))$, contradicting $c > 0$.
\end{proof}

\begin{example}\label{regdiscr}
    Let $q \colon H \rightarrow M$ open proper Hausdorff abelian group bundle. By Proposition \ref{comprepdiscsp} the \textbf{regular representation}
        \begin{align*}
            R \colon H \rightarrow \bigsqcup_{m \in M} \mathscr{L}(\mathrm{C}(H_m)), \quad x \mapsto R_x
        \end{align*}
    with $R_xf(y) \coloneqq f(x^{-1}y)$ for $(x,y) \in H \times_M H$ and $f \in \mathrm{C}_q(H)$ on the Banach bundle $\mathrm{s}\colon \mathrm{C}_q(H) \rightarrow M$ has relative discrete spectrum. 
\end{example}

We now return to the setting of topological dynamical systems.
\begin{definition}\label{defdiscretespectop}
    A topological dynamical system $(q, \varphi)$ with respect to $p$ has \textbf{relative discrete spectrum} if the induced Koopman representation $T^{\varphi}$ on $\mathrm{s}\colon \mathrm{C}_q(K) \rightarrow M$ has relative discrete spectrum.
\end{definition}

For a topological dynamical system $(q, \varphi)$ defined on an open proper Hausdorff bundle $q \colon K \rightarrow M$ we identify the eigenspaces $\ker(\chi - T^{\varphi})$ for $\chi \in \mathcal{D}_p(U)$ and $U \subseteq M$ open with subspaces of $\mathrm{C}_{\mathrm{b}}(q^{-1}(U))$ via the bijections from Example \ref{bgloseccorr}, i.e.,
    \begin{align*}
        \ker(\chi - T^{\varphi}) = \{f \in \mathrm{C}_{\mathrm{b}}(q^{-1}(U)) \mid T_t^{\varphi_c} f|_{K_{p(t)}} = \mathrm{pr}_{\T}(\chi(t)) f|_{K_{p(t)}} \textrm{ for all } t \in p^{-1}(U)\}.
    \end{align*}
With this convention, we obtain that the system $(q, \varphi)$ has relative discrete spectrum if and only if one of the following equivalent assertions holds.
    \begin{enumerate}[(a)]
        \item The set 
            \begin{align*}
                \{f|_{K_m} \mid f \in \ker(\chi - T^\varphi) \textrm{ for } \chi \in \mathcal{D}_p(U), \, U \subseteq M \textrm{ open neighborhood of } m\}
            \end{align*}
        is total in $\mathrm{C}(K_m)$ for each $m \in M$.
        \item For each open subset $U \subseteq M$, every $f \in \mathrm{C}_{\mathrm{b}}(q^{-1}(U))$ and each $\varepsilon > 0$ there is an open covering $(U_i)_{i \in I}$ of $U$ such that for each $i \in I$ we have 
                \begin{align*}
                    \mathrm{dist}\biggl(f|_{q^{-1}(U_i)}, \lin \bigcup_{\chi \in \mathcal{D}_p(U_i)} \mathrm{ker}(\chi - T^{\varphi})\biggr) \leq \varepsilon
                \end{align*}
        with respect to the supremum norm on $\mathrm{C}_{\mathrm{b}}(q^{-1}(U_i))$.
        \item For each $f \in \mathrm{C}_{\mathrm{b}}(K)$ and each $\varepsilon > 0$ there is an open covering $(U_i)_{i \in I}$ of $M$ such that for each $i \in I$ we have 
                \begin{align*}
                    \mathrm{dist}\biggl(f|_{q^{-1}(U_i)}, \lin \bigcup_{\chi \in \mathcal{D}_p(U_i)} \mathrm{ker}(\chi - T^{\varphi})\biggr) \leq \varepsilon
                \end{align*}
        with respect to the supremum norm on $\mathrm{C}_{\mathrm{b}}(q^{-1}(U_i))$.
    \end{enumerate}
Since for any $m \in M$ one can readily check that the linear hull of the set
    \begin{align*}
         \{f|_{K_m} \mid f \in \ker(\chi - T^\varphi) \textrm{ for } \chi \in \mathcal{D}_p(U), U \subseteq M \textrm{ open neighborhood of } m\}
    \end{align*}
unital *-subalgebra of $\mathrm{C}(K_m)$, the Stone--Weierstraß theorem  yields the following additional characterization of topological dynamical systems with relative discrete spectrum:
    \begin{enumerate}[(d)]
        \item For each $(x,y) \in K \times_M K$ we find a local character $\chi \in \mathcal{D}_p(U)$ on an open neighborhood $U$ of $p(x) = p(y)$ and $f \in \ker(\chi - T^\varphi)$ with $f(x) \neq f(y)$.
    \end{enumerate}

We now establish that the rotation systems introduced in Example \ref{examplestds} (iii) indeed have relative discrete spectrum:

\begin{proposition}\label{compactifdiscr}
    Let $(q, c)$ be a compactification of $p$. Then the induced topological dynamical system $(q, \varphi_c)$ has relative discrete spectrum.
\end{proposition}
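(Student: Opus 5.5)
The plan is to use characterization (d) of relative discrete spectrum, since it only requires separating points within a fiber. Example \ref{exampleeigensect} already supplies eigenfunctions of the rotation system. Write $q \colon H \rightarrow M$. By the remark after Definition \ref{defgrpcomp}, $q$ is abelian, so it is an open proper Hausdorff abelian group bundle. For every local character $\chi \in \mathcal{D}_q(U)$, Example \ref{exampleeigensect} shows that the bounded continuous function $\mathrm{pr}_\T \circ \overline{\chi} \in \mathrm{C}_{\mathrm{b}}(q^{-1}(U))$ lies in $\ker(\chi \circ c|_{p^{-1}(U)} - T^{\varphi_c})$, and here $\chi \circ c|_{p^{-1}(U)} \in \mathcal{D}_p(U)$.

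Next I would verify (d). Let $(x,y) \in H \times_M H$ with $x \neq y$ and set $m \coloneqq q(x) = q(y)$. Since $H_m$ is a compact abelian group, classical Pontryagin duality gives some $\varrho \in H_m^*$ with $\varrho(x) \neq \varrho(y)$. By Proposition \ref{dualpropetal} (i) the dual bundle $q^*$ is \'etale. Lemma \ref{etaleneighborhoodbase} then yields an open neighborhood $U$ of $m$ and a local section $\tau \in \Gamma_{q^*}(U)$ with $\tau(m) = \varrho$. Applying Proposition \ref{dualshvseciso}, we obtain the local character $\chi \coloneqq \chi_\tau \in \mathcal{D}_q(U)$, which satisfies $\mathrm{pr}_\T \circ \chi|_{H_m} = \varrho$. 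The function $f \coloneqq \mathrm{pr}_\T \circ \overline{\chi}$ is then an eigenfunction for the local character $\chi \circ c|_{p^{-1}(U)} \in \mathcal{D}_p(U)$. Moreover $f(x) = \overline{\varrho(x)} \neq \overline{\varrho(y)} = f(y)$, which is exactly (d).

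The step that needs the most care is the reduction to (d) itself. This is the Stone--Weierstraß argument stated just before the proposition: the span of the fiber restrictions of eigenfunctions must be a unital $*$-subalgebra of $\mathrm{C}(H_m)$. That holds because products and conjugates of eigenfunctions are again eigenfunctions, for the product and the inverse of the local characters, after restricting to a common neighborhood. The constants are eigenfunctions for the trivial character. Since the paper has already asserted this equivalence, I would simply invoke it.

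As a cross-check, one could instead proceed directly via Proposition \ref{comprepdiscsp} applied to the regular representation (Example \ref{regdiscr}). Here $T^{\varphi_c}_t = R_{c(t)}$, so every eigensection of $R$ for $\chi \in \mathcal{D}_q(U)$ is an eigensection of $T^{\varphi_c}$ for $\chi \circ c$. Relative discrete spectrum of $R$ therefore transfers immediately to $T^{\varphi_c}$. This route avoids Stone--Weierstraß entirely, and I would mention it as the shortest argument.
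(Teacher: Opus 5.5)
Your proof is correct, but your main route differs from the paper's. The argument you list as a cross-check is exactly the paper's proof. The paper takes the relative discrete spectrum of the regular representation $R$ of $q$ (Example~\ref{regdiscr}, i.e.\ Proposition~\ref{comprepdiscsp}) and notes that eigensections of $R$ are eigensections of $T^{\varphi_c}$, because $T^{\varphi_c}_t = R_{c(t)}$. Your formulation $\ker(\chi - R)\subseteq\ker(\chi\circ c|_{p^{-1}(U)} - T^{\varphi_c})$ is the precise version of the inclusion the paper writes loosely.

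Your primary argument goes through criterion (d) instead. The explicit eigenfunctions $\mathrm{pr}_\T\circ\overline{\chi}$ from Example~\ref{exampleeigensect} already separate points of each fiber $H_m$, using two inputs: characters of the compact fiber separate points, and such characters extend to local characters because $q^*$ is \'{e}tale. Stone--Weierstrass then does the rest.

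What your route buys:
\begin{itemize}
\item It bypasses the analytic core of Proposition~\ref{comprepdiscsp}: the normalized continuous Haar system from Proposition~\ref{existenceconthaar}, the weakly defined averaging operator $P$, and the continuity argument based on the relative Banach--Alaoglu Lemma~\ref{compact}.
\item It names concretely which eigenfunctions suffice, namely the conjugated local characters of $q$ themselves.
\item The key bundle-theoretic input, the \'{e}tale property of $q^*$ from Proposition~\ref{dualpropetal}~(i), is common to both routes.
\end{itemize}

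What your route costs: it relies on the algebra structure of $\mathrm{C}(H_m)$, so it is tied to Koopman representations. The paper's route rests on a statement about arbitrary bounded strongly continuous representations on Banach bundles. The paper needs that statement anyway, for instance for the representation $S$ of $\mathrm{E}_{\mathrm{u}}(q,\varphi)$ in the proof of Theorem~\ref{unifenvcomp2}. As a result, the present proposition costs the paper only one line.
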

\begin{proof}
    Write $q \colon H \rightarrow M$. The left regular representation $R$ of $p$ from Example \ref{regdiscr} has relative discrete spectrum. Since we clearly have $\ker(\chi - R) \subseteq \ker(\chi - T^{\varphi_c})$ for every local character $\chi \in \mathcal{D}_q(U)$ on an open subset $U \subseteq M$, we obtain that $T^{\varphi_c}$ has relative discrete spectrum as well.
\end{proof}

\subsection{Relatively Ergodic Systems}\label{ergodicitysec}

The classical topological Halmos--von Neumann theorem for group actions deals with minimal topological dynamical systems, i.e., transformations for which there are no non-trivial closed invariant subsets. For systems with discrete spectrum this is equivalent to \emph{topological ergodicity}, i.e., the absence of non-trivial fixed vectors of the induced Koopman representation (see  \cite[Proposition 1.12]{kreidler-hermle}). We now define a suitable concept of topological ergodicity in our situation (see also \cite[Subsections 3.2 and 3.3]{EdKr2021} and \cite[Subsections 5.1.2--5.1.4]{Herm2026} for groupoid actions with compact base spaces).

\begin{definition}
    Let $(q,\varphi)$ be a topological dynamical system on $q \colon K \rightarrow M$ and consider the induced Koopman representation $T^{\varphi}$ on the Banach bundle $\mathrm{s} \colon \mathrm{C}_q(K) \rightarrow M$. Then the eigenspace
        \begin{align*}
            \fix(T^{\varphi}) \coloneqq \ker(\1 - T)  = \{f \in \mathrm{C}_{\mathrm{b}}(K)\mid T^{\varphi}_t f|_{K_{p(t)}} = f|_{K_{p(t)}} \textrm{ for every } t \in G\} 
        \end{align*}
     of the trivial character $\1_p \colon p \rightarrow \mathrm{triv}_M^{\T}, \, x \mapsto (p(x),1)$ is called the \textbf{fixed space} of $T^{\varphi}$. We also set $\fix_0(T^{\varphi}) \coloneqq \fix(T^{\varphi}) \cap \mathrm{C}_0(K)$ and $\fix_{\mathrm{c}}(T^{\varphi}) \coloneqq \fix(T^{\varphi}) \cap \mathrm{C}_{\mathrm{c}}(K)$.
\end{definition}

Clearly, if $(q,\varphi)$ is a topological dynamical system and $f \in \mathrm{C}_{\mathrm{b}}(M)$, then the pulledback $f \circ q \colon K \rightarrow \C$ is an element of $\fix(T^{\varphi})$. Relative ergodicity now captures the situation that these are the only elements of $\fix(T^{\varphi})$.

\begin{definition}
    A topological dynamical system $(q,\varphi)$ is called \textbf{relatively (topologically) ergodic}\footnote{It would be less cumbersome to simply speak of \emph{ergodicity} rather than \emph{relative ergodicity} here. However, since an action of a group bundle is a special case of a groupoid action, we want to stay consistent with the concept of relative ergodicity from \cite{EdKr2021} and \cite[Subsection 5.1.4]{Herm2026} for groupoid actions. It then also seems natural to use the term \emph{relative discrete spectrum} (instead of only \emph{discrete spectrum}) for representations and dynamical systems of a group bundle.} if $\fix(T^{\varphi}) = \{f \circ q \mid f \in \mathrm{C}_{\mathrm{b}}(M)\}$.
\end{definition}

We derive some basic characterizations.

\begin{lemma}\label{basicchartoperg}
    For a topological dynamical system $(q,\varphi)$ the following assertions are equivalent.
        \begin{enumerate}[(a)]
            \item If $f \in \fix(T^{\varphi})$, then $f|_{K_m}$ is constant for each $m \in M$.
            \item $(q,\varphi)$ is relatively ergodic.
            \item $\fix_0(T^{\varphi}) =  \{f \circ q \mid f \in \mathrm{C}_{0}(M)\}$.
            \item $\fix_{\mathrm{c}}(T^{\varphi}) =  \{f \circ q \mid f \in \mathrm{C}_{\mathrm{c}}(M)\}$.
        \end{enumerate}
\end{lemma}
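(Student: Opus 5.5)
I would prove the cycle of implications (a) $\Rightarrow$ (b) $\Rightarrow$ (c) $\Rightarrow$ (d) $\Rightarrow$ (a). The inclusions ``$\supseteq$'' in (b), (c), (d) are automatic. Indeed, for $f \in \mathrm{C}_{\mathrm{b}}(M)$ the function $f \circ q$ is fiberwise constant, hence fixed. Moreover, $f \in \mathrm{C}_0(M)$ (resp.\ $\mathrm{C}_{\mathrm{c}}(M)$) gives $f \circ q \in \mathrm{C}_0(K)$ (resp.\ $\mathrm{C}_{\mathrm{c}}(K)$), because $q$ is proper: $\{|f\circ q| \geq \varepsilon\} = q^{-1}(\{|f|\geq\varepsilon\})$ is compact, and $\supp(f\circ q) \subseteq q^{-1}(\supp f)$. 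So only the reverse inclusions need work.

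For (a) $\Rightarrow$ (b), take $g \in \fix(T^{\varphi})$. By (a) there is a unique function $f \colon M \rightarrow \C$ with $g = f \circ q$, and $f$ is bounded. For continuity of $f$, I would use that $q$ is an open continuous surjection, hence a quotient map. Then $f^{-1}(O) = q(g^{-1}(O))$ is open for every open $O \subseteq \C$. Hence $f \in \mathrm{C}_{\mathrm{b}}(M)$.

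For (b) $\Rightarrow$ (c), take $g \in \fix_0(T^{\varphi})$ and write $g = f\circ q$ with $f \in \mathrm{C}_{\mathrm{b}}(M)$. We have $\{|f| \geq \varepsilon\} = q(\{|g|\geq \varepsilon\})$ because $q$ is surjective, and this set is compact as a continuous image of a compact set. So $f \in \mathrm{C}_0(M)$.

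For (c) $\Rightarrow$ (d), take $g \in \fix_{\mathrm{c}}(T^{\varphi}) \subseteq \fix_0(T^{\varphi})$ and write $g = f \circ q$ with $f \in \mathrm{C}_0(M)$. Then $\{f \neq 0\} = q(\{g\neq 0\}) \subseteq q(\supp g)$, which is compact. Hence $f \in \mathrm{C}_{\mathrm{c}}(M)$.

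The main step is (d) $\Rightarrow$ (a), a localization argument. Let $g \in \fix(T^{\varphi})$ and $m \in M$. I would choose $h \in \mathrm{C}_{\mathrm{c}}(M)$ with $h = 1$ on a neighborhood of $m$, using local compactness of $M$ and Urysohn's lemma. Then $(h\circ q)\, g$ is continuous, and it has compact support since $q^{-1}(\supp h)$ is compact by properness. It is also fixed: $T^{\varphi}_t$ acts on the fiber $\mathrm{C}(K_{p(t)})$, where $h\circ q$ is the constant $h(p(t))$, and $T^\varphi_t$ is linear, so $T^\varphi_t\bigl(((h\circ q) g)|_{K_{p(t)}}\bigr) = h(p(t))\, g|_{K_{p(t)}}$. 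By (d), $(h \circ q)\, g = f \circ q$ for some $f \in \mathrm{C}_{\mathrm{c}}(M)$. Restricting to $K_m$, where $h\circ q = 1$, shows that $g|_{K_m}$ is constant. I expect no serious obstacle here; the only delicate point is verifying that $T^{\varphi}$ commutes with multiplication by fiberwise constant functions, which is immediate from $T^{\varphi}_t f = f\circ \varphi_t^{-1}$ on each fiber.
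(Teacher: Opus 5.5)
Your proposal is correct and follows essentially the same route as the paper: openness of $q$ for continuity of the factored function in (a)$\Rightarrow$(b), properness and surjectivity of $q$ to move between $\mathrm{C}_0$/$\mathrm{C}_{\mathrm{c}}$ on $K$ and on $M$ in (b)$\Rightarrow$(c)$\Rightarrow$(d), and the cutoff $(h\circ q)\cdot g$ with $h \in \mathrm{C}_{\mathrm{c}}(M)$, $h(m)=1$, for (d)$\Rightarrow$(a). The only cosmetic difference is that you close a single cycle of implications, whereas the paper additionally records the trivial direction (b)$\Rightarrow$(a).
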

\begin{proof}
    Write $q \colon K \rightarrow M$. The implication \enquote{(b) $\Rightarrow$ (a)} is trivial. Conversely, if (a) holds, assume that there exists $g \in \fix(T^{\varphi}) \setminus  \{f \circ q \mid f \in \mathrm{C}_{\mathrm{b}}(M)\}$. If $g|_{K_m}$ is constant for each $m \in M$, then we obtain, using openness of $q$, that $g$ factors continuously through $M$, a contradiction.\medskip
    
    For the implications  \enquote{(b) $\Rightarrow$ (c)} and \enquote{(c) $\Rightarrow$ (d)} it suffices to observe that, since $q$ is proper, for $f \in \mathrm{C}_{\mathrm{b}}(M)$ we have $f \circ q \in \mathrm{C}_0(K)$ precisely when $f \in \mathrm{C}_0(M)$, and $f \circ q \in \mathrm{C}_{\mathrm{c}}(K)$ precisely when $f \in \mathrm{C}_{\mathrm{c}}(M)$.\medskip
    
    For the remaining implication \enquote{(d) $\Rightarrow$ (a)} assume that there is $f \in \fix(T^{\varphi})$ such that there exists $m \in M$ with $f|_{K_m}$ non-constant. Let $h \in \mathrm{C}_{\mathrm{c}}(M)$ with $h(m) = 1$. Then $(h \circ q) \cdot f \in \fix_{\mathrm{c}}(T^{\varphi})$ is still not constant on $K_m$. This proves the result.
\end{proof}

We obtain that rotation systems defined by group bundle compactifications are always ergodic:

\begin{proposition}\label{rotergodic}
     Let $(q, c)$ be a group bundle compactification of $p$. Then the induced topological dynamical system $(q,\varphi_c)$ is relatively ergodic.
\end{proposition}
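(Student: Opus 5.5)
By Lemma \ref{basicchartoperg} it suffices to show that every $f \in \fix(T^{\varphi_c})$ is constant on each fiber $H_m$. So fix $f \in \fix(T^{\varphi_c})$ and $m \in M$. The idea is to transfer invariance under $c(G)$ to invariance under all of $H$, using that $c(G)$ generates $q$.

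Concretely, consider the set
\begin{align*}
    S \coloneqq \{x \in H \mid f(xy) = f(y) \textrm{ for every } y \in H_{q(x)}\}.
\end{align*}
I would check three properties of $S$:
\begin{enumerate}[(i)]
    \item \emph{$S$ defines a subsemigroup bundle.} Each fiber contains $1_{H_m}$. If $x, x' \in S_m$, then $f(xx'y) = f(x'y) = f(y)$, so $S_m$ is closed under multiplication.
    \item \emph{$S$ contains $c(G)$.} Invariance under $T^{\varphi_c}$ gives $f(c(t)^{-1}y) = f(y)$ for all $t \in G$ and $y \in H_{p(t)}$. Replacing $t$ by $t^{-1}$ yields $c(t) \in S$.
    \item \emph{$S$ is closed in $H$.} Take a net $x_\alpha \in S$ with $x_\alpha \to x$, and let $y \in H_{q(x)}$. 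Since $q$ is open, Lemma \ref{charopen} provides a subnet $x_\beta$ and points $y_\beta \in H_{q(x_\beta)}$ with $y_\beta \to y$. Continuity of multiplication on $H \times_M H$ and of $f$ then give $f(xy) = \lim_\beta f(x_\beta y_\beta) = \lim_\beta f(y_\beta) = f(y)$.
\end{enumerate}

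By the definition of a generating set (Definition \ref{generatedbundle}), these three properties force $S = H$. Hence $f(xy) = f(y)$ for all $(x,y) \in H \times_M H$. Setting $y = 1_{H_m}$ gives $f(x) = f(1_{H_m})$ for every $x \in H_m$, so $f$ is constant on each fiber. Lemma \ref{basicchartoperg} (a)$\Rightarrow$(b) then concludes the proof.

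The only step requiring any care is the closedness in (iii), and it is handled exactly as in the proof of Lemma \ref{abeliangenerator}, via the openness of $q$.
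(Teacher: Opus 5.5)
Your proposal is correct and matches the paper's proof: the same set $S$, the same three checks (subsemigroup bundle, containment of $c(G)$, closedness via openness of $q$), and the same conclusion $f(x) = f(1_{H_m})$ via Lemma \ref{basicchartoperg}. Your explicit check that $c(t) \in S$, obtained by replacing $t$ with $t^{-1}$, fills in a detail the paper only calls ``clear''.
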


\begin{proof}
    Write $q \colon H \rightarrow M$ and take $f \in \fix(T^{\varphi_c})$. By Lemma \ref{basicchartoperg} it suffices to check that $f|_{H_m}$ is constant for each $m \in M$.
    If we set
        \begin{align*}
            S \coloneqq \{ x \in H \mid f(xy) = f(y) \textrm{ for all } y \in H_{q(x)} \},
        \end{align*}
    then it is clear that $S$ contains $c(G)$ and that $S$ defines a subsemigroup bundle of $p$. We will show below that $S$ is also closed in $H$. Thus,  $S=H$, i.e., $f(xy) = f(y)$ for all $(x,y) \in H \times_{M} H$. This shows $f(x) = f(1_{H_m})$ for all $m \in M$ and $x \in H_m$, as desired.\medskip

    To check that $S$ is closed in $H$, let $(x_{\alpha})_{\alpha}$ be a net in $S$ converging to some $x \in H$. Now take $y \in H_{q(x)}$. Since $q$ is open, we find a subnet  $(x_{\beta})_{\beta}$ of $(x_{\alpha})_{\alpha}$ and elements $y_{\beta} \in H_{q(x_{\beta})}$ with $\lim_{\beta} y_{\beta} = y$. But then
        \begin{align*}
            f(xy) = \lim_{\beta} f(x_{\beta}y_{\beta}) = \lim_{\beta} f(y_{\beta}) = f(y).
        \end{align*}
    This shows $x \in S$.
\end{proof}

Let us give a different perspective on relative ergodicity. Observe that for a topological dynamical system $(q,\varphi)$ on an open proper Hausdorff bundle $p \colon K \rightarrow M$ the subspace $\fix_0(T^{\varphi}) \subseteq \mathrm{C}_0(M)$ is actually a C*-subalgebra of $\mathrm{C}_0(K)$. Since it contains all pullbacks  $f \circ q$ for $f \in \mathrm{C}_0(M)$, it is \textbf{non-degenerate}, i.e., $\fix_0(T^{\varphi}) \cdot \mathrm{C}_0(K)$ spans a dense subspace of $\mathrm{C}_0(K)$, by \cite[Appendix C.6]{Land2017}.\medskip

Now, recall that by the Gelfand--Naimark theorem, there is an equivalence between the category of locally compact spaces with proper continuous maps as morphisms, and the opposite category of commutative C*-algebras with non-degenerate *-homomorphisms as morphisms, see again \cite[Appendix C.6]{Land2017}. In particular, we obtain that for any locally compact space $\Omega$ there is a correspondence between non-degenerate C*-subalgebras $B \subseteq \mathrm{C}_0(\Omega)$ on one hand, and proper continuous surjections $r \colon \Omega \rightarrow L$ onto locally compact spaces $L$ on the other: If $r \colon \Omega \rightarrow L$ is a continuous and proper surjection onto a locally compact space $L$, then 
    \begin{align*}
        r^*(\mathrm{C}_0(L)) \coloneqq \{f \circ r \mid f \in \mathrm{C}_0(L)\}
    \end{align*}
is a non-degenerate C*-subalgebra of $\mathrm{C}_0(\Omega)$. Conversely, for any  non-degenerate C*-subalgebra $B \subseteq \mathrm{C}_0(\Omega)$ its Gelfand space $\mathbb{X}(B)$ is a locally compact space, and the inclusion map $B \hookrightarrow \mathrm{C}_0(\Omega)$ induces a continuous and proper surjection $\pi_{B} \colon \Omega \rightarrow \mathbb{X}(B)$. Moreover, these constructions are, at least up to an isomorphism, inverse to each other:
    \begin{enumerate}[(i)]
        \item  For each non-degenerate C*-subalgebra $B \subseteq \mathrm{C}_0(\Omega)$ we have $\pi_B^*(\mathrm{C}_0(L)) = B$.
        \item  If $r \colon \Omega \rightarrow L$ is a continuous and proper surjection, then there is a unique homeomorphism $h \colon L \rightarrow \mathbb{X}(B)$ with $h \circ r = \pi_{r^*(\mathrm{C}_0(L))}$.
    \end{enumerate}

\begin{definition}
    Let  $(q,\varphi)$ be a topological dynamical system on an open proper Hausdorff bundle $q \colon K \rightarrow M$. The space $K_{\fix} \coloneqq \mathbb{X}(\fix_0(T^{\varphi}))$ is called the \textbf{fixed factor} of $(q,\varphi)$, and we write $q_{\fix} \coloneqq \pi_{\fix_0(T^{\varphi})} \colon K \rightarrow K_{\fix}$ for the corresponding proper continuous surjection.
\end{definition}

By definition, $\fix_0(T^{\varphi})$ is the largest non-degenerate C*-subalgebra $B \subseteq \mathrm{C}_0(K)$ such that $T^{\varphi}_t f|_{K_{p(t)}} = f|_{K_{p(t)}}$ for every $f \in B$ and each $t \in G$. This readily implies that $q_{\fix}$ is invariant, i.e., $q_{\fix}(\varphi_t(x)) = q_{\fix}(x)$ for all $t \in G$ and $x \in K_{p(t)}$, and that $q_{\fix}$ is maximal with this property in the following sense: Whenever $r \colon K \rightarrow L$ is an invariant proper continuous surjection onto a locally compact space $L$, there is a unique proper continuous surjection $h \colon K_{\fix} \rightarrow L$ such that the diagram
    \[
		\xymatrix{
			K  \ar[rr]^{q_{\fix}} \ar[rd]_{r}  &  &K_{\fix} \ar[ld]^{h} \\
            & L  & \\
		}
	\]	
commutes. This universal property characterizes $q_{\fix}$ up to an isomorphism. We refer to \cite[Subsection 5.1.2]{Herm2026} for this concept in the framework of groupoid actions over a compact base space.  
\medskip

In particular, the open proper continuous surjection $q \colon K \rightarrow M$ factors through $K_{\fix}$, i.e., we can write $q = h_q \circ q_{\fix}$ for a unique proper continuous surjection $h_q \colon K_{\fix} \rightarrow M$.  Using this point of view, the system $(q,\varphi)$ is relatively ergodic precisely when the map $h_q \colon K_{\fix} \rightarrow M$ is a homeomorphism (and hence the fixed factor is canonically isomorphic to $M$).\medskip

For later use we discuss yet another perspective: By \cite[Proposition 10.4.9]{Bour1995} there is, for a given locally compact space $\Omega$, a correspondence between proper continuous surjections $r \colon \Omega \rightarrow L$ to onto locally compact spaces $L$ on one the hand, and  closed equivalence relations $\sim \, \, \subseteq \Omega \times \Omega$ on $\Omega$ with compact equivalence classes on the other. If $r \colon \Omega \rightarrow L$ is a proper continuous surjection to onto a locally compact space $L$, letting $x \sim_r y$ for $x,y \in \Omega$ precisely when $r(x) = r(y)$ defines a closed equivalence relation with compact equivalence classes. Conversely, the factor map defined by such an equivalence relation defines a proper continuous surjection. Once again, these constructions are mutually inverse up to an isomorphism.

\begin{definition}\label{deffixrel}
    Let  $(q,\varphi)$ be a topological dynamical system on an open proper Hausdorff bundle $p \colon K \rightarrow M$. The closed equivalence relation with compact equivalence classes on $K$ defined by $q_{\fix} \colon K \rightarrow K_{\fix}$ is denoted by $\sim_{\fix}$.
\end{definition}

By the universal property of the fixed factor we obtain that $\sim_{\fix}$ is the smallest closed equivalence relation with compact equivalence classes on $K$ containing the \textbf{orbit relation}
    \begin{align*}
        \sim_{\varphi} \, \, \coloneqq \{(x,\varphi_t(x))\mid t \in G, x \in K_{p(t)}\} \subseteq K \times K.
    \end{align*}
In fact, it is even the smallest among all closed equivalence relations on $K$ containing the orbit relation $\sim_{\varphi}$: If $\sim$ denotes the intersection of these, then $\sim$ must be a subset of $K \times_M K$, and hence automatically has compact equivalence classes since $q$ is proper.\medskip

In view of the above considerations we obtain the subsequent characterizations of relative ergodicity.

\begin{proposition}\label{chartoperg}
     For a  topological dynamical system  $(q,\varphi)$ on an open proper Hausdorff bundle $q \colon K \rightarrow M$ the following assertions are equivalent.
        \begin{enumerate}[(a)]
            \item $(q,\varphi)$ is relatively ergodic.
            \item For every invariant proper continuous surjection $r \colon K \rightarrow L$ there is a unique proper continuous surjection $h \colon M \rightarrow L$ with $r = h \circ q$.
            \item The smallest closed  equivalence relation with compact equivalence classes on $K$ containing the orbit relation $\sim_{\varphi}$ is $K \times_M K$.
            \item The smallest closed equivalence relation on $K$ containing the orbit relation $\sim_{\varphi}$ is $K \times_M K$.
        \end{enumerate}
\end{proposition}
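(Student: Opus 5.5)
The plan is to establish the cycle of implications (a) $\Rightarrow$ (b) $\Rightarrow$ (c) $\Rightarrow$ (d) $\Rightarrow$ (a), using the discussion of the fixed factor and the correspondences preceding the statement. The common tool is the factorization $q = h_q \circ q_{\fix}$ together with the observation that $(q,\varphi)$ is relatively ergodic if and only if $h_q \colon K_{\fix} \rightarrow M$ is a homeomorphism. This in turn rests on Lemma \ref{basicchartoperg} (c): $\fix_0(T^{\varphi}) = \{f\circ q \mid f \in \mathrm{C}_0(M)\}$ means exactly $\fix_0(T^{\varphi}) = q^*(\mathrm{C}_0(M))$. By the Gelfand duality statements (i) and (ii), this identifies $K_{\fix}$ with $M$ compatibly with $q$.

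For (a) $\Rightarrow$ (b), I take an invariant proper continuous surjection $r \colon K \rightarrow L$. The universal property of the fixed factor gives a unique $h' \colon K_{\fix} \rightarrow L$ with $r = h' \circ q_{\fix}$. Since $h_q$ is a homeomorphism, I set $h \coloneqq h' \circ h_q^{-1}$. This $h$ is proper, continuous and surjective, and it satisfies $r = h \circ q$. Uniqueness follows from surjectivity of $q$.

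For (b) $\Rightarrow$ (c), I apply (b) to $r = q_{\fix}$ and obtain $h$ with $q_{\fix} = h \circ q$. Hence $q(x) = q(y)$ implies $q_{\fix}(x) = q_{\fix}(y)$, so $K\times_M K \subseteq {\sim_{\fix}}$. The reverse inclusion holds because $q$ is itself invariant (each $\varphi_t$ preserves fibers), so $q$ factors through $q_{\fix}$. Since $\sim_{\fix}$ is the smallest closed equivalence relation with compact classes containing $\sim_\varphi$, this gives (c). Note also that $K\times_M K$ is closed (as $M$ is Hausdorff) and has compact classes (as $q$ is proper), so it is admissible.

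For (c) $\Rightarrow$ (d), I use the remark that the smallest closed equivalence relation containing $\sim_\varphi$ lies inside $K\times_M K$ and therefore automatically has compact classes. Consequently it coincides with the smallest one in (c).

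For (d) $\Rightarrow$ (a), I take $f \in \fix(T^{\varphi})$. The relation $\{(x,y) \in K\times_M K \mid f(x) = f(y)\}$ is closed, is an equivalence relation, and contains $\sim_\varphi$ because $f$ is invariant. By (d) it equals $K\times_M K$, so $f$ is constant on every fiber, and Lemma \ref{basicchartoperg} (a) yields relative ergodicity.

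The only delicate point is the first step: checking that the homeomorphism $h_q$ is correctly produced via Gelfand duality. It must be compatible with the factorization $q = h_q\circ q_{\fix}$, and the composed map $h$ must be proper. I expect this to follow directly from statements (i) and (ii) preceding the definition of the fixed factor.
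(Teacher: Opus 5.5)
Your proposal is correct and follows the paper's own argument. The paper writes no separate proof; it derives the proposition from the preceding discussion of the fixed factor, namely that relative ergodicity holds iff $h_q$ is a homeomorphism, the universal property of $q_{\fix}$, and the minimality of $\sim_{\fix}$ among closed equivalence relations (with or without compact classes) containing $\sim_{\varphi}$. These are exactly the ingredients you use. The only small deviation is your direct proof of (d) $\Rightarrow$ (a) via the closed relation $\{(x,y)\in K\times_M K \mid f(x)=f(y)\}$ for $f\in\fix(T^{\varphi})$ together with Lemma \ref{basicchartoperg}, which saves you from arguing that the bijective proper map $h_q$ is a homeomorphism.
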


\subsection{Uniform Enveloping Semigroup Bundles}\label{univenvsec}

As seen in Proposition \ref{compactifdiscr}, every compactification of our open topological abelian group bundle $p \colon G \rightarrow M$ gives rise to a topological dynamical system with relative discrete spectrum. We now go the other direction and construct a group compactification from a given system $(q,\varphi)$ on an open proper Hausdorff bundle $q \colon K \rightarrow M$ with relative discrete spectrum.\medskip

In the classical case of group actions, i.e., if $M = \{\mathrm{pt}\}$ is a singleton space, one can construct a group compactification by introducing the \textbf{uniform enveloping semigroup} as the closure
    \begin{align*}
        \mathrm{E}_{\mathrm{u}}(K,\varphi) = \overline{\{\varphi_t \mid t \in G\}} \subseteq \mathrm{C}(K,K)
    \end{align*}
with respect to the compact-open topology, equipped with the composition of maps as multiplication. This is a variation of the classical enveloping semigroup in topological dynamics introduced by Ellis in \cite{Ellis1960}. For systems with discrete spectrum it is a compact group (and indeed agrees with the Ellis semigroup in this case), hence gives rise to a group compactification
    \begin{align*}
        G \rightarrow \mathrm{E}_{\mathrm{u}}(K,\varphi), \quad t \mapsto \varphi_t,
    \end{align*}
see \cite[Subsection 1.2]{kreidler-hermle}.\medskip

We shall now introduce a \enquote{bundle version} of this construction. This is (apart from allowing locally compact base spaces instead of compact ones here) a special case of the uniform enveloping \emph{semigroupoid} or the uniform enveloping \emph{poloid} from \cite[Section 3]{EdKr2021} and \cite[Subsection 5.1.5]{Herm2026}, respectively. Recall from Proposition \ref{semigrpbundlefibermaps} that for any locally compact bundle $q \colon \Omega \rightarrow M$ we obtain a topological semigroup bundle by restricting $\mathrm{s}\colon \mathrm{C}_q^q(\Omega,\Omega) \rightarrow M$ to the subspace
    \begin{align*}
        \mathrm{C}(q) = \bigsqcup_{m \in M} \mathrm{C}(K_m, K_m) \subseteq \mathrm{C}_q^q(K, K).
    \end{align*}
The following definition also uses the notation for generated subsemigroup bundles from Definition \ref{generatedbundle}.

\begin{definition}\label{defunifenvsem}
    For a topological dynamical system $(q, \varphi)$ on an open proper Hausdorff bundle $q \colon K \rightarrow M$ the subsemigroup bundle $\mathrm{E}_{\mathrm{u}}(q, \varphi)$ defined by 
        \begin{align*}
            \mathrm{E}_{\mathrm{u}}(q, \varphi) \coloneqq \langle \{ \varphi_t \colon K_{p(t)} \rightarrow K_{p(t)} \mid t \in G \} \rangle \subseteq \mathrm{C}(q)
        \end{align*}
    is called the \textbf{uniform enveloping semigroup bundle} of $(q,\varphi)$.
\end{definition}

For rotation systems defined by group bundle compactifications we can explicitly compute the uniform enveloping semigroup bundle (cf. \cite[Proposition 5.1.29]{Herm2026} for groupoid compactifications of topologically ergodic groupoids).

\begin{proposition}\label{envforcomp}
    Consider a group bundle compactification $(q, c)$ where $q \colon H \rightarrow M$. Then
        \begin{align*}
            \Phi \colon H \rightarrow \mathrm{E}_{\mathrm{u}}(q, \varphi_c), \quad x \mapsto \vartheta_x,
        \end{align*}
    with $\vartheta_x(y) \coloneqq xy$ for all $x \in H$ and $y \in H_{q(x)}$, defines an isomorphism of (semi)group bundles over $M$. 
\end{proposition}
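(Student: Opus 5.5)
The plan is to view $\Phi$ as a map into the semigroup bundle $\mathrm{C}(q)$, show that it is a continuous injective homomorphism whose image is closed and contains the generators, and then invoke the automatic-continuity results for proper bundles. Concretely, $\Phi \colon H \rightarrow \mathrm{C}(q)$, $x \mapsto \vartheta_x$, satisfies $\mathrm{s}(\vartheta_x) = q(x)$, and $\vartheta_x$ is continuous on $H_{q(x)}$ because multiplication is continuous. The fiber maps are semigroup homomorphisms since $\vartheta_{xy} = \vartheta_x \circ \vartheta_y$. Injectivity follows from $\vartheta_x(1_{H_{q(x)}}) = x$.

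For continuity of $\Phi$ I use the net characterization of Proposition \ref{netconv}, which applies because $q$ is locally compact. Let $x_\alpha \to x$. Then $\mathrm{s}(\vartheta_{x_\alpha}) = q(x_\alpha) \to q(x)$. Moreover, for any subnet and any $y_\beta \in H_{q(x_\beta)}$ with $y_\beta \to y$, we get $x_\beta y_\beta \to xy$ by continuity of multiplication on $H \times_M H$.

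Next I show $\Phi(H) = \mathrm{E}_{\mathrm{u}}(q,\varphi_c)$.
\begin{enumerate}[(i)]
\item For $t \in G$ the generator $(\varphi_c)_t$ equals $\vartheta_{c(t)}$, so the generating set $\{(\varphi_c)_t\}$ is $\Phi(c(G))$.
\item The set $\Phi(H)$ is a subsemigroup bundle of $\mathrm{C}(q)$ containing these generators.
\item I then show $\Phi(H)$ is closed in $\mathrm{C}(q)$, which gives $\mathrm{E}_{\mathrm{u}} \subseteq \Phi(H)$. Let $\vartheta_{x_\alpha} \to \vartheta$ with $\mathrm{s}(\vartheta) = m$. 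Then $q(x_\alpha) \to m$, and since $q$ is proper Lemma \ref{charproper} yields a subnet $x_\beta \to x \in H_m$. By continuity of $\Phi$ and the Hausdorff property of $\mathrm{C}(q)$ (Corollary \ref{corhausdorff}, using that $q$ is open and locally compact), $\vartheta = \vartheta_x$.
\item For the reverse inclusion, consider $\Phi^{-1}(\mathrm{E}_{\mathrm{u}})$. It is closed in $H$, it is a subsemigroup bundle (nonempty fibers, since each contains $c(G)$ by surjectivity of $p$), and it contains $c(G)$. Since $c(G)$ generates $q$, we get $\Phi^{-1}(\mathrm{E}_{\mathrm{u}}) = H$.
\end{enumerate}

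It remains to see that $\Phi \colon H \rightarrow \mathrm{E}_{\mathrm{u}}$ is a homeomorphism. It is a bijective bundle morphism from the proper bundle $q$ to $\mathrm{s}|_{\mathrm{E}_{\mathrm{u}}}$, which is Hausdorff; $M$ is Hausdorff, being locally compact. Corollary \ref{automaticcontprop} therefore gives that $\Phi$ is a homeomorphism. Consequently $\mathrm{E}_{\mathrm{u}}$ is in fact a group bundle isomorphic to $q$, and inversion and units transfer along $\Phi$.

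The main obstacle is the closedness of $\Phi(H)$ in step (iii), since it is the only place where properness enters in an essential way. It relies on passing to a convergent subnet via Lemma \ref{charproper} and then identifying the limit through uniqueness of limits in the Hausdorff space $\mathrm{C}(q)$.
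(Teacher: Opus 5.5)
Your proposal is correct and follows the paper's proof step by step: continuity and injectivity of $\Phi$, the inclusion $\mathrm{E}_{\mathrm{u}}(q,\varphi_c) \subseteq \Phi(H)$, the equality $\Phi^{-1}(\mathrm{E}_{\mathrm{u}}(q,\varphi_c)) = H$ via the generating property of $c(G)$, and Corollary \ref{automaticcontprop} at the end. The only difference is that you prove closedness of $\Phi(H)$ by hand, whereas the paper cites Proposition \ref{properclosed}; that proposition's proof is exactly your net argument, and its Hausdorff hypothesis on the target is the one you supply via Corollary \ref{corhausdorff}.
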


\begin{proof}
    One readily checks that 
        \begin{align*}
            \Phi \colon H \rightarrow \mathrm{C}(q), \quad x \mapsto \vartheta_x,
        \end{align*}
    where $\vartheta_x(y) = xy$ for all $x \in H$ and $y \in H_{q(x)}$, defines a morphism of topological semigroup bundles over $M$. Since $q$ is proper, the image $\Phi(H)$ is a closed subset of $\mathrm{C}(q)$ by Proposition \ref{properclosed}. Moreover, the image $\Phi(H)$ defines a subsemigroup bundle and contains 
        \begin{align*}
            \{ (\varphi_c)_t \colon K_{p(t)} \rightarrow K_{p(t)} \mid t \in G \}
        \end{align*}
    by definition of $\varphi_c$. We conclude that $\mathrm{E}_{\mathrm{u}}(q, \varphi_c) \subseteq \Phi(H)$. On the other hand, the preimage $\Phi^{-1}(\mathrm{E}_{\mathrm{u}}(q, \varphi_c))$ is a closed subset of $H$ containing $c(G)$, and defines a subsemigroup bundle of $q$. Thus, $\Phi^{-1}(\mathrm{E}_{\mathrm{u}}(q, \varphi_c)) = H$, and consequently $\Phi(H) \subseteq \mathrm{E}_{\mathrm{u}}(q, \varphi_c)$. In conclusion, we obtain that the corestriction
        \begin{align*}
            \Phi \colon  H \rightarrow \mathrm{E}_{\mathrm{u}}(q, \varphi_c), \quad x \mapsto \vartheta_x
        \end{align*}
    defines a surjective morphism of topological semigroup bundles over $M$. To prove injectivity, let $x, y \in H$ with $\vartheta_x = \vartheta_y$. Since $\Phi$ is a morphism of topological bundles, we then have $m \coloneqq q(x) = q(y)$. But then
        \begin{align*}
            x = \vartheta_{x}(1_{H_{m}}) = \vartheta_{y}(1_{H_{m}}) = y.
        \end{align*}
    By Corollary \ref{automaticcontprop} we conclude that $\Phi$ defines an isomorphism of topological group bundles over $M$. 
\end{proof}

In particular, we see that for rotation systems the uniform enveloping semigroup bundles is even a proper abelian group bundle. The following result shows that this is true for arbitrary topological dynamical systems with relative discrete spectrum (cf. \cite[Theorems 3.27 and 4.14]{EdKr2021} and \cite[Proposition 5.1.37]{Herm2026} for actions of \enquote{topologically ergodic groupoids}). Recall the definition of the homeomorphism group bundle of a proper locally compact bundle from Proposition \ref{semigrpbundlefibermaps}.

\begin{proposition} \label{unifenvcomp}
    Let $(q, \varphi)$ be a topological dynamical system with relative discrete spectrum. Then $\mathrm{E}_{\mathrm{u}}(q, \varphi)$ defines a proper abelian subgroup bundle of the homeomorphism group bundle $\mathrm{s}|_{\mathrm{Homeo}(q)} \colon \mathrm{Homeo}(q) \rightarrow M$. 
\end{proposition}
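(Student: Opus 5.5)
We follow the classical argument: relative discrete spectrum gives equicontinuity, and a relative Arzelà--Ascoli theorem (Theorem \ref{arzasc}) then gives compactness. Since properness, closedness and the group property can all be checked locally on $M$, we first restrict to $q^{-1}(L)$ for a compact neighbourhood $L$ of a point of $M$. There $q_L$ is an open compact bundle and Theorem \ref{arzasc} applies. Let
\begin{align*}
    S \coloneqq \{\varphi_t \mid t \in G\} \subseteq \mathrm{C}(q).
\end{align*}
The first goal is to show that $S \cap \mathrm{s}^{-1}(L)$ is uniformly equicontinuous. Condition (i) of Theorem \ref{arzasc} is automatic, because all images lie in the compact set $q^{-1}(L)$.

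For equicontinuity, fix an entourage of $q^{-1}(L)$ and a finite family of continuous functions separating points up to $\varepsilon$. By characterization (b) of relative discrete spectrum and compactness of $L$, finitely many open sets $U_i$ cover $L$. On each $U_i$, every $f\in\mathrm{C}_{\mathrm b}(K)$ from a fixed finite family is within $\varepsilon$ of a finite linear combination of eigenfunctions $g_{ij}\in\ker(\chi_{ij}-T^\varphi)$. For an eigenfunction,
\begin{align*}
    |g(\varphi_t(x))-g(\varphi_t(y))| = |\mathrm{pr}_\T(\chi(t))|\,|g(x)-g(y)| = |g(x)-g(y)|
\end{align*}
for $x,y$ in the same fibre $K_{p(t)}$. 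So finitely many continuous functions control all $\varphi_t$ uniformly. Since the uniformity on the compact space $q^{-1}(L)$ is generated by finitely many functions up to $\varepsilon$, and the entourage only needs to be checked on pairs within the same fibre (the bundle $q$ supplies the remaining coordinate), this gives uniform equicontinuity. Theorem \ref{arzasc} then shows that $S\cap \mathrm{s}^{-1}(L)$ is precompact in $\mathrm{C}_q^q$, and its closure consists of maps that are again isometric for the same pseudometrics built from eigenfunctions.

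Next we show that the closure $E$ of $S$ is a subsemigroup bundle and equals $\mathrm{E}_{\mathrm u}(q,\varphi)$. Every element $\vartheta$ of $E$ over $m$ is a limit of $\varphi_{t_\alpha}$ with $p(t_\alpha)\to m$, by Proposition \ref{netconv}. Each eigenfunction $g$ then satisfies
\begin{align*}
    g\circ\vartheta = \lambda\, g \quad \text{on } K_m, \qquad |\lambda|=1,
\end{align*}
where $\lambda$ is a limit point of $\overline{\mathrm{pr}_\T(\chi(t_\alpha))}$. Since the eigenfunctions separate points of $K_m$ (characterization (d)), $\vartheta$ is injective. Moreover, the algebra spanned on $K_m$ is mapped onto itself, which makes $\vartheta$ surjective. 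Hence $E\subseteq\mathrm{Homeo}(q)$.

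Abelianness on $S$ holds because $G$ is abelian. Continuity of composition (Proposition \ref{semigrpbundlefibermaps}) gives that the closure is closed under fibrewise products, so $E$ is a closed subsemigroup bundle; by definition of $\langle\cdot\rangle$ we get $\mathrm{E}_{\mathrm u}=E$. Here I expect a subtlety, which is the main obstacle of the bookkeeping: a product $\vartheta_1\vartheta_2$ of limits over $m$ is obtained by choosing nets with the same base points, which uses openness of $p$ (Lemma \ref{charopen}) to lift convergence $m_\alpha\to m$. Alternatively, Lemma \ref{abeliangenerator} can be applied to the open abelian bundle generated by $S$.

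It remains to show that fibres are groups and that inversion is continuous. Each fibre $E_m$ is a compact abelian semigroup of homeomorphisms whose elements act on eigenfunctions by unimodular scalars. A compact monothetic-type argument, namely that in a compact semigroup of isometries the inverse lies in the closure of positive powers, gives $\vartheta^{-1}\in E_m$. Alternatively, inverses are limits of $\varphi_{t_\alpha^{-1}}$, which converge by precompactness, and inversion on $\mathrm{Homeo}(q)$ is continuous. Properness of $\mathrm{s}|_E$ follows from Lemma \ref{charproper}, because $E\cap\mathrm{s}^{-1}(L)$ is compact for compact $L$.

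The main obstacle is the equicontinuity step. It requires converting the local eigenfunction approximations into the fibrewise uniform equicontinuity required by Theorem \ref{arzasc} on the whole compact piece $q^{-1}(L)$ simultaneously, while handling the different local characters on the different $U_i$.
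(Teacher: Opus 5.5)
There is a genuine gap. Everything you establish (precompactness over $L$, the scalar action $g\circ\vartheta=\lambda g$, injectivity, properness) is established only for the closure $E=\overline{\{\varphi_t\mid t\in G\}}$, i.e.\ for limits of \emph{single} maps $\varphi_{t_\alpha}$. You then assert $E=\mathrm{E}_{\mathrm{u}}(q,\varphi)$, on the grounds that continuity of composition makes $E$ closed under fibrewise products. That step is false. Two elements of $E_m$ are limits of $\varphi_{s_\alpha}$ and $\varphi_{t_\beta}$ living over different base points, and such maps cannot be composed. Openness of $p$ only lets you approximate $\varphi_t$ with $t\in G_m$ by maps over nearby fibres; it does not do this for limit points of $S$. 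This is exactly the phenomenon of the Klein four-group example preceding Definition \ref{generatedbundle}.

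The failure persists even with relative discrete spectrum. Take $p=\mathrm{triv}_{[-1,1]}^{\Z}$, $K=[-1,1]\times\T^2$ and $\varphi_{(m,k)}(m,z,w)=(m,a(m)^kz,b(m)^kw)$ with $a(m)=\mathrm{e}^{2\pi\mathrm{i}m}$ and $b(m)=\mathrm{e}^{2\pi\mathrm{i}|m|}$. The functions $z^jw^l$ are global eigenfunctions, so the system has relative discrete spectrum. Yet $E_0$ consists of the rotations by $(u,\overline{u})$ and $(u,u)$ for $u\in\T$. This set is not closed under composition, since $(\mathrm{i},-\mathrm{i})(\mathrm{i},\mathrm{i})=(-1,1)$, whereas $\mathrm{E}_{\mathrm{u}}(q,\varphi)_0$ consists of all rotations of $\T^2$. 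So properness of $\mathrm{s}|_{\mathrm{E}_{\mathrm{u}}(q,\varphi)}$ is not proved.

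Your abelianness arguments fail for the same reason. The closure of the abelian bundle $S$ need not be abelian: in Example \ref{examplepseudoisometric} the non-commuting $\zeta,\zeta'$ both lie in $\overline{S}$ over $0$. Lemma \ref{abeliangenerator} does not rescue this, because it needs the \emph{ambient} bundle $\mathrm{E}_{\mathrm{u}}(q,\varphi)$ to be open. That is only known under relative ergodicity (Theorem \ref{unifenvcomp2}).

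The missing idea is to prove the scalar property for \emph{every} element of $\mathrm{E}_{\mathrm{u}}(q,\varphi)$, not just for limits of $\varphi_{t_\alpha}$, via a good-set argument (Lemma \ref{lemmaunifsemigrp}). Fix $g\in\ker(\chi-T^{\varphi})$ with $\chi\in\mathcal{D}_p(U)$. Consider the set of $\vartheta\in\mathrm{E}_{\mathrm{u}}(q,\varphi)$ with $g|_{K_{\mathrm{s}(\vartheta)}}\circ\vartheta\in\T\cdot g|_{K_{\mathrm{s}(\vartheta)}}$ whenever $\mathrm{s}(\vartheta)\in U$. It contains all $\varphi_t$ and is closed under fibrewise composition. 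It is also closed: pass to a convergent subnet of the scalars in $\T$. Hence it is all of $\mathrm{E}_{\mathrm{u}}(q,\varphi)$.

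With this in hand, the rest follows as in the paper:
\begin{itemize}
\item Your equicontinuity estimate applies verbatim to the closed set $\mathrm{E}_{\mathrm{u}}(q,\varphi)\cap\mathrm{s}^{-1}(L)$, and Theorem \ref{arzasc} gives properness.
\item The group property follows by the same argument applied to $\{\vartheta\in\mathrm{E}_{\mathrm{u}}(q,\varphi)\mid\vartheta\in\mathrm{Homeo}(q),\ \vartheta^{-1}\in\mathrm{E}_{\mathrm{u}}(q,\varphi)\}$, whose closedness uses properness (Lemma \ref{compactsubgrpbundle}).
\item Abelianness follows because two fibre elements acting by unimodular scalars on eigenfunctions satisfy $f\circ\vartheta\circ\vartheta'=f\circ\vartheta'\circ\vartheta$ for all $f$ in a total subset of $\mathrm{C}(K_m)$, and hence commute.
\end{itemize}
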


We first show two auxiliary statements. The proof of the first one is similar to the one of Proposition \ref{rotergodic}.

\begin{lemma} \label{lemmaunifsemigrp}
    Let $(q, \varphi)$ be a topological dynamical system. Let further $f \in \mathrm{ker}(\chi - T^{\varphi})$ for some local character $\chi \in \mathcal{D}_p(U)$ on an open subset $U \subseteq M$. Then for each element $\vartheta \in \mathrm{E}_{\mathrm{u}}(q, \varphi)$ with $\mathrm{s}(\vartheta) \in U$ there exists an element $z \in \mathbb{T}$ such that
        \begin{align*}
            f|_{K_{\mathrm{s}(\vartheta)}} \circ \vartheta = z f|_{K_{\mathrm{s}(\vartheta)}}.
        \end{align*}
\end{lemma}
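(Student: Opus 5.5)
Following the hint that the argument mirrors Proposition \ref{rotergodic}, the plan is to consider the set
\begin{align*}
    S \coloneqq \{\vartheta \in \mathrm{C}(q) \mid \mathrm{s}(\vartheta) \notin U \textrm{ or } \exists\, z \in \T \textrm{ with } f|_{K_{\mathrm{s}(\vartheta)}} \circ \vartheta = z f|_{K_{\mathrm{s}(\vartheta)}}\},
\end{align*}
show that $q|_S$ (more precisely $\mathrm{s}|_S$) is a subsemigroup bundle of $\mathrm{C}(q)$, that $S$ is closed in $\mathrm{C}(q)$, and that $S$ contains all maps $\varphi_t \colon K_{p(t)} \rightarrow K_{p(t)}$ for $t \in G$. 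By Definition \ref{generatedbundle} this gives $\mathrm{E}_{\mathrm{u}}(q,\varphi) \subseteq S$, which is the claim.

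The generators lie in $S$ directly by the eigen-equation. Indeed, $f \in \ker(\chi - T^{\varphi})$ means $f|_{K_{p(t)}} \circ \varphi_t^{-1} = \mathrm{pr}_{\T}(\chi(t)) f|_{K_{p(t)}}$ for $t \in p^{-1}(U)$. Applying this to $t^{-1}$ and using $\chi(t^{-1}) = \overline{\chi(t)}$ yields $f \circ \varphi_t = \overline{\mathrm{pr}_{\T}(\chi(t))} f$ on $K_{p(t)}$, so $z = \overline{\mathrm{pr}_\T(\chi(t))} \in \T$. Closure under fiberwise composition is immediate: if $f \circ \vartheta_1 = z_1 f$ and $f \circ \vartheta_2 = z_2 f$ on the same fiber, then $f \circ \vartheta_1 \circ \vartheta_2 = z_1 z_2 f$. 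Each fiber $S_m$ is nonempty, since it contains the identity ($z=1$), or all of $\mathrm{C}(K_m,K_m)$ when $m \notin U$.

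The main step is closedness. Let $(\vartheta_\alpha)_\alpha$ be a net in $S$ converging to $\vartheta \in \mathrm{C}(q)$ with $m \coloneqq \mathrm{s}(\vartheta) \in U$; then eventually $\mathrm{s}(\vartheta_\alpha) \in U$, so we obtain scalars $z_\alpha \in \T$. By compactness of $\T$, pass to a subnet with $z_\beta \rightarrow z \in \T$. Given $x \in K_m$, openness of $q$ and Lemma \ref{charopen} allow us, after passing to a further subnet, to choose $x_\beta \in K_{\mathrm{s}(\vartheta_\beta)}$ with $x_\beta \rightarrow x$. Proposition \ref{netconv} gives $\vartheta_\beta(x_\beta) \rightarrow \vartheta(x)$, and continuity of $f$ on $q^{-1}(U)$ gives
\begin{align*}
    f(\vartheta(x)) = \lim_\beta f(\vartheta_\beta(x_\beta)) = \lim_\beta z_\beta f(x_\beta) = z f(x).
\end{align*}
One subtlety remains: the subnet, and hence a priori $z$, depends on $x$. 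We avoid this by fixing the subnet with $z_\beta \rightarrow z$ first and only afterwards choosing a sub-subnet for each $x$; passing to a subnet does not change the limit $z$. Hence a single $z$ works for all $x \in K_m$, so $\vartheta \in S$, and $S$ is closed. This completes the argument.
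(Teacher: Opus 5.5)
Your proof is correct and follows the paper's argument. The paper likewise shows that the set of $\vartheta$ satisfying the eigen-relation (whenever $\mathrm{s}(\vartheta) \in U$) is a closed subsemigroup bundle containing the generators $\varphi_t$, and it gets closedness by passing to a subnet on which $z_\alpha \to z$. The only differences are cosmetic: the paper works inside $\mathrm{E}_{\mathrm{u}}(q,\varphi)$ rather than $\mathrm{C}(q)$, and it takes the limit $f|_{K_{\mathrm{s}(\vartheta_\alpha)}} \circ \vartheta_\alpha \to f|_{K_{\mathrm{s}(\vartheta)}} \circ \vartheta$ directly in $\mathrm{C}_q(K)$, whereas you unpack this pointwise via Lemma \ref{charopen} and Proposition \ref{netconv}.
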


\begin{proof}
    We set
    \begin{align*}
        S \coloneqq \{ \vartheta \in \mathrm{E}_{\mathrm{u}}(q, \varphi) \mid \text{if } \mathrm{s}(\vartheta) \in U,
        \text{ then there is a } z \in \mathbb{T} \text{ with }
f|_{K_{\mathrm{s}(\vartheta)}} \circ \vartheta = z f|_{K_{\mathrm{s}(\vartheta)}} \}.
    \end{align*}
    Clearly, $S$ contains the set
        \begin{align*}
            \{\varphi_t \colon K_{p(t)} \rightarrow K_{p(t)} \mid t \in G\} 
        \end{align*}
    and defines a subsemigroup bundle of $\mathrm{s}|_{\mathrm{E}_{\mathrm{u}}(q, \varphi)}$. We show that $S$ is closed in $\mathrm{E}_{\mathrm{u}}(q, \varphi)$, which then implies the claim by definition of $\mathrm{E}_{\mathrm{u}}(q, \varphi)$. So let $(\vartheta_{\alpha})_{\alpha}$ be a net in $S$ converging to some $\vartheta \in \mathrm{E}_{\mathrm{u}}(q, \varphi)$. To show that $\vartheta \in S$ we may assume that $\mathrm{s}(\vartheta) \in U$. Then there exists an $\alpha_0$ such that $\mathrm{s}(\vartheta_\alpha) \in U$ for all $\alpha \geq \alpha_0$. For each $\alpha \geq \alpha_0$ we then find some $z_\alpha \in \mathbb{T}$ with
$f|_{K_{\mathrm{s}(\vartheta_\alpha)}} \circ \vartheta_\alpha = z_\alpha f|_{K_{\mathrm{s}(\vartheta_\alpha)}}$. By compactness of $\T$ we can pass to a subnet to achieve that the limit $z \coloneqq \lim_{\alpha} z_{\alpha}$ exists in $\T$. But then
    \begin{align*}
        z f|_{K_{\mathrm{s}(\vartheta)}} = \lim_{\alpha} z_{\alpha} f|_{K_{\mathrm{s}(\vartheta_{\alpha})}} = \lim_{\alpha}  f|_{K_{\mathrm{s}(\vartheta_{\alpha})}} \circ \vartheta_{\alpha} = f|_{K_{\mathrm{s}(\vartheta)}} \circ \vartheta,
    \end{align*}
    and hence $\vartheta \in S$ as desired.
\end{proof}

The second lemma is a group bundle version of \cite[Proposition 3.11]{EdKr2021} which also covers the case of locally compact base spaces.

\begin{lemma}\label{compactsubgrpbundle}
    Let $(q, \varphi)$ be a topological dynamical system. If  $\mathrm{s}|_{\mathrm{E}_{\mathrm{u}}(q, \varphi)} \colon \mathrm{E}_{\mathrm{u}}(q, \varphi) \rightarrow M$ is proper, then it is a subgroup bundle of $\mathrm{s}|_{\mathrm{Homeo}(q)} \colon \mathrm{Homeo}(q) \rightarrow M$. 
\end{lemma}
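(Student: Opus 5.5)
Throughout, write $E \coloneqq \mathrm{E}_{\mathrm{u}}(q,\varphi)$. The plan is to mimic the classical argument that a compact subsemigroup of a topological group in which all elements are "cancellable" is a group, carried out fibrewise but with the generating set handled through closedness. The key device is again a closed subsemigroup bundle argument as in Lemma \ref{lemmaunifsemigrp}: we show that the set
\begin{align*}
    S \coloneqq \{\vartheta \in E \mid \vartheta \in \mathrm{Homeo}(K_{\mathrm{s}(\vartheta)}) \textrm{ and } \vartheta^{-1} \in E\}
\end{align*}
is closed in $E$, defines a subsemigroup bundle, and contains all $\varphi_t$. Then $S = E$ by Definition \ref{generatedbundle}, which says precisely that every element of $E$ is a homeomorphism whose inverse lies in $E$. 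Since $E$ contains the units $\varphi_{1_{G_m}} = \mathrm{id}_{K_m}$, each fiber $E_m$ is then a subgroup of $\mathrm{Homeo}(K_m)$.

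The easy parts come first. Each $\varphi_t$ lies in $S$ since $\varphi_t^{-1} = \varphi_{t^{-1}}$. Moreover, $S$ is closed under fibrewise composition because $E$ is a subsemigroup bundle and $(\vartheta_1\vartheta_2)^{-1} = \vartheta_2^{-1}\vartheta_1^{-1}$, and every fiber $S_m$ contains $\mathrm{id}_{K_m}$ and is hence nonempty.

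The main obstacle is closedness of $S$, and this is where properness of $\mathrm{s}|_E$ enters. Let $(\vartheta_\alpha)_\alpha$ be a net in $S$ converging to $\vartheta \in E$, and set $m_\alpha \coloneqq \mathrm{s}(\vartheta_\alpha)$ and $m \coloneqq \mathrm{s}(\vartheta)$, so that $m_\alpha \rightarrow m$. The inverses $\vartheta_\alpha^{-1}$ lie in $E$ and satisfy $\mathrm{s}(\vartheta_\alpha^{-1}) = m_\alpha \rightarrow m$. Since $\mathrm{s}|_E$ is proper and $M$ is locally compact, Lemma \ref{charproper} provides a subnet with $\vartheta_\beta^{-1} \rightarrow \eta$ for some $\eta \in E_m$. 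Continuity of the multiplication on the semigroup bundle $\mathrm{C}(q)$ (Proposition \ref{semigrpbundlefibermaps}) then gives
\begin{align*}
    \vartheta\eta = \lim_\beta \vartheta_\beta\vartheta_\beta^{-1} = \lim_\beta \mathrm{id}_{K_{m_\beta}} \quad \textrm{and} \quad \eta\vartheta = \lim_\beta \mathrm{id}_{K_{m_\beta}}.
\end{align*}
It remains to check that $\mathrm{id}_{K_{m_\beta}} \rightarrow \mathrm{id}_{K_m}$ in $\mathrm{C}(q)$. This follows from Proposition \ref{netconv}: if $x_\beta \in K_{m_\beta}$ with $x_\beta \rightarrow x$, then trivially $\mathrm{id}(x_\beta) = x_\beta \rightarrow x$. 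Alternatively, it follows from continuity of $t \mapsto \varphi_t$ together with $\varphi_{1_{G_{m_\beta}}} = \mathrm{id}_{K_{m_\beta}}$ and continuity of the unit section of $p$.

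Since $\mathrm{C}(q)$ is Hausdorff by Corollary \ref{corhausdorff} (here $q$ is open and locally compact), limits are unique. Hence $\vartheta\eta = \eta\vartheta = \mathrm{id}_{K_m}$, so $\vartheta$ is a homeomorphism of $K_m$ with inverse $\eta \in E$, that is, $\vartheta \in S$. This proves closedness.

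To conclude, $E \subseteq \mathrm{Homeo}(q)$ and each fiber $E_m$ is a subgroup of $\mathrm{Homeo}(K_m)$. The topology on $E$ agrees with the subspace topology from $\mathrm{Homeo}(q)$, since both are inherited from $\mathrm{C}(q)$, so $E$ is a subgroup bundle of $\mathrm{s}|_{\mathrm{Homeo}(q)}$. Continuity of inversion, of the unit section and of multiplication is inherited from the topological group bundle $\mathrm{Homeo}(q)$ of Proposition \ref{semigrpbundlefibermaps}; recall that $q$ is proper there.
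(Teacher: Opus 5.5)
Your proposal is correct and follows essentially the same route as the paper. You show that the set of elements of $\mathrm{E}_{\mathrm{u}}(q,\varphi)$ that are homeomorphisms with inverse in $\mathrm{E}_{\mathrm{u}}(q,\varphi)$ is a closed subsemigroup bundle containing all $\varphi_t$, obtaining closedness from properness (Lemma \ref{charproper}), which gives a convergent subnet of inverses, together with continuity of multiplication. Beyond this you also spell out details the paper leaves implicit: the convergence $\mathrm{id}_{K_{m_\beta}} \to \mathrm{id}_{K_m}$, uniqueness of limits via Corollary \ref{corhausdorff}, and nonemptiness of the fibers.
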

\begin{proof}
    Let
        \begin{align*}
            S \coloneqq \{ \vartheta \in \mathrm{E}_{\mathrm{u}}(q, \varphi) \ | \ \vartheta \in \mathrm{Homeo}(q) \text{ and }
\vartheta^{-1} \in \mathrm{E}_{\mathrm{u}}(q, \varphi) \}.
        \end{align*}
    It is clear that $S$ contains the set $\{\varphi_t \colon K_{p(t)} \rightarrow K_{p(t)}\mid t \in G\}$ and defines a subsemigroup bundle of $\mathrm{s}|_{\mathrm{E}_{\mathrm{u}}(q, \varphi)}$. By definition of $\mathrm{E}_{\mathrm{u}}(q, \varphi)$, it therefore suffices to prove that $S$ is closed in $\mathrm{E}_{\mathrm{u}}(q, \varphi)$. So let $(\vartheta_\alpha)_{\alpha}$ be a net in $S$ converging to some $\vartheta \in \mathrm{E}_{\mathrm{u}}(q, \varphi)$. Since $\mathrm{s}|_{\mathrm{E}_{\mathrm{u}}(q, \varphi)}$ is proper we may assume, by passing to a subnet, that $(\vartheta_\alpha^{-1})_{\alpha}$ also converges to some element $\vartheta' \in \mathrm{E}_{\mathrm{u}}(q, \varphi)$ (see Lemma \ref{charproper}). But then, by continuity of the multiplication, we have
        \begin{align*}
            \vartheta' \circ \vartheta = \lim_{\alpha} \vartheta_\alpha^{-1} \circ \vartheta_{\alpha} = \mathrm{id}_{K_{\mathrm{s}(\vartheta)}}  = \lim_{\alpha} \vartheta_{\alpha} \circ \vartheta_\alpha^{-1} = \vartheta \circ \vartheta'.
        \end{align*}
    Thus, $\vartheta \in \mathrm{Homeo}(q)$ with $\vartheta^{-1} \in \mathrm{E}_{\mathrm{u}}(q, \varphi)$, and consequently $\vartheta \in S$.
\end{proof}
\begin{remark}
    Lemma \ref{compactsubgrpbundle} is also a direct consequence of an abstract assertion on  topological \emph{poloids}, see \cite[Theorem 4.1.48]{Herm2026}.
\end{remark}

\begin{proof}[Proof of Proposition \ref{unifenvcomp}]
    We write $q \colon K \rightarrow M$ and first check that $\mathrm{s}|_{\mathrm{E}_{\mathrm{u}}(q, \varphi)}$ is a proper bundle. So let $C \subseteq M$ be compact and show that $\mathrm{s}|_{\mathrm{E}_{\mathrm{u}}(q, \varphi)}^{-1}(C) \subseteq \mathrm{C}_q^q(q^{-1}(C),q^{-1}(C))$ is compact. By the Arzel\'a--Ascoli theorem (see Theorem \ref{arzasc}) it suffices to show that $\mathrm{s}|_{\mathrm{E}_{\mathrm{u}}(q, \varphi)}^{-1}(C)$ is uniformly equicontinuous. Since the uniform structure on $q^{-1}(C)$ is generated by the entourages
        \begin{align*} 
            V_{f,\varepsilon} = \{(x,y) \in q^{-1}(C) \times q^{-1}(C) \mid |f(x) - f(y)| < \varepsilon\}
        \end{align*}
    for $f \in \mathrm{C}(q^{-1}(C))$ and $\varepsilon > 0$, we equivalently have to check that 
        \begin{align*}
            f \circ \mathrm{E}_{\mathrm{u}}(q, \varphi) \coloneqq \{ f|_{K_{\mathrm{s}(\vartheta)}} \circ \vartheta \mid \vartheta \in \mathrm{E}_{\mathrm{u}}(q, \varphi) \textrm{ with } \mathrm{s}(\vartheta) \in C\}
        \end{align*}
    is equicontinuous for each $f \in \mathrm{C}(q^{-1}(C))$. So fix $f \in \mathrm{C}(q^{-1}(C))$. We extend $f$ to a compactly supported continuous function on $K$ which we still denote by $f$. Since the system $(q, \varphi)$ has relative discrete spectrum, we find for a given $\varepsilon > 0$ an open covering $(U_i)_{i \in I}$ of $M$ such that for each $i \in I$ there exists a function $g_i \in \lin \bigcup_{\chi \in \mathcal{D}_p(U_i)} \ker(\chi - T^{\varphi})$ with $\|f|_{q^{-1}(U_i)} - g_i\|_\infty \leq \varepsilon$. As $M$ is locally compact, we find for each $i \in I$ an open cover $(V_j^i)_{j \in J_i}$ of $U_i$ such that the closure $\overline{V_j^i}$ within $M$ is compact and still contained in $U_i$ for every $j \in J_i$. Since $C$ is compact, we now find a finite subset $I' \subseteq I$ and finite subsets $J_i' \subseteq J_i$ for each $i\in I'$ such that $C \subseteq \bigcup_{i \in I'} \bigcup_{j \in J_i'} V_j^i$. Thus, for each $\vartheta \in \mathrm{E}_{\mathrm{u}}(q, \varphi)$ we find some $i \in I'$ and $j \in J_i'$ with $\mathrm{s}(\vartheta) \in V_j^i$, and hence
        \begin{align*}
             \|f|_{K_{\mathrm{s}(\vartheta)}} \circ \vartheta - g_i|_{K_{\mathrm{s}(\vartheta)}} \circ \vartheta \|_{\infty} \leq \varepsilon.
        \end{align*}
    Since $\bigcup_{i \in I'} J_i'$ is finite, it suffices by an easy application of the triangle inequality to check that
        \begin{align*}
            \biggl\{g_i|_{K_{\mathrm{s}(\vartheta)}}\circ \vartheta \mid \vartheta \in \mathrm{E}_{\mathrm{u}}(q, \varphi) \textrm{ with } \mathrm{s}(\vartheta) \in \overline{V_j^i}\biggr\}
        \end{align*}
    is a uniformly equicontinuous subset of $\mathrm{C}_q(q^{-1}(\overline{V_j^i}))$ for each $i \in I'$ and $j \in J_i'$.\medskip
    
    Replacing $f$ by one of these functions $g_i|_{q^{-1}(\overline{V_j^i})}$ for $i \in I'$ and $j \in J_i'$, we may therefore assume without loss of generality that $f \in \mathrm{C}(q^{-1}(C))$ can be extended to an element $f \in \lin \bigcup_{\chi \in \mathcal{D}_p(U)} \ker(\chi - T^{\varphi})$ for some open subset $U \subseteq M$ containing $C$. It is then straightforward to further reduce to the case that we even have $f \in \ker(\chi - T^{\varphi})$ for some local character $\chi \in \mathcal{D}_p(U)$ on $U$.\medskip
    
    Applying Theorem \ref{arzasc} once more, it suffices to prove that $f \circ \mathrm{E}_{\mathrm{u}}(q, \varphi)$ is precompact in $\mathrm{C}_q(q^{-1}(C))$. However, Lemma \ref{lemmaunifsemigrp} yields 
        \begin{align*}
            f \circ \mathrm{E}_{\mathrm{u}}(q, \varphi)  \subseteq \{z \cdot f|_{K_m} \mid z \in \T,\,  m \in C\},
        \end{align*}
    and the set on the right-hand side is compact in $\mathrm{C}_q(q^{-1}(C))$. This establishes that $\mathrm{s}|_{\mathrm{E}_{\mathrm{u}}(q, \varphi)}$ is proper.  It now follows from Lemma \ref{compactsubgrpbundle} that  $\mathrm{s}|_{\mathrm{E}_{\mathrm{u}}(q, \varphi)}$ is a proper subgroup bundle of $\mathrm{s}|_{\mathrm{Homeo}(q)}$.\medskip
        
    Lastly, we show that $\mathrm{E}_{\mathrm{u}}(q, \varphi)$ is abelian. So let $\vartheta, \vartheta' \in \mathrm{E}_{\mathrm{u}}(q, \varphi)$ with
$m \coloneqq\mathrm{s}(\vartheta) = \mathrm{s}(\vartheta')$, and show $\vartheta \circ \vartheta' = \vartheta' \circ \vartheta$. Since the continuous functions on $K_m$ separate points, it suffices to prove
        \begin{align*}
            f \circ \vartheta \circ \vartheta' = f \circ \vartheta' \circ \vartheta \quad \text{for all } \,
            f \in \mathrm{C}(K_{m}).
        \end{align*}
    Since $(q,\varphi)$ has relative discrete spectrum, we may restrict to the case that $f = g|_{K_{m}}$, where $g \in \mathrm{ker}(\chi - T^{\varphi})$ for some local character
$\chi \in \mathcal{D}_p(U)$ on an open neighborhood $U \subseteq M$ of $m$. But then we find, by Lemma \ref{lemmaunifsemigrp}, numbers $w,z \in \T$ with $f \circ \vartheta = w f$ and $f \circ \vartheta' = w' f$. This yields
    \begin{align*}
        f \circ \vartheta \circ \vartheta' &= (w f) \circ \vartheta'
        = w (f \circ \vartheta') = w w' f
        = w'w f = w' (f \circ \vartheta) 
        = (w' f) \circ \vartheta\\
        &= f \circ \vartheta' \circ \vartheta,
    \end{align*}
    as desired.
\end{proof}

Note that we have \emph{not} shown the uniform enveloping semigroup bundle of a topological dynamical system with relative discrete spectrum to be open. This will be done -- under the assumption of topological ergodicity -- in Theorem \ref{unifenvcomp2} below. As a prepration we first show the following result. For compact base spaces this is a consequence of  \cite[Proposition 5.9]{EdKr2021} (up to noticing that the proof there only uses compactness of the uniform enveloping semigroupoid, and not the stated assumption of a pseudoisometric action). 

\begin{proposition}\label{charergtransit}
    Let $(q, \varphi)$ be a topological dynamical system and write $q \colon K \rightarrow M$. If $\mathrm{s}|_{\mathrm{E}_{\mathrm{u}}(q, \varphi)} \colon \mathrm{E}_{\mathrm{u}}(q, \varphi) \rightarrow M$ is proper, then the following assertions are equivalent.
        \begin{enumerate}[(a)]
            \item $(q,\varphi)$ is relatively ergodic.
            \item $\mathrm{E}_{\mathrm{u}}(q, \varphi)_m$ acts transitively on $K_m$ for each $m \in M$.
        \end{enumerate}
\end{proposition}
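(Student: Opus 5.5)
The plan is to use the characterization of relative ergodicity via closed equivalence relations in Proposition \ref{chartoperg} (d). We introduce the relation
\begin{align*}
    R \coloneqq \{(x,\vartheta(x)) \mid \vartheta \in \mathrm{E}_{\mathrm{u}}(q,\varphi),\ x \in K_{\mathrm{s}(\vartheta)}\} \subseteq K \times_M K.
\end{align*}
Then (b) says precisely that $R = K \times_M K$. The argument therefore reduces to three facts. First, $R$ contains the orbit relation $\sim_\varphi$. Second, $R$ is contained in every closed equivalence relation containing $\sim_\varphi$. Third, when $\mathrm{s}|_{\mathrm{E}_{\mathrm{u}}(q,\varphi)}$ is proper, $R$ is itself a closed equivalence relation. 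Granting these, $R$ is the smallest closed equivalence relation containing $\sim_\varphi$, and Proposition \ref{chartoperg} gives (a) $\Leftrightarrow$ $R = K\times_M K$ $\Leftrightarrow$ (b).

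The first fact holds since $\varphi_t \in \mathrm{E}_{\mathrm{u}}(q,\varphi)$ for all $t \in G$.

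For the second fact, let $\sim$ be a closed equivalence relation containing $\sim_\varphi$. Consider
\begin{align*}
    S \coloneqq \{\vartheta \in \mathrm{C}(q) \mid x \sim \vartheta(x) \text{ for all } x \in K_{\mathrm{s}(\vartheta)}\}.
\end{align*}
It contains all $\varphi_t$. It is a subsemigroup bundle by transitivity of $\sim$. It is also closed in $\mathrm{C}(q)$: if $\vartheta_\alpha \to \vartheta$ and $x \in K_{\mathrm{s}(\vartheta)}$, openness of $q$ (Lemma \ref{charopen}) yields a subnet and points $x_\beta \in K_{\mathrm{s}(\vartheta_\beta)}$ with $x_\beta \to x$. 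Proposition \ref{netconv} then gives $\vartheta_\beta(x_\beta) \to \vartheta(x)$, and closedness of $\sim$ gives $x \sim \vartheta(x)$. Hence $\mathrm{E}_{\mathrm{u}}(q,\varphi) \subseteq S$, i.e.\ $R \subseteq\, \sim$.

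The third fact is where properness enters, and I expect it to be the main point. Reflexivity holds because the identity fibers lie in $\mathrm{E}_{\mathrm{u}}(q,\varphi)$. Indeed, $\varphi_{1_{G_m}} = \mathrm{id}_{K_m}$, and $p$ is surjective, so each $K_m$ is covered.

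For symmetry, properness and Lemma \ref{compactsubgrpbundle} show that $\mathrm{E}_{\mathrm{u}}(q,\varphi)$ is a subgroup bundle of $\mathrm{Homeo}(q)$. So if $y = \vartheta(x)$, then $x = \vartheta^{-1}(y)$ with $\vartheta^{-1} \in \mathrm{E}_{\mathrm{u}}(q,\varphi)$. Transitivity follows from the semigroup property, since $\vartheta'(\vartheta(x)) = (\vartheta'\circ\vartheta)(x)$.

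For closedness, let $(x_\alpha, \vartheta_\alpha(x_\alpha)) \to (x,y)$ in $K \times_M K$. Then $\mathrm{s}(\vartheta_\alpha) = q(x_\alpha) \to q(x)$. By properness of $\mathrm{s}|_{\mathrm{E}_{\mathrm{u}}(q,\varphi)}$ and Lemma \ref{charproper}, a subnet $\vartheta_\beta$ converges to some $\vartheta \in \mathrm{E}_{\mathrm{u}}(q,\varphi)$ with $\mathrm{s}(\vartheta) = q(x)$. Proposition \ref{netconv} then gives $\vartheta_\beta(x_\beta) \to \vartheta(x)$. Since $K$ is Hausdorff, $y = \vartheta(x)$, so $(x,y) \in R$.

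This settles the third fact, and combining the three facts with Proposition \ref{chartoperg} (d) completes the proof.
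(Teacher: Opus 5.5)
Your proof is correct and follows essentially the same route as the paper, which derives the statement from Lemma \ref{lemmafixorb}. There, the orbit relation of $\mathrm{E}_{\mathrm{u}}(q,\varphi)$ is shown to be a closed equivalence relation using properness and Lemma \ref{compactsubgrpbundle}, and the reverse inclusion comes from the same closed-subsemigroup-bundle argument, using openness of $q$, that you give. The only difference is that the paper compares directly with $\sim_{\fix}$, whereas you compare with an arbitrary closed equivalence relation containing $\sim_\varphi$ and invoke Proposition \ref{chartoperg}~(d); this amounts to the same thing.
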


Proposition \ref{charergtransit} follows directly from the subsequent lemma.

\begin{lemma} \label{lemmafixorb}
    Let $(q, \varphi)$ be a topological dynamical system and write $q \colon K \rightarrow M$. If $\mathrm{s}|_{\mathrm{E}_{\mathrm{u}}(q, \varphi)} \colon \mathrm{E}_{\mathrm{u}}(q, \varphi) \rightarrow M$ is proper, then the relation $\sim_{\fix}$ is the orbit relation defined by the action of the group bundle $\mathrm{s}|_{\mathrm{E}_{\mathrm{u}}(q, \varphi)} \colon \mathrm{E}_{\mathrm{u}}(q, \varphi) \rightarrow M$, i.e., 
        \begin{align*}
            \sim_{\fix} \,\, = \{(x,\vartheta(x))\mid \vartheta \in \mathrm{E}_{\mathrm{u}}(q, \varphi),\, x \in K_{\mathrm{s}(\vartheta)}\}.
        \end{align*}
\end{lemma}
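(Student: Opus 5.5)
Denote the right-hand side by $R \coloneqq \{(x,\vartheta(x))\mid \vartheta \in \mathrm{E}_{\mathrm{u}}(q, \varphi),\, x \in K_{\mathrm{s}(\vartheta)}\}$. The plan is to show that $R$ is a closed equivalence relation on $K$ which contains the orbit relation $\sim_{\varphi}$, and that it is contained in every closed equivalence relation containing $\sim_{\varphi}$. By the discussion after Definition \ref{deffixrel}, $\sim_{\fix}$ is the smallest closed equivalence relation on $K$ containing $\sim_{\varphi}$, so this yields $\sim_{\fix} = R$.

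\textbf{Step 1: $R$ is a closed equivalence relation containing $\sim_\varphi$.} Since $\mathrm{s}|_{\mathrm{E}_{\mathrm{u}}(q,\varphi)}$ is proper, Lemma \ref{compactsubgrpbundle} shows that $\mathrm{E}_{\mathrm{u}}(q,\varphi)$ is a subgroup bundle of $\mathrm{Homeo}(q)$. Each fiber is then a group of homeomorphisms of $K_m$. Reflexivity follows from the identities, symmetry from inverses, and transitivity from composition. The inclusion $\sim_{\varphi} \subseteq R$ holds because every $\varphi_t$ lies in $\mathrm{E}_{\mathrm{u}}(q,\varphi)$.

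For closedness, take a net $(x_\alpha,\vartheta_\alpha(x_\alpha)) \to (x,y)$. Then $\mathrm{s}(\vartheta_\alpha) = q(x_\alpha) \to q(x)$. Properness of $\mathrm{s}|_{\mathrm{E}_{\mathrm{u}}}$ together with Lemma \ref{charproper} lets us pass to a subnet with $\vartheta_\alpha \to \vartheta$ in $\mathrm{E}_{\mathrm{u}}(q,\varphi)$, which is closed in $\mathrm{C}(q)$. Proposition \ref{netconv} then gives $\vartheta_\alpha(x_\alpha) \to \vartheta(x)$. Since $K$ is Hausdorff, $y = \vartheta(x)$, so $(x,y) \in R$.

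\textbf{Step 2: minimality.} Let $\sim$ be any closed equivalence relation containing $\sim_\varphi$. Define
\begin{align*}
S \coloneqq \{\vartheta \in \mathrm{E}_{\mathrm{u}}(q,\varphi) \mid x \sim \vartheta(x) \textrm{ for all } x \in K_{\mathrm{s}(\vartheta)}\}.
\end{align*}
This set contains all $\varphi_t$, and it is a subsemigroup bundle by transitivity of $\sim$. It is closed by the same net argument: if $\vartheta_\alpha \to \vartheta$ and $x \in K_{\mathrm{s}(\vartheta)}$, openness of $q$ (Lemma \ref{charopen}) gives a subnet and points $x_\beta \in K_{\mathrm{s}(\vartheta_\beta)}$ with $x_\beta \to x$. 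Then $(x_\beta,\vartheta_\beta(x_\beta)) \to (x,\vartheta(x))$, and closedness of $\sim$ gives $x \sim \vartheta(x)$. By the definition of $\langle\cdot\rangle$ in Definition \ref{generatedbundle}, $S = \mathrm{E}_{\mathrm{u}}(q,\varphi)$, so $R \subseteq {\sim}$.

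\textbf{Main obstacle.} The delicate point is the closedness of $R$. It needs properness, and it needs the limit $\vartheta$ to stay in $\mathrm{E}_{\mathrm{u}}$; the latter holds because $\langle\cdot\rangle$ is closed by construction. Equality of compact equivalence classes is automatic, since $R \subseteq K \times_M K$ and $q$ is proper.
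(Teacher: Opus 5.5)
Your proof is correct and follows essentially the same route as the paper. You show that $R$ is a closed equivalence relation containing $\sim_{\varphi}$, so minimality gives $\sim_{\fix} \subseteq R$, and then use the closed subsemigroup bundle $S$ together with openness of $q$ to get $R \subseteq \sim_{\fix}$. The differences are cosmetic: you run the $S$-argument for an arbitrary closed equivalence relation containing $\sim_{\varphi}$ rather than for $\sim_{\fix}$ directly, and you write out the closedness of $R$, which the paper only calls readily checked.
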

\begin{proof}
    Write $\sim$ for the equivalence relation on the right-hand side. Using that the bundle $\mathrm{s}|_{\mathrm{E}_{\mathrm{u}}(q, \varphi)} \colon \mathrm{E}_{\mathrm{u}}(q, \varphi) \rightarrow M$ is a proper group bundle (see Proposition \ref{compactsubgrpbundle}), it is readily checked that $\sim$ is a closed equivalence relation. Since the orbit relation 
        \begin{align*}
            \{(x,\varphi_t(x))\mid t \in G, \, x \in K_{p(t)}\}
        \end{align*}
    is contained in $\sim$, we conclude that $\sim_{\fix}\, \, \subseteq \, \, \sim$. Now set
        \begin{align*}
            S \coloneqq \{\vartheta \in \mathrm{E}_{\mathrm{u}}(q, \varphi)\mid x \, \sim_{\fix} \, \vartheta(x) \textrm{ for every } x \in K_{\mathrm{s}(\vartheta)}\}.
        \end{align*}
    Then $S$ contains $\{\varphi_t \colon K_{p(t)} \rightarrow K_{p(t)}\mid t \in G\}$ and defines a subsemigroup bundle of $\mathrm{S}(q)$. We show that $S$ is also closed in $\mathrm{E}_{\mathrm{u}}(q, \varphi)$, which will imply $\sim \, \, \subseteq \, \, \sim_{\fix}$, and hence the claim. So let $(\vartheta_\alpha)_{\alpha}$ be a net in $S$ converging to some $\vartheta \in \mathrm{E}_{\mathrm{u}}(q,\varphi)$. Let further $x \in K_{\mathrm{s}(\vartheta)}$. Since $q$ is open we may assume, by passing to a subnet, that there is a net $(x_\alpha)_{\alpha}$ in $K$ with $x_{\alpha} \in K_{\mathrm{s}(\vartheta_{\alpha})}$ for each $\alpha$ which converges to $x$. But then $x_{\alpha} \, \sim_{\fix} \,  \vartheta_{\alpha}(x_{\alpha})$ for each $\alpha$, and hence also $x \, \sim_{\fix} \, \vartheta(x)$ since $\sim_{\fix}$ is closed.
\end{proof}

We are now ready to deduce the following strengthening of Proposition \ref{unifenvcomp} providing a characterization of relative discrete spectrum for relatively ergodic systems.

\begin{theorem}\label{unifenvcomp2}
    For a topological dynamical system $(q, \varphi)$ consider the following assertions.
        \begin{enumerate}[(a)]
            \item The map $\mathrm{s}|_{\mathrm{E}_{\mathrm{u}}(q, \varphi)}$ is an open proper abelian subgroup bundle of the homeomorphism group bundle $\mathrm{s}|_{\mathrm{Homeo}(q)} \colon \mathrm{Homeo}(q) \rightarrow M$. 
            \item The system $(q,\varphi)$ has relative discrete spectrum.
        \end{enumerate}
     Then \enquote{(a) $\Rightarrow$ (b)}. If $(q,\varphi)$ is relatively ergodic, then (a) and (b) are equivalent.
\end{theorem}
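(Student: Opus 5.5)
For \enquote{(a) $\Rightarrow$ (b)} I would mimic the rotation-system argument. Write $E \coloneqq \mathrm{E}_{\mathrm{u}}(q,\varphi)$ and $\mathrm{s}_E \coloneqq \mathrm{s}|_{E}$; by (a) this is an open proper Hausdorff abelian group bundle. The map $(\vartheta,x) \mapsto \vartheta(x)$ is an action of $\mathrm{s}_E$ on $q$, so $(\vartheta,f) \mapsto f\circ\vartheta^{-1}$ defines a bounded strongly continuous representation $S$ of $\mathrm{s}_E$ on $\mathrm{s}\colon \mathrm{C}_q(K)\rightarrow M$; continuity comes from Proposition \ref{netconv}. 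By Proposition \ref{comprepdiscsp}, $S$ has relative discrete spectrum.

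Next I would transport eigensections back to $p$. The map $c\colon G \rightarrow E$, $t\mapsto \varphi_t$, is a morphism of group bundles: continuity follows from Proposition \ref{netconv} and continuity of $\varphi$. Hence $(\mathrm{s}_E,c)$ is a group bundle compactification of $p$. If $\chi\in\mathcal{D}_{\mathrm{s}_E}(U)$ and $f\in\ker(\chi - S)$, then $f\in \ker(\chi\circ c|_{p^{-1}(U)} - T^{\varphi})$, because $T^{\varphi}_t = S_{c(t)}$. Therefore every vector that spans $\mathrm{C}(K_m)$ for $S$ is also an eigensection for $T^{\varphi}$, and $(q,\varphi)$ has relative discrete spectrum. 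This direction needs no ergodicity.

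For \enquote{(b) $\Rightarrow$ (a)} under relative ergodicity, Proposition \ref{unifenvcomp} already makes $\mathrm{s}_E$ a proper abelian subgroup bundle of $\mathrm{Homeo}(q)$, so only openness remains, and I expect this to be the main obstacle. By Proposition \ref{charergtransit}, each fiber $E_m$ acts transitively on $K_m$. The abelian group $E_m$ also acts freely: if $\vartheta(x)=x$, then for any $y=\eta(x)$ we get $\vartheta(y)=\eta\vartheta(x)=y$. So for each $m$ and each point $x_0\in K_m$, the evaluation $E_m\rightarrow K_m$, $\vartheta\mapsto\vartheta(x_0)$, is a bijection.

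To get openness I would use Lemma \ref{charopen}. Take $m_\alpha\rightarrow m$ and $\vartheta\in E_m$. I would first try to fix a point $x\in K_m$ and, using openness of $q$, pass to a subnet with $x_\alpha\in K_{m_\alpha}$ and $x_\alpha\rightarrow x$, and likewise $y_\alpha\rightarrow\vartheta(x)$. Transitivity then gives $\vartheta_\alpha\in E_{m_\alpha}$ with $\vartheta_\alpha(x_\alpha)=y_\alpha$. By properness of $\mathrm{s}_E$ (Lemma \ref{charproper}), a further subnet satisfies $\vartheta_\alpha\rightarrow\vartheta'\in E_m$. Continuity of evaluation gives $\vartheta'(x)=\vartheta(x)$, and freeness gives $\vartheta'=\vartheta$. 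This is exactly the approximation property in Lemma \ref{charopen}(b), so $\mathrm{s}_E$ is open.
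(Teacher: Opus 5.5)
Your proposal is correct and follows the paper's proof essentially step by step. For (a)$\Rightarrow$(b) you use the representation $S_\vartheta f = f\circ\vartheta^{-1}$, Proposition~\ref{comprepdiscsp} and the inclusion $\ker(\chi - S)\subseteq\ker(\chi\circ c - T^{\varphi})$; for (b)$\Rightarrow$(a) you reduce to openness and combine transitivity (Proposition~\ref{charergtransit}), properness and freeness of the abelian action. The only difference is that you spell out the freeness argument, which the paper just cites as a standard fact.
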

\begin{proof}
    Write $q\colon K \rightarrow M$ and assume first that (a) holds. Then
        \begin{align*}
            S \colon \mathrm{E}_{\mathrm{u}}(q,\varphi) \rightarrow \bigsqcup_{m \in M} \mathscr{L}(\mathrm{C}(K_m)), \quad \vartheta \mapsto S_{\vartheta}
        \end{align*}
    with $S_{\vartheta}f\coloneqq f \circ \vartheta^{-1}$ for $\vartheta \in \mathrm{E}_{\mathrm{u}}(q,\varphi)$ and $f \in \mathrm{C}(K_{\mathrm{s}(\vartheta)})$ is a bounded strongly continuous representation of $\mathrm{s}|_{\mathrm{E}_{\mathrm{u}}(q, \varphi)}$. Thus, it has relative discrete spectrum by Proposition \ref{comprepdiscsp}. If we write $c \colon G \rightarrow \mathrm{E}_{\mathrm{u}}(q,\varphi), \, t \mapsto \varphi_t$, then
        \begin{align*}
            \ker(\chi - S) \subseteq \ker(\chi \circ c - T^{\varphi})
        \end{align*}
    for each local character $\chi \in \mathcal{D}_{s|_{\mathrm{E}_{u}(q,\varphi)}}(U)$ on an open subset $U \subseteq M$. Thus, $(q,\varphi)$ has relative discrete spectrum as well.\medskip
    
    Now assume that $(q,\varphi)$ is relatively ergodic and that (b) holds. In view of Proposition \ref{unifenvcomp} and  Lemma \ref{abeliangenerator}, the only assertion left to verify is that the bundle $\mathrm{s}|_{\mathrm{E}_{\mathrm{u}}(q, \varphi)}$ is open. So let $\vartheta \in \mathrm{E}_{\mathrm{u}}(q,\varphi)$ and $(m_{\alpha})_{\alpha}$ be a net in $M$ converging to $m \coloneqq \mathrm{s}(\vartheta)$. Pick $x \in K_m$ and set $y \coloneqq \vartheta(x)$. Since $q$ is open, we find a subnet $(m_{\beta})_{\beta}$ of $(m_{\alpha})_{\alpha}$ and for each $\beta$ some $x_{\beta} \in K_{m_\beta}$ such that $\lim_{\beta} x_{\beta} = x$. Passing to a subnet once again, we may further assume that there are $y_{\beta} \in K_{m_{\beta}}$ for each $\beta$ such that $\lim_{\beta} y_{\beta} = y$. By Proposition \ref{charergtransit} we find for each $\beta$ some $\vartheta_{\beta} \in \mathrm{E}_{\mathrm{u}}(q,\varphi)_{m_{\beta}}$ with $\vartheta_{\beta}(x_{\beta}) = y_{\beta}$. Using that $\mathrm{s}|_{\mathrm{E}_{\mathrm{u}}(q, \varphi)}$ is proper, we may pass to a subnet for a final time to also assume that the limit $\vartheta' \coloneqq \lim_{\beta} \vartheta_{\beta}$ exists in $\mathrm{E}_{\mathrm{u}}(q,\varphi)$ (see Lemma \ref{charproper}). But then
        \begin{align*}
            \vartheta(x) = y = \lim_{\beta} y_{\beta} = \lim_{\beta}  \vartheta_{\beta}(x_{\beta}) = \vartheta'(x).
        \end{align*}
    Since transitive and effective actions of abelian groups are free, we conclude that $\vartheta' = \vartheta$, and hence $\vartheta = \lim_{\beta} \vartheta_{\beta}$. This shows that $\mathrm{s}|_{\mathrm{E}_{\mathrm{u}}(q, \varphi)} \colon \mathrm{E}_{\mathrm{u}}(q, \varphi)  \rightarrow M$ is open.
\end{proof}

Note that the assumption of being abelian in the implication \enquote{(a) $\Rightarrow$ (b)} of Theorem \ref{unifenvcomp2} is actually redundant (see Lemma \ref{abeliangenerator}). However, the condition of being open is not:

\begin{example}\label{examplepseudoisometric}
    Equip the symmetric group $\mathrm{Sym}(4)$ of the four-element set $\{1,2,3,4\}$ with the discrete topology. We further pick two non-commuting $4$-cyclic permutations $\zeta,\zeta' \in \mathrm{Sym}(4)$, e.g., $\zeta = (1\, 2\, 3\, 4)$ and $\zeta' = (1\, 2\, 4\, 3)$. Assume that $p$ is the open abelian subgroup bundle of the trivial group bundle $\mathrm{triv}_{[-1,1]}^{\mathrm{Sym}(4)}$ given by
        \begin{align*}
            G_m = \begin{cases} \{m\} \times  \{(\zeta)^j\mid j \in \{0,1,2,3\}\} & \textrm{ for } m \in [-1,0),\\
            \{m\} \times  \{\mathrm{id}\} & \textrm{ for } m = 0,\\
            \{m\} \times \{(\zeta')^j\mid j \in \{0,1,2,3\}\} & \textrm{ for } m \in (0,1].\\
            \end{cases}
        \end{align*}
    Now consider the trivial bundle $q = \mathrm{triv}_{[-1,1]}^{\{1,2,3,4\}}$ of the discrete space $\{1,2,3,4\}$ over $[-1,1]$.  We then obtain a topological dynamical system $(q,\varphi)$ over $p$ be restricting the natural action of $\mathrm{triv}_{[-1,1]}^{\mathrm{Sym}(4)}$ on $q = \mathrm{triv}_{[-1,1]}^{\{1,2,3,4\}}$ to $p$, i.e.,
        \begin{align*}
            \varphi_{(m,\xi)}(m,x) = (m,\xi(x)) \quad \textrm{ for } \, m \in [-1,1],\, x \in \{1,2,3,4\}, \, \textrm{ and } \,  (m,\xi) \in G_m.
        \end{align*}    
        
    Now let $\langle \{\zeta,\zeta'\}\rangle$ be the (non-abelian) subgroup of $\mathrm{Sym}(4)$ generated by the subset $\{\zeta,\zeta'\}$, and consider the compact subgroup bundle $r \colon H \rightarrow [-1,1]$ of $\mathrm{triv}_{[-1,1]}^{\mathrm{Sym}(4)}$ given by
        \begin{align*}
            H_m = \begin{cases} \{m\} \times  \{(\zeta)^j\mid j \in \{0,1,2,3\}\} & \textrm{ for } m \in [-1,0),\\
            \{m\} \times  \langle \{\zeta,\zeta'\}\rangle & \textrm{ for } m = 0,\\
            \{m\} \times \{(\zeta')^j\mid j \in \{0,1,2,3\}\} & \textrm{ for } m \in (0,1].\\
            \end{cases}
        \end{align*}
    Then  $\Phi \colon H \rightarrow \mathrm{E}_{\mathrm{u}}(q,\varphi)$ with $(\Phi(m,\xi))(m,x) = (m,\xi(x))$ for $m \in [-1,1]$, $x \in \{1,2,3,4\}$ and $(m,\xi) \in H_m$ defines an isomorphism between the topological (semi)group bundles $r$ and $\mathrm{s}|_{\mathrm{E}_{\mathrm{u}}(q,\varphi)}$. In particular, we see that $\mathrm{s}|_{\mathrm{E}_{\mathrm{u}}(q, \varphi)}$ is a proper subgroup bundle of the homeomorphism group bundle $\mathrm{s}|_{\mathrm{Homeo}(q)} \colon \mathrm{Homeo}(q) \rightarrow M$. Moreover, the system $(q,\varphi)$ is relatively ergodic by Proposition \ref{charergtransit}. However, it does not have relative discrete spectrum, since the group bundle $\mathrm{s}|_{\mathrm{E}_{\mathrm{u}}(q, \varphi)}$ is not abelian (and consequently not open either).
\end{example}

\begin{remark}
    Recall that in the group case (i.e., if $M= \{\mathrm{pt}\}$ is a singleton space) discrete spectrum is equivalent to two classical notions of structuredness. In fact, for a topological topological dynamical system $(K,\varphi)$ over a topological abelian group $G$ the following assertions are equivalent (see \cite[Theorem 1.11]{kreidler-hermle}).
        \begin{enumerate}[(a)]
            \item $(K,\varphi)$ has discrete spectrum.
            \item $(K,\varphi)$ is \textbf{pseudoisometric}, i.e., there exists a family of invariant pseudometrics on $K$ generating the topology.
            \item $(K,\varphi)$ is \textbf{equicontinuous}, i.e.,  $\{\varphi_t\mid t \in G\}$ is a uniformly equicontinuous subset of $\mathrm{C}(K,K)$.
        \end{enumerate}
     With suitable generalizations of these notions to \enquote{groupoid actions with compact unit space}, \cite[Theorems 3.27 and 4.14]{EdKr2021} show that the equivalence of (a) and (b) still holds true in the framework of actions of \enquote{topologically ergodic} groupoids. On the other hand, the analogue of (c) is strictly weaker in this situation (see \cite[Proposition 1.17 and Example 3.15]{EdKr2021}).\medskip
     
     Now consider a relatively ergodic system $(q,\varphi)$ over an open group bundle $p \colon G \rightarrow M$ with compact base space $M$. If $(q,\varphi)$ has relative discrete spectrum, then Theorem \ref{unifenvcomp2} combined with the proof method of \cite[Proposition 3.17]{EdKr2021} yields that $(q,\varphi)$ is pseudoisometric in the sense of \cite[Definition 1.15]{EdKr2021}. On the other hand, one can readily check that Example \ref{examplepseudoisometric} is pseudoisometric (and, by \cite[Proposition 1.17]{EdKr2021}, also equicontinuous), and yet does \emph{not} have relative discrete spectrum. Thus, in our framework, the concept of discrete spectrum (over a compact base space) is stronger than the above classical notions of structuredness from topological dynamics.
\end{remark}

In the last part of this subsection, we note that Theorem \ref{unifenvcomp2} allows us to assign a group compactification to each relatively ergodic system with relative discrete spectrum. In fact, this construction is even functorial (see also \cite[Lemma 3.19]{EdKr2021} for a related result in the groupoid setting):

\begin{proposition}\label{compfunctor}
    \begin{enumerate}[(i)]
        \item If $(q, \varphi)$ is a relatively ergodic system with relative discrete spectrum, then the pair $(\mathrm{s}|_{\mathrm{E}_{\mathrm{u}}(q, \varphi)}, i_{(q, \varphi)})$ with
            \begin{align*}
                i_{(q, \varphi)} \colon G \rightarrow \mathrm{E}_{\mathrm{u}}(q, \varphi), \quad
t \mapsto \varphi_t
            \end{align*}
        is a group bundle compactification of $p$.
        \item Let $\Phi \colon (q, \varphi) \rightarrow (q', \varphi')$ be a morphism between relatively ergodic systems with relative discrete spectrum, and write $q \colon K \rightarrow M$. Then $\Phi$ is surjective and we have a morphism of group bundle compactifications
            \begin{align*}
               \Phi^{\sharp} \colon (\mathrm{s}|_{\mathrm{E}_{\mathrm{u}}(q, \varphi)}, i_{(q, \varphi)}) \rightarrow (\mathrm{s}|_{\mathrm{E}_{\mathrm{u}}(q', \varphi')}, i_{(q', \varphi')}), \quad \vartheta \mapsto \Phi^{\sharp}(\vartheta),
            \end{align*}
        where $\Phi^{\sharp}(\vartheta)(\Phi(x)) \coloneqq \Phi(\vartheta(x))$ for every 
$\vartheta \in \mathrm{E}_{\mathrm{u}}(q, \varphi)$ and $x \in K_{\mathrm{s}(\vartheta)}$.
    \end{enumerate}
\end{proposition}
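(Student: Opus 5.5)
For part (i), I will combine Theorem \ref{unifenvcomp2} with the definitions. By that theorem, $\mathrm{s}|_{\mathrm{E}_{\mathrm{u}}(q,\varphi)}$ is an open proper abelian subgroup bundle of $\mathrm{Homeo}(q)$. It is Hausdorff by Corollary \ref{corhausdorff}, since $q$ is open and locally compact. The map $i_{(q,\varphi)}$ lands in $\mathrm{E}_{\mathrm{u}}(q,\varphi)$ by definition, and the action axioms give fiberwise homomorphy. For continuity I will use Proposition \ref{netconv}: if $t_\alpha\to t$ and $x_\beta\to x$ with $x_\beta\in K_{p(t_\beta)}$, then $\varphi_{t_\beta}(x_\beta)\to\varphi_t(x)$ because $\varphi$ is continuous on $G\times_M K$. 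Finally, $\mathrm{E}_{\mathrm{u}}(q,\varphi)$ is by definition generated by $i_{(q,\varphi)}(G)$, so the pair is a group bundle compactification.

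For part (ii), surjectivity of $\Phi$ comes first. By Proposition \ref{properclosed}, $\Phi(K)$ is closed. It is also invariant and meets each fiber $K'_m$, since $\Phi$ is a bundle morphism over $M$ and $K_m\neq\emptyset$. By Proposition \ref{charergtransit}, relative ergodicity together with properness of $\mathrm{E}_{\mathrm{u}}(q',\varphi')$ means that $\mathrm{E}_{\mathrm{u}}(q',\varphi')_m$ acts transitively on $K'_m$. So it suffices to show that $\Phi(K)$ is invariant under $\mathrm{E}_{\mathrm{u}}(q',\varphi')$. I will prove this with the usual closure argument. Let $S'$ be the set of $\vartheta'\in\mathrm{E}_{\mathrm{u}}(q',\varphi')$ with $\vartheta'(\Phi(K)_{\mathrm{s}(\vartheta')})\subseteq\Phi(K)$. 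Then $S'$ contains every $\varphi'_t$ and is a subsemigroup bundle. To see that it is closed, take $\vartheta'_\alpha\to\vartheta'$ and $y=\Phi(x)$; openness of $q$ lets me lift $x$ along a subnet to points $x_\beta\in K_{\mathrm{s}(\vartheta'_\beta)}$ with $x_\beta\to x$, and then $\vartheta'_\beta(\Phi(x_\beta))\to\vartheta'(y)$ with each term in the closed set $\Phi(K)$.

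Next I will show that $\Phi^\sharp$ is well defined, i.e.\ that $\Phi(x_1)=\Phi(x_2)$ implies $\Phi(\vartheta(x_1))=\Phi(\vartheta(x_2))$. Consider the set $S$ of those $\vartheta\in\mathrm{E}_{\mathrm{u}}(q,\varphi)$ satisfying both of the following:
\begin{enumerate}[(1)]
\item $\vartheta$ preserves the relation $\ker\Phi=\{(x_1,x_2)\mid \Phi(x_1)=\Phi(x_2)\}$;
\item the induced map $\Phi^\sharp(\vartheta)$ lies in $\mathrm{E}_{\mathrm{u}}(q',\varphi')$.
\end{enumerate}
This $S$ contains all $\varphi_t$, since $\Phi^\sharp(\varphi_t)=\varphi'_t$, and it is a subsemigroup bundle. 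Closedness is the main obstacle. Take $\vartheta_\alpha\to\vartheta$ with $\vartheta_\alpha\in S$. Because $\mathrm{E}_{\mathrm{u}}(q',\varphi')$ is proper, a subnet $\Phi^\sharp(\vartheta_\beta)$ converges to some $\vartheta'$. For $x\in K_{\mathrm{s}(\vartheta)}$, lift $x$ to $x_\beta\to x$ using openness of $q$. Then $\Phi(\vartheta_\beta(x_\beta))=\Phi^\sharp(\vartheta_\beta)(\Phi(x_\beta))$, and passing to the limit via Proposition \ref{netconv} gives $\Phi(\vartheta(x))=\vartheta'(\Phi(x))$. This identity yields (1) and (2) for $\vartheta$ at once.

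Hence $S=\mathrm{E}_{\mathrm{u}}(q,\varphi)$ and $\Phi^\sharp$ is well defined. Uniqueness of the limit uses surjectivity of $\Phi_m$ together with Hausdorffness of $\mathrm{E}_{\mathrm{u}}(q',\varphi')$. Continuity of $\Phi^\sharp$ follows from the closed graph criterion, Proposition \ref{lemmaclosedgraph}: the target is proper, and the limit argument just given shows that the graph is closed. The homomorphism property is immediate, and $\Phi^\sharp\circ i_{(q,\varphi)}=i_{(q',\varphi')}$ holds by construction, so $\Phi^\sharp$ is a morphism of group bundle compactifications.
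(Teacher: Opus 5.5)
Your proof is correct and follows the paper's structure: closure arguments over the subsemigroup bundles generated by the $\varphi_t$, transitivity from Proposition \ref{charergtransit} for surjectivity, and the closed graph criterion of Proposition \ref{lemmaclosedgraph} for continuity of $\Phi^{\sharp}$; your set $S$ is essentially the paper's. The one deviation is the surjectivity step. The paper first proves the stronger lifting statement that every $\vartheta' \in \mathrm{E}_{\mathrm{u}}(q',\varphi')$ is intertwined via $\Phi$ with some $\vartheta \in \mathrm{E}_{\mathrm{u}}(q,\varphi)$, which uses properness of $\mathrm{E}_{\mathrm{u}}(q,\varphi)$. You only show that the closed set $\Phi(K)$ is $\mathrm{E}_{\mathrm{u}}(q',\varphi')$-invariant, which is slightly leaner and needs only Proposition \ref{properclosed} and openness of $q$.
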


\begin{proof}
    Part (i) is a consequence of Theorem \ref{unifenvcomp2}. To show part (ii), write $q' \colon K'\rightarrow M$, and first consider the set
        \begin{align*}
            S' \coloneqq \{\vartheta' \in \mathrm{E}_{\mathrm{u}}(q', \varphi')\mid \exists \, \vartheta \in  \mathrm{E}_{\mathrm{u}}(q, \varphi)_{\mathrm{s}(\vartheta')} \textrm{ with } \vartheta'(\Phi(x)) = \Phi(\vartheta(x))  \textrm{ for every } x \in K_{\mathrm{s}(\vartheta')}\}.
        \end{align*}
    It is clear that $S'$ contains the set $\{\varphi_t'\colon K_{p(t)} \rightarrow K_{p(t)} \mid t \in G\}$ and that $\mathrm{s}|_{S'}$ defines a subsemigroup bundle of $\mathrm{s}|_{\mathrm{E}_{\mathrm{u}}(q', \varphi')}$. We show that $S$ is closed in $\mathrm{E}_{\mathrm{u}}(q', \varphi')$. So let $(\vartheta'_{\alpha})_{\alpha}$ be a net in $S$ converging to some $\vartheta' \in \mathrm{E}_{\mathrm{u}}(q', \varphi')$. For each $\alpha$ choose $\vartheta_{\alpha} \in \mathrm{E}_{\mathrm{u}}(q, \varphi)_{\mathrm{s}(\vartheta'_{\alpha})}$ with $\vartheta'_{\alpha}(\Phi(x)) = \Phi(\vartheta_{\alpha}(x))$ for each $x \in K_{\mathrm{s}(\vartheta'_{\alpha})}$. Since $\mathrm{s}|_{\mathrm{E}_{\mathrm{u}}(q, \varphi)}$ is proper by Theorem \ref{unifenvcomp2} we may assume, by passing to a subnet, that the limit $\vartheta = \lim_{\alpha} \vartheta_{\alpha}$ exists in $\mathrm{E}_{\mathrm{u}}(q, \varphi)$ (see Lemma \ref{charproper}), and clearly $\mathrm{s}(\vartheta) = \mathrm{s}(\vartheta')$. Now pick $x \in K_{\mathrm{s}(\vartheta')}$. Since $q$ is open, we find a subnet $(\vartheta'_{\beta})_{\beta}$ of $(\vartheta'_{\alpha})_{\alpha}$ and elements $x_{\beta} \in K_{\mathrm{s}(\vartheta'_{\beta})}$ for each $\beta$ such that $x = \lim_{\beta} x_{\beta}$. But then 
        \begin{align*}
            \vartheta'(\Phi(x)) = \lim_{\beta} \vartheta'_{\beta}(\Phi(x_{\beta})) = \lim_{\beta} \Phi(\vartheta_{\beta}(x_{\beta}))  = \Phi(\vartheta(x)) 
        \end{align*}
    since $\Phi$ is continuous. Hence $S$ is closed, and therefore $S' = \mathrm{E}_{\mathrm{u}}(q', \varphi')$.\medskip

    We now obtain that $\Phi$ is surjective: If $y \in K'$, choose $x \in K_{q'(y)}$. By Proposition \ref{charergtransit} we find $\vartheta' \in \mathrm{E}_{\mathrm{u}}(q', \varphi')_{q'(y)}$ with $\vartheta'(\Phi(x)) = y$. But then, since  $S = \mathrm{E}_{\mathrm{u}}(q', \varphi')$, we find $\vartheta \in \mathrm{E}_{\mathrm{u}}(q, \varphi)_{q'(y)}$ with $\vartheta'(\Phi(x)) = \Phi(\vartheta(x))$, and hence $y = \Phi(\vartheta(x)) \in \mathrm{im}(\Phi)$.\medskip

    Now apply a similar argument as above to see that the set
        \begin{align*}
            S \coloneqq \{\vartheta \in \mathrm{E}_{\mathrm{u}}(q, \varphi)\mid \exists \, \vartheta' \in  \mathrm{E}_{\mathrm{u}}(q', \varphi')_{\mathrm{s}(\vartheta)} \textrm{ with } \vartheta'(\Phi(x)) = \Phi(\vartheta(x))  \textrm{ for every } x \in K_{\mathrm{s}(\vartheta)}\}
        \end{align*}
    is $\mathrm{E}_{\mathrm{u}}(q, \varphi)$. But, since $\Phi$ is surjective, this simply means that for each $\vartheta \in \mathrm{E}_{\mathrm{u}}(q, \varphi)$ the map $\Phi^{\sharp}(\vartheta) \colon K_{\mathrm{s}(\vartheta)}' \rightarrow K_{\mathrm{s}(\vartheta)}'$ with $\vartheta'(\Phi(x)) = \Phi(\vartheta(x))$  for each $x \in K_{\mathrm{s}(\vartheta')}$ is well-defined and an element of $\mathrm{E}_{\mathrm{u}}(q', \varphi')_{\mathrm{s}(\vartheta)}$.\medskip

    To check that $\Phi^{\sharp}$ is continuous, it suffices by Proposition \ref{lemmaclosedgraph} to prove that its graph is a closed subset of $\mathrm{E}_{\mathrm{u}}(q, \varphi) \times_{M} \mathrm{E}_{\mathrm{u}}(q', \varphi')$. Let $(\vartheta_{\alpha})_{\alpha}$ be a net in $\mathrm{E}_{\mathrm{u}}(q, \varphi) \times_{M} \mathrm{E}_{\mathrm{u}}(q', \varphi')$ such that $\lim_{\alpha} \vartheta_{\alpha} = \vartheta \in \mathrm{E}_{\mathrm{u}}(q, \varphi)$ and $\lim_{\alpha} \Phi^{\sharp}(\vartheta_{\alpha}) = \vartheta' \in \mathrm{E}_{\mathrm{u}}(q', \varphi')$. For $x \in K_{\mathrm{s}(\vartheta)}$ take a subnet $(\vartheta_{\beta})_{\beta}$ of $(\vartheta_{\alpha})_{\alpha}$ and elements $x_{\beta} \in K_{\mathrm{s}(\vartheta_\beta)}$ with $\lim_{\beta} x_{\beta} = x$. Then
        \begin{align*}
            \Phi^\sharp(\vartheta)(\Phi(x)) = \Phi(\vartheta(x)) = \lim_{\beta} \Phi(\vartheta_{\beta}{\beta}(x_{\beta})) = \lim_{\beta} \Phi^\sharp(\vartheta_{\beta})(\Phi(x_{\beta})) = \vartheta'(\Phi(x)).
        \end{align*}
    Since  $x \in K_{\mathrm{s}(\vartheta)}$ was arbitrary, this shows $\vartheta' = \Phi^{\sharp}(\vartheta)$ as desired. Thus  $\Phi^{\sharp}$ is continuous, and it is now easy to check that it defines a morphism of group bundle compactifications.
\end{proof}

%Under these assumptions, \cite[Theorem 7.2]{EdKr2021} shows that having the relative analogue of discrete spectrum is equivalent to being a pseudoisometric extension (or, if the system $(K,\varphi)$ is minimal, to being an equicontinuous extension, see \cite[Corollary 5.10]{deVr1993})). Thus, classical and important concepts of structuredness for extensions $q \colon (K,\varphi) \rightarrow (M,\psi)$ in topological dynamics can be encoded in a natural way in terms of the induced Koopman representations $T^{\varphi}$ and $T^{\psi}$ \emph{if the base system $(M,\psi)$ is topologically ergodic}.

%It turns out that a topological dynamical system with discrete spectrum can equivalently be characterized as \emph{equicontinuous} or \emph{pseudoisometric} systems (see, e.g., \cite[Section 1.2]{kreidler-hermle}).

\subsection{A Halmos--von Neumann Theorem for Abelian Group Bundles}\label{hvnsec}

Recall from the introduction that the representation aspect of the classical Halmos--von Neumann theorem for a topological abelian group states that each ergodic (or equivalently: minimal) system with discrete spectrum is isomorphic to a rotation system defined by a group compactification.\medskip

However, the straightforward extension of this assertion to the group bundle framework does not hold. In fact, for any rotation system $(q,\varphi_c)$ defined by a group compactification $(q,\varphi_c)$ of $p$ the the bundle $q \colon H \rightarrow M$ has the global section $M \rightarrow H, \, m \mapsto 1_{H_m}$. But -- as shown by the following simple example (see  \cite[Example 2.10]{edeko}) -- a general relatively ergodic system $(q,\varphi)$ with relative discrete spectrum does \emph{not} have to admit any global section.

\begin{example}
    Assume that $p = \mathrm{triv}_\T^{\Z} \colon \T \times \Z \rightarrow \T$ is the trivial bundle over $M= \T$ with fiber $\Z$. The open compact bundle $q \colon \T \rightarrow \T, \, z \mapsto z^2$ does not have a global section (cf. \cite[Example 2.6]{edeko}). Now consider the topological dynamical system $(q,\varphi)$ defined by the map
        \begin{align*}
            \varphi \colon (\T \times \Z) \times_{p,q} \T \rightarrow \T, \quad ((w,k),z) \mapsto (-1)^kz.
        \end{align*}
    Then
        \begin{align*}
            \T \times \Z/2\Z \rightarrow \mathrm{E}_{\mathrm{u}}(q,\varphi), \quad (w,[k]_{2\Z}) \mapsto \psi_{(w,[k]_{2\Z})},
        \end{align*}
    where $\psi_{(w,[k]_{2\Z})}(z) = (-1)^kz$ for $z \in q^{-1}(\{w\})$ and $(w,[k]_{2\Z}) \in \T \times \Z/2\Z$, defines an isomorphism of topological group bundles $\mathrm{triv}_{\T}^{\Z/2\Z} \rightarrow \mathrm{s}|_{\mathrm{E}_{\mathrm{u}}(q,\varphi)}$. In particular, the bundle $\mathrm{s}|_{\mathrm{E}_{\mathrm{u}}(q,\varphi)}$ is an open proper abelian subgroup bundle of the homeomorphism group bundle $\mathrm{s}|_{\mathrm{Homeo}(q)} \colon \mathrm{Homeo}(q) \rightarrow M$, and its fiber groups act transitively. By Proposition \ref{charergtransit} and Theorem \ref{unifenvcomp2} the system $(q,\varphi)$ is therefore relatively ergodic and has relative discrete spectrum.
\end{example}

In the following, we therefore focus on relatively ergodic systems with relative discrete spectrum which also have a global section. The next definition introduces the relative version of the \enquote{category of pointed minimal system with discrete spectrum} from \cite[Definition 4.10]{kreidler-hermle}.

\begin{definition}\label{defpointed}
    The \textbf{category of relatively ergodic systems with relative discrete spectrum and distinguished section (over $p$)}, denoted by $\mathbf{ErgDisc}_{\mathrm{sec}}(p)$, has  
        \begin{enumerate}[(i)]
            \item all triples $(q,\varphi,\tau)$ as objects, where $(q,\varphi)$ is a relatively ergodic system with relative discrete spectrum and $\tau$ is a global section of $q$, and 
            \item morphisms $\Phi \colon (q_1,\varphi_1) \rightarrow (q_2,\varphi_2)$ of topological dynamical systems with $\Phi \circ \tau_1 = \tau_2$ as morphisms $\Phi \colon  (q_1,\varphi_1,\tau_1) \rightarrow (q_2,\varphi_2,\tau_2)$ between such triples.
        \end{enumerate}
\end{definition}

Group bundle compactifications give rise to relatively ergodic systems with relative discrete spectrum and distinguished section in a natural and functorial way.

\begin{definition} \label{rotfunct}
    Define a functor $\mathrm{Rot}\colon \mathbf{Comp}(p) \rightarrow \mathbf{ErgDisc}_{\mathrm{sec}}(p)$ by setting
        \begin{enumerate}[(i)]
            \item $\mathrm{Rot}(q,c) \coloneqq (q, \varphi_c, 1)$ for every group bundle compactification $(q, c)$ of $p$ (where $1$ denotes the section of neutral elements of $q$), and 
            \item $\mathrm{Rot}(\Phi) \coloneqq \Phi$ for each morphism $\Phi \colon (q, c) \rightarrow (q', c')$ of group bundle compactifications.
        \end{enumerate}
    We call this the \textbf{rotation functor}.
\end{definition}

Proposition \ref{compfunctor} allows us to construct a functor in the converse direction.

\begin{definition}
    Define a functor $\mathrm{Env} \colon \mathbf{ErgDisc}_{\mathrm{sec}}(p) \rightarrow \mathbf{Comp}(p)$ by setting
        \begin{enumerate}[(i)]
            \item $\mathrm{Env}(q, \varphi,\tau) \coloneqq (\mathrm{s}|_{\mathrm{E}_{\mathrm{u}}(q, \varphi)}, i_{(q, \varphi)})$ for each object
$(q, \varphi, \tau)$ in $\mathbf{ErgDisc}_{\mathrm{sec}}(p)$, and
            \item $\mathrm{Env}(\Phi) \coloneqq \Phi^{\sharp}$ for every morphism $\Phi \colon (q, \varphi, \tau) \rightarrow (q', \varphi', \tau')$ in $\mathbf{ErgDisc}_{\mathrm{sec}}(p)$.
        \end{enumerate}
    We call this the \textbf{enveloping group bundle functor}.
\end{definition}

Our goal now is to establish that the above two functors are essentially inverse to each other. We  start with the following result. 

\begin{proposition}\label{natisocomprot1}
    For every relatively ergodic system with relative discrete spectrum and distinguished section $(q, \varphi, \tau)$ we have an isomorphism
        \begin{align*}
            \gamma_{(q, \varphi, \tau)} \colon \mathrm{Rot}(\mathrm{Env}(q, \varphi, \tau)) \rightarrow (q, \varphi, \tau), \quad \vartheta \mapsto \vartheta(\tau(\mathrm{s}(\vartheta))).
        \end{align*}
    Furthermore, these isomorphisms define a natural isomorphism
        \begin{align*}
            \gamma \colon \mathrm{Rot} \circ \mathrm{Env} \rightarrow \mathrm{Id}_{\mathbf{ErgDisc}_{\mathrm{sec}}(p)}.
        \end{align*}
\end{proposition}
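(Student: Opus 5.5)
We write $E \coloneqq \mathrm{E}_{\mathrm{u}}(q,\varphi)$ and $\gamma \coloneqq \gamma_{(q,\varphi,\tau)}$, and check first that $\gamma$ is a morphism in $\mathbf{ErgDisc}_{\mathrm{sec}}(p)$. The object $\mathrm{Rot}(\mathrm{Env}(q,\varphi,\tau))$ is the triple $(\mathrm{s}|_{E},\varphi_{i_{(q,\varphi)}},1)$. By Theorem \ref{unifenvcomp2} and Proposition \ref{compfunctor} (i) this is an open proper Hausdorff group bundle compactification; the Hausdorff property comes from Corollary \ref{corhausdorff}.

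For continuity, $\gamma$ is the composition of $\vartheta \mapsto (\vartheta,\tau(\mathrm{s}(\vartheta)))$ with the evaluation map $\mathrm{C}(q)\times_M K \to K$. The evaluation map is continuous by Proposition \ref{netconv}, since $q$ is locally compact. The map $\gamma$ commutes with the projections to $M$. It also sends the neutral section to $\tau$, because $\gamma(\mathrm{id}_{K_m}) = \tau(m)$. Equivariance means $\gamma(\varphi_t\circ\vartheta) = \varphi_t(\gamma(\vartheta))$, and this holds directly from the definition of $\varphi_{i_{(q,\varphi)}}$.

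Next I would show that $\gamma$ is bijective. For surjectivity, take $y \in K_m$. By relative ergodicity and Proposition \ref{charergtransit} (applicable since $\mathrm{s}|_E$ is proper by Proposition \ref{unifenvcomp}), $E_m$ acts transitively on $K_m$. Hence some $\vartheta\in E_m$ satisfies $\vartheta(\tau(m)) = y$. For injectivity, suppose $\vartheta(\tau(m)) = \vartheta'(\tau(m))$. Then $\vartheta'^{-1}\vartheta$ fixes $\tau(m)$. Since $E_m$ is an abelian group acting transitively and effectively (its elements are genuine homeomorphisms of $K_m$), it acts freely. This is the same argument used at the end of the proof of Theorem \ref{unifenvcomp2}, and it gives $\vartheta=\vartheta'$.

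Since $\mathrm{s}|_E$ is proper and $q$ is Hausdorff, Corollary \ref{automaticcontprop} upgrades the bijective morphism $\gamma$ to a homeomorphism. Its inverse automatically intertwines the actions and the sections, so $\gamma$ is an isomorphism in $\mathbf{ErgDisc}_{\mathrm{sec}}(p)$.

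For naturality, let $\Phi\colon (q,\varphi,\tau)\to(q',\varphi',\tau')$ be a morphism. We need $\Phi(\vartheta(\tau(m))) = \Phi^\sharp(\vartheta)(\tau'(m))$. Since $\tau' = \Phi\circ\tau$, this is exactly the defining identity $\Phi^\sharp(\vartheta)(\Phi(x)) = \Phi(\vartheta(x))$ from Proposition \ref{compfunctor} (ii), applied with $x=\tau(m)$.

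The main obstacle is the bijectivity step. It relies on freeness of the fiberwise action, which uses commutativity from Proposition \ref{unifenvcomp} together with transitivity from Proposition \ref{charergtransit}. The rest of the argument is routine verification.
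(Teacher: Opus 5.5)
Your proposal is correct and follows the same approach as the paper. You check that $\gamma_{(q,\varphi,\tau)}$ is a morphism, get bijectivity from transitivity (Proposition \ref{charergtransit}) together with freeness of the effective transitive action of the abelian fiber groups, upgrade to an isomorphism via Corollary \ref{automaticcontprop}, and read off naturality from the defining identity of $\Phi^\sharp$ — exactly the paper's argument, with the routine verifications it leaves to the reader spelled out.
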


\begin{proof}
    Let $(q, \varphi, \tau)$ be a relatively ergodic system with relative discrete spectrum and distinguished section. It is straightforward to show that $\gamma_{(q, \varphi, \tau)}$ defines a morphism in $\mathbf{ErgDisc}_{\mathrm{sec}}(p)$. Moreover, for a given $m \in M$, the fiber group $\mathrm{E}_{\mathrm{u}}(q, \varphi)_m$ acts effectively and, by Proposition \ref{charergtransit}, also transitively on $K_m$ for each $m \in M$. Since $\mathrm{E}_{\mathrm{u}}(q, \varphi)_m$ is abelian, this yields that the action is also free. This implies that $\gamma_{(q, \varphi, \tau)}$ is a bijection, and hence an isomorphism by Corollary \ref{automaticcontprop}. Moreover, one can readily check that $\gamma$ is indeed a natural isomorphism.
\end{proof}

\begin{corollary}\label{withsectionisomorphic}
    Let $(q, \varphi, \tau)$ and $(q',\varphi',\tau')$ be relatively ergodic system with relative discrete spectrum and distinguished section. Then $(q, \varphi, \tau)$ and $(q',\varphi',\tau')$ are isomorphic if and only if the underlying topological dynamical systems $(q,\varphi)$ and $(q',\varphi')$ are isomorphic.
\end{corollary}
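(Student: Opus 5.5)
The forward direction is immediate: an isomorphism $(q,\varphi,\tau)\to(q',\varphi',\tau')$ in $\mathbf{ErgDisc}_{\mathrm{sec}}(p)$ is in particular an isomorphism of the underlying topological dynamical systems. For the converse, I will not try to transport $\tau$ along a given isomorphism directly. Instead, I will pass through the rotation systems provided by Proposition \ref{natisocomprot1}, which reduces the question to how the enveloping group bundle depends on the choice of section.

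So let $\Psi\colon (q,\varphi)\to(q',\varphi')$ be an isomorphism of topological dynamical systems. First I check that $\Psi$ is a morphism $(q,\varphi,\tau)\to(q',\varphi',\Psi\circ\tau)$ in $\mathbf{ErgDisc}_{\mathrm{sec}}(p)$, and that its inverse is a morphism back. This only uses that $\Psi\circ\tau$ is again a global section of $q'$, which holds because $\Psi$ is a bundle morphism over $M$. Hence
\begin{align*}
(q,\varphi,\tau)\cong(q',\varphi',\Psi\circ\tau).
\end{align*}
It therefore suffices to prove that for a fixed object $(q',\varphi',\cdot)$ any two distinguished sections $\tau_1,\tau_2$ give isomorphic objects.

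For this I use Proposition \ref{natisocomprot1}, which gives
\begin{align*}
(q',\varphi',\tau_i)\cong\mathrm{Rot}(\mathrm{Env}(q',\varphi',\tau_i)) \quad \textrm{for } i=1,2.
\end{align*}
By definition of $\mathrm{Env}$, the compactification $\mathrm{Env}(q',\varphi',\tau_i)=(\mathrm{s}|_{\mathrm{E}_{\mathrm{u}}(q',\varphi')},i_{(q',\varphi')})$ does not depend on the section at all. Consequently $\mathrm{Rot}(\mathrm{Env}(q',\varphi',\tau_1))$ and $\mathrm{Rot}(\mathrm{Env}(q',\varphi',\tau_2))$ are literally the same object, namely the rotation system on $\mathrm{E}_{\mathrm{u}}(q',\varphi')$ with the section of neutral elements. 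Composing these isomorphisms gives $(q',\varphi',\tau_1)\cong(q',\varphi',\tau_2)$. Explicitly, the isomorphism sends $\vartheta(\tau_1(m))\mapsto\vartheta(\tau_2(m))$.

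The only step requiring any care is the independence of $\mathrm{Env}$ from the section, and this holds by the very definition of the functor. All the substantive work, namely that the evaluation map is bijective and continuous, is already contained in Proposition \ref{natisocomprot1}.
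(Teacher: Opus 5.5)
Your argument is correct and is essentially the paper's proof: both rest on Proposition \ref{natisocomprot1} together with the fact that $\mathrm{Env}$ ignores the distinguished section. The only difference is in how the isomorphism $\Psi$ is used. The paper applies $\mathrm{Env}$ (i.e., the induced map $\Psi^{\sharp}$ from Proposition \ref{compfunctor}) directly to the non-pointed isomorphism $\Psi$. You instead first transport $\tau$ to $\Psi\circ\tau$, so that you only need the literal equality $\mathrm{Env}(q',\varphi',\Psi\circ\tau)=\mathrm{Env}(q',\varphi',\tau')$.
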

\begin{proof}
    If $(q,\varphi)$ and $(q',\varphi')$ are isomorphic topological dynamical systems, then the group bundle compactifications 
        \begin{align*}
            \mathrm{Env}(q, \varphi, \tau) = (\mathrm{s}|_{\mathrm{E}_{\mathrm{u}}(q, \varphi)}, i_{(q, \varphi)}) \quad \textrm{ and } \quad \mathrm{Env}(q', \varphi', \tau') = (\mathrm{s}|_{\mathrm{E}_{\mathrm{u}}(q', \varphi')}, i_{(q', \varphi')})
        \end{align*}
    are isomorphic as well. By Proposition \ref{natisocomprot1} we obtain that $(q, \varphi, \tau)$ and $(q',\varphi',\tau')$ are isomorphic in $\mathbf{ErgDisc}_{\mathrm{sec}}(p)$. The converse implication is trivial.
\end{proof}

The following is the counterpart to Proposition \ref{natisocomprot1}.

\begin{proposition}\label{natisocomprot2}
    For every group bundle compactification $(q, c)$ of $p$ we obtain an isomorphism of group bundle compactifications
        \begin{align*}
            \eta_{(q, c)} \colon (q, c) \rightarrow \mathrm{Env}(\mathrm{Rot}(q, c)), \quad x \mapsto \vartheta_x,
        \end{align*}
    with $\vartheta_x(y) \coloneqq xy$ for all $x \in H$ and $y \in H_{q(x)}$. Additionally, these isomorphisms define a natural isomorphism
        \begin{align*}
            \mathrm{Id}_{\mathbf{Comp}(p)} \rightarrow \mathrm{Env} \circ \mathrm{Rot}.
        \end{align*}
\end{proposition}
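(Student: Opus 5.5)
The main work is already done in Proposition \ref{envforcomp}. Unwinding the definitions, $\mathrm{Rot}(q,c) = (q,\varphi_c,1)$ and $\mathrm{Env}(\mathrm{Rot}(q,c)) = (\mathrm{s}|_{\mathrm{E}_{\mathrm{u}}(q,\varphi_c)}, i_{(q,\varphi_c)})$. Proposition \ref{envforcomp} shows that $\eta_{(q,c)} = \Phi \colon H \rightarrow \mathrm{E}_{\mathrm{u}}(q,\varphi_c)$, $x \mapsto \vartheta_x$, is an isomorphism of topological group bundles over $M$. Its inverse $\vartheta \mapsto \vartheta(1_{H_{\mathrm{s}(\vartheta)}})$ is continuous, so it is also a bundle morphism.

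It then remains to check compatibility with the compactification maps. For $t \in G$ and $y \in H_{p(t)}$ we have
\begin{align*}
    \eta_{(q,c)}(c(t))(y) = c(t)y = (\varphi_c)_t(y) = i_{(q,\varphi_c)}(t)(y),
\end{align*}
so $\eta_{(q,c)} \circ c = i_{(q,\varphi_c)}$. Hence $\eta_{(q,c)}$ is a morphism of group bundle compactifications. Its inverse group bundle morphism satisfies $\eta_{(q,c)}^{-1} \circ i_{(q,\varphi_c)} = c$, so it is also a morphism in $\mathbf{Comp}(p)$, and $\eta_{(q,c)}$ is an isomorphism in that category.

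For naturality, take a morphism $\Psi \colon (q,c) \rightarrow (q',c')$ in $\mathbf{Comp}(p)$, with $q \colon H \rightarrow M$ and $q' \colon H' \rightarrow M$. We have $\mathrm{Rot}(\Psi) = \Psi$ and $\mathrm{Env}(\Psi) = \Psi^{\sharp}$ from Proposition \ref{compfunctor}; note that $\Psi$ is surjective by Lemma \ref{morgbcsurj}, so $\Psi^{\sharp}$ is defined. We must show $\Psi^{\sharp}(\vartheta_x) = \vartheta_{\Psi(x)}$ for $x \in H$. By definition $\Psi^{\sharp}(\vartheta_x)$ is the unique map on $H'_{q(x)}$ with $\Psi^{\sharp}(\vartheta_x)(\Psi(y)) = \Psi(xy)$ for all $y \in H_{q(x)}$. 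Since $\Psi$ is fiberwise a homomorphism, $\Psi(xy) = \Psi(x)\Psi(y) = \vartheta_{\Psi(x)}(\Psi(y))$. Surjectivity of $\Psi_{q(x)}$ then gives the equality.

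The only step that needs any care is this last well-definedness and surjectivity point: $\Psi^{\sharp}$ is characterized only on the image of $\Psi$, so the fiberwise surjectivity from Lemma \ref{morgbcsurj} is exactly what is needed. Everything else is direct computation on top of Proposition \ref{envforcomp}.
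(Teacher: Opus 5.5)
Your proposal is correct and follows the paper's route. The paper derives the first part directly from Proposition \ref{envforcomp} and leaves naturality as an easy exercise; you fill in the omitted checks correctly. These are $\eta_{(q,c)} \circ c = i_{(q,\varphi_c)}$ and $\Psi^{\sharp}(\vartheta_x) = \vartheta_{\Psi(x)}$, where the latter is justified by the fiberwise surjectivity of $\Psi$ from Lemma \ref{morgbcsurj}.
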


\begin{proof}
    The first part follows directly from Proposition \ref{envforcomp}, and the second is an easy exercise.
\end{proof}

With Propositions \ref{natisocomprot1} and \ref{natisocomprot2} we have established the following categorical equivalence.

\begin{theorem}\label{equivrotcomp}
     For the open topological abelian group bundle $p \colon G \rightarrow M$ over the locally compact space $M$ the functors 
        \begin{align*}
            \mathrm{Rot}\colon \mathbf{Comp}(p) \rightarrow \mathbf{ErgDisc}_{\mathrm{sec}}(p) \quad \textrm{and} \quad \mathrm{Env} \colon \mathbf{ErgDisc}_{\mathrm{sec}}(p) \rightarrow \mathbf{Comp}(p)
        \end{align*}
    establish an equivalence between the category $\mathbf{ErgDisc}_{\mathrm{sec}}(p)$ of relatively ergodic systems with relative discrete spectrum and distinguished section and the category of group bundle compactifications $\mathbf{Comp}(p)$.
\end{theorem}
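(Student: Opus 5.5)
The statement to prove is Theorem \ref{equivrotcomp}. It follows formally from Propositions \ref{natisocomprot1} and \ref{natisocomprot2}, so my plan is to check the ingredients those two propositions rely on.

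The first step is to confirm that both functors are well-defined.

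For $\mathrm{Rot}$, I would proceed as follows. Proposition \ref{compactifdiscr} shows that $(q,\varphi_c)$ has relative discrete spectrum, and Proposition \ref{rotergodic} shows that it is relatively ergodic. The unit section $m\mapsto 1_{H_m}$ is continuous because $q$ is a group bundle. A morphism $\Phi$ of compactifications is a group bundle morphism with $\Phi\circ c_1=c_2$. Hence $\Phi(c_1(t)x)=c_2(t)\Phi(x)$, so $\Phi$ intertwines the rotations, and it maps units to units. Functoriality is then trivial.

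For $\mathrm{Env}$, well-definedness on objects and morphisms is exactly Proposition \ref{compfunctor}. For functoriality, I would use that $\Phi^\sharp$ is determined by $\Phi^\sharp(\vartheta)\circ\Phi=\Phi\circ\vartheta$ together with surjectivity of $\Phi$. Uniqueness then gives $(\Psi\circ\Phi)^\sharp=\Psi^\sharp\circ\Phi^\sharp$ and $\mathrm{id}^\sharp=\mathrm{id}$. I would also check $\Phi^\sharp\circ i_{(q,\varphi)}=i_{(q',\varphi')}$ by evaluating at $\varphi_t$.

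The second step is to establish the natural isomorphism $\gamma\colon\mathrm{Rot}\circ\mathrm{Env}\to\mathrm{Id}$ from Proposition \ref{natisocomprot1}, via $\vartheta\mapsto\vartheta(\tau(\mathrm{s}(\vartheta)))$.

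Continuity of this map follows from the continuity of evaluation $\mathrm{C}(q)\times_M K\to K$, which comes from Proposition \ref{netconv}, composed with the section $\tau$. Equivariance holds because $i(t)\vartheta=\varphi_t\circ\vartheta$, and the map sends the unit $\mathrm{id}$ to $\tau$.

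Bijectivity is the substantive part. Fibrewise, $\mathrm{E}_{\mathrm{u}}(q,\varphi)_m$ acts effectively, and it acts transitively by Proposition \ref{charergtransit}, whose properness hypothesis is supplied by Theorem \ref{unifenvcomp2}. A transitive effective abelian action is free: if $\vartheta x=x$, then $\vartheta(\sigma x)=\sigma\vartheta x=\sigma x$ for all $\sigma$, so $\vartheta=\mathrm{id}$. Hence orbit maps are bijective. Corollary \ref{automaticcontprop} then upgrades the bijection to a homeomorphism, since the source is proper and the target is Hausdorff.

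Naturality amounts to the identity $\Phi(\vartheta(\tau_1(m)))=\Phi^\sharp(\vartheta)(\tau_2(m))$. This follows from the definition of $\Phi^\sharp$ together with $\Phi\circ\tau_1=\tau_2$.

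The third step is to establish the natural isomorphism $\eta\colon\mathrm{Id}\to\mathrm{Env}\circ\mathrm{Rot}$, $x\mapsto\vartheta_x$. This is Proposition \ref{envforcomp}, with compatibility $\vartheta_{c(t)}=(\varphi_c)_t$. Naturality requires $\Phi^\sharp(\vartheta_x)=\vartheta_{\Phi(x)}$. This follows from $\Phi(xy)=\Phi(x)\Phi(y)$ and surjectivity of $\Phi$ (Lemma \ref{morgbcsurj}).

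Combining the two natural isomorphisms shows that the functors are essentially inverse. I expect the only genuine obstacle to be the bijectivity of the orbit map in the second step, which depends on transitivity via relative ergodicity and properness. Everything else is routine diagram checking.
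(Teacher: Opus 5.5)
Your proposal is correct and follows the paper's route. The paper derives the theorem directly from the natural isomorphisms of Propositions \ref{natisocomprot1} and \ref{natisocomprot2}, and you justify those isomorphisms with the same arguments: transitivity via Proposition \ref{charergtransit}, freeness of transitive effective abelian actions, and Corollary \ref{automaticcontprop} for $\gamma$; Proposition \ref{envforcomp} and surjectivity from Lemma \ref{morgbcsurj} for $\eta$. You also spell out the functoriality and naturality checks that the paper leaves as easy exercises.
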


Having established Theorems \ref{maintheoremcomp} and \ref{equivrotcomp}, we are almost in a position to state the main result of our article: a Halmos--von Neumann theorem for open topological abelian group bundles. To address the missing part, recall from the introduction that in the group case the point spectrum of the Koopman representation serves as a complete isomorphism invariant for ergodic systems with discrete spectrum. Recall from Definition \ref{spectralnotions} that in our situation the point spectrum  $\sigma_{\mathrm{p}}^{T^\varphi} = (\sigma_{\mathrm{p}}^{T^\varphi}(U))_U$ of the Koopman representation of a topological dynamical system $(q,\varphi)$ on a bundle $q \colon K \rightarrow M$ is given by 
    \begin{align*}
        \sigma_{\mathrm{p}}^{T^{\varphi}}(U) \coloneqq \{\chi \in \mathcal{D}_p(U) \mid \exists f \in \ker(\chi - T^{\varphi}) \textrm{ with } \|f|_{K_m}\|_{\infty} = 1 \textrm{ for every } m \in U\}
    \end{align*}
for every open subset $U \subseteq M$. We now establish the following generalization of \cite[Proposition 4.20]{kreidler-hermle}.

\begin{proposition}\label{functorcomposition}
   The functor $\mathrm{DDual} \circ \mathrm{Env} \colon \mathbf{ErgDisc}_{\mathrm{sec}}(p) \rightarrow \mathbf{WSub}(\mathcal{D}_p)^{\mathrm{op}}$ satisfies
        \begin{align*}
            \sigma_{\mathrm{p}}^{T^{\varphi}}(U) = ((\mathrm{DDual} \circ \mathrm{Env})(q,\varphi, \tau))(U)
        \end{align*}
     for each relatively ergodic system with relative discrete spectrum and distinguished section $(q, \varphi, \tau)$ and each open subset $U \subseteq M$.  
\end{proposition}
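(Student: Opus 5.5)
We have to show that for each open $U \subseteq M$,
\begin{align*}
    \sigma_{\mathrm{p}}^{T^{\varphi}}(U) = \{\chi \circ i_{(q,\varphi)}|_{p^{-1}(U)} \mid \chi \in \mathcal{D}_{E}(U)\},
\end{align*}
where we abbreviate $E \coloneqq \mathrm{s}|_{\mathrm{E}_{\mathrm{u}}(q,\varphi)}$, a group bundle compactification by Proposition \ref{compfunctor}. The natural route is to reduce to the rotation system. By Proposition \ref{natisocomprot1} there is an isomorphism $\gamma \colon (E,\varphi_{i},1) \rightarrow (q,\varphi,\tau)$ of topological dynamical systems. Pulling back along $\gamma$ identifies $\mathrm{C}_{\mathrm{b}}(q^{-1}(U))$ with $\mathrm{C}_{\mathrm{b}}(E^{-1}(U))$, preserves fiberwise sup norms, and intertwines the Koopman representations. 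Hence $\sigma_{\mathrm{p}}^{T^{\varphi}} = \sigma_{\mathrm{p}}^{T^{\varphi_i}}$. It therefore suffices to prove the statement for a rotation system $(q,\varphi_c)$ of an arbitrary compactification $(q,c)$ with $q \colon H \rightarrow M$, namely $\sigma_{\mathrm{p}}^{T^{\varphi_c}}(U) = (c^*\mathcal{D}_q)(U)$.

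The inclusion $\supseteq$ is Example \ref{exampleeigensect}. For $\chi \in \mathcal{D}_q(U)$, the function $\mathrm{pr}_\T \circ \overline{\chi}$ is an eigenfunction for $\chi \circ c$, and it has modulus one, so its fiberwise sup norm is one.

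For $\subseteq$, let $\psi \in \mathcal{D}_p(U)$ and let $f \in \ker(\psi - T^{\varphi_c})$ be normalized. Fix $m \in U$. By Lemma \ref{lemmaunifsemigrp}, applied to the rotation system together with Proposition \ref{envforcomp}, for each $x \in H_m$ there is $z_x \in \T$ with $f(xy) = z_x f(y)$ for all $y \in H_m$. Since $\|f|_{H_m}\|_\infty = 1$, some $y_0$ has $f(y_0) \neq 0$. The map $x \mapsto z_x = f(xy_0)/f(y_0)$ is then uniquely determined, continuous, and multiplicative. Setting $y=1$ gives $f(x) = z_x f(1_{H_m})$, so $|f|$ is constant on $H_m$, hence equal to $1$, and in particular $f(1_{H_m}) \neq 0$. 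Define
\begin{align*}
    \chi(x) \coloneqq \bigl(q(x), \overline{f(x)}\, f(1_{H_{q(x)}})\bigr) \quad \text{for } x \in q^{-1}(U).
\end{align*}
This map is continuous because $f$ and the unit section are continuous. It takes values in $\T$ and is fiberwise the character $\overline{z_x}$, so $\chi \in \mathcal{D}_q(U)$.

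It remains to check $\chi \circ c = \psi$ on $p^{-1}(U)$. The eigen-equation gives $f(c(t)^{-1}y) = \mathrm{pr}_\T(\psi(t)) f(y)$. Taking $y = 1$, this yields $\overline{z_{c(t)}} = \mathrm{pr}_\T(\psi(t))$, which is exactly $\mathrm{pr}_\T(\chi(c(t)))$. Hence $\psi = c^*\chi \in (c^*\mathcal{D}_q)(U)$.

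The main subtlety is the use of normalization to rule out zeros of $f$ on a fiber. This is exactly where ergodicity, in the form of transitivity of the rotation, enters, via the identity $f(x) = z_x f(1)$. The remaining bookkeeping is the transport through $\gamma$ and the identification $\mathrm{DDual}(\mathrm{Env}(q,\varphi,\tau)) = i^*\mathcal{D}_{E}$.
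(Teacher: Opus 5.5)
Your proof is correct, but it is organized differently from the paper's. The paper uses the model isomorphism $\gamma_{(q,\varphi,\tau)}$ of Proposition \ref{natisocomprot1} only for the inclusion $\supseteq$, which it proves exactly as you do via Example \ref{exampleeigensect}.

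For $\subseteq$ the paper stays with the original system. Given a normalized $f \in \ker(\chi - T^{\varphi})$, Lemma \ref{lemmaunifsemigrp} gives a unique $z_\vartheta \in \T$ with $f|_{K_{\mathrm{s}(\vartheta)}} \circ \vartheta^{-1} = z_\vartheta f|_{K_{\mathrm{s}(\vartheta)}}$ for each $\vartheta \in \mathrm{E}_{\mathrm{u}}(q,\varphi)$ over $U$. Continuity of $\vartheta \mapsto (\mathrm{s}(\vartheta), z_\vartheta)$ then comes from the closed graph theorem (Proposition \ref{lemmaclosedgraph}) together with continuity of inversion in $\mathrm{Homeo}(q)$.

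You instead transport the whole problem to the rotation system first. There the eigenfunction itself gives the character through the explicit formula $x \mapsto \overline{f(x)}\, f(1_{H_{q(x)}})$. Continuity is then immediate from continuity of $f$ and of the unit section, with no closed-graph step. Your observation that $|f| \equiv 1$ on every fiber, from $f(x) = z_x f(1_{H_m})$, is exactly what makes this formula valid.

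What each route buys:
\begin{itemize}
\item Your route makes the character completely explicit and avoids the closed-graph argument.
\item Your inclusion $\subseteq$ depends on the isomorphism $\gamma$, and hence on the distinguished section and on transitivity of the fiber actions.
\item The paper's argument for $\subseteq$ needs neither $\gamma$ nor the section. It only uses that $\mathrm{E}_{\mathrm{u}}(q,\varphi)$ consists of fiber homeomorphisms and is closed under inversion (Proposition \ref{unifenvcomp}).
\end{itemize}
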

\begin{proof}
    Let  $(q, \varphi, \tau)$ be a relatively ergodic system with relative discrete spectrum and distinguished section, and $U \subseteq M$ be an open subset. We have to show that
        \begin{align*}
            \sigma_{\mathrm{p}}^{T^{\varphi}}(U) = \{\chi' \circ (i_{(q,\varphi)})|_{p^{-1}(U)} \mid \chi' \in \mathcal{D}_{\mathrm{s}|_{\mathrm{E}_{\mathrm{u}}(q, \varphi)}}(U)\}.
        \end{align*}
    
    For the first inclusion \enquote{$\subseteq$} let $\chi \in  \sigma_{\mathrm{p}}^{T^{\varphi}}(U)$. Take $f \in \mathrm{ker}(\chi - T^{\varphi})$ with $\|f|_{K_m}\| =1$ for each $m \in U$. By Lemma \ref{lemmaunifsemigrp} we find for a given $\vartheta \in \mathrm{E}_{\mathrm{u}}(q, \varphi)$ with $\mathrm{s}(\vartheta) \in U$ some element $z \in \mathbb{T}$ with $f|_{K_{\mathrm{s}(\vartheta)}} \circ \vartheta^{-1} = z \cdot f|_{K_{\mathrm{s}(\vartheta)}} $. Since $f|_{K_{\mathrm{s}(\vartheta)}}  \neq 0$, the element $z \in \T$ is uniquely determined, and we write $z_{\vartheta}$ for it. Note that for $t \in G$ with $p(t) \in U$ we have $z_{\varphi_{t}} = \mathrm{pr}_{\T}(\chi(t))$.\medskip
    
    Now, consider the map
        \begin{align*}
            \chi' \colon \mathrm{s}|_{\mathrm{E}_{\mathrm{u}}(q, \varphi)}^{-1}(U) \rightarrow U \times \T, \quad
\vartheta \mapsto (\mathrm{s}(\vartheta), z_{\vartheta}).
        \end{align*}
    We show below that $\chi'$ is continuous. It is then easy to check that $\chi'$ is a local character of $\mathrm{s}|_{\mathrm{E}_{\mathrm{u}}(q, \varphi)}$, i.e., $\chi' \in \mathcal{D}_{\mathrm{s}|_{\mathrm{E}_{\mathrm{u}}(q, \varphi)}}(U)$. But
        \begin{align*}
            (\chi' \circ i_{(q,\varphi)})(t) = (p(t), z_{\varphi_t}) = (p(t),\mathrm{pr}_{\T}(\chi(t))) = \chi(t)
        \end{align*}
    for each $t \in p^{-1}(U)$, hence $\chi = \chi' \circ (i_{(q,\varphi)})|_{p^{-1}(U)}$.\medskip
    
    To prove that $\chi'$ is continuous, it suffices by Proposition \ref{lemmaclosedgraph} to check that the graph of $\chi'$ is closed. So take a net $(\vartheta_\alpha)_{\alpha}$ in $\mathrm{E}_{\mathrm{u}}(q, \varphi)$ with $\mathrm{s}(\vartheta_\alpha) \in U$ for all $\alpha$ converging to some $\vartheta \in \mathrm{E}_{\mathrm{u}}(q, \varphi)$ with $\mathrm{s}(\vartheta) \in U$, and assume further that $\lim_{\alpha} \chi'(\vartheta_{\alpha}) = (\mathrm{s}(\vartheta),w)$ for an element $w \in \T$. Then
        \begin{align*}
            w f|_{K_{\mathrm{s}(\vartheta)}} = \lim_{\alpha} z_{\vartheta_\alpha}f|_{K_{\mathrm{s}(\vartheta_\alpha)}}
= \lim_{\alpha} f|_{K_{\mathrm{s}(\vartheta_\alpha)}} \circ \vartheta_{\alpha}^{-1} = f|_{K_{\mathrm{s}(\vartheta)}} \circ \vartheta^{-1}
= z_{\vartheta} f|_{K_{\mathrm{s}(\vartheta)}}.
        \end{align*}
    This yields $w = z_{\vartheta}$, and hence $(\mathrm{s}(\vartheta),w) = \chi'(\vartheta)$ as desired.\medskip

    For the converse inclusion \enquote{$\supseteq$}, take a local character $\chi \in \mathcal{D}_{\mathrm{s}|_{\mathrm{E}_{\mathrm{u}}(q, \varphi)}}(U)$. By Example \ref{exampleeigensect} we have that $\chi \circ (i_{(q,\varphi)})|_{p^{-1}(U)}$ is an eigenvalue of the rotation system $\mathrm{Rot}(\mathrm{Env}(q, \varphi, y))$ defined by the compactification $(\mathrm{s}|_{\mathrm{E}_{\mathrm{u}}(q, \varphi)}, i_{(q,\varphi)})$. Since, by Proposition \ref{natisocomprot1}, we have an isomorphism 
        \begin{align*}
              \gamma_{(q, \varphi, \tau)} \colon \mathrm{Rot}(\mathrm{Env}(q, \varphi, y)) \rightarrow (q, \varphi, y), \quad \vartheta \mapsto \vartheta(\tau(\mathrm{s}(\vartheta))),
        \end{align*}
    we conclude that $\chi \circ (i_{(q,\varphi)})|_{p^{-1}(U)} \in \sigma_{\mathrm{p}}^{T^{\varphi}}(U)$ as well.
\end{proof}

Motivated by Proposition \ref{functorcomposition} we introduce the following definition.

\begin{definition}
    The composition  $\sigma_{\mathrm{p}} \coloneqq \mathrm{DDual} \circ \mathrm{Env} \colon \mathbf{ErgDisc}_{\mathrm{sec}}(p) \rightarrow \mathbf{WSub}(\mathcal{D}_p)^{\mathrm{op}}$ is called the \textbf{point spectrum functor}. 
\end{definition}

We are finally ready to state our main theorem, which extends \cite[Theorem 4.25]{kreidler-hermle} to the setting of group bundles (cf. \cite[Theorem 5.2.31]{Herm2026} for topologically ergodic groupoids).

\begin{theorem}\label{hvncat}
    For the open topological abelian group bundle $p \colon G \rightarrow M$ over the locally compact space $M$ the following assertions hold.
        \begin{enumerate}[(i)]
            \item The functors 
                \begin{align*}
                    \mathrm{Rot}&\colon \mathbf{Comp}(p) \rightarrow \mathbf{ErgDisc}_{\mathrm{sec}}(p),\\
                    \mathrm{Env}&\colon \mathbf{ErgDisc}_{\mathrm{sec}}(p) \rightarrow \mathbf{Comp}(p)
                \end{align*}
            establish an equivalence between the category of relatively ergodic systems with relative discrete spectrum and distinguished section over $p$ and the category of group bundle compactifications of $p$.
            \item The functors 
                \begin{align*}
                    \mathrm{DDual} &\colon \mathbf{Comp}(p) \rightarrow \mathbf{WSub}(\mathcal{D}_p)^{\mathrm{op}},\\
                    \mathrm{CDual} &\colon \mathbf{WSub}(\mathcal{D}_p)^{\mathrm{op}} \rightarrow \mathbf{Comp}(p)
                \end{align*}
            establish an equivalence between the category of group bundle compactifications of $p$ and the opposite category of well-supported subsheaves of the dual sheaf of $p$.
            \item The functor 
                \begin{align*}
                    \sigma_{\mathrm{p}} \colon \mathrm{DDual} \circ \mathrm{Env} \colon \mathbf{ErgDisc}_{\mathrm{sec}}(p) \rightarrow \mathbf{WSub}(\mathcal{D}_p)^{\mathrm{op}}
                \end{align*}
            defines an equivalence between  the category of relatively ergodic systems with relative discrete spectrum and distinguished section and the opposite category of well-supported subsheaves of the dual sheaf of $p$.
        \end{enumerate}
\end{theorem}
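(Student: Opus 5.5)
Parts (i) and (ii) of the statement have already been established, so the plan is mostly a matter of assembling earlier results. Part (i) is exactly Theorem \ref{equivrotcomp}, which rests on the natural isomorphisms $\gamma \colon \mathrm{Rot} \circ \mathrm{Env} \rightarrow \mathrm{Id}_{\mathbf{ErgDisc}_{\mathrm{sec}}(p)}$ from Proposition \ref{natisocomprot1} and $\eta \colon \mathrm{Id}_{\mathbf{Comp}(p)} \rightarrow \mathrm{Env} \circ \mathrm{Rot}$ from Proposition \ref{natisocomprot2}. Part (ii) is exactly Theorem \ref{maintheoremcomp}, which rests on the natural isomorphism $\gamma$ of Proposition \ref{natiso1} and the identity natural isomorphism of Proposition \ref{natiso2}. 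I would therefore cite these two results and spend the real work on (iii).

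For (iii) I would use the general categorical fact that a composite of equivalences is again an equivalence. Set
\begin{align*}
    F \coloneqq \mathrm{DDual} \circ \mathrm{Env} \quad \textrm{ and } \quad F' \coloneqq \mathrm{Rot} \circ \mathrm{CDual}.
\end{align*}
I would then exhibit natural isomorphisms $F' \circ F \simeq \mathrm{Id}_{\mathbf{ErgDisc}_{\mathrm{sec}}(p)}$ and $F \circ F' \simeq \mathrm{Id}_{\mathbf{WSub}(\mathcal{D}_p)^{\mathrm{op}}}$. The first is
\begin{align*}
    \mathrm{Rot}\circ\mathrm{CDual}\circ\mathrm{DDual}\circ\mathrm{Env} \;\simeq\; \mathrm{Rot}\circ\mathrm{Env} \;\simeq\; \mathrm{Id}.
\end{align*}
Here the first isomorphism is $\mathrm{Rot}$ whiskered with the inverse of the natural isomorphism from Proposition \ref{natiso1}, applied along $\mathrm{Env}$. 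The second is $\gamma$ from Proposition \ref{natisocomprot1}. The second required isomorphism is
\begin{align*}
    \mathrm{DDual}\circ\mathrm{Env}\circ\mathrm{Rot}\circ\mathrm{CDual} \;\simeq\; \mathrm{DDual}\circ\mathrm{CDual} \;=\; \mathrm{Id}.
\end{align*}
Here I would whisker the inverse of $\eta$ from Proposition \ref{natisocomprot2}, and finish with the equality from Proposition \ref{natiso2}.

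The only step needing care is checking that whiskering and composing natural isomorphisms again gives natural isomorphisms, in particular when passing through opposite categories. This is routine: naturality squares are preserved under applying functors and under vertical composition, and components remain isomorphisms because functors preserve isomorphisms. In $\mathbf{WSub}(\mathcal{D}_p)^{\mathrm{op}}$, a poset category, every natural transformation between functors into it is automatically natural, and isomorphisms there are equalities. So the second natural isomorphism simply says that $F(F'(\mathcal{F})) = \mathcal{F}$, which can be checked objectwise.

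Finally, I would note that Proposition \ref{functorcomposition} identifies $F(q,\varphi,\tau)$ with the point spectrum $\sigma_{\mathrm{p}}^{T^{\varphi}}$. This justifies the name of the functor in (iii), but it is not needed for the equivalence itself. I expect no genuine obstacle in any of these steps.
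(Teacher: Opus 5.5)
Your proposal is correct and matches the paper, which obtains (i) from Theorem \ref{equivrotcomp} and (ii) from Theorem \ref{maintheoremcomp}, and regards (iii) as the composite of these two equivalences. Your explicit whiskering with the inverses of the natural isomorphisms from Propositions \ref{natiso1} and \ref{natisocomprot2}, together with Propositions \ref{natisocomprot1} and \ref{natiso2}, just spells out the routine categorical step the paper leaves implicit.
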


The following \enquote{category theory free} formulation is closer to the original Halmos--von Neumann theorem from the introduction. It is a direct consequence of Theorem \ref{hvncat} and Corollary \ref{withsectionisomorphic}.

\begin{theorem}\label{hvnmain}
    For relatively ergodic systems with relative discrete spectrum admitting a continuous section the following assertions hold.
        \begin{enumerate}[(i)]
            \item Two such systems $(q_1,\varphi_1)$ and $(q_2,\varphi_2)$ are isomorphic if and only if $\sigma_{\mathrm{p}}^{T^{\varphi_1}} = \sigma_{\mathrm{p}}^{T^{\varphi_2}}$.
            \item  The point spectra $\sigma_{\mathrm{p}}(T^{\varphi})$ arising from such systems $(K,\varphi)$ are precisely the well-supported subsheaves of the dual sheaf $\mathcal{D}_p$.
            \item Every such system is isomorphic to a rotation system $(q,\varphi_c)$ defined by a group bundle compactification $(q,c)$ of $p$.
        \end{enumerate}
\end{theorem}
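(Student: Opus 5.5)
We derive Theorem \ref{hvnmain} from Theorem \ref{hvncat} together with Corollary \ref{withsectionisomorphic} and Proposition \ref{functorcomposition}. The basic device is to attach a section: every relatively ergodic system $(q,\varphi)$ with relative discrete spectrum admitting a continuous section $\tau$ becomes an object $(q,\varphi,\tau)$ of $\mathbf{ErgDisc}_{\mathrm{sec}}(p)$. By Proposition \ref{functorcomposition}, its image under $\sigma_{\mathrm{p}} = \mathrm{DDual}\circ\mathrm{Env}$ is exactly the point spectrum $\sigma_{\mathrm{p}}^{T^\varphi}$. In particular, this point spectrum is a well-supported subsheaf of $\mathcal{D}_p$ and does not depend on the choice of $\tau$.

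For (i), assume first that $(q_1,\varphi_1)\cong(q_2,\varphi_2)$ via some $\Phi$. Since $T^{\varphi_2}$ is conjugated to $T^{\varphi_1}$ by composition with $\Phi$, the eigensections and normalized eigensections correspond, so $\sigma_{\mathrm{p}}^{T^{\varphi_1}}=\sigma_{\mathrm{p}}^{T^{\varphi_2}}$. (Alternatively, Corollary \ref{withsectionisomorphic} gives an isomorphism in $\mathbf{ErgDisc}_{\mathrm{sec}}(p)$, and a functor into a poset category sends isomorphic objects to equal objects.) Conversely, suppose the point spectra agree. Then $\sigma_{\mathrm{p}}(q_1,\varphi_1,\tau_1)=\sigma_{\mathrm{p}}(q_2,\varphi_2,\tau_2)$ in $\mathbf{WSub}(\mathcal{D}_p)^{\mathrm{op}}$. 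Because $\sigma_{\mathrm{p}}$ is an equivalence by Theorem \ref{hvncat} (iii), it is full and faithful, so the identity morphism of this subsheaf lifts to an isomorphism between the triples. Forgetting the sections then yields an isomorphism $(q_1,\varphi_1)\cong(q_2,\varphi_2)$. Note that this direction does not even require Corollary \ref{withsectionisomorphic}.

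For (ii), one inclusion is exactly the remark above: every point spectrum is a well-supported subsheaf. For the reverse, take a well-supported subsheaf $\mathcal{F}\subseteq\mathcal{D}_p$. Essential surjectivity of $\sigma_{\mathrm{p}}$ produces an object $(q,\varphi,\tau)$ with $\sigma_{\mathrm{p}}(q,\varphi,\tau)\cong\mathcal{F}$. Since the target is a poset category, isomorphic objects coincide, so $\sigma_{\mathrm{p}}^{T^\varphi}=\mathcal{F}$. Concretely, one may take $\mathrm{Rot}(\mathrm{CDual}(\mathcal{F}))$ and compute its spectrum with Proposition \ref{natiso2} and Proposition \ref{functorcomposition}; this system automatically carries the unit section.

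For (iii), apply Proposition \ref{natisocomprot1}. The map $\gamma_{(q,\varphi,\tau)}$ is an isomorphism from $\mathrm{Rot}(\mathrm{Env}(q,\varphi,\tau))$, which is the rotation system of the compactification $(\mathrm{s}|_{\mathrm{E}_{\mathrm{u}}(q,\varphi)},i_{(q,\varphi)})$, onto $(q,\varphi,\tau)$.

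The only subtle point is the bookkeeping in (i) and (ii): we must use that $\mathbf{WSub}(\mathcal{D}_p)^{\mathrm{op}}$ is a poset category, so that isomorphic objects are equal and equal objects admit the identity morphism. We must also use that the point spectrum does not depend on the chosen section, which follows from Proposition \ref{functorcomposition}.
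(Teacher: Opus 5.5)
Your proposal is correct and follows the paper's route: the paper derives Theorem~\ref{hvnmain} directly from the equivalences of Theorem~\ref{hvncat} together with Corollary~\ref{withsectionisomorphic}, with Proposition~\ref{functorcomposition} identifying $\sigma_{\mathrm{p}}(q,\varphi,\tau)$ with $\sigma_{\mathrm{p}}^{T^{\varphi}}$, exactly as you do. Your extra remarks only spell out bookkeeping the paper leaves implicit: in the poset category $\mathbf{WSub}(\mathcal{D}_p)^{\mathrm{op}}$ isomorphic objects are equal, a fully faithful functor reflects isomorphisms, and the forward direction of (i) can also be checked directly by transporting eigenfunctions along $\Phi$.
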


\begin{remark}
    Let us conclude this work with some open problems.
    \begin{enumerate}[(i)]
        \item Via topological models the classical Halmos--von Neumann theorem for continuous group actions can be transferred to ergodic theory (see, e.g.,   \cite[Theorems 3.8 and 4.5]{kreidler-hermle}). It seems reasonable to expect that one can also use Theorem \ref{hvnmain} to infer a Halmos--von Neumann theorem for measure-preserving systems similar to \cite[Theorem 5.22]{ADKPL2026}.
        \item As shown in \cite[Section 5]{kreidler-hermle}, there is also a topological Halmos--von Neumann theorem for continuous actions of non-abelian topological groups. It would be interesting to generalize this result to open group bundles over a locally compact base space.
        \item The second author develops a \enquote{Halmos--von Neumann theory} for topologically ergodic groupoids in his upcoming PhD thesis \cite{Herm2026}. It rests on representation theory for compact \emph{transitive} groupoids $H$. These can -- from a groupoid point of view -- be seen as the counterparts to proper group bundles, which are fundamental to this article. It is therefore a natural question whether there is a common generalization of both theories, maybe even for actions of arbitrary topological groupoids.
    \end{enumerate}
    We hope to address some of these problems in future work.
\end{remark}

\parindent 0pt
\parskip 0.5\baselineskip
\setlength{\footskip}{4ex}
\bibliographystyle{alpha}
\bibliography{bibliography} 
\footnotesize

\end{document}